\newcommand{\R}{\mathbb{R}}
\newcommand{\Q}{\mathbb{Q}}
\newcommand{\Sp}{\mathbb{S}}
\newcommand{\Z}{\mathbb{Z}}
\newcommand{\T}{\mathbb{T}}
\newcommand{\C}{\mathbb{C}}
\newcommand{\N}{\mathbb{N}}
\newcommand{\dist}{\mathrm{dist}}
\newcommand{\dom}{\operatorname{dom}}
\renewcommand{\H}{\mathbb{H}}
\newcommand{\D}{\H^2}
\newcommand{\F}{\mathcal{F}}
\newcommand{\tx}{{\widetilde{x}}}
\newcommand{\ty}{\widetilde{y}}
\newcommand{\tf}{\widetilde{f}}
\newcommand{\tS}{\widetilde{S}}
\newcommand{\tgamma}{\widetilde{\gamma}}
\newcommand{\tdist}{\widetilde{\dist}}
\newcommand{\dgamma}{\dot{\gamma}}
\newcommand{\Homeo}{\operatorname{Homeo}}
\newcommand{\wt}{\widetilde}
\newcommand{\Mo}{\mathrm{M}}
\newcommand{\dd}{\,\mathrm{d}}
\newcommand{\M}{\mathcal{M}_{\vartheta>0}^{\textnormal{erg}}(f)}
\DeclareMathOperator{\diam}{diam}
\newcommand{\Id}{\operatorname{Id}}
\newcommand{\pr}{\operatorname{pr}}
\newcommand{\I}{I}
\newcommand{\Hy}{\mathbb{H}}
\newcommand{\cl}{\mathcal{N}}
\newcommand{\varep}{\varepsilon}
\newcommand{\card}{\operatorname{Card}}
\newcommand{\sing}{\operatorname{Sing}}
\newcommand{\len}{\operatorname{len}}
\newcommand{\1}{\mathbf{1}}
\newcommand{\inte}{\operatorname{int}}
\newcommand{\conv}{\operatorname{conv}}
\newcommand{\wh}{\widehat}
\newcommand{\dotLambda}{\dot{\Lambda}}
\newcommand{\dotgamma}{\dot{\gamma}}
\theoremstyle{remark}
\newtheorem{remark}{Remark}[section]
\theoremstyle{definition}
\newtheorem{definition}[remark]{Definition}
\theoremstyle{plain}
\newtheorem{theorem}[remark]{Theorem}
\newtheorem{lemma}[remark]{Lemma}
\newtheorem{proposition}[remark]{Proposition}
\newtheorem*{proposition*}{Proposition}
\newtheorem{coro}[remark]{Corollary}
\newtheorem{claim}[remark]{Claim}
\newtheorem{teorema}{Theorem}
\newtheorem{prop}[teorema]{Proposition}
\newtheorem{corol}[teorema]{Corollary}
\numberwithin{equation}{section}
\title{Geodesic tracking and the shape of ergodic rotation sets}
\author{Alejo García-Sassi}
\address{A. García-Sassi: Instituto de Matem\'atica y Estad\'istica, Facultad de Ingenier\'ia, Universidad
de la Rep\'ublica, Julio Herrera y Reissig 565, 11300  Montevideo, Uruguay}
\author{Pierre-Antoine Guihéneuf}
\address{P.-A. Guihéneuf: Institut de Math\'ematiques de Jussieu-Paris Rive Gauche, IMJ-PRG, Sorbonne Universit\'e, Universit\'e Paris-Diderot, CNRS, F-75005, Paris, France}
\author{Pablo Lessa}
\address{P. Lessa: CMAT, Facultad de Ciencias, Igu\'a 4225, 11400 Montevideo, Uruguay}
\begin{document}

\begin{abstract}
We prove a structure theorem for ergodic homological rotation sets of homeomorphisms isotopic to the identity on a closed orientable hyperbolic surface: this set is made of a finite number of pieces that are either one-dimensional or almost convex. The latter ones give birth to horseshoes; in the case of a zero-entropy homeomorphism, we show that there exists a geodesic lamination containing the directions in which generic orbits with respect to ergodic invariant probabilities turn around the surface under iterations of the homeomorphism.
The proof is based on the idea of \emph{geodesic tracking} of orbits that are typical for some invariant measure by geodesics on the surface, that allows to get links between the dynamics of such points and the one of the geodesic flow on some invariant subset of the unit tangent bundle of the surface.
\end{abstract}

\maketitle

\setcounter{tocdepth}{2}
\tableofcontents

\section{Introduction}

Consider a closed orientable hyperbolic surface \(S\) of genus $g$ and denote by $\tS$ its universal cover. This space is conformly equivalent to the hyperbolic disk $\Hy^2$. 
Denote $\dist$ a distance on \(S\) and $\tdist$ its lift to $\tS$ so that $(\tS,\tdist)$ is isometric to $\Hy^2$. 
As a hyperbolic space, the lift $\tS$ has a well defined boundary at infinity $\partial\tS$, which is homeomorphic to the circle.
We denote $\Homeo_0(S)$ the space of homeomorphisms of $S$ that are isotopic to the identity. Let $f \in \textnormal{Homeo}_0(S)$.
The group of covering transformations \(\Gamma: \tS \to \tS\) acts freely and discretely on \(\widetilde{S}\) by isometries which preserve orientation, and there exists a unique \emph{preferred lift} \(\widetilde{f}:\tS \to \tS\) of \(f\) commuting with the covering transformations:
\begin{equation}\label{eq:preferedliftequation}
 \tf \circ \gamma = \gamma \circ \tf\ \text{ for all }\gamma \in \Gamma.
\end{equation}

\subsection{Homological rotation sets}

Let us start by defining the notion of homological rotation set, due to Schwarzman \cite{MR88720}.

Consider the Abelianization \(\Gamma' = \Gamma/[\Gamma,\Gamma]\) of the group of covering transformations \(\Gamma\) as a \(\Z\)-module and identify the first real homology of \(S\) with
\[H_1(S,\R) = \R \otimes_{\Z} \Gamma',\]
(that is formal linear combinations with real coefficients of elements of \(\Gamma'\) where sums with integer coefficients are identified with the actual \(\Z\)-linear combination given by the Abelian group structure).
We recall that as $S$ is a closed surface of genus $g$, the set $H_1(S,\R)$ is a real vector space of dimension $2g$.

We equip the homology $H_1(S,\R)$ with a norm $\|\cdot\|$ and the intersection form $\wedge$ coming from the classical cup product on cohomology via Poincaré duality. This intersection form coincides with the classical algebraic intersection number in restriction to elements of $H_1(S,\Z)$ (\emph{e.g.}\ \cite{lellouch}).
Given \(a \in \Gamma\), we denote \([a] \in H_1(S,\R)\) its \emph{homology class}.

Fix a bounded and measurable fundamental domain \(D \subset \tS\) for the action of \(\Gamma\) on \(\tS\) and let \(x \mapsto \tx\) denote the lift of \(x \in S\) to \(D\).
For each \( y \in S\) let \(a_{y} \in \Gamma\) be the unique element such that \(a_y^{-1}\tilde f(\tilde y) \in D\).

The following definition is allowed by Birkhoff Ergodic Theorem. 

\begin{definition}
Given an ergodic \(f\)-invariant probability measure \(\mu\), the \emph{homological rotation vector} of \(\mu\) is
\begin{equation}\label{eq:homologyequation}
\rho_{H_1}(\mu) = \int_{S}[a_y]\dd\mu(y) = \lim\limits_{n \to +\infty}\frac{1}{n}\sum_{i=0}^{n-1}[a_{f^i(x)}],
\end{equation}
for \(\mu\)-almost every \(x \in S\). If $x\in S$ satisfies \eqref{eq:homologyequation}, we will denote $\rho_{H_1}(x) = \rho_{H_1}(\mu)$.
\end{definition}

\begin{remark}
This definition is independent of the choice of the fundamental domain $D$. To see this, note that the deck transformation $(a_xa_{f(x)}\cdots a_{f^{n-1}(x)})^{-1}$ sends $\tilde f^n(\tilde x)$ to $D$, and that two fundamental domains are at bounded Hausdorff distance, and hence the sums on the right of \eqref{eq:homologyequation} associated to two different fundamental domains only differ by a constant independent of $n$ and $x$ (see also Remark~\ref{Rem:indpFond}).
\end{remark}

\begin{definition}[Homological rotation sets]\label{def:ErgHomRotationSet}
Let $f \in \Homeo_0(S)$. 
The \emph{(homological) rotation set} $\rho_{H_1}(f)$ of $f$ is the set of vectors $r\in H_1(S,\R)$ such that there exists $(x_k)_k\in S^\N$ and $(n_k)_k\in\N^\N$ with $\lim_{k\to+\infty} n_k = +\infty$ and such that 
\[\lim\limits_{k \to +\infty}\frac{1}{n_k}\sum_{i=0}^{n_k-1}[a_{f^{i}(x_k)}] = r.\]
The \emph{ergodic (homological) rotation set} $\rho^{\textnormal{erg}}_{H_1}(f)$ of $f$ is
\[\rho^{\textnormal{erg}}_{H_1}(f) = \big\{\rho_{H_1}(\mu) \mid \mu \in \mathcal{M}^{\textnormal{erg}}(f)\big\},\] 
where $\mathcal{M}^{\textnormal{erg}}(f)$ is the set of $f$-invariant ergodic probability Borel measures.
\end{definition}

It is trivial that the inclusion $\rho^{\textnormal{erg}}_{H_1}(f)\subset \rho_{H_1}(f)$ always holds. We have moreover that \cite[Corollary 1.2]{pollicott} $\rho_{H_1}(f) \subset \conv\rho^{\textnormal{erg}}_{H_1}(f)$. In the case of the torus, there is a more precise description of the relations between these two sets.

Indeed, the same definition of rotation sets can be applied in the case of the two torus $\T^2$, with the difference that the obtained sets do depend on the chosen lift $\tilde f$ of $f$; however the sets associated to two different lifts only differ by an integer translation. 

In \cite{zbMATH04084609}, Misiurewicz and Ziemian show that for $f\in\Homeo_0(\T^2)$, the set $\rho_{H_1}(\tilde f)$ is convex. It allows to prove directly that $\rho_{H_1}(\tilde f) =\conv(\rho^{\textnormal{erg}}_{H_1}(\tilde f))$.  
In \cite{zbMATH00009916} the same authors prove that any point of the interior of $\rho_{H_1}(\tilde f)$ is realised as the rotation vector of some ergodic measure, in other words $\inte(\rho_{H_1}(\tilde f)) = \inte(\rho^{\textnormal{erg}}_{H_1}(\tilde f))$. 
Under the hypothesis that $\rho_{H_1}(\tilde f)$ has nonempty interior, Llibre and Mackay prove that the homeomorphism $f$ has positive topological entropy \cite{llibremackay} and Franks shows that any rational point of the interior of the rotation set is realised as the rotation vector of a periodic orbit \cite{MR0958891}.

If, however, the set $\rho_{H_1}(\tilde f)$ is a segment, it can happen that the set $\rho^{\textnormal{erg}}_{H_1}(\tilde f)$ is totally disconnected (think about a flow on the torus with a lot of essential Reeb components).

For higher genus surfaces, the difference between the sets $\rho_{H_1}(f)$ and $\rho^{\textnormal{erg}}_{H_1}(f)$ can be much bigger: for example the equality $\rho_{H_1}(f) = \conv(\rho^{\textnormal{erg}}_{H_1}(f))$ is no longer true (see Figure~\ref{fig:CrissCrossBitorus}). While much progress has been done in the last 35 years in the topic of the dynamical features of $\rho_{H_1}(\tilde f)$ in the torus case, until recently the attempts to understand the higher genus case led to rather partial results. 
Most of these works suppose some property about the existence of a system of periodic points (or orbits) having certain kind of rotational behaviour, hypothesis that frequently implies  that the rotation set is sufficiently big \cite{pollicott, MR1334719, zbMATH00914982, matsumoto, zbMATH05634807, MR4190050}. 
The only work getting rid of this kind of hypothesis is \cite{lellouch}, where Lellouch gets the existence of horseshoes under the hypothesis of nontrivial intersection number of the rotation vectors of two different ergodic measures. Part of the proof of our main theorem will improve his result, involving in particular the use of some of his techniques.
Note also the work of Alonso, Brum and Passeggi that gets a structure theorem for the rotation set $\rho_{H_1}(f)$ in the $C^0$ generic case, by studying the case of Axiom A diffeomorphisms \cite{MR4578317}.

In the present work, we tackle the case of $\rho^{\textnormal{erg}}_{H_1}(f)$ in the hope it could be a gateway for the understanding of $\rho_{H_1}(f)$. Still, the set $\rho^{\textnormal{erg}}_{H_1}(f)$ seems to have its own interest as --- as we will see --- it carries important properties of the dynamics that cannot be seen directly on the classical rotation set $\rho_{H_1}(f)$ (rotational horseshoes, rotational periodic points, pseudo-Anosov components relative to a finite set\dots).

\begin{teorema}[Shape of ergodic rotation sets]\label{thm:ShapeRotationSet}
Let $f \in \Homeo_0(S)$, where $S$ has genus $g$. Then, its ergodic rotation set $\rho_{H_1}^{\textnormal{erg}}(f)$ can be written as
\[\rho_{H_1}^{\textnormal{erg}}(f) = \rho^1 \cup \rho^+,\]
where
\begin{enumerate}
\item The set $\rho^1$ is included in the union of at most $3g-3$ lines.
\item The set $\rho^+$ is the union of at most $2g-2$ sets $\rho^+_i$, such that, for every $i$:
\begin{itemize}
\item The set ${\rho^{+}_i}$ spans a linear subspace $V_i$ which has a basis formed by elements of $H_1(S,\Z)$;
\item The set $\overline{\rho^{+}_i}$ is a convex set containing $0$ (where \(\overline{\phantom{x}}\) denotes the topological closure);
\item We have $\operatorname{int}_{V_i}(\overline{\rho_i^+})= \operatorname{int}_{V_i}(\rho^+_i)$ (in other words, $\rho_i^+$ is convex up to the fact that elements of $\partial_{V_i}(\rho_i^+)\setminus\operatorname{extrem}(\rho_i^+)$ can be in the complement of $\rho_i^+$);
\item Every element of $\operatorname{int}_{V_i}(\rho^{+}_i) \cap H_1(S,\Q)$ is the rotation vector of some $f$-periodic orbit (because $V_i$ has a rational basis, such elements are dense in $\operatorname{int}_{V_i}(\rho^{+}_i)$). 
\end{itemize} 
\item For every pair of vectors $v, w \in \rho_{H_1}^{\textnormal{erg}}(f)$ such that $v \wedge w \neq 0$, there exists $i$ such that $v, w \in \rho^+_i$. In particular, $\textnormal{span}(\rho^1)$ is a totally isotropic subspace.
\end{enumerate}
Moreover, if $\rho^{\textnormal{erg}}_{H_1}(f)$ is not contained in a finite union of lines, then $f$ has a rotational horseshoe (see Definition~\ref{def:rotationalhorseshoe}) and hence $h_{top}(f)>0$.
\end{teorema}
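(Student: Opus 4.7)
The plan is to leverage the geodesic tracking framework that is the backbone of the paper and to combine it with an analysis via the intersection form. The proof divides naturally into three conceptual pieces: a dichotomy between zero and nonzero intersection, a topological bound on the number of pieces coming from a geodesic lamination, and an internal convexity/realization argument for the positive pieces.

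First, I would use the tracking results announced in the abstract: to each ergodic measure $\mu$ with $\rho_{H_1}(\mu) \neq 0$ one associates an oriented geodesic direction on $S$ (or a family of them) whose homology class is parallel to $\rho_{H_1}(\mu)$ and which shadows $\mu$-typical orbits forward in time. The intersection form of two rotation vectors then coincides with (or controls) the algebraic intersection number of the corresponding tracking geodesics on $S$. The key dichotomy is: either two rotation vectors $v,w$ satisfy $v \wedge w \neq 0$, in which case their tracking geodesics cross transversally on $S$ and typical orbits of the two measures intersect each other's neighborhoods infinitely often in a topologically transverse way; or $v \wedge w = 0$, and the tracking geodesics are disjoint or coincide (after projection to $S$).

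In the nonzero intersection case I would follow and improve the strategy of Lellouch \cite{lellouch}: transverse intersections of the tracking geodesics, together with the recurrence of the orbits giving these tracked trajectories, yield, via a Birkhoff-type closing-and-connecting argument, a rotational horseshoe that encodes the rotation vectors of both measures. This both gives item 3 of the theorem (any pair with nonzero intersection lands in a common convex piece $\rho^+_i$) and, provided $\rho^{\textnormal{erg}}_{H_1}(f)$ is not contained in finitely many lines, produces at least one pair with nonzero intersection and hence a rotational horseshoe, proving the final statement of the theorem.

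For the part with pairwise-vanishing intersection, the union of tracking geodesics forms a geodesic lamination $\Lambda$ on $S$; decomposing $\Lambda$ into its minimal sublaminations gives two types of pieces. Isolated simple closed leaves produce one-dimensional strata in $\rho^0$ (each contributing a single line through the origin), while minimal components that fill a subsurface produce the $\rho_i^+$. The total count of such pieces is bounded by $3g-3$, the number of components of a pants decomposition, which follows from the standard fact that any collection of pairwise disjoint essential simple closed geodesics (resp.\ any lamination with pairwise disjoint filling pieces) on a closed genus-$g$ surface has at most $3g-3$ elements. Within a filling component, I would re-apply Step 2 to points of $\rho^+_i$: the horseshoes produced in the associated subsurface realize convex combinations of existing rotation vectors as rotation vectors of new ergodic measures, yielding convexity of $\overline{\rho^+_i}$ and (via a Franks-type argument adapted to the surface with boundary) that every rational point in the relative interior is the rotation vector of a periodic orbit, hence the almost-convexity in item 2.

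The main obstacle I expect is the refinement required in Step 2: turning the tracking output — which a priori only gives asymptotic homological data for $\mu$-typical orbits — into an honest robust horseshoe whose rotation vectors are simultaneously close to both $\rho_{H_1}(\mu_1)$ and $\rho_{H_1}(\mu_2)$ when the measures are not supported on periodic orbits. This requires upgrading Lellouch's construction from the periodic-orbit setting to arbitrary ergodic measures, matching returns to a small neighborhood of the crossing point while preserving the homological displacement per return. A secondary technical difficulty is ensuring that the lamination produced in Step 3 is genuinely geodesic (and not merely a disjoint family of geodesics with possible accumulations), which I would handle by closing up the family of tracking geodesics and verifying that accumulated leaves still track some ergodic measure.
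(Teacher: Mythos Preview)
Your proposal contains a genuine gap in its central dichotomy. You assert that $v \wedge w = 0$ implies the tracking geodesics ``are disjoint or coincide (after projection to $S$)''. This is false: vanishing of the homological intersection form does \emph{not} force the tracking geodesics to be disjoint on $S$. Only the forward implication holds (this is Proposition~\ref{prop:InterHomoImpliesInterGeod}): $v \wedge w \neq 0$ forces a transverse crossing, but a transverse crossing can occur with $v \wedge w = 0$. The paper exhibits an explicit example (a point-push along a homologically trivial but homotopically essential curve, see Figure~5) where $\rho_{H_1}^{\mathrm{erg}}(f)=\{0\}$ yet tracking geodesics self-intersect and a rotational horseshoe exists. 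Because of this, the paper's decomposition into $\rho^0$ and $\rho^+$ is \emph{not} the dichotomy $v\wedge w = 0$ versus $v\wedge w\neq 0$; it is the dichotomy ``tracking geodesics form a minimal lamination'' versus ``some tracking geodesics cross transversally'' (the equivalence relation $\sim$ of Section~\ref{SecIntroRot}). Your Step~3 inherits the error: you assign filling minimal sublaminations to $\rho^+$, but in the paper every class whose tracking geodesics are simple --- whether the resulting minimal lamination is a closed curve or a genuinely irrational filling lamination --- goes into $I^0$ and contributes to $\rho^0$. The $\rho^+_i$ pieces arise exactly from classes where geodesics cross, and their associated sets $\Lambda_i$ are \emph{not} laminations.

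Two secondary points. First, the horseshoe from crossing tracking geodesics (Theorem~\ref{TheoInterGeod}) is not a light extension of Lellouch: it requires the full Le~Calvez--Tal forcing machinery (maximal isotopies, transverse foliations, $\mathcal F$-transverse intersections), together with a delicate case analysis on whether the transverse trajectories accumulate or cross, plus a local control on transverse trajectories near singularities (Proposition~\ref{LemLocalTransverse}). Second, the realization of rational interior points of $\rho^+_i$ by periodic orbits is not done by a Franks-type argument on a subsurface; the paper instead produces, via repeated applications of the horseshoe theorem, a finite set $F$ of periodic points whose tracking geodesics pairwise cross, shows that $f|_{S\setminus F}$ has a pseudo-Anosov Nielsen--Thurston component containing all of $F$ (Claim~\ref{ClaimpA}), and then invokes a Misiurewicz--Ziemian style argument for that component (Lemma~\ref{LemNonemInte}).
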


Note that there is to our knowledge a single known obstruction for a compact and convex subset of the plane to be the rotation set of a torus homeomorphism, due to Le Calvez and Tal \cite{lecalveztalforcing}. Hence, obtaining restrictions on the possible shapes of the sets $\overline{\rho_i^+}$ seems to be a very difficult task.

Note also that there are examples of torus homeomorphisms with nonempty interior rotation set whose ergodic rotation sets $\rho^{\textnormal{erg}}_{H_1}$ are not convex (more precisely, elements of $\partial(\rho^{\textnormal{erg}}_{H_1})\setminus\operatorname{extrem}(\rho^{\textnormal{erg}}_{H_1})$ do not belong to $\rho^{\textnormal{erg}}_{H_1}$), see \cite[Section 3]{zbMATH00009916}. This construction can be easily adapted to higher genus surfaces to show that one cannot hope having the full convexity of the set $\rho_i^+$.

The example of \cite[Figure 1]{guiheneuf2023hyperbolic} shows that there are homeomorphisms of a closed surface $S$ of genus 2 with rotation set $\rho_{H_1}(f)$ having nonempty interior but $\rho_{H_1}^\textrm{erg}(f)$ included in the union of two planes of $H_1(S,\R)$. Hence, the sets $\inte(\rho_{H_1}(f))$ and $\inte(\rho_{H_1}^\textrm{erg}(f))$ are in general different. This suggests that both invariants $\rho_{H_1}(f)$ and $\rho_{H_1}^\textrm{erg}(f)$ have their own interest, as Theorem~\ref{thm:ShapeRotationSet} shows in particular that $\rho_{H_1}^\textrm{erg}(f)$ encodes the rotational periodic points and rotational horseshoes of $f$, while these cannot be deduced from $\rho_{H_1}(f)$ as can be seen in \cite[Figure 1]{guiheneuf2023hyperbolic} (but this latter set contains information about wandering dynamics). 

The ergodic rotation set also encodes the action of the homeomorphism on the fine curve graph \cite{guiheneuf2023hyperbolic}, one can hope that (for example) some criteria of positiveness stable commutator length or existence of free subgroups generated by two elements in $\Homeo_0(S)$ can be phrased in terms of ergodic rotation sets (the question for the torus case is tackled in the recent work \cite{bowden2024boundary}).
\medskip

We will obtain Theorem~\ref{thm:ShapeRotationSet} as a consequence of homotopical (and not only homological) properties of typical trajectories of ergodic measures: one of the morals of our proof strategy is that one has to understand the homotopical properties of the orbits (in the sense of \cite{pa}) to harvest consequences in the homological world.

Let us describe these properties that will lead to a more precise picture of the mechanisms giving birth to the decomposition of the ergodic rotation set. It will allow us to state a more precise version of Theorem~\ref{thm:ShapeRotationSet} in Theorem~\ref{thm:DecompRotSetIntro}.

\subsection{Geodesic tracking}

The key idea of our work is that there exists a tracking of orbits that are typical with respect to some ergodic measure of the homeomorphism, by the orbits of the geodesic flow of the surface. In some sense, the behaviour of typical points for some invariant measure is encoded by a sub-dynamics of the geodesic flow, and one task is to understand to what extent this parallel can be made rigorous. Let us first introduce some definitions. The same kind of ideas is already formulated by Boyland in \cite{boyland}.

\begin{definition}[Rotation distance]
For \(x \in S\) and \(n \in \N\), we define the \emph{rotation distance} \(L_n:S \to [0,+\infty)\) of $x$ as
\begin{equation}\label{eq:rotationdistanceequation}
	L_n(x) = \tdist (\tx, \tf ^n (\tx)),
\end{equation}	
where $\tx$ is any lift of $x$ to $\tS$ (this last quantity does not depend on the chosen lift).
\end{definition}

These functions are well defined and continuous by \eqref{eq:preferedliftequation}.

\begin{definition}[Rotation speed]\label{DefRotSpeed}
The \emph{rotation speed} $\vartheta(x)$ of a point \(x \in S\) is the common value of the following limits if they exist and coincide
\begin{equation}\label{eq:defrhoequation}
\vartheta(x) = \lim\limits_{n \to +\infty}\frac{1}{n}L_n(x) = \lim\limits_{n \to +\infty}\frac{1}{n}L_n(f^{-n}(x)).
\end{equation}	
\end{definition}

\begin{lemma}[Ergodic measures have rotation speed]\label{LemErgoRotSpeed}
Let \(\mu\) be an \(f\)-invariant ergodic probability on \(S\). Then there exists a constant \(\vartheta_\mu\), that we call the \emph{rotation speed} of \(\mu\), such that
\[\vartheta(x) = \vartheta_\mu,\]
for \(\mu\)-almost every point \(x \in S\).	
\end{lemma}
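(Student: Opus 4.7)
The plan is to apply Kingman's subadditive ergodic theorem. First I would check that $L_n$ is a subadditive cocycle for $f$: given any lift $\tx$ of $x$, the triangle inequality in $\tS$ and the equivariance $\tf \circ \gamma = \gamma \circ \tf$ give
\[
L_{n+m}(x) = \tdist(\tx,\tf^{n+m}(\tx)) \leq \tdist(\tx,\tf^n(\tx)) + \tdist(\tf^n(\tx),\tf^{n+m}(\tx)) = L_n(x) + L_m(f^n(x)),
\]
where in the last equality I use that $\ty := \tf^n(\tx)$ is itself a lift of $y := f^n(x)$ and $L_m(y)$ does not depend on the chosen lift. Moreover $L_1$ is continuous on $S$ (again by equivariance it descends well) and hence bounded, so $L_1 \in L^1(\mu)$ and $L_n^+ \in L^1(\mu)$ for all $n$.

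By Kingman's subadditive ergodic theorem applied to $(f,\mu)$, the limit $\lim_{n\to\infty}\frac{1}{n}L_n(x)$ exists $\mu$-a.e.\ and is $f$-invariant; by ergodicity of $\mu$ it equals a constant $\rho_\mu$ $\mu$-a.e. This handles the first of the two limits in the definition of $\rho(x)$.

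For the second limit, I would introduce $L_n^{-}(x) := \tdist(\tx,\tf^{-n}(\tx))$. The same triangle-inequality argument shows $L_n^-$ is a subadditive cocycle over $f^{-1}$, with $\mu$ still $f^{-1}$-invariant and ergodic. Kingman gives $\frac{1}{n}L_n^-(x) \to \rho_\mu^-$ $\mu$-a.e.\ for some constant $\rho_\mu^-$. Taking the particular lift $\tf^{-n}(\tx)$ of $f^{-n}(x)$, one sees
\[
L_n(f^{-n}(x)) = \tdist(\tf^{-n}(\tx),\tf^{n}(\tf^{-n}(\tx))) = \tdist(\tf^{-n}(\tx),\tx) = L_n^-(x),
\]
so the backward limit in Definition~\ref{DefRotSpeed} exists $\mu$-a.e.\ and equals $\rho_\mu^-$.

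It remains to show $\rho_\mu = \rho_\mu^-$. Kingman identifies both constants as the infima (equivalently limits) of $\frac{1}{n}\int L_n\,\mathrm{d}\mu$ and $\frac{1}{n}\int L_n^-\,\mathrm{d}\mu$ respectively. By the pointwise identity $L_n^- = L_n \circ f^{-n}$ obtained above and the $f$-invariance of $\mu$, these two sequences of integrals coincide term by term, so the two limits agree. There is no genuine obstacle here; the only mildly delicate point is the backward limit, which I handle by switching to $f^{-1}$ rather than trying to deduce it from the forward Kingman statement directly.
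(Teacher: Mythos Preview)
Your proof is correct and follows the same approach as the paper, which simply invokes Kingman's subadditive ergodic theorem applied to $L_n$. You have filled in the verification of subadditivity, integrability, and the treatment of the backward limit via $f^{-1}$ (together with the identification $\rho_\mu=\rho_\mu^-$), all of which the paper leaves implicit.
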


\begin{proof}
It comes from Kingman's subadditive ergodic theorem applied to the maps $L_n(x) = \tdist(\tilde x,\tilde f^n(\tilde x))$.
\end{proof}
 
We will use the term \emph{geodesic} to mean a local isometry \(\gamma: \R \to S\), and we will denote by \(\dgamma:\R \to \textnormal{T}^1S\) the lift of a geodesic to the unit tangent bundle \(\textnormal{T}^1S\) of \(S\). The space of all geodesics (up to reparametrization) is identified with \(\textnormal{T}^1S\) via \(\gamma \mapsto \dgamma(0)\) and in particular is compact and metrizable.

\begin{definition}[Tracking geodesic]\label{DefTrackGeod}
We say \(x \in S\) admits a \emph{tracking geodesic} $\gamma$ if \(\vartheta(x) > 0\) and if for each lift \(\tx\) of \(x\), there exists a lift $\tgamma$ of $\gamma$ such that:
\begin{equation}\label{eq:trackingequation}
\lim\limits_{n \to +\infty}\frac{1}{n}\tdist\big(\tf^n(\tx), \tgamma(n \vartheta(x)\big) = \lim\limits_{n \to +\infty}\frac{1}{n}\tdist\big(\tf^{-n}(\tx), \tgamma(-n \vartheta(x)\big) = 0.
\end{equation}	
\end{definition}

Tracking geodesics (when they exist) are unique up to reparametrization by translation, in particular the set \(\dgamma(\R) \subset \textnormal{T}^1S\) does not depend on the choice of tracking geodesic \(\gamma\) for \(x\).

\begin{definition}[Normalized tracking geodesic]\label{DefNormTrackGeod}
 Given $x \in S$, we say a tracking geodesic $\gamma_x$ is \emph{normalized} if there exist lifts \(\tx, \tgamma _{\tx}\) satisfying Equation~\eqref{eq:trackingequation} and such that
 \begin{equation}\label{eq:normalizedtrackinggeodesic}
 	\tdist (\tx, \tgamma _{\tx}(0)) = \tdist (\tx, \tgamma _{\tx} (\mathbb{R}))
 \end{equation} 
\end{definition}

Note that each point $x$ admitting a tracking geodesic admits a unique normalized tracking geodesic $\gamma_x$.

We denote by \(\M\) the set of ergodic \(f\)-invariant probability measures with positive rotation speed (defined by Lemma~\ref{LemErgoRotSpeed}). The following result implies in particular that if \(\M\) is non-empty then there exist points admitting tracking geodesics.

\begin{teorema}[Tracking geodesics]\label{thm:trackinggeodesictheorem}
 The set \(S_T\) of points in \(S\) that admit a tracking geodesic (Definition~\ref{DefTrackGeod}) is a Borel set, and has full measure for all \(\mu \in \M\).
 Furthermore, there exists a Borel mapping \(x \mapsto \gamma_x\) assigning to each \(x \in S_T\) its normalized tracking geodesic \(\gamma_x:\R \to S\) (see Definition~\ref{DefNormTrackGeod}).
\end{teorema}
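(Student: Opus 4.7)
The plan is to realize the orbit $(\tf^n(\tx))$ as an isometry cocycle on the proper $\delta$-hyperbolic space $\tS\cong\H^2$ and to apply Karlsson's ergodic theorem to get sublinear tracking by a geodesic ray forward and backward in time; then to splice the two rays into a single bi-infinite tracking geodesic and verify Borel measurability.

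First I would recast the setup as a cocycle. Lifting each $x\in S$ to its preferred representative $\tx\in D$ and defining the deck-transformation cocycle $a:S\to\Gamma$ by $\tf(\tx)=a(x)\widetilde{f(x)}$, iterating and using \eqref{eq:preferedliftequation} gives $\tf^n(\tx)=A_n(x)\widetilde{f^n(x)}$ with $A_n(x)=a(x)\,a(f(x))\cdots a(f^{n-1}(x))$. Fixing a basepoint $o\in D$, the orbit $A_n(x)\cdot o$ differs from $\tf^n(\tx)$ by at most $\diam D$, and the step $x\mapsto\tdist(o,a(x)\cdot o)$ is continuous, hence bounded and integrable on the compact surface $S$. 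Since $L_n(x)/n\to\rho_\mu>0$ by Lemma~\ref{LemErgoRotSpeed}, Karlsson's ergodic theorem for ergodic cocycles of isometries with positive drift into a proper $\delta$-hyperbolic space yields, for $\mu$-a.e.\ $x$, a Borel map $\xi^+:S\to\partial\tS$ and the arclength-parametrized geodesic ray $\tgamma_x^+:[0,\infty)\to\tS$ from $\tx$ to $\xi^+(x)$ with
\[
\tfrac{1}{n}\,\tdist\bigl(\tf^n(\tx),\tgamma_x^+(n\rho_\mu)\bigr)\longrightarrow 0.
\]
Applying the same result to $(S,f^{-1},\mu)$ produces $\xi^-(x)\in\partial\tS$ and an analogous past-tracking ray $\tgamma_x^-$.

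Next I would splice these into a bi-infinite geodesic by showing $\xi^-(x)\neq\xi^+(x)$ $\mu$-almost everywhere. If the two coincide at a common $\xi$, then $\tgamma_x^-=\tgamma_x^+$ is the unique ray from $\tx$ to $\xi$, and both $\tf^n(\tx)$ and $\tf^{-n}(\tx)$ are $o(n)$-close to the same point $\tgamma_x^+(n\rho_\mu)$; the triangle inequality then forces $\tdist(\tf^n(\tx),\tf^{-n}(\tx))=o(n)$. On the other hand, equivariance of $\tf$ under the deck group gives $\tdist(\tf^n(\tx),\tf^{-n}(\tx))=L_{2n}(f^{-n}(x))$, and for $\mu$-a.e.\ $x$ the iterates $f^{-n}(x)$ all lie in the full-measure convergence set of Lemma~\ref{LemErgoRotSpeed} (the intersection over $n\in\Z$ is still of full measure by $f$-invariance), so this equals $2n\rho_\mu+o(n)$, contradicting $\rho_\mu>0$. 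Thus $\xi^-(x)\neq\xi^+(x)$ almost surely, and I define $\tgamma_x:\R\to\tS$ as the unique bi-infinite geodesic with ideal endpoints $\xi^\pm(x)$, reparametrized so that $\tgamma_x(0)$ is the foot of the perpendicular from $\tx$ to $\tgamma_x(\R)$, realizing Definition~\ref{DefNormTrackGeod}. Because $\tS$ is $\delta$-hyperbolic, the rays $\tgamma_x^\pm$ fellow-travel $\tgamma_x$ at bounded distance, so the two unilateral trackings combine into the bilateral tracking condition \eqref{eq:trackingequation}; projecting down yields the normalized tracking geodesic $\gamma_x:\R\to S$.

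Borel-measurability of $S_T$ and of $x\mapsto\gamma_x$ is then routine: $S_T$ is carved out by countably many convergence conditions on Borel functions of $x$ (existence and positivity of $\rho(x)$, the Cauchy limits defining $\xi^\pm(x)$ in the Polish compactification $\tS\cup\partial\tS$, and the limits in \eqref{eq:trackingequation}), and the selection $x\mapsto\gamma_x$ is the Borel map $x\mapsto(\xi^-(x),\xi^+(x))$ postcomposed with continuous operations (unique geodesic from two distinct ideal endpoints, then perpendicular-foot normalization). The main technical obstacle is Karlsson's theorem itself: Kingman's subadditive theorem alone only delivers $L_n/n\to\rho_\mu>0$, and upgrading this to sublinear tracking by a geodesic ray requires a geometric-ergodic argument selecting ``record times'' $N$ at which $L_N(x)-L_{N-k}(x)\ge k(\rho_\mu-\varepsilon)$ for all $0\le k\le N$, and exploiting $\delta$-hyperbolicity to confine the orbit on $[0,N]$ within an $o(N)$-neighborhood of the chord $[\tx,\tf^N(\tx)]$; passing to a limit along such records produces the asymptotic direction $\xi^+(x)$ and the tracking ray.
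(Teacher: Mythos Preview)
Your overall architecture is sound and is a genuinely different route from the paper. Realizing the orbit as an isometry cocycle and invoking Karlsson's ergodic theorem does produce forward and backward boundary points $\xi^\pm(x)$ with sublinear ray tracking, and once $\xi^+\neq\xi^-$ the splicing, normalization, and measurability go through as you indicate. The paper instead avoids any black box and argues by hand with elementary hyperbolic trigonometry: its Lemma~\ref{lem:twopointslemma} uses the law of sines to show directly that $\tf^{\pm n}(\tx)$ converge to points $\alpha(\tx),\omega(\tx)\in\partial\tS$ and sublinearly track rays toward them, and Lemma~\ref{lem:trackingcharacterizationlemma} uses Fermi coordinates to show that the bare condition $\alpha(\tx)\neq\omega(\tx)$ already forces bilateral tracking by the geodesic $(\alpha,\omega)$. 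Your approach is shorter but imports substantial machinery; the paper's is self-contained but computational.

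There is, however, a real gap in your argument that $\xi^+(x)\neq\xi^-(x)$. You assert $L_{2n}(f^{-n}(x))=2n\rho_\mu+o(n)$ on the grounds that each $f^{-n}(x)$ lies in the Kingman convergence set. But membership there only gives $L_m(f^{-n}(x))/m\to\rho_\mu$ as $m\to\infty$ for each \emph{fixed} $n$; the diagonal $m=2n$ does not follow, since nothing controls the rate uniformly in $n$. Worse, under the hypothesis $\xi^+=\xi^-$ the Gromov product $(\tf^n\tx\,|\,\tf^{-n}\tx)_{\tx}$ tends to infinity, and the identity
\[
L_{2n}(f^{-n}(x))=L_n(x)+L_n(f^{-n}(x))-2(\tf^n\tx\,|\,\tf^{-n}\tx)_{\tx}
\]
shows the diagonal claim is essentially \emph{equivalent} to $\xi^+\neq\xi^-$, so the argument is circular. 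The fix is essentially the paper's Lemma~\ref{LemTypNeq}: the Busemann increment $c(x)=B_{\xi^+(\tx)}(\tf(\tx))-B_{\xi^+(\tx)}(\tx)$ is lift-independent (since $\xi^+(g\tx)=g\,\xi^+(\tx)$ and Busemann differences are isometry-equivariant), hence a bounded Borel function on $S$; its forward Birkhoff sums telescope to $B_{\xi^+}(\tf^n\tx)-B_{\xi^+}(\tx)\sim n\rho_\mu$, forcing $\int c\,d\mu=\rho_\mu>0$, and then backward Birkhoff gives $B_{\xi^+}(\tf^{-n}\tx)\to-\infty$, contradicting $\tf^{-n}(\tx)\to\xi^+$.
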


Theorem~\ref{thm:trackinggeodesictheorem} was essentially proved for \(\tS\) with non-positive sectional curvature satisfying the visibility property in \cite[Theorem 2 and Corollary 21]{lessa}. However, for constant negative curvature the proof is much simpler so we will give it here for the sake of completeness. It involves mainly geometry of the hyperbolic plane and ergodic theory.

A roughly equivalent result for flows on bundles over a hyperbolic base manifold was proved in \cite[Lemmas 2.2 and 2.3]{boyland}. This work can be adapted to the case of $C^1$ diffeomorphisms on a closed hyperbolic surface by considering its suspension flow. It is unclear to us (and to Philip Boyland himself) whether this proof strategy can be directly adapted to the $C^0$ case.

\subsection{Equidistribution of tracking geodesics and minimal laminations}

The next step is to understand what happens collectively for tracking geodesics associated to some $f$-ergodic measure $\mu$ with positive rotation speed. In particular, we get that tracking geodesics of a $\mu$-typical point equidistribute to a measure $\nu_\mu$ on $\mathrm{T}^1S$; moreover the measure $\nu_\mu$ is invariant and ergodic for the geodesic flow (Theorem~\ref{thm:equidistributiontheoremintro}). 
The support of this measure $\nu_\mu$ gives a set $\dotLambda_\mu \subset \textnormal{T}^1S $ naturally associated to $\mu$, which is invariant and transitive under the geodesic flow.

\begin{definition}
	We say a geodesic \(\gamma\) \emph{equidistributes to a probability measure \(\nu\) on \(\textnormal{T}^1S\)} if for every continuous function \(\varphi:\textnormal{T}^1S \to \R\) we have
	\[\lim_{T \to +\infty}\frac{1}{T}\int_{0}^{T}\varphi(\dot{\gamma}(t))\dd t = \lim_{T \to +\infty}\frac{1}{T}\int_{-T}^{0}\varphi(\dot{\gamma}(t))\dd t = \int_{\textnormal{T}^1S}\varphi(v)\dd \nu(v). \]
\end{definition}

We recall that the support of a probability measure is the smallest closed set with full measure.  

\begin{teorema}[Equidistribution of tracking geodesics]\label{thm:equidistributiontheoremintro}
  For each \(\mu \in \M\) there exists a probability measure \(\nu_\mu\) on \(\textnormal{T}^1S\) such that \({\gamma}_x\) equidistributes to \(\nu_\mu\) for \(\mu\)-a.e.\ \(x \in S\). 
  Furthermore, the measure \(\nu_\mu\) is invariant and ergodic with respect to the geodesic flow on \(\textnormal{T}^1S\).  
Its support is equal to
\[\dotLambda_{\mu}:= \textnormal{supp}(\nu_{\mu}) = \overline{\dotgamma_x(\R)} \]
for \(\mu\)-a.e.\ \(x \in S\)
\end{teorema}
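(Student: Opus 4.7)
The plan is to reduce the existence of $\nu_\mu$ to Birkhoff's ergodic theorem for $f$ via a telescoping identity along the tracking geodesics, and then to deduce ergodicity and the support equality from the relation $\pi_*\mu=\nu_\mu$ on $\phi_t$-invariant Borel sets, where $\pi(x):=\dotgamma_x(0)$. By Definitions~\ref{DefTrackGeod} and~\ref{DefNormTrackGeod}, the geodesics $\gamma_x$ and $\gamma_{f(x)}$ are the same unparametrized curve in $S$ and differ only by a translation of parameter pinned down by the normalization, so there is a Borel shift $\alpha\colon S_T\to\R$ satisfying
\[
\dotgamma_{f(x)}(t)=\dotgamma_x(t+\alpha(x))\qquad\text{for all }t\in\R.
\]
Writing $\alpha_N:=\sum_{j=0}^{N-1}\alpha\circ f^j$, Theorem~\ref{thm:trackinggeodesictheorem} together with the $1$-Lipschitz property of the foot-of-perpendicular projection onto a geodesic in $\Hy^2$ yields $\alpha_N(x)=N\rho_\mu+o(N)$ $\mu$-almost surely.

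For $\psi\in C(\textnormal{T}^1S)$, set $\tilde\Psi_\psi(x):=\int_0^{\alpha(x)}\psi(\dotgamma_x(t))\,\dd t$. The change of variables $u=t+\alpha_k(x)$ in each term of a Birkhoff sum produces the telescoping identity
\[
\sum_{k=0}^{N-1}\tilde\Psi_\psi(f^kx)\ =\ \sum_{k=0}^{N-1}\int_{\alpha_k(x)}^{\alpha_{k+1}(x)}\psi(\dotgamma_x(u))\,\dd u\ =\ \int_0^{\alpha_N(x)}\psi(\dotgamma_x(u))\,\dd u,
\]
with no overlap-or-gap error to control. Birkhoff for $(f,\mu)$ then gives a $\mu$-a.e. limit $\frac{1}{N}\int_0^{\alpha_N(x)}\psi(\dotgamma_x)\,\dd u\to\int\tilde\Psi_\psi\,\dd\mu=:\rho_\mu\nu_\mu(\psi)$, and the tracking estimate substitutes $N\rho_\mu$ for $\alpha_N(x)$ at a cost $\|\psi\|_\infty\cdot o(N)$; extending from the discrete times $T=N\rho_\mu$ to arbitrary $T>0$ and to the negative-time statement is routine. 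Riesz representation then promotes $\nu_\mu$ to a Borel probability measure on $\textnormal{T}^1S$, automatically $\phi_t$-invariant as a Cesàro limit. The main obstacle of this step is to establish $\alpha\in L^1(\mu)$ needed to apply Birkhoff; the bound $|\alpha(x)|\le\tdist(\tx,\tf\tx)+\tdist(\tx,\tgamma_x(\R))$ reduces it to an $L^1$-control of the perpendicular distance $r(x):=\tdist(\tx,\tgamma_x(\R))$, which I would obtain via a Lusin-type truncation on the $f$-invariant full-measure set where the tracking is quantitative.

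Integrating the equidistribution against $\mu$ and applying dominated convergence yields $\frac{1}{T}\int_0^T(\phi_s)_*\pi_*\mu\,\dd s\to\nu_\mu$ weakly, so a portmanteau argument gives $\pi_*\mu(C)=\nu_\mu(C)$ for every $\phi_t$-invariant Borel $C\subset\textnormal{T}^1S$. Ergodicity of $\nu_\mu$ follows by contradiction: a non-trivial decomposition $\nu_\mu=c\nu_1+(1-c)\nu_2$ supported on disjoint $\phi_t$-invariant sets $C_1,C_2$ pulls back via $\pi$ to disjoint $f$-invariant Borel subsets of $S_T$ (using $\dotgamma_{f(x)}(0)=\phi_{\alpha(x)}(\dotgamma_x(0))$ and $\phi_t$-invariance of the $C_i$) of respective $\mu$-measures $\nu_\mu(C_i)\in\{c,1-c\}$, contradicting ergodicity of $\mu$. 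Applying the same identity to $C:=\operatorname{supp}(\nu_\mu)$ gives $\pi_*\mu(\operatorname{supp}(\nu_\mu))=1$, so by $\phi_t$-invariance of the support, $\dotgamma_x(\R)\subset\operatorname{supp}(\nu_\mu)$ for $\mu$-a.e. $x$, hence $\overline{\dotgamma_x(\R)}\subset\operatorname{supp}(\nu_\mu)$; conversely every $v\in\operatorname{supp}(\nu_\mu)$ has $\nu_\mu(U)>0$ for any open neighbourhood $U$, and the equidistribution forces $\dotgamma_x$ to visit $U$, giving the reverse inclusion and the promised equality $\dotLambda_\mu=\operatorname{supp}(\nu_\mu)=\overline{\dotgamma_x(\R)}$.
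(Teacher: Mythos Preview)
Your overall architecture is the same as the paper's: define the time-shift along the tracking geodesic (your $\alpha$, the paper's $T$), telescope the Birkhoff sums of $\tilde\Psi_\psi$ into $\int_0^{\alpha_N(x)}\psi(\dotgamma_x)\,\dd t$, and read off $\nu_\mu$ by Riesz. Two points deserve correction.

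First, the ``main obstacle'' you flag is not one. You already invoke that orthogonal projection onto a geodesic in $\Hy^2$ is $1$-Lipschitz; applied to $\tx$ and $\tf(\tx)$ this gives directly
\[
|\alpha(x)| \;=\; \tdist\big(\tgamma_{\tx}(0),\tgamma_{\tx}(\alpha(x))\big)\;\le\;\tdist(\tx,\tf(\tx))\;\le\;\max_{S} L_1,
\]
so $\alpha$ is bounded, hence in $L^1(\mu)$. There is no need for the detour through $r(x)$ and a Lusin truncation (and your proposed bound $|\alpha|\le L_1+r$ is loose anyway).

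Second, and more substantively, your ergodicity step contains a genuine gap. From the weak convergence $\frac{1}{T}\int_0^T(\phi_s)_*\pi_*\mu\,\dd s\to\nu_\mu$ the portmanteau theorem does \emph{not} yield $\pi_*\mu(C)=\nu_\mu(C)$ for an arbitrary $\phi_t$-invariant Borel set $C$; portmanteau only controls continuity sets. In fact the identity you claim is false in general: what one actually has (and what the paper proves via a regularity argument extending $\nu_\mu$ from continuous to bounded Borel test functions) is
\[
\nu_\mu(B)\;=\;\frac{1}{\rho_\mu}\int_S\!\Big(\int_0^{\alpha(x)}\1_B(\dotgamma_x(t))\,\dd t\Big)\dd\mu(x)\quad\text{for every Borel }B\subset\textnormal{T}^1S,
\]
which for $\phi_t$-invariant $C$ specializes to $\nu_\mu(C)=\frac{1}{\rho_\mu}\int_{\pi^{-1}(C)}\alpha\,\dd\mu$, not $\pi_*\mu(C)$. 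This weighted identity is still enough for your contradiction argument (since $\pi^{-1}(C)$ is $f$-invariant, $\mu(\pi^{-1}(C))\in\{0,1\}$ forces $\nu_\mu(C)\in\{0,1\}$), and likewise for the support statement; but you must actually prove it, and that requires the inner/outer regularity step you omitted, not portmanteau.
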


We will denote by $\Lambda_{\mu}$ the projection of $\dotLambda_{\mu}$ on $S$.

We say a geodesic is \emph{simple} if it has no transverse self-intersections.  Such a geodesic is either injective, or periodic (closed). A particularly interesting consequence of Theorem~\ref{thm:equidistributiontheoremintro} is obtained when tracking geodesics are simple.
Recall that a \emph{geodesic lamination} is a closed set \(\Lambda \subset S\) of the form
\[\Lambda = \bigcup\limits_{x \in \Lambda}\beta_x(\R),\]
where each \(\beta_x\) is a simple geodesic with \(\beta_x(0) = x\), and for all \(x,y \in \Lambda\) one has that \(\beta_x(\R)\) and \(\beta_y(\R)\) are either disjoint or coincide.
A geodesic lamination is said to be \emph{minimal} if \(\Lambda = \overline{\beta_x(\R)}\) for all \(x \in \Lambda\).

\begin{teorema}[Simple tracking geodesics]\label{thm:ifsimpletheoremintro}
Suppose \(\mu \in \M\) is such that \(\gamma_x\) is simple for \(\mu\)-a.e.\ \(x \in S\).  Then there exists a minimal geodesic lamination \(\Lambda_\mu \subset S\) such that, for \(\mu\)-a.e.\ \(x \in S\),
\[\overline{\gamma_x(\R)} = \Lambda_\mu.\]
\end{teorema}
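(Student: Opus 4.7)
The plan is to combine the equidistribution result of Theorem~\ref{thm:equidistributiontheoremintro} with two pieces of classical hyperbolic geometry: (i) transverse intersections of geodesic arcs are stable under $C^{0}$ perturbations in $\textnormal{T}^{1}S$, and (ii) the structure theorem for geodesic laminations (decomposition into finitely many minimal sublaminations plus isolated leaves spiraling onto them). First I would invoke Theorem~\ref{thm:equidistributiontheoremintro} to fix, for $\mu$-a.e.\ $x$, the common flow-invariant closed set $\dotLambda_{\mu}=\overline{\dotgamma_{x}(\R)}\subset\textnormal{T}^{1}S$, which equals the support of the ergodic geodesic-flow measure $\nu_{\mu}$. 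Projecting, $\Lambda_{\mu}=\pi(\dotLambda_{\mu})=\overline{\gamma_{x}(\R)}$ for $\mu$-a.e.\ $x$, and it only remains to upgrade $\Lambda_{\mu}$ into a \emph{minimal} geodesic lamination.

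The first step would be the lamination structure. I would fix a $\mu$-typical $x$ with $\gamma_{x}$ simple and $\overline{\dotgamma_{x}(\R)}=\dotLambda_{\mu}$, and argue that no two distinct unoriented geodesics tangent to vectors of $\dotLambda_{\mu}$ can cross transversally. Assume for contradiction that $v,w\in\dotLambda_{\mu}$ with $v\neq\pm w$ give geodesics $\gamma_{v},\gamma_{w}$ meeting transversally at a point $p\in S$. Picking times $s_{n},t_{n}\in\R$ with $\dotgamma_{x}(s_{n})\to v$ and $\dotgamma_{x}(t_{n})\to w$, the points $\gamma_{x}(s_{n}),\gamma_{x}(t_{n})$ tend to $p$ with tangent directions approaching the non-collinear vectors $v,w$. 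Since transverse crossings are open in $\textnormal{T}^{1}S$, for $n$ large the arcs of $\gamma_{x}$ near $s_{n}$ and $t_{n}$ must still cross transversally close to $p$; since $s_{n}\neq t_{n}$ eventually, this would give a transverse self-intersection of $\gamma_{x}$, contradicting simplicity. The same reasoning, applied to two non-collinear tangent vectors of a single $\gamma_{v}$ at a common point, shows that each $\gamma_{v}$ is itself simple. Consequently, over each $y\in\Lambda_{\mu}$ the fibre $\pi^{-1}(y)\cap\dotLambda_{\mu}$ consists of at most two antipodal vectors, providing a well-defined unoriented leaf $\beta_{y}$; and whenever two such leaves meet at a common point, their tangents there must be collinear, so they coincide as unoriented geodesics. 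This gives $\Lambda_{\mu}$ the structure of a geodesic lamination in the sense of the definition above.

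To obtain minimality, I would invoke the classical structure theorem for geodesic laminations on a closed hyperbolic surface: $\Lambda_{\mu}$ decomposes as a finite disjoint union of minimal sublaminations together with a countable collection of isolated non-closed leaves, each end of which spirals onto a minimal sublamination. Any such isolated leaf corresponds to a single non-closed orbit of the geodesic flow in $\textnormal{T}^{1}S$, homeomorphic to $\R$ with the flow acting by translation, and therefore supports no non-zero finite flow-invariant measure. Hence $\nu_{\mu}$ gives mass zero to the isolated-leaf part and is supported on the finite disjoint union of minimal sublaminations; by ergodicity it is concentrated on exactly one of them, say $\dotLambda^{*}$. Since $\textnormal{supp}(\nu_{\mu})=\dotLambda_{\mu}$ and $\dotLambda^{*}$ is closed, we conclude $\dotLambda_{\mu}=\dotLambda^{*}$, so $\Lambda_{\mu}$ is minimal.

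The main obstacle is the first step: ruling out transverse crossings inside $\dotLambda_{\mu}$ starting only from the simplicity of $\gamma_{x}$. While this realises the standard principle that the closure of a simple geodesic on a hyperbolic surface is a geodesic lamination, making it precise requires a quantitative local argument in $\wt S\cong\H^{2}$ — given the angle and distance of a putative transverse crossing of $\gamma_{v}$ and $\gamma_{w}$ at $p$, one must explicitly determine how close the approximating tangent vectors $\dotgamma_{x}(s_{n})$ and $\dotgamma_{x}(t_{n})$ must be to $v,w$ for a transverse crossing of the corresponding arcs of $\gamma_{x}$ to be forced. Once that is done, the remaining ingredients — ergodicity of $\nu_{\mu}$ and the structural description of geodesic laminations — combine in a straightforward way.
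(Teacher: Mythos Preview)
Your argument is correct, but it takes a somewhat different route to minimality than the paper does. Both proofs begin from Theorem~\ref{thm:equidistributiontheoremintro}, obtaining $\overline{\gamma_x(\R)}=\Lambda_\mu$ for $\mu$-a.e.\ $x$. For the lamination structure, the paper simply records that the closure of a simple geodesic is a lamination (a classical fact), while you sketch this via stability of transverse crossings; your sketch is fine, though you should note that one may assume the vectors $v,w$ are based at the crossing point $p$ itself (using flow-invariance of $\dotLambda_\mu$), so that $\gamma_x(s_n),\gamma_x(t_n)\to p$ really holds. For minimality, the paper uses only the \emph{recurrence} of $\gamma_x$ (also furnished by Theorem~\ref{thm:equidistributiontheoremintro}) together with the Casson--Bleiler criterion that a connected lamination with no isolated leaf is minimal: connectedness is immediate, and an isolated leaf would force $\gamma_x$ to be periodic by recurrence. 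You instead invoke the full structure theorem (finitely many minimal sublaminations plus finitely many --- not countably many --- non-closed isolated leaves) and then use the \emph{ergodicity} of $\nu_\mu$ to single out one minimal piece, observing that non-periodic orbits carry no finite invariant measure. Both arguments are valid; the paper's is shorter and uses only recurrence of a single leaf, while yours leans more heavily on the measure-theoretic content of Theorem~\ref{thm:equidistributiontheoremintro} and on the structural classification of laminations.
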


It is therefore natural to look for a criterion ensuring that the tracking geodesics are simple. This is performed in the next step of the proof.

\subsection{Intersections of tracking geodesics}

We then prove that the intersection of some tracking geodesics implies the existence of new orbits with some type of rotational behaviour.

Let us recall the definition of homotopic rotation set in the sense of  \cite{pa}. In the following we will denote the geodesics of $\tilde S\simeq\D$ by the couple $(\alpha,\omega)$ of their endpoints in $\partial\tS\simeq\Sp^1$.

\begin{definition}\label{Def:RotVect}
A triple \((\alpha,\omega,v) \in \partial \tS \times \partial \tS \times (0,+\infty)\) is a \emph{rotation vector in the sense of \cite{pa}} if there exists a sequence \(\tx_k\) in \(\tS\) and an increasing sequence \(n_k\) of integers tending to \(+\infty\) such that
\[\lim_{k \to +\infty}\tx_k = \alpha, \lim\limits_{k \to +\infty}\tf^{n_k}(\tx_k) = \omega,\]
and
\[v = \lim\limits_{k \to +\infty}\frac{1}{n_k}\tdist\big(\pr_{(\alpha,\omega)}(\tx_k),\,\pr_{(\alpha,\omega)}(\tf^{n_k}(\tx_k))\big),\]
where $\pr_{(\alpha,\omega)}$ denotes the orthogonal projection on the geodesic ${(\alpha,\omega)}$. 
\end{definition}

\begin{definition}\label{def:dynatrans}
	Let $\mu, \mu' \in \M$. We will say that $\mu$ and $\mu'$ are \emph{dynamically transverse} if there exist $v \in \dotLambda_{\mu}, \ v' \in \dotLambda_{\mu'}$ such that $\pi_S(v) = \pi_S(v')$ and $v \neq v'$. 
\end{definition}

Note that if $v,v' \in \textnormal{T}^1 S$ are as in last definition, then any pair of geodesics $\gamma, \gamma'$ such that $\gamma(t) = v, \ \gamma'(t') = v'$ for some $t, t' \in \R$, intersect transversally.

In the following theorem, if $x$ is a point having a tracking geodesic $\gamma_x$, and if $\tgamma_\tx$ is a lift of this geodesic to $\tS$, we denote $\alpha(\tilde x),\omega(\tilde x)\in\partial\tS$ the $\alpha$ and $\omega$-limits of $\tgamma_\tx$. 

\begin{teorema}[Intersections of tracking geodesics and rotation sets]\label{thm:intertrackingimpliesentropy}
Suppose there exist \(\mu_1,\mu_2\in\M\) which are dynamically transverse.
Then:
\begin{itemize}
\item The homeomorphism $f$ has a topological rotational horseshoe (see Definition~\ref{def:rotationalhorseshoe}), and in particular has positive topological entropy;
\item Both $\big(\alpha(\tilde x_1),\omega(\tilde x_2),\max(\vartheta_{\mu_1}, \vartheta_{\mu_2})\big)$ and $\big(\alpha(\tilde x_2),\omega(\tilde x_1),\max(\vartheta_{\mu_1}, \vartheta_{\mu_2})\big)$ are rotation vectors in the sense of \cite{pa};
\item For any $r\in H_1(S,\R)$ in the triangle spanned by $0, \rho_{H_1}(\mu_1), \rho_{H_1}(\mu_2)$ and any $\varepsilon>0$, there exists a periodic point $z\in S$ such that $\|\rho_{H_1}(z) - r\|\le \varepsilon$.
\end{itemize}
\end{teorema}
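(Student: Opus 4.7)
The plan is to combine the geometric/ergodic structure provided by the preceding theorems with a Lellouch-style topological forcing argument. First, fix $\mu_1,\mu_2$-typical points $x_1,x_2$ and lifts $\tilde x_1,\tilde x_2$ so that the tracking geodesics $\tilde\gamma_{\tilde x_1}$ and $\tilde\gamma_{\tilde x_2}$ cross transversally at a point $p\in\tilde S$ with some angle $\theta>0$. By Theorem~\ref{thm:trackinggeodesictheorem} the iterates $\tilde f^n(\tilde x_i)$ track $\tilde\gamma_{\tilde x_i}$ with sublinear error and speed $\rho_{\mu_i}$; by Theorem~\ref{thm:equidistributiontheoremintro}, typical orbits of $\mu_i$ visit (modulo $\Gamma$) a full-$\nu_{\mu_i}$-measure neighborhood of the intersection, with tangent direction close to that of $\dot{\tilde\gamma}_{\tilde x_i}$ at $p$. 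Pushing these visits back to $\tilde S$ by appropriate deck transformations, one obtains, for arbitrarily large $N_1,N_2$, long orbit pieces of $x_1$ and $x_2$ whose lifts pass near $p$ with angles close to those of the two geodesics and with controlled cumulative displacement, approximately $\rho_{\mu_i}$ per unit time along $\tilde\gamma_{\tilde x_i}$.

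The heart of the proof is the construction of a rotational horseshoe. Two transverse families of orbit pieces meeting near $p$ define two "corridors" in $\tilde S$ along which orbits drift at definite speeds. One applies a Brouwer/forcing-type topological argument (in the spirit of Lellouch's construction, adapting his transverse intersection set-up to the present hypothesis, possibly through Le Calvez's equivariant foliation machinery) to produce an invariant compact set on which $f^N$ factors onto the full shift on three symbols for a suitable $N$: symbol $1$ corresponds to following a long piece of $\mu_1$-orbit along $\tilde\gamma_{\tilde x_1}$, symbol $2$ to a long piece along $\tilde\gamma_{\tilde x_2}$, and symbol $0$ to a loop that stays close to $p$. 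Transversality of the tracking geodesics plus the sublinearity of the tracking error supplies the topological disjointness needed to invoke forcing; this gives the rotational horseshoe of Definition~\ref{def:rotationalhorseshoe} and, in particular, $h_{\mathrm{top}}(f)>0$. I expect this step to be the main obstacle: making the shadowing of two transversely intersecting tracking geodesics by a genuine Markov-type structure requires carefully packaging Lellouch's argument in the present ergodic/coarse setting.

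The rotation vectors in the sense of \cite{pa} are extracted from orbits of the horseshoe whose symbolic past consists of $1$'s and whose symbolic future consists of $2$'s. Such an orbit has a lift that travels backward along a representative of $\tilde\gamma_{\tilde x_1}$ toward $\alpha(\tilde x_1)$ at asymptotic speed $\rho_{\mu_1}$, and forward along a representative of $\tilde\gamma_{\tilde x_2}$ toward $\omega(\tilde x_2)$ at speed $\rho_{\mu_2}$. Because $\tilde\gamma_{\tilde x_1}$ and the target geodesic $(\alpha(\tilde x_1),\omega(\tilde x_2))$ share the endpoint $\alpha(\tilde x_1)$, they are asymptotic, so the orthogonal projection onto the target of a point far out along $\tilde\gamma_{\tilde x_1}$ in the $\alpha$-direction moves at rate $\rho_{\mu_1}$; symmetrically at $\omega(\tilde x_2)$. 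Thus if a lift $\tilde x_k$ sits on $\tilde\gamma_{\tilde x_1}$ at hyperbolic distance $\ell_1$ from the intersection and $\tilde f^{n_k}(\tilde x_k)$ sits on $\tilde\gamma_{\tilde x_2}$ at distance $\ell_2$ from it, the projection displacement is $\ell_1+\ell_2+O(1)$ while $n_k\approx \ell_1/\rho_{\mu_1}+\ell_2/\rho_{\mu_2}$. Letting $\ell_1,\ell_2\to\infty$ with the ratio biased so that the faster regime dominates yields the limit $\max(\rho_{\mu_1},\rho_{\mu_2})$, producing the required rotation vector; the symmetric choice gives $(\alpha(\tilde x_2),\omega(\tilde x_1),\max(\rho_{\mu_1},\rho_{\mu_2}))$.

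Finally, to realize every $r$ in the triangle spanned by $0,\rho_{H_1}(\mu_1),\rho_{H_1}(\mu_2)$ by periodic orbits of $f$ up to arbitrarily small error, one exploits that the horseshoe semi-conjugates to a full shift on $\{0,1,2\}$ and that the homological contribution of long blocks of symbol $i$ is close to the corresponding $\rho_{H_1}(\mu_i)$ per unit time (using the Birkhoff-type identity in \eqref{eq:homologyequation} and comparing homology classes of long orbit pieces to those of closed geodesics homotopic to the corresponding segments of tracking geodesics, controlled by Gauss–Bonnet/bounded-boundary-term estimates). Given rational weights $(a_0,a_1,a_2)$ with $a_0+a_1+a_2=1$, the periodic word $0^{a_0 N}1^{a_1 N}2^{a_2 N}$ corresponds to a periodic orbit with homological rotation vector arbitrarily close to $a_1\rho_{H_1}(\mu_1)+a_2\rho_{H_1}(\mu_2)$; density of rational weights inside the triangle then finishes the argument.
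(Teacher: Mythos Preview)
Your outline captures the right ingredients --- tracking geodesics, equidistribution, Le Calvez--Tal forcing, Lellouch's techniques --- and the heuristics for the second and third conclusions are sound. But there is a genuine gap in the first step, and it is exactly the technical heart of the paper's argument.

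The issue is this: transversality of the tracking geodesics $\tilde\gamma_{\tilde x_1}$ and $\tilde\gamma_{\tilde x_2}$ does \emph{not} imply that the corresponding transverse trajectories $\tilde I_{\tilde\F}^\Z(\tilde x_1)$ and $\tilde I_{\tilde\F}^\Z(\tilde x_2)$ in the Brouwer foliation have an $\tilde\F$-transverse intersection, which is what forcing theory actually needs. Lellouch already gave an example (reproduced in the paper) where the geodesics cross but $\tilde I_{\tilde\F}^\Z(\tilde x_1)$ is $\tilde\F$-equivalent to a subpath of $\tilde I_{\tilde\F}^\Z(\tilde x_2)$ --- no $\F$-transverse intersection at all. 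Your sentence ``transversality of the tracking geodesics plus sublinearity of the tracking error supplies the topological disjointness needed to invoke forcing'' is precisely where this fails: sublinear error gives you nothing at the scale of leaves of $\F$.

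The paper handles this with a dichotomy. Using recurrence of the tracking geodesics and a delicate control on how far transverse trajectories can wander (Proposition~\ref{LemLocalTransverse}, proved separately), one builds auxiliary deck-translated copies of the trajectories arranged so that either (i) the leaves of $\tilde\F$ through $\tilde I_{\tilde\F}^\Z(\tilde x_1)$ all cross $T_2\tilde I_{\tilde\F}^\Z(\tilde x_2)$ in both directions (``Condition~(C)''), forcing $\tilde I_{\tilde\F}^\Z(\tilde x_1)$ to \emph{accumulate} on $T_2\tilde I_{\tilde\F}^\Z(\tilde x_2)$, which lands one in Lellouch's annular set-up and his Section~3.4 machinery applies; or (ii) Condition~(C) fails for all four relevant crossings, and then a combinatorial lemma from \cite{pa} produces a genuine $\tilde\F$-transverse intersection, after which the fundamental forcing proposition and \cite[Theorem~M]{lct2} yield the horseshoe directly. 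Your proposal collapses this dichotomy into a single ``Lellouch-style forcing'' step, and as you yourself flag, that is where the real work lies; without the case split you cannot get started.
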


We will need an improved version of this statement for the proof of Theorem~\ref{thm:DecompRotSetIntro}, we postpone this statement to Section~\ref{SubsecInter} (Theorem~\ref{TheoInterGeod}).

Theorem~\ref{thm:intertrackingimpliesentropy} improves \cite[Proposition C]{pa} in the case of points that are typical for some measure, and generalizes \cite[Theorem F]{pa} (with weaker conclusions) in the case where the geodesics are not closed.
Note that Theorem~\ref{thm:intertrackingimpliesentropy} strictly improves Lellouch's theorem about the existence of horseshoes and new rotation vectors of $\rho_{H_1}^\textrm{erg}$ in the case of rotation vectors of ergodic measures having nonzero intersection in homology \cite[Théorème A, Théorème C]{lellouch}: indeed, Proposition~\ref{prop:InterHomoImpliesInterGeod} ensures that if two ergodic measures have rotation vectors with nontrivial intersection in homology, then some tracking geodesics for typical points for these measures do intersect.

The proof of Theorem~\ref{thm:intertrackingimpliesentropy} is the most technical part of this article; it involves the powerful \emph{forcing theory} of Le Calvez and Tal \cite{lecalveztalforcing, lct2}, which is based on the equivariant Brouwer theory of Le Calvez \cite{lecalvezfoliations}.
%
%

\subsection{Rotation sets and tracking geodesics}\label{SecIntroRot}

We are now ready to state an improved version of Theorem~\ref{thm:ShapeRotationSet}. The pieces of the rotation set's decomposition are obtained as equivalence classes for the following relation involving intersections of tracking geodesics. 

Let us define the equivalence relation\footnote{This is inspired from the equivalence relation of \cite{guiheneuf2023hyperbolic}.} $\sim$ on $\M$ by: $\mu_1\sim \mu_2$ if one of the following is true (see right after Theorem~\ref{thm:equidistributiontheoremintro} for the definition of the set $\Lambda_\mu$):
\begin{itemize}
\item $\Lambda_{\mu_1} = \Lambda_{\mu_2}$;
\item There exist $\tau_1,\dots,\tau_m\in\M$ such that $\tau_1=\mu_1$, $\tau_m=\mu_2$ and for all $1\le i<m$, the measures $\tau_i$ and $\tau_{i+1}$ are dynamically transverse.
\end{itemize}  
The fact that this is an equivalence relation is straightforward. We then denote $\{\cl_i\}_{i\in I}=\M/\sim$ the equivalence classes of $\sim$. We call $I^1$ the set of classes with the property that any two measures $\mu_1$ and $\mu_2$ of $\cl_i$ satisfy $\Lambda_{\mu_1} = \Lambda_{\mu_2}$; we will see that by Theorem~\ref{TheoInterGeod} this implies that the geodesics corresponding to vectors in \(\dotLambda_{\mu_1}\) are simple.
Let $I^+$ denote the other classes, which are such that for any $\mu\in \cl_i$ with \(i \in I^+\), there exists $\mu'\in\M$ such that $\mu$ and $\mu'$ are dynamically transverse.

This decomposition into classes leads to the main theorem of our article, which improves Theorem~\ref{thm:ShapeRotationSet}:

\begin{teorema}\label{thm:DecompRotSetIntro}
Let $f\in\Homeo_0(S)$. Write
\[\M = \bigsqcup_{i\in I} \cl_i = \bigsqcup_{i\in I^1} \cl_i \sqcup \bigsqcup_{i\in I^+} \cl_i\]
the decomposition of the set of ergodic measures with positive rotation speed into equivalence classes for the relation $\sim$, and denote for $i\in I$,
\[\rho_i = \big\{\rho_{H_1}(\mu)\mid \mu\in\cl_i\big\},\qquad 
V_i = \operatorname{Span}(\rho_i),\qquad
\Lambda_i = \bigcup_{\mu\in\cl_i} \Lambda_\mu.\]
then:
\begin{enumerate}
\item For every $i \in I^1$, we have

\begin{itemize}
	\item $\Lambda_i$ is a minimal geodesic lamination of \(S\). 
	\item $\rho_{I^1} = \bigcup \limits_{i \in I^1}\rho_i$ is included in a union of at most $3g-3$ lines of $H_1(S,\R)$.
\end{itemize}

\item If $I^+ \neq \varnothing$, then $f$ has a topological horseshoe (and in particular, positive topological entropy), and for every $i \in I^+$:

\begin{itemize}
\item The linear subspace $V_i$ has a basis formed by elements of $H_1(S,\Z)$;
\item The set $\overline{\rho_i}$ is a convex set containing $0$;
\item We have $\operatorname{int}_{V_i}(\overline{\rho_i})= \operatorname{int}_{V_i}(\rho_i)$ (in other words, $\rho_i$ is convex up to the fact that elements of $\partial_{V_i}(\rho_i)\setminus\operatorname{extrem}(\rho_i)$ can be in the complement of $\rho_i$);
\item Every element of $\operatorname{int}_{V_i}(\rho_i) \cap H_1(S,\Q)$ is the rotation vector of some $f$-periodic orbit (because $V_i$ has a rational basis, such elements are dense in $\operatorname{int}_{V_i}(\rho_i)$). 
\end{itemize} 

\item For $i,j\in I$, $i\neq j$, for $v_i\in V_i$ and $v_j\in V_j$, we have $v_i\wedge v_j = 0$. Moreover, if $i,j\in I^1$, then $\Lambda_i\cap\Lambda_j = \varnothing$.
\item $\card(I)\le 5g-5$. More precisely, $\card I^1\le 3g-3$ and $\card I^+\le 2g-2$.
\end{enumerate}
\end{teorema}

Note that for $i\in I^+$, we have some kind of ``quantitative estimate'' about the periods of the periodic points realizing the vectors of $\operatorname{int}_{V_i}(\rho_i)\cap H_1(S,\Q)$: for any finite set $F\subset \inte_{V_i}(\rho_i)$, denoting $R = \conv(\{0\}, F)$, there exists a constant $M>0$ such that for any $p/q\in R\cap H_1(S,\Q)$, with $p\in H_1(S,\Z)$ and $q\in\N^*$, there exists a $f$-periodic point with its period dividing $Mq$ and with rotation vector $p/q$.

\subsection{Zero entropy homeomorphisms}

A direct consequence of Theorem~\ref{thm:DecompRotSetIntro} is that if the topological entropy of \(f\) is zero then there are no pieces of dimension $\ge 2$ in the rotation set. The study of zero entropy surface homeomorphisms has a long story and was one of the motivations of the present work.

\subsubsection{Handel's preprint}

Let \(h_{\textnormal{top}}(f)\) denote the topological entropy of \(f\). In an unpublished preprint, among many interesting results, the idea was put forth (see \cite[Theorem 2.5]{handel}) that if \(h_{\textnormal{top}}(f) = 0\), then all \(f\)-orbits should track a geodesic lamination \(\Lambda \subset S\) (at least during the time when they are far from the set of fixed points of \(f\)).

Many of the results and techniques of \cite{handel} are by now well known.
For example, a fixed point result used to exclude \emph{oriented cycles} from the dynamics has now received several proofs and improvements in \cite{handel1}, 
\cite{fixed2}, \cite{juliana2}, \cite{juliana1}, and \cite{fixed1}.   Some of the Homotopy Brouwer Theory has been developed for example in \cite{leroux}.  Some results for area preserving zero entropy homeomorphisms of the two-sphere appear in \cite{frankshandel}.  The new technique of \emph{forcing} developed by Le Calvez and Tal \cite{lecalveztalforcing} has recently been used to obtain many results, including further results on the two-sphere, and a proof of \cite[Theorem 9.1]{handel} for annulus homeomorphisms (see \cite{lct2}). 

However, as far as the authors are aware, there is still no published, or even widely accepted, proof of \cite[Theorem 2.5]{handel}.

\subsubsection{Flows}

Let us consider first the case where \(f\) is the time-one map of a flow on \(S\).  In this case it was shown in \cite{youngflows} that \(h_{\textnormal{top}}(f) = 0\).

In this context, the solution to the so-called Anosov-Weil problem for flows (see \cite[Theorems 5.1.2 and 5.5.1]{nikolaev}, and also \cite{anosovweil1}, \cite{anosovweil2}) implies that, if \(f\) is the time-one map of a \(C^\infty\) flow on \(S\) and has a finite number of fixed points, then each \(\tf\)-orbit is either bounded, at bounded distance from a geodesic ray, or at bounded distance from a geodesic.  Because different flow lines do not intersect, the projection to \(S\) of all geodesics occuring in the last case will belong to some geodesic lamination \(\Lambda \subset S\).  Hence, in this case, a complete picture of tracking by a geodesic lamination is available.

We note that some results of this kind pertaining to flows predate Handel's preprint, such as the classification up to orbit equivalence of certain transitive flows \cite{transitiveflows} (see also \cite{aransonminimal}).

\subsubsection{Non-contractible periodic points}

Without assuming that \(f\) is the time-one map of a flow, a complete picture of tracking by geodesic laminations is attainable for periodic orbits.

The technique involved consists of puncturing \(S\) on some finite \(f\)-invariant set and using the Nielsen-Thurston classification of homeomorphisms up to isotopy on the punctured surface \cite{thurston}.  

This idea has been previously used to ensure the existence of positive entropy, given a certain configuration of periodic points (for example in \cite{llibremackay}, \cite{pollicott}, and \cite{matsumoto}).  But the particular result below seems to have no published (or even well known) proof, so we include one here.

We say an \(f\)-periodic point \(x \in S\) with minimal period \(n\) is \emph{non-contractible} if for some lift \(\tx\) one has \(\tf^n(\tx) \neq \tx\).  In this case, there is a loxodromic element in \(\Gamma\) taking \(\tx\) to \(\tf^n(\tx)\) and its axis projects to a normalized tracking \(\gamma_x\) for \(x\) which is closed.

\begin{theorem}[Tracking of non-contractible periodic orbits by a geodesic lamination]\label{periodicpointtheorem}
Let \(f \in \Homeo_0(S)\) be such that \(h_{\textnormal{top}}(f) = 0\). Then there exists a finite union of pairwise disjoint simple closed geodesics \(\Lambda = \bigsqcup_{i} \alpha_i(\R)\) such that \(\gamma_x(\R) \subset \Lambda\) for all non-contractible periodic points \(x \in S\).
\end{theorem}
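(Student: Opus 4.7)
The plan is to use Theorem~\ref{thm:intertrackingimpliesentropy} contrapositively: since \(h_{\textnormal{top}}(f)=0\), no pair of typical orbits of measures in \(\M\) can have tracking geodesics that cross transversally in \(\tS\). This will force the tracking geodesics of non-contractible periodic points to assemble into a disjoint family of simple closed geodesics on \(S\), and a classical topological bound will make that family finite.

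Fix a non-contractible periodic point \(x\) of minimal period \(n\) and a lift \(\tx\in\tS\). There is a loxodromic \(\eta\in\Gamma\) with \(\tf^n(\tx)=\eta(\tx)\); a lift of the tracking geodesic \(\gamma_x\) through \(\tx\) parametrises the axis of \(\eta\), and the projection of this axis to \(S\) is the closed image \(\alpha_x=\gamma_x(\R)\). The uniform probability measure \(\mu_x\) on the orbit of \(x\) is ergodic, \(f\)-invariant and has positive rotation speed, so it belongs to \(\M\); its typical points are exactly the orbit points. As \(\sigma\) ranges over \(\Gamma\), the tracking geodesic lift through \(\sigma(\tx)\) parametrises \(\sigma\cdot\operatorname{axis}(\eta)=\operatorname{axis}(\sigma\eta\sigma^{-1})\); the analogous statement holds for lifts of the other orbit points. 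Hence, given any two orbit points \(x_1,x_2\) and any two lifts \(\ell_1,\ell_2\) of \(\alpha_x\), we can choose lifts \(\tx_1,\tx_2\) whose tracking geodesic lifts parametrise \(\ell_1,\ell_2\).

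The argument then proceeds in three steps. \emph{Each \(\alpha_x\) is simple:} were it not, two distinct lifts of \(\alpha_x\) in \(\tS\) would cross transversally, and by the freedom of lift choice above these could be realised as tracking geodesics of lifts of typical points of \(\mu_x\); Theorem~\ref{thm:intertrackingimpliesentropy} applied with \(\mu_1=\mu_2=\mu_x\) would then give \(h_{\textnormal{top}}(f)>0\), a contradiction. \emph{Distinct \(\alpha_x\) and \(\alpha_y\) are disjoint:} two simple closed geodesics on \(S\) that are neither equal nor disjoint must intersect transversally (since in \(\D\) any two distinct complete geodesics are either disjoint or meet transversally at a single point), so the same argument with \(\mu_1=\mu_x\), \(\mu_2=\mu_y\) would again force \(h_{\textnormal{top}}(f)>0\). \emph{Finiteness:} the collection of distinct \(\alpha_x\) is therefore a family of pairwise disjoint simple closed geodesics on a closed orientable surface of genus \(g\), which can contain at most \(3g-3\) elements, the extremal case being a pants decomposition. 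Taking \(\Lambda\) to be the union of these finitely many closed geodesics finishes the proof.

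The main difficulty I foresee is the careful verification of the "for any typical points" hypothesis of Theorem~\ref{thm:intertrackingimpliesentropy} in the simplicity step where \(\mu_1=\mu_2=\mu_x\): one must ensure that for every pair of points of the periodic orbit there exist lifts whose tracking geodesics form a prescribed transversely crossing pair of lifts of \(\alpha_x\). This rests on the transitive action of \(\Gamma\) on the lifts of \(\alpha_x\) together with the fact that every lift of \(\alpha_x\) is parametrised by the tracking geodesic of some lift of every fixed orbit point; once this bookkeeping is settled, the remainder is a direct combination of the horseshoe criterion with the pants-decomposition bound.
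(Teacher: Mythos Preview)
Your proof is correct, but it takes a different route from the paper's. The paper proves Theorem~\ref{periodicpointtheorem} using classical Nielsen--Thurston theory: for the simplicity step it invokes Kra's theorem (a non-simple isotopy loop forces a pseudo-Anosov component on the once-punctured surface), and for the disjointness step it invokes the main theorem of Imayoshi--Ito--Yamamoto (two isotopy loops with positive geometric intersection number force a pseudo-Anosov component on the twice-punctured surface); positive entropy then follows from Boyland's result that a pseudo-Anosov component implies \(h_{\mathrm{top}}>0\). Your argument instead feeds the periodic-orbit measures \(\mu_x\) into Theorem~\ref{thm:intertrackingimpliesentropy} (the forcing-theory horseshoe criterion) and reads off the same conclusions directly.

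Both approaches are valid. The paper's proof is deliberately placed in the introduction because it relies only on published results from the 1980s--2000s and is independent of the forcing machinery developed later in the paper; it motivates the general theory by showing what was already within reach. Your proof is shorter and more unified with the paper's main results---indeed it is essentially the specialisation to atomic measures of the argument behind Theorem~\ref{thm:maintheorem}---but it treats Theorem~\ref{thm:intertrackingimpliesentropy} as a black box, so it is heavier in prerequisites. The ``main difficulty'' you flag is not actually a difficulty: since \(\Gamma\) acts transitively on the lifts of \(\alpha_x\) and every lift of every orbit point has tracking geodesic equal to some lift of \(\alpha_x\), the bookkeeping goes through immediately, including the case \(x_1=x_2\).
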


\begin{proof}
 First suppose that there exists a non-contractible periodic point \(x\) with minimal period \(n\) such that \(\gamma_x\) is not simple.
 
 Let \(g = f^n\) and let \(t \mapsto g_t\) be an isotopy from the identity to \(g\).  Since \(\Homeo_0(S)\) is contractible (see \cite{hamstrom}), the free homotopy class of the closed curve \(t \mapsto g_t(x)\) given by this isotopy is independent of the given isotopy.  Let us call this class \([\alpha]\).
 
 The class \([\alpha]\) is some power of the free homotopy class of \(\gamma_x\) restricted to its minimal period.  In particular, there is no simple curve in \([\alpha]\). 
 
 From \cite[Theorem 2]{kra} this implies that the Nielsen-Thurston class of \(g\) on \(S \setminus \lbrace x\rbrace\) contains a pseudo-Anosov component.  Hence, from \cite[Theorem 7.7]{boyland2}, \(g\) has positive entropy, which implies that \(f\) does as well.

 Now suppose that \(x \neq y\) are non-contractible periodic points whose tracking geodesics are simple but intersect transversally. Similarly to the previous case we consider \(g = f^n\) for \(n\) the minimal common period of \(x\) and \(y\).
 
 Again because geodesics minimize the geometric intersection number in their free homotopy class, the classes \([\alpha]\) and \([\beta]\) of the isotopy curves \(t \mapsto g_t(x)\) and \(t \mapsto g_t(y)\) have positive intersection number.
 
 From \cite[Main Theorem]{imayoshi} this implies that \(g\) has a pseudo-Anosov component, and hence \(f\) has positive entropy as before.
 
 From the two cases above we obtain that if \(h_{\textnormal{top}}(f) = 0\) then 
 \[\Lambda = \bigcup \gamma_x(\R),\]
 is a finite union of pairwise disjoint simple closed geodesics, as required.
\end{proof}

\subsubsection{Forcing}

The more recent techniques of foliations by Brouwer lines \cite{lecalvezfoliations}, and forcing \cite{lecalveztalforcing,lct2,guiheneufforcing}, have allowed in \cite{pa} extensions of the previous results to non-periodic orbits (and the present work is another example of this).
In particular \cite[Theorem E]{pa} roughly states that if there is a trajectory following a closed geodesic in \(S\) at positive speed, then either the geodesic has no transverse self-intersections or \(h_{\textnormal{top}}(f) > 0\).
\bigskip

\subsubsection{Non-hyperbolic surfaces}

Note that the case of other closed surfaces (\emph{i.e.}\ the sphere $\Sp^2$ and the torus $\T^2$) are way better understood.

For the sphere, Franks and Handel \cite{frankshandel} (for smooth area-preserving diffeomorphisms) and then Le Calvez and Tal \cite{lecalveztalforcing,lct2} (for general homeomorphisms) obtained a classification of the non wandering set of homeomorphisms of $\Sp^2$ having zero entropy (or more generally without topological horseshoe): roughly speaking, this non wandering set is covered by a family of invariant annuli on which all points turn in the same direction.

There is, to our knowledge, no tentative of systematic classification of zero entropy torus homeomorphisms. From Llibre and Mackay \cite{llibremackay}, such homeomorphisms must have their rotation set included in some line of the plane. Hence, for any homeomorphism $f\in\Homeo_0(\T^2)$, there is a line $D$ of the plane such that any lifting orbit $(\tilde f^n(\tilde x))_n$ in the universal cover $\R^2$ of $\T^2$ stays at sublinear distance from $D$.

In some cases, it is possible to prove that this distance stays finite (a property called ``bounded deviation'' or ``bounded displacement''), proving that the dynamics is really much like the one of a flow. It is proven true if the rotation set is a non-degenerate line segment and if the homeomorphism has at least one periodic point \cite{zbMATH06304088,zbMATH06914177, zbMATH07488214}, or if the homeomorphism preserves the area and has its fixed point set nonempty and contained in a topological disk \cite{liu2022noncontractible}.

\subsubsection{A consequence of Theorem~\ref{thm:DecompRotSetIntro}}

A particularly interesting consequence of Theorem~\ref{thm:DecompRotSetIntro} is a weakened form of tracking by a geodesic lamination (compared to Handel's conjecture), which applies only to generic points with respect to certain \(f\)-invariant ergodic measures:

\begin{corol}[Ergodic measures are tracked by geodesic laminations]\label{thm:maintheorem}
Let $f\in \Homeo_0(S)$.
  If $f$ has no topological horseshoe\footnote{In particular, if \(h_{\textnormal{top}}(f) = 0\).} then for each measure \(\mu \in \M\) (meaning that $\mu$ is $f$-invariant, ergodic, and with positive rotation speed, see Lemma~\ref{LemErgoRotSpeed} and \eqref{eq:defrhoequation}) there exists a minimal geodesic lamination \(\Lambda_\mu \subset S\) such that
  \[\overline{\gamma_x(\R)} = \Lambda_\mu,\]
  for \(\mu\)-almost every point \(x \in S\), where \(\gamma_x\) is given by Theorem~\ref{thm:trackinggeodesictheorem}.

  Furthermore, for each pair \(\mu,\mu' \in \M\) one has that either \(\Lambda_\mu\) and \(\Lambda_{\mu'}\) are equal or they are disjoint.
  
  The set 
  \begin{equation}\label{Eq:Lambda}
\Lambda = \bigcup\limits_{\mu \in \M}\Lambda_\mu,
\end{equation}
is a geodesic lamination, which has at most $3g-3$ minimal sublaminations.
\end{corol}

Since each non-contractible periodic orbit supports a measure in \(\M\), the corollary above generalizes Theorem \ref{periodicpointtheorem}.

\subsubsection{Examples with non-trivial laminations}

The last section of the present article is devoted to some examples of dynamics in closed hyperbolic surfaces.
We observe that, even though we have $\dot{\gamma}_x(\R) = \dot{\gamma}_{f(x)}(\R)$ for all \(x \in S_T\), this does not imply that \(\dot{\gamma_x}(\R)\) is \(\mu\)-a.e.\ constant for each \(\mu \in \M\).  This is due to the fact that space of orbits of the geodesic flow on \(\textnormal{T}^1S\) is non-Hausdorff. In fact, non-trivial laminations may occur in Theorem~\ref{thm:trackinggeodesictheorem}.

\begin{prop}[Ergodic measures with many tracking geodesics]\label{thm:nonconstanttrackingtheorem}
  For every closed orientable hyperbolic surface \(S\) there exists a homeomorphism \(f \in \textnormal{Homeo}_0(S)\), and \(\mu \in \M\) such that the map
  \[x \mapsto \dot{\gamma}_x(\R),\]
  is not \(\mu\)-a.e.\ constant, where \(x \mapsto \gamma_x\) is given by Theorem~\ref{thm:trackinggeodesictheorem}.
  More precisely, for any measurable set $E\subset S$ such that $\mu(E)>0$, the set $\{\dot\gamma_x(\R)\mid x\in E\}$ is uncountable.
\end{prop}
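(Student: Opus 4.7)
The plan is to construct $f$ as the time-one map of a continuous flow on $S$ whose regular orbits are the leaves of a minimal geodesic lamination with uncountably many leaves. First I would fix a minimal measured geodesic lamination $(\Lambda,\nu)$ on $S$ with no closed leaves, so that $\Lambda$ consists of uncountably many pairwise disjoint simple geodesic leaves; such laminations exist on every closed hyperbolic surface, for instance by taking the stable lamination of any pseudo-Anosov element of the mapping class group of $S$, which moreover admits a unique transverse measure up to scaling. The complement $S\setminus\Lambda$ is a finite disjoint union of ideal polygons, and I would insert finitely many separatrices in each polygon to extend $\Lambda$ to a singular measured foliation $\mathcal{F}$ on $S$ with finite singular set $\Sigma\subset S\setminus\Lambda$.

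Next I would build a continuous flow $\phi_t$ on $S$ whose regular orbits are the leaves of $\mathcal{F}$, parameterized by arc length on $\Lambda$ and reparameterized near $\Sigma$ so that $\phi_t$ extends continuously by fixing each point of $\Sigma$. Then $f=\phi_1$ lies in $\Homeo_0(S)$ automatically, since the flow itself provides an isotopy from $\Id$ to $f$. Combining $\nu$ transversally with arc length along the leaves produces a finite $\phi_t$-invariant measure $\mu$ supported on $\Lambda$; by minimality of $\Lambda$ together with unique ergodicity of the transverse measure, $\mu$ is ergodic for $f$. For $\mu$-a.e.\ $x$ the orbit $\{f^n(x)\}$ lies on the geodesic leaf $\ell_x$ of $\Lambda$ through $x$ and moves along it at unit arc-length speed, so the normalized tracking geodesic is $\gamma_x=\ell_x$ and the rotation speed is $\rho_\mu=1>0$; hence $\mu\in\M$ and Theorem~\ref{thm:trackinggeodesictheorem} applies.

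Distinct leaves of $\Lambda$ are pairwise disjoint in $S$, so distinct leaves produce distinct subsets $\dot\ell(\R)\subset\textnormal{T}^1S$. Since $\mu$ decomposes locally as the product of the non-atomic transverse measure $\nu$ with Lebesgue measure along the leaves, any measurable $E\subset S$ with $\mu(E)>0$ meets uncountably many leaves, which gives that $\{\dot\gamma_x(\R)\mid x\in E\}$ is uncountable.

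The principal obstacle is to construct the flow $\phi_t$ so that it is genuinely continuous across the singular set $\Sigma$ while the resulting measure $\mu$ remains ergodic with strictly positive rotation speed $\rho_\mu>0$. The reparameterization must slow the flow down to continuously fix each point of $\Sigma$ without creating a positive-$\mu$-measure subset of $\Lambda$ on which the flow speed vanishes; since $\Lambda$ and $\Sigma$ are disjoint compact sets, the flow speed along $\Lambda$ can be kept uniformly bounded away from zero, which should suffice.
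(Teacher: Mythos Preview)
Your approach matches the paper's second construction (Handel's ``example $\Lambda$'', Proposition~\ref{prop:HandelExample}): build a flow tangent to a minimal geodesic lamination, so that every orbit inside the lamination is tracked by its own leaf at unit speed, and the transverse measure gives an ergodic $\mu\in\M$ whose support meets uncountably many leaves. The paper also offers a genuinely different first construction (the Stepanov/Oxtoby route, Proposition~\ref{prop:OxtobyAutomorphism}), which blows up the unique fixed point of an irrational flow on $\T^2$ and glues in higher-genus pieces; that route avoids geodesic laminations altogether and is arguably more elementary.

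There is, however, a real gap in your outline: \emph{orientability} of the lamination. You write ``the stable lamination of any pseudo-Anosov element'', but for a generic pseudo-Anosov the stable lamination is not orientable --- the complementary ideal polygons may have an odd number of sides --- and a non-orientable lamination carries only a line field, not a vector field. In that case there is no globally defined flow $\phi_t$ whose orbits are the leaves, and your $f=\phi_1$ does not exist. The paper devotes a subsection to constructing, on every closed hyperbolic surface, specific pseudo-Anosov maps whose stable foliations are orientable (via orientation-preserving branched covers of linear Anosov maps on $\T^2$), precisely to ensure that the complementary polygons are even-sided and that the leaves can be coherently oriented. You need to impose the same hypothesis and justify that such laminations exist in every genus; once that is in place, the remainder of your argument (unit speed along leaves, ergodicity from unique ergodicity of the transverse measure, non-atomicity forcing every positive-measure set to meet uncountably many leaves) goes through.
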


Two constructions are indicated in \cite[example S and example \(\Lambda\)]{handel}.  The first, credited to Stepanov, is to take an irrational flow on the two torus, slow it down to add two fixed points, blow these points up to circles and paste.   The second, is to directly consider a non-trivial minimal orientable lamination \(\Lambda\) and let \(f\) be the time-one map of the flow of a vector field which is non-zero and tangent along \(\Lambda\).   We provide details of these constructions for the sake of completeness.

\subsection{Some open questions}

Here are some open questions raised by the techniques developed in the present paper.

\begin{enumerate}[label=\Alph*)]
\item Is it true that if $i,j\in I$ with $i\neq j$, then $\Lambda_i\cap\Lambda_j$ is reduced to a finite set of closed geodesics? We  only prove this result in restriction to the set $I^1$.
\item Are the laminations $\Lambda_i$, for $i\in I^1$, orientable? At least, the tracking geodesics of these laminations get an orientation from the orbits that produce them.
\item Could one get applications of this work to the ``classical'' rotation set $\rho_{H_1}$? Is this latter set a finite union of convex sets? Is it convex when it has nonempty interior ? In both those directions, the present work about typical points may be used as a fulcrum to tackle the case of other orbits: in the case of zero entropy homeomorphisms, is the set $\Lambda$ of \eqref{Eq:Lambda} sufficient to describe the asymptotic behaviour of all non-wandering orbits?
\item Is it possible to get results of bounded displacement with respect to the rotation set $\rho_{H_1}$, under weaker hypothesis than \cite{MR4190050, lellouch}? 
\item Consider the flip on \(\textnormal{T}^1S\) sending each unit tangent vector to its opposite.  We notice that since geodesic currents are in bijection with flip invariant positive measures on \(\textnormal{T}^1S\) which are invariant under the geodesic flow (see for example \cite[Theorem 3.4]{erlandsson}), symmetrizing the measures in \(\mathcal{R}_f\) by the flip yields a natural family of geodesic currents associated to \(f\).  Given a closed geodesic \(C\), the intersection number of these currents with \(C\) encodes the rate of intersection of isotopy curves \(f\) with \(C\). Hence, they also seem like a natural object for further study. Also, it could be possible that considering the suspension $\phi_f$ of the homeomorphism $f$, equipped with the suspension $\dd t\otimes \mu$ of the $f$-ergodic measure $\mu$ yields as in \cite{boyland} to a semi-conjugation between the measured flows $(\phi_f,\dd t\otimes \mu)$ and the geodesic flow on $\mathrm{T}^1 S$ equipped with $\nu_\mu$.
\item There is still quite a large gap between our Corollary~\ref{thm:maintheorem} and \cite[Theorem 2.5]{handel}. Of course, one would like to be able to describe not only the orbits that are typical for measures but all of them, or at least the ones having some form of recurrence (\emph{e.g.}\ recurrent or non wandering). 
One could also ask whether our conclusions can be strengthened to get bounded deviations from the geodesics in the the zero entropy case. Note that at the end of Section~7.2 of \cite{pa} there is an example (based on a construction of Koropecki and Tal) of an ``almost annular'' homeomorphism that has 
unbounded displacements in all directions not intersecting the direction of some embedded invariant annulus $A$, but nontrivial rotation set in restriction to the annulus $A$.
\end{enumerate}

\subsection{Organization of the paper}

We devote Section \ref{sec:tracking} to getting to the proof of Theorem~\ref{thm:trackinggeodesictheorem}.

In Section \ref{sec:equidistributionsection} we prove that for \(\mu \in \M\), almost all tracking geodesics equidistribute to the same measure, and thus we obtain Theorem \ref{thm:equidistributiontheoremintro}. We also show that under the assumption that tracking geodesics are simple, this yields a minimal lamination containing almost every tracking geodesic, which proves Theorem \ref{thm:ifsimpletheoremintro}.

In Section \ref{sec:rotationsection} we link geodesic tracking with a notion of homotopical rotation. We also show that ergodic measures with non-zero homological rotation belong to $\M$.


In Section \ref{sec:forcing2} we show that if \(f\) does not contain a topological horshoe then generic tracking geodesics for measures \(\mu,\mu' \in \M\) cannot intersect transversally.

These results are combined to prove Theorem \ref{thm:DecompRotSet}, which yields a partition of $\M$ into rotationally disjoint equivalence classes. This allows to obtain Theorem \ref{thm:ShapeRotationSet} as a byproduct. By restricting these results to the zero-entropy context, we recover Corollary~\ref{thm:maintheorem}. All of this is done in Section \ref{sec:maintheoremsection}. 

The examples of Proposition~\ref{thm:nonconstanttrackingtheorem} are constructed in Section \ref{sec:examplessection}.


\section*{Acknowledgments}

We would like to thank Alejandro Bellati, Christian Bonatti, Philip Boyland, Viveka Erlandsson, Alejandro Kocsard, Carlos Matheus, Rafael Potrie and Jing Tao for helpful conversations.

We are extremely thankful to Emmanuel Militon who allowed us to adapt his idea of the equivalence relation $\sim$ among ergodic measures to the case of intersection of tracking geodesics.

We warmly thank Patrice Le Calvez for his numerous insights regarding the proof of Proposition~\ref{LemLocalTransverse}.

Thanks to MathAmSud, IFUMI, IMERL, PEPS, CSIC and ANII for aiding the production of this article.

\section{Tracking geodesics --- proof of Theorem~\ref{thm:trackinggeodesictheorem}\label{sec:tracking}}

The goal of this section is to prove Theorem~\ref{thm:trackinggeodesictheorem}. First, we prove that the set $S_T$ of points admitting a tracking geodesic (Theorem~\ref{DefTrackGeod}) is equal to the set $S'_T$ of points with positive rotation speed (Theorem~\ref{DefRotSpeed}) whose limit points in $\partial\D$ are different (Lemma~\ref{lem:trackingcharacterizationlemma}). Then, we prove in Lemma~\ref{LemTypNeq} that points which are typical for some ergodic measure with positive rotation speed, belong to $S'_T$.

\subsection{Characterization of geodesic tracking}

We first show that the set \(S_T\) of points admitting a tracking geodesic (Theorem~\ref{DefTrackGeod}),
and the mapping \(x \mapsto \gamma_x\), are both Borel measurable.

For this purpose, endow \(X = \R^\N\) with the topology of pointwise convergence and \(X \times X\) with the product topology.  

We observe that the subset \(\mathcal{L} \subset X\) consisting of Cauchy sequences, and the subset \(\mathcal{E} \subset X \times X\) of pairs of sequences \(\left((x_n),(y_n)\right)\), such that \(\lim\limits_{n \to +\infty}|x_n - y_n| = 0\), are Borel.

It is immediate that the following mappings into \(X\) are continuous (recall that $L_n$ is defined in \eqref{eq:rotationdistanceequation} by $L_n(x) = \tdist (\tx, \tf ^n (\tx))$)
\[\varphi_1(x) = \left(\frac{1}{n}L_n(x)\right)_{n \in \N},
\qquad 
\varphi_2(x) = \left(\frac{1}{n}L_n(f^{-n}(x))\right)_{n \in \N},\]
and therefore the mapping \(\varphi_3 = (\varphi_1,\varphi_2)\) into \(X \times X\) is continuous as well.

It follows that the set 
\[S_\vartheta = \lbrace x \in S: \varphi_1(x) \in \mathcal{L}, \varphi_2(x) \in \mathcal{L}, \varphi_3(x) \in \mathcal{E}\rbrace,\]
of points $x$ having a well defined rotation speed \(\vartheta(x)\) (defined in \eqref{eq:defrhoequation}) is a Borel subset of \(S\).

Since for each \(\epsilon > 0\), the set \(X_{\epsilon}\) of sequences in \(X\) eventually greater than \(\epsilon\) is Borel, we obtain that the set
\[S_{\vartheta > 0} = S_{\vartheta} \cap \bigcup\limits_{n \in \N}\lbrace x \in S: \varphi_1(x) \in X_{\frac{1}{n}}\rbrace\]
of points with positive rotation speed is Borel as well.

We fix from now on a measurable section \(x \mapsto \tx\) of the covering projection \(\pi : \tilde S\to S\), \emph{i.e.}\ a Borel mapping such that \(\pi(\tx) = x\) for all \(x \in S\). This exists by general measurable selection theorems, and also may be constructed from a polygonal fundamental domain for the group \(\Gamma\) of cover transformations.

We identify \(\tS\) with the unit disc \(\D = \lbrace z \in \C: |z| < 1\rbrace\) and \(\tdist\) with the Poincaré metric on \(\D\).

\begin{lemma}\label{lem:twopointslemma}
  For each \(x \in S_{\vartheta > 0}\) and any lift $\wt x$ of $x$, there exist \(\alpha(\wt x),\omega(\wt x) \in \partial \D = \lbrace z \in \C: |z| = 1\rbrace\) such that
  \(\alpha(\wt x) = \lim_{n \to +\infty}\tf^{-n}(\tx)\text{ and }\omega(\wt x) = \lim_{n \to +\infty}\tf^n(\tx).\)
  
  Furthermore, if \(\beta_{-},\beta_{+}:[0,+\infty) \to \R\) are geodesic rays with limit points \(\alpha(\tx)\) and \(\omega(\tx)\) respectively, then
  \[0 = \lim\limits_{n \to +\infty}\frac{1}{n}\tdist\big(\tf^n(\tx),\beta_+(\R)\big) = \lim\limits_{n \to +\infty}\frac{1}{n}\tdist\big(\tf^{-n}(\tx),\beta_{-}(\R)\big).\]
\end{lemma}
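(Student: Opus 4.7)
The plan is first to prove the convergences $\tf^{\pm n}(\tx) \to \omega(\tx), \alpha(\tx) \in \partial\D$ via a quantitative angular estimate at $\tx$, and then to deduce the sublinear geodesic tracking from the asymptotic geometry of $\Hy^2$.

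Since the preferred lift $\tf$ commutes with the deck group $\Gamma$ of isometries (Equation~\eqref{eq:preferedliftequation}), the continuous function $\ty \mapsto \tdist(\ty, \tf(\ty))$ is $\Gamma$-invariant, descends to the compact surface $S$, and is therefore bounded by some constant $C$. Writing $u_n := \tf^n(\tx)$ and $r_n := \tdist(\tx, u_n)$, this gives the bounded step $\tdist(u_n, u_{n+1}) \le C$ for every $n$. By hypothesis $r_n = L_n(x)$ satisfies $r_n / n \to \rho > 0$, and taking the lift $\tf^{-n}(\tx)$ of $f^{-n}(x)$ also yields $\tdist(\tx, \tf^{-n}(\tx)) = L_n(f^{-n}(x))$ with quotient tending to $\rho$, so both forward and backward orbits escape to infinity in $\tS$ at linear rate.

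The heart of the argument is an exponential decay of the angle $\theta_n \in [0,\pi]$ subtended at $\tx$ by the geodesic triangle $\tx, u_n, u_{n+1}$. The hyperbolic law of cosines gives
\[\sinh(r_n)\sinh(r_{n+1})(1 - \cos\theta_n) = \cosh(\tdist(u_n, u_{n+1})) - \cosh(r_n - r_{n+1}) \le \cosh(C) - 1,\]
and since $r_n/n \to \rho$ the factor $\sinh(r_n)\sinh(r_{n+1})$ grows like $e^{2n\rho}$; so $\theta_n = O(e^{-n\rho(1-\varepsilon)})$ for every $\varepsilon > 0$ and $n$ large. In particular the series $\sum_n \theta_n$ converges, so the endpoint on $\partial\D$ of the geodesic ray from $\tx$ through $u_n$ is Cauchy in $\partial\D$, converging to some $\omega(\tx)$; combined with $r_n \to \infty$ this yields $u_n \to \omega(\tx)$ in $\overline{\D}$. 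The same argument applied to backward iterates yields $\tf^{-n}(\tx) \to \alpha(\tx) \in \partial\D$.

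For the tracking, let $\gamma_+ : [0,\infty) \to \tS$ be the geodesic ray from $\tx$ ending at $\omega(\tx)$ and set $p_n := \gamma_+(r_n)$. The angle $\theta'_n$ at $\tx$ between $u_n$ and $p_n$ is bounded by the geometric tail $\sum_{k \ge n}\theta_k = O(e^{-n\rho(1-\varepsilon)})$. A second application of the law of cosines to the isoceles triangle at $\tx$ with both legs of length $r_n$ and apex angle $\theta'_n$ gives
\[\cosh(\tdist(u_n, p_n)) - 1 = \sinh^2(r_n)(1 - \cos\theta'_n) = O(e^{4n\rho\varepsilon})\]
for every $\varepsilon > 0$, whence $\tdist(u_n, p_n) = o(n)$. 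For any other geodesic ray $\beta_+$ ending at $\omega(\tx)$, the rays $\gamma_+$ and $\beta_+$ are asymptotic in $\Hy^2$ so $\tdist(p_n, \beta_+(\R)) \to 0$, and combining yields $\tdist(u_n, \beta_+(\R))/n \to 0$. The statement for $\beta_-$ and the backward orbit is symmetric. The main obstacle is the exponential angular decay above: it turns ``bounded step size plus linear escape from the base point'' into summable angular variation at $\tx$, which genuinely uses the hyperbolic geometry through the $\sinh \sinh$ factor in the law of cosines (in a flat setting the analogous angular bound decays only as $1/r_n$, which is not summable, and bounded-step linearly-escaping sequences can genuinely spiral without having a limit direction).
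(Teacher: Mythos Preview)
Your proof is correct and follows essentially the same approach as the paper: both arguments use hyperbolic trigonometry on the triangle $\tx$ (or a fixed basepoint), $\tf^n(\tx)$, $\tf^{n+1}(\tx)$ to show that the angular variation decays exponentially (hence is summable, giving the boundary limit), and then reuse this estimate to bound the distance to the limiting ray. The only cosmetic differences are that the paper works with angles at the origin of the disk model and the law of sines, while you work with angles at $\tx$ and the law of cosines; your choice is marginally cleaner since it avoids the separate step of showing $|\tf^n(\tx)| \to 1$ in the Euclidean metric.
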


\begin{proof}
  We will prove the existence of \(\omega(\wt x)\) and tracking property for the ray \(\beta_+\).  Applying this to \(f^{-1}\) yields the result for \(\alpha(\wt x)\) and \(\beta_-\) as well. 

  For this purpose, fix the origin $O$ as $O=\beta_+(0)$, which allows to Euclidean polar coordinates \(\tf^n(\tx) = r_n\exp(i \theta_n)\).  

  Picking \(0 < a < \vartheta(x)\) (defined in \eqref{eq:defrhoequation}) we have
  \[an < \tdist(O,\tf^n(\tx)),\]
  for all \(n\) large enough. In particular, this implies that \(\lim_{n \to +\infty}r_n = 1\) since the Poincaré metric is bi-lipschitz with respect to the Euclidean metric on compact subsets of \(\D\).

  To control \(\theta_n\) we observe that since \(L_1:S \to \R, x\mapsto \tdist(\tx,\tilde f(\tx))\) is continuous, there exists \(b > 0\) such that,   for all \(n\in\Z\),
  \[\tdist(\tf^n(\tx), \tf^{n+1}(\tx)) \le b.\]

  We now consider the hyperbolic triangle with vertices \(O,\tf^n(\tx)\) and \(\tf^{n+1}(\tx)\), and notice that the angle at \(O\) (for the Poincaré metric) is \(|\theta_n - \theta_{n+1}|\) because the Poincaré metric is conformal with respect to the Euclidean one. By the hyperbolic law of sines we have
\begin{equation}\label{sinhequation}\sin(|\theta_n - \theta_{n+1}|) \le \sinh(b)\sinh(an)^{-1},\end{equation}
  for all \(n\) large enough, and in particular \(\left(\theta_n\right)_{n \in \N}\) is a Cauchy sequence, hence converges.
  
  We may suppose that \(\beta_+\) is  the geodesic ray from \(O\) to \(\omega(\tx)\).   We fix \(\epsilon > 0\), $\epsilon<\vartheta$, denote by \(d_n = \tdist(\tf^n(\tx),\beta_+(\R))\), and set \(\theta = \lim_{n \to +\infty}\theta_n\) and \(\vartheta = \vartheta(x)\).
  
  From \eqref{sinhequation} applied to $a=\vartheta-\epsilon/2$ and the definition of \(\vartheta\) we obtain
\[|\theta - \theta_n| \le \exp(-(\vartheta-\epsilon)n),\]
and
\[\tdist(O,\tf^n(\tx)) \le (\vartheta+\epsilon)n,\]
for all \(n\) large enough.
  
  We apply the hyperbolic law of sines to the triangle \(O, \tf^n(\tx),p_n\) where \(p_n\) is the closest point to \(\tf^n(\tx)\) in \(\beta_+(\R)\).  This yields:
\[\frac{\sinh(d_n)}{\sin(|\theta-\theta_n|)} = \frac{\sinh(\tdist(O,\tf^n(\tx))}{\sin(\pi/2)} \le \sinh((\vartheta+\epsilon)n),\]
for all \(n\) large enough.
  
It follows that
\[\sinh d_n \le e^{-(\vartheta-\epsilon)n}e^{-(\vartheta+\epsilon)n} = e^{2\epsilon n},\]
and hence \(d_n \le 4\epsilon n\) for all \(n\) large enough, which since \(\epsilon > 0\) was arbitrary yields
\[\lim\limits_{n \to +\infty}\frac{1}{n}d_n = 0,\]
as required.
\end{proof}

We now consider the set \(S_T'\) defined by
\[S_T' = \lbrace x \in S_{\vartheta > 0} \mid  \alpha(\wt x) \neq \omega(\wt x)\rbrace.\]

Since by Lemma~\ref{lem:twopointslemma} one has that \(\alpha(\wt x)\) and \(\omega(\wt x)\) are well defined on \(S_{\vartheta >0}\), it follows that they are Borel measurable and therefore \(S_T'\) is a Borel set.

The claim that the set \(S_T\) of points in \(S\) admitting a tracking geodesic is Borel measurable now follows from the following lemma.

\begin{lemma}\label{lem:trackingcharacterizationlemma}
  One has \(S_T = S_T'\).
\end{lemma}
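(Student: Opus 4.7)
The inclusion $S_T \subseteq S_T'$ is the short direction. Given $x \in S_T$ with tracking geodesic $\gamma$ and a lift $\tgamma$ satisfying~\eqref{eq:trackingequation} for some lift $\tx$, I would argue that the two endpoints of $\tgamma$ on $\partial\tS$ coincide with $\alpha(\tx)$ and $\omega(\tx)$, and since $\rho(x)>0$ the geodesic $\tgamma$ is non-degenerate, so $\alpha(\tx)\neq \omega(\tx)$. To identify these endpoints: both $\tf^n(\tx)$ and $\tgamma(n\rho(x))$ lie at hyperbolic distance of order $\rho(x)n$ from a fixed base-point in $\D$, so in Euclidean terms they are exponentially close to $\partial\D$; if their limits on $\partial\D$ were distinct, the hyperbolic distance between them would grow linearly in $n$ (any geodesic joining them would need to pass within a bounded neighborhood of the origin), contradicting $\tdist(\tf^n(\tx),\tgamma(n\rho(x))) = o(n)$. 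The backward case is symmetric.

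For the reverse inclusion $S_T' \subseteq S_T$, the plan is to construct the tracking geodesic by hand. Given $x \in S_T'$ and a lift $\tx$, let $\tgamma:\R\to\tS$ be the unit-speed parametrization of the geodesic from $\alpha(\tx)$ to $\omega(\tx)$, oriented so that $\tgamma(t)\to\omega(\tx)$ as $t\to+\infty$. Lemma~\ref{lem:twopointslemma} already gives $\tdist(\tf^n(\tx),\tgamma(\R)) = o(n)$; letting $p_n$ be the foot of the perpendicular from $\tf^n(\tx)$ onto $\tgamma$ and $t_n$ its parameter, this reads $\tdist(\tf^n(\tx),\tgamma(t_n)) = o(n)$. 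It remains to check $t_n = n\rho(x) + o(n)$, after which the triangle inequality gives $\tdist(\tf^n(\tx),\tgamma(n\rho(x))) \le o(n) + |t_n - n\rho(x)| = o(n)$, which is the forward half of~\eqref{eq:trackingequation}; projecting $\tgamma$ to $S$ yields the desired $\gamma$.

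The asymptotic $t_n = n\rho(x) + o(n)$ is the one mildly technical point. It follows from the hyperbolic Pythagorean identity $\cosh(\tdist(\tx,\tgamma(t))) = \cosh(\tdist(\tx,p_\perp))\cosh(|t - t_\perp|)$, where $p_\perp = \tgamma(t_\perp)$ is the foot of the perpendicular from $\tx$ onto $\tgamma$. This identity implies $\tdist(\tx,\tgamma(t_n)) = |t_n - t_\perp| + O(1)$. Combining with the triangle inequality bound $|\tdist(\tx,\tf^n(\tx)) - \tdist(\tx,\tgamma(t_n))| \le \tdist(\tf^n(\tx),\tgamma(t_n)) = o(n)$ and the defining asymptotic $\tdist(\tx,\tf^n(\tx)) = L_n(x) = n\rho(x) + o(n)$, one gets $|t_n - t_\perp| = n\rho(x) + o(n)$; since $\tf^n(\tx)\to\omega(\tx)$ one has $t_n\to+\infty$ eventually, yielding $t_n = n\rho(x) + o(n)$. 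The backward half of~\eqref{eq:trackingequation} is handled identically using the second equality in the definition of $\rho(x)$, and the chosen orientation of $\tgamma$ makes both tracking conditions compatible. I do not anticipate substantial obstacles; the only thing to watch is the orientation of $\tgamma$, which is forced by the asymmetric roles of $\alpha(\tx)$ and $\omega(\tx)$.
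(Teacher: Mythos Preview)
Your proposal is correct, and for the inclusion \(S_T \subseteq S_T'\) it matches the paper's argument in spirit (the paper uses the hyperbolic law of sines to make your ``if the limits on \(\partial\D\) were distinct, the hyperbolic distance would grow linearly'' precise, but the content is the same).

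For the harder inclusion \(S_T' \subseteq S_T\), your route differs from the paper's and is in fact more economical. You invoke Lemma~\ref{lem:twopointslemma} directly: since \(\tgamma\) contains a ray converging to \(\omega(\tx)\), that lemma immediately gives \(r_n := \tdist(\tf^n(\tx),\tgamma(\R)) = o(n)\), and then the hyperbolic Pythagorean identity in the right triangle \((\tx, \tgamma(t_\perp), \tgamma(t_n))\) yields \(t_n = n\rho(x) + o(n)\) in two lines. The paper instead does not reuse Lemma~\ref{lem:twopointslemma} at this point; it first establishes \(t_n/n \to \rho(x)\) by a contradiction argument in Fermi coordinates (bounding \(|t_{n+1}-t_n|\) when \(r_n\) is large and deriving a contradiction with \(t_n \to +\infty\)), and only afterwards deduces \(r_n = o(n)\) via the hyperbolic law of cosines. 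Your approach trades that contradiction argument for a single appeal to an already-proved lemma, which is a genuine simplification; the paper's route, on the other hand, is self-contained at that point and illustrates a technique (controlling the parameter along a geodesic via the exponential contraction of the orthogonal direction) that may be of independent interest.
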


\begin{proof}
$\mathbf{ S_T\subset S'_T}$: 
  Suppose first that \(x \in S_T\).  By definition, \(\vartheta(x) > 0\), so it follows by Lemma~\ref{lem:twopointslemma} that \(\omega(\wt x)\) and \(\alpha(\wt x)\) exist.  Now let \(\tgamma\) be the geodesic satisfying the tracking Equation~\eqref{eq:trackingequation}; we will show that \(\alpha(\wt x) = \lim_{t \to +\infty}\tgamma(-t)\) and \(\omega(\wt x) = \lim_{t \to +\infty}\tgamma(t)\) which will imply that \(\alpha(\wt x) \neq \omega(\wt x)\). 

  Again it suffices to establish \(\omega(\wt x) = \lim_{t \to +\infty}\tgamma(t)\) since the other claim follows applying this to \(f^{-1}\).

  From the definition \eqref{eq:trackingequation} of tracking geodesics, for each \(\epsilon > 0\) one has 
  \[\tdist\big(\tf^n(\tx),\tgamma(n\vartheta(x))\big) \le \epsilon n\]
  and
  \[(\vartheta(x) - \epsilon) n < \tdist\big(\tgamma(n\vartheta(x)),\tgamma(0)\big),\]
  for all \(n\) large enough.

  Applying the hyperbolic law of sines to the hyperbolic triangle with vertices \(\tgamma(0), \tf^n(\tx)\), and \(\tgamma(n\vartheta(x))\), denoting by \(\theta_n\) the angle at \(\tgamma(0)\) we obtain
  \[\sin(\theta_n) \le \sinh(\epsilon n)\sinh((\vartheta(x) - \epsilon)n)^{-1},\]
  for all \(n\) large enough.  Taking \(\epsilon < \vartheta(x)/2\), this implies that \(\lim_{n \to +\infty}\theta_n = 0\), and hence, \(\omega(\wt x) = \lim_{t \to +\infty}\tgamma(t)\) as claimed. 
  
   From this we obtain that \(\alpha(\wt x) \neq \omega(\wt x)\); we have proved that \(S_T \subset S_T'\).
\bigskip

$\mathbf{ S'_T\subset S_T}$:  We now suppose that \(x \in S_T'\); let \(\tgamma\) denote the unique geodesic with \(\lim_{t \to -\infty}\tgamma(t) = \alpha(\wt x)\) and \(\lim_{t \to +\infty}\tgamma(t) = \omega(\wt x)\). 

Let \(t_n \in \R\) and \(r_n \ge 0\) be defined by
  \begin{equation}\label{eq:distancetogeodesicequation}
    r_n = \tdist\big(\tf^n(\tx),\tgamma(t_n)\big) = \tdist\big(\tf^n(\tx),\tgamma(\R)\big),
  \end{equation}
  and observe that since \(\omega(\wt x) = \lim\limits_{t \to +\infty}\tgamma(t)\) we have
  \begin{equation}\label{eq:tndivergenceequation}
    \lim\limits_{n \to +\infty}t_n = +\infty.
  \end{equation}
  From the triangle inequality it follows that
  \begin{equation}\label{eq:tnrnlowerequation}
    \vartheta = \vartheta(x) = \lim\limits_{n \to +\infty}\frac{1}{n}\tdist\big(\tx,\tf^n(\tx)\big) \le \liminf\limits_{n \to +\infty}\frac{t_n + r_n}{n}.
  \end{equation}

  Suppose that there exists \(\epsilon > 0\) such that
  \begin{equation}\label{eq:tnupperboundequation}
    \frac{t_n}{n} \le (1-\epsilon)\vartheta,
  \end{equation}
  for infinitely many \(n\geq 0\).
  Fix \(b = \max L_1\), and let \(R > 0\) be such that 
  \begin{equation}\label{eq:choiceofRequation}
    \cosh(R)^{-1}b \le (1-\epsilon)\vartheta,
  \end{equation}
  and \(N\) be large enough so that (see equation~\eqref{eq:tnrnlowerequation})
  \begin{equation}\label{eq:largenequation}
\frac{\epsilon}{2}\vartheta n > R + b
\qquad\text{and}\qquad 
    \big(1 -\frac{\epsilon}{2}\big)\vartheta \le \frac{t_n + r_n}{n}.
  \end{equation}
  for all \(n \ge N\).
  
  If \(n \ge N\) satisfies the inequality \eqref{eq:tnupperboundequation} then by the second inequality of \eqref{eq:largenequation} we have
  \begin{equation}\label{eq:rnislargeequation}
r_n \ge \frac{\epsilon}{2}\vartheta n,
\end{equation}
  but, from the first inequality of \eqref{eq:largenequation} this implies that \( r_n> R+b\) and hence that $r_{n+1}>R$.
  The Fermi coordinates \((t,r)\) given by projection and distance to a geodesic are \(\dd r^2 + \cosh(r)^2 \dd t^2\) and hence we obtain for the \(n\) under consideration
  \[|t_{n+1}-t_{n}| \le b\cosh(\min(r_{n},r_{n+1}))^{-1} \le b\cosh(R)^{-1}\le (1-\epsilon)\vartheta,\]
  by inequality~\eqref{eq:choiceofRequation}.

  It follows that, using \eqref{eq:tnupperboundequation} once again,
\[\frac{t_{n+1}}{n+1} = \frac{t_{n+1}-t_n+t_n}{n+1}\le \frac{(1-\epsilon)\vartheta+n(1-\epsilon)\vartheta}{n+1} = (1-\epsilon)\vartheta\]
and by induction we have \eqref{eq:tnupperboundequation} for all \(n\) large enough.

  But this implies that \eqref{eq:rnislargeequation} holds for all \(n\) large enough and therefore
  \[|t_{n+1} - t_n| \le b\cosh(n\epsilon \vartheta /2)^{-1},\]
  for all \(n\) large enough.

However, this contradicts Equation~\eqref{eq:tndivergenceequation}, so we have shown that
\begin{equation}\label{eq:conclpart1lem}
\vartheta \le \liminf\limits_{n \to +\infty}\frac{t_n}{n}.
\end{equation}

Consider the hyperbolic triangle with vertices \(\tgamma(0),\tgamma(t_n), \tf^n(\tx)\); its sides have length \(t_n,r_n\) and \(d_n = \tdist(\tgamma(0),\tf^n(\tx))\).  The angle between the sides of length \(t_n\) and \(r_n\) is a right angle so one has \(d_n \ge t_n\).   However, by hypothesis
\[\lim\limits_{n \to +\infty} \frac{1}{n}d_n = \vartheta(x),\]
so it follows that
\[\limsup\limits_{n \to +\infty}\frac{t_n}{n} \le \vartheta(x),\]
which, combined with \eqref{eq:conclpart1lem}, implies that
\begin{equation}\label{eq:limtnn}
\lim\limits_{n \to +\infty}\frac{t_n}{n} = \vartheta(x).
\end{equation}

Hence, by applying the hyperbolic law of cosines and Equation~\eqref{eq:limtnn} for the last line, we obtain
\begin{align*}
  \vartheta(x) &= \lim\limits_{n \to +\infty}\frac{1}{n}d_n 
       \\ &=\lim\limits_{n \to +\infty}\frac{1}{n}\log \cosh(d_n)
       \\ &=\lim\limits_{n \to +\infty}\frac{1}{n}\log \big(\cosh(t_n)\cosh(r_n)\big)
       \\ &=\lim\limits_{n \to +\infty}\left(\frac{1}{n}t_n + \frac{1}{n}r_n\right),
       \\ &= \vartheta(x) + \lim\limits_{n \to +\infty}\frac{1}{n}r_n,
\end{align*}
from where we deduce that \(\lim_{n\to +\infty}\frac{1}{n}r_n = 0\).

This implies by the triangle inequality that the tracking Equation~\eqref{eq:trackingequation} is satisfied for the positive iterates of \(\tf\).  The same argument shows that Equation~\eqref{eq:trackingequation} is satisfied for negative iterates of \(\tx\) as well.   To conclude we obtain that since \(\tf\) commutes with \(\Gamma\) (Equation~\eqref{eq:preferedliftequation}) the projection of \(\tgamma\) to \(S\) is a tracking geodesic for \(x\).
\end{proof}

We observe that the mapping \(\varphi\) assigning to a triplet \((\alpha,\omega,r)\) with \(\alpha,\omega \in \partial \D\) distinct and \(r \in \D\), the geodesic \(\tgamma\) joining \(\alpha\) to \(\omega\) -- with \(\tgamma(0)\) being the closest point to \(r\) --, is continuous. 
In view of Lemma~\ref{lem:trackingcharacterizationlemma} the measurability of \(x \mapsto \gamma_x\) follows since it is a composition of this mapping at \((\alpha(\wt x),\omega(\wt x),\tx)\) with \(\pi\).

\subsection{Ergodic measures and tracking geodesics}

If \(\mu \in \M\) then by definition \(\mu(S_{\vartheta > 0}) = 1\) and by Lemma~\ref{lem:twopointslemma}, the limit points \(\alpha(\wt x)\) and \(\omega(\wt x)\) are well defined at \(\mu\)-almost every point.

The fact that \(S_T\) has full measure for \(\mu\), and hence the statement of Theorem~\ref{thm:trackinggeodesictheorem}, follows from  Lemma~\ref{lem:trackingcharacterizationlemma} and the following result.

\begin{lemma}\label{LemTypNeq}
  For each \(\mu \in \M\) one has \(\alpha(\wt x) \neq \omega(\wt x)\) for \(\mu\)-a.e.\ \(x \in S\).
\end{lemma}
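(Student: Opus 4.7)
The proof proceeds by contradiction. The set
\[
A = \lbrace x \in S_{\rho>0} : \alpha(\tx) = \omega(\tx)\rbrace
\]
is well defined (the condition does not depend on the lift, as $\alpha$ and $\omega$ are $\Gamma$-equivariant) and Borel, and it is $f$-invariant since $\omega(\tf\tx) = \omega(\tx)$ and $\alpha(\tf\tx) = \alpha(\tx)$. By ergodicity, $\mu(A) \in \lbrace 0,1 \rbrace$, so it suffices to rule out $\mu(A) = 1$.

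The main device will be a Busemann cocycle at $\omega(\tx)$. For $p \in \partial \D$, let $B_p : \tS \to \R$ denote a Busemann function at $p$ (fixed up to additive constant). On $S_{\rho>0}$ I set
\[
h(x) = B_{\omega(\tx)}(\tf(\tx)) - B_{\omega(\tx)}(\tx).
\]
The identity $B_{\gamma p}(\gamma z) - B_{\gamma p}(\gamma w) = B_p(z) - B_p(w)$, valid for every $\gamma \in \Gamma$, ensures that $h$ does not depend on the choice of lift $\tx$ nor on the normalization of $B_p$. Since $|h(x)| \le \tdist(\tx,\tf(\tx)) = L_1(x)$ and $L_1$ is continuous on the compact surface $S$, the function $h$ is bounded and hence $\mu$-integrable.

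Assuming $\mu(A) = 1$, I would compute $\int h\dd\mu$ in two ways. Telescoping together with Birkhoff's theorem for $f$ gives, for $\mu$-a.e.\ $x$,
\[
\int h\dd\mu = \lim_{n \to +\infty}\frac{1}{n}\sum_{k=0}^{n-1}h(f^k(x)) = \lim_{n \to +\infty}\frac{B_{\omega(\tx)}(\tf^n\tx)}{n}.
\]
Now Lemma~\ref{lem:twopointslemma} locates $\tf^n\tx$ at distance $o(n)$ from the geodesic ray $\beta_+$ from $\tx$ to $\omega(\tx)$; writing its orthogonal projection as $\beta_+(t_n)$, the hyperbolic Pythagorean identity applied to the right triangle $\tx,\beta_+(t_n),\tf^n\tx$ yields $t_n = L_n(x) + o(n)$, which combined with Kingman's theorem gives $t_n/n \to \rho_\mu$. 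Since $B_{\omega(\tx)}$ decreases at unit speed along $\beta_+$ and is $1$-Lipschitz on $\tS$, this forces $B_{\omega(\tx)}(\tf^n\tx)/n \to -\rho_\mu$, so $\int h\dd\mu = -\rho_\mu$.

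Applying the same reasoning to $f^{-1}$ and telescoping in negative time gives
\[
\int h\dd\mu = -\lim_{n \to +\infty}\frac{B_{\omega(\tx)}(\tf^{-n}\tx)}{n}.
\]
On $A$, however, $\alpha(\tx) = \omega(\tx)$, so Lemma~\ref{lem:twopointslemma} applied to the $\alpha$-limit gives sublinear tracking of the backward orbit by the \emph{same} ray $\beta_+$, and the previous computation yields $B_{\omega(\tx)}(\tf^{-n}\tx)/n \to -\rho_\mu$. Hence $\int h\dd\mu = +\rho_\mu$, which combined with $\int h\dd\mu = -\rho_\mu$ and $\rho_\mu > 0$ produces the desired contradiction. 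The only technical point is the Busemann estimate $B_{\omega(\tx)}(\tf^n\tx) = -n\rho_\mu + o(n)$, which is a routine consequence of Lemma~\ref{lem:twopointslemma} and Kingman's theorem; everything else is an invariance/telescoping argument.
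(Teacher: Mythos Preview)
Your proof is correct and follows essentially the same Busemann-cocycle approach as the paper: both define a function of the form $B_{\omega(\tx)}(\tf(\tx))-B_{\omega(\tx)}(\tx)$, apply Birkhoff's theorem in both time directions, and use Lemma~\ref{lem:twopointslemma} to compute the asymptotics. The only differences are cosmetic: you frame it as a proof by contradiction on $\mu(A)=1$ and make the dependence of the Busemann function on $\omega(\tx)$ explicit (which is cleaner), while the paper argues directly that the Busemann functions at $\omega(\tx)$ and $\alpha(\tx)$ have different linear growth along the backward orbit and hence must be distinct; also your sign convention for Busemann functions is the opposite of the paper's.
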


\begin{proof}
Let \(\beta_+,\beta_-\) be the geodesic rays from a basepoint \(O\) to \(\alpha(\tx)\) and \(\beta(\tx)\) as in Lemma~\ref{lem:twopointslemma}.

We consider the Busemann functions \(B_+\) and \(B_-\) associated to these geodesic half rays, that is
\[B_{\pm}(y) = \lim\limits_{t \to + \infty}\left(t- \tdist(y,\beta_{\pm}(t))\right).\]

We will use that \(B_{\pm}\) is \(1\)-Lipschitz and \(B_{\pm}(\beta_{\pm}(t)) = t\).

Let \(\varphi_{\pm}:S \to \R\) be defined by
\[\varphi_{\pm}(x) = B_{\pm}(\tf(\tx)) - B_{\pm}(\tx),\]
where \(\tx\) is any lift of \(x\).

Because \(B_{\pm}\) is \(1\)-Lipschitz, \(\varphi_{\pm}\) is bounded above by \(L_1\) and therefore integrable on $S$.  By Birkhoff ergodic theorem we have
\begin{equation}\label{eq:birkhoffequation}
\int_{S} \varphi_{\pm}(x) \dd \mu(x) = \lim\limits_{n \to +\infty}\frac{1}{n}\sum\limits_{k = 0}^{n-1}\varphi_{\pm}(f^k(x)) =   -\lim\limits_{n \to +\infty}\frac{1}{n}\sum\limits_{k = 1}^{n}\varphi_{\pm}(f^{-k}(x)).
\end{equation}

Lemma~\ref{lem:twopointslemma}, says that for \(\mu\)-a.e.\ \(x\) and any lift \(\tx\),
\[0 = \lim\limits_{n \to +\infty}\frac{1}{n}\tdist\big(\tf^n(\tx),\beta_+(\R)\big),\]
so, using the fact that $\beta_+$ is 1-Lipschitz, the definition \eqref{eq:trackingequation} of the rotation speed $\vartheta$ and $B_+(\beta_+(t)) = t$, one gets
\[\vartheta = \lim\limits_{n \to +\infty}\frac{1}{n}B_+(\tf^n(\tx)) = -\lim\limits_{n \to +\infty}\frac{1}{n}B_+(\tf^{-n}(\tx)) = \lim\limits_{n \to +\infty}\frac{1}{n}B_-(\tf^{-n}(\tx)),\]
which implies \(B_+ \neq B_-\), and hence \(\alpha(\tx) \neq \omega(\tx)\).
\end{proof}

\section{Equidistribution of tracking geodesics and minimal laminations --- proof of Theorems \ref{thm:equidistributiontheoremintro} and \ref{thm:ifsimpletheoremintro} \label{sec:equidistributionsection}}

\subsection{Proof of equidistribution}

For each \(x \in S\) consider a lift \(\tx\) and the corresponding lift \(\tgamma_\tx\) of \(\gamma_x\) satisfying the tracking Equation~\eqref{eq:trackingequation}.

Let \(T(x)\) be the difference between the projections of $\tx$ and $\tf(\tilde x)$ on $\tgamma_{\tilde x}$ measured by the parametrization of $\tgamma_\tx$ by $\R$, in other words
\[\tdist(\tf(\tx),\tgamma_\tx(\R)) = \tdist(\tf(\tx),\tgamma_\tx(T(x))).\]
Recall that by definition of the parametrization of $\tgamma_\tx$, one has
\[\tdist(\tx,\tgamma_\tx(\R)) = \tdist(\tx,\tgamma_\tx(0)).\]

Notice that \(T\) is measurable, and well defined by Equation~\eqref{eq:preferedliftequation}. We also have:

\begin{claim}
$T: S \to \mathbb{R}$ is bounded from above by $\widehat{L}_1 = \max_{x \in S} L_1(x)$. 
\end{claim}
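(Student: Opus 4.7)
The plan is a direct one-line reduction to the fact that the nearest-point projection onto a complete geodesic in $\D$ is $1$-Lipschitz. Indeed, by the definition of the parametrization of $\tgamma_\tx$, the points $\tgamma_\tx(0)$ and $\tgamma_\tx(T(x))$ are precisely the nearest-point projections of $\tx$ and $\tf(\tx)$ onto the geodesic $\tgamma_\tx(\R)$. Since the parametrization is by arclength,
\[|T(x)| = \tdist\big(\tgamma_\tx(0),\,\tgamma_\tx(T(x))\big) = \tdist\big(\pr_{\tgamma_\tx}(\tx),\,\pr_{\tgamma_\tx}(\tf(\tx))\big).\]

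For the Lipschitz bound on $\pr_{\tgamma_\tx}$, I would just invoke the Fermi coordinates already used in the proof of Lemma~\ref{lem:trackingcharacterizationlemma}: in coordinates $(t,r)$ given by signed arclength along $\tgamma_\tx$ and signed distance to it, the hyperbolic metric is $\dd r^2 + \cosh(r)^2 \dd t^2$, so for any smooth curve $s\mapsto (t(s),r(s))$ one has $|t'(s)|\le \cosh(r(s))^{-1}\|(t',r')(s)\|\le \|(t',r')(s)\|$. Integrating, the map $(t,r)\mapsto t$ is $1$-Lipschitz, which is exactly the $1$-Lipschitz property of $\pr_{\tgamma_\tx}$.

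Combining these two observations with the definition $L_1(x)=\tdist(\tx,\tf(\tx))$ yields
\[T(x)\le |T(x)| \le \tdist(\tx,\tf(\tx)) = L_1(x)\le \widehat{L}_1,\]
as claimed. There is no real obstacle: the whole content is that projection onto a geodesic contracts distances in $\D$, a standard consequence of negative curvature (or equivalently of the CAT($-1$) property), and this fact has already been used implicitly earlier in the section, so reusing the Fermi-coordinate computation makes the argument self-contained.
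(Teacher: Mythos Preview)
Your proof is correct and follows essentially the same approach as the paper: both use Fermi coordinates $(t,r)$ with respect to $\tgamma_{\tx}$ and the metric expression $\dd s^2 = \dd r^2 + \cosh(r)^2\,\dd t^2$ to conclude that the $t$-displacement (which is $T(x)$) is bounded by the hyperbolic distance $L_1(x)$. Your version is slightly more explicit in spelling out the $1$-Lipschitz property of the projection via integration of the infinitesimal bound $|t'|\le \|(t',r')\|$, but the content is identical.
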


\begin{proof}
Let $x \in S$, let $\tx$ a lift of $x$, and take Fermi coordinates $(t,r)$ with respect to the geodesic $\tgamma_{\tx}$. Note that the $t$-coordinates of $\tx$ and $\tf(\tx)$ are $0$ and $T(x)$, respectively. Recalling that in these coordinates we have that $ds^2 = dr^2 + \cosh(r)^2 dt^2$, we obtain that $L_1(x) = \tdist(\tf(x),\tx) \geq T(x)$.
\end{proof}

Defining \(T_n\) for \(n \in \N\) by
\[T_n(x) = T(x) + T(f(x)) + \cdots + T(f^{n-1}(x)),\]
we have
\[\tdist\big(\tf^n(\tx),\tgamma_{\tilde x}(\R)\big) = \tdist\big(\tf^n(\tx),\tgamma_{\tilde x}(T_n(x))\big).\]

From this observation, Birkhoff ergodic theorem and the tracking equation, we obtain the following.

\begin{remark}\label{rem:limitofT_n}
  For each \(\mu \in \M\), the rotation speed defined in \eqref{eq:trackingequation} satisfies
  \[\vartheta_\mu = \int_{S}T \dd \mu = \lim\limits_{n \to +\infty}\frac{1}{n}T_n(x),\]
  for \(\mu\)-a.e.\ \(x \in S\).
\end{remark}

Note that in particular, $\lim \limits_{n \to +\infty} T_n(x) = + \infty$ for \(\mu\)-a.e.\ \(x \in S\).

\begin{proof}[Proof of Theorem~\ref{thm:equidistributiontheoremintro}] 
\textbf{Equidistribution:}
Take a continuous function \(\varphi:\textnormal{T}^1S \to \R\), and define $\Mo\varphi: S \to \mathbb{R}$ by
\begin{equation}\label{eq:DefM}
(\Mo\varphi)(x) = \int_0^{T(x)}\varphi(\dgamma_x(t))\dd t.
\end{equation}
Given that the speed of $\dgamma_x$ is $\mu$-almost everywhere equal to 1, $\varphi$ is continuous and $T$ is bounded, we have that $\Mo\varphi \in \textnormal{L}^1(\mu)$. Let us define the linear operator
\begin{equation}\label{eq:DefI}
I(\varphi) := \int_{S}\Mo\varphi \dd \mu 
\end{equation}

Since \(\varphi\mapsto I(\varphi)\) is linear and positive, by the Riesz representation theorem there is some positive measure \(\nu_\mu\) on \(\textnormal{T}^1S\) such that
\[\int_{\textnormal{T}^1S} \varphi \dd \nu_\mu = \frac{I(\varphi)}{\vartheta_\mu},\]
for all continuous \(\varphi\). 

Note that (the last equality is Remark~\ref{rem:limitofT_n})
\[\nu_\mu\big(\textnormal{T}^1S\big) = \frac{I(1)}{\vartheta_\mu} = \frac{\int_{S}\Mo 1 \dd \mu}{\vartheta_\mu} = \frac{\int_{S}T \dd \mu}{\vartheta_\mu} = 1;\]
in other words $\nu_\mu$ is a probability measure. 

We then apply the ergodic theorem and obtain, for \(\mu\)-a.e.\ \(x \in S\),
\begin{align*}\label{eq:integralequation}
	I(\varphi) &= \lim\limits_{n \to +\infty}\frac{1}{n}\sum_{k=0}^{n-1}(\Mo\varphi)(f^k(x))\\ 
	&= \lim\limits_{n \to +\infty}\frac{1}{n}\int_{0}^{T_n(x)}\varphi(\dgamma_x(t))\dd t\\ 
	&\!\!\!\!\underset{Rk. \ref{rem:limitofT_n}}{=} \vartheta_\mu\lim_{n \to +\infty}\frac{1}{T_n(x)}\int_{0}^{T_n(x)}\varphi(\dgamma_x(t))\dd t\\ 
	& = \vartheta_\mu \lim_{n \to +\infty}\frac{1}{T_n(x)}\int_{T_n(f^{-n}(x))}^{0}\varphi(\dgamma_x(t))\dd t.
\end{align*}

Since \(T_{n+1}-T_n\) is uniformly bounded and $T_n$ tends to infinity as $n$ tends to infinity, this implies that, for \(\mu\)-a.e.\ \(x \in S\),
\begin{align*}
\int_{\mathrm{T}^1S} \varphi \dd \nu_\mu \ 
	&{=} \lim_{t \to +\infty}\frac{1}{t}\int_{0}^{t}\varphi(\dgamma_x(t))\dd t\\ 
	& = \lim_{t \to +\infty}\frac{1}{t}\int_{-t}^{0}\varphi(\dgamma_x(t))\dd t,
\end{align*}
hence \(\dot\gamma_x\) equidistributes to \(\nu_\mu\) for \(\mu\)-a.e. \(x\).

This fact also implies that $\nu_\mu$ is invariant under the geodesic flow.
\medskip

\textbf{Ergodicity of $\nu_\mu$:}
Suppose that the measure \(\nu_\mu\) is not ergodic. Then, there exists a measurable subset $A \subset \textnormal{T}^1 S$ invariant by the geodesic flow and of non-trivial $\nu_\mu$-measure.

The set of points $x\in S$ such that $\dot\gamma_x \in A$ is invariant by $f$ (because $\dot\gamma_x = \dot\gamma_{f(x)}$ up to reparametrization), then has measure 0 or 1. Up to changing $A$ to $S\setminus A$, we can assume it is 0.

The map $I$ defined in \eqref{eq:DefI} can be applied to any continuous map; let us extend it to indicator functions. Let $\lambda_\mu$ be the Borel probability measure defined on $\textnormal{T}^1S$ by $\lambda_\mu(B)=\int_S \Mo(\1_B)\dd\mu/\vartheta_\mu$ (the operator $\Mo$ is defined in \eqref{eq:DefM}); it follows from the following classical arguments that $\lambda_\mu=\nu_\mu$. 

Indeed, let $B\subset\mathrm{T}^1 M$ be a Borel set, and $\varep>0$. By the exterior regularity of the measure $\nu_\mu$, there exists an open set $O\subset T^1S$ containing $B$ such that $\nu_\mu(O)-\nu_\mu(B)<\varepsilon$. 
By the exterior regularity of the measure $\lambda_\mu$, there exists an open set $O'\subset S$ containing $B$ such that $\lambda_\mu(O')-\lambda_\mu(B)<\varepsilon$.
Let $U = O \cap O'$, then $0\leq \nu_{\mu}(U)-\nu_{\mu}(B)<\varepsilon$ and $0 \leq \lambda_\mu(U)-\lambda_\mu(B)<\varepsilon$.
Consider a non decreasing sequence $(\phi_n)_n$ of continuous maps that converges simply to $\1_U$ (obtained for example by playing with the distance to the complement of $U$). With the monotone convergence theorem, one gets
\[ \nu_\mu(U) = \int_{\textnormal{T}^1S} \1_U\dd\nu_\mu \underset{n\to+\infty}{\longleftarrow} \int_{\textnormal{T}^1S} \phi_n \dd \nu_\mu = \int_S \frac{\Mo\phi_n}{\vartheta_\mu} \dd\mu  \underset{n\to+\infty}{\longrightarrow}  \int_S \frac{\Mo \1_U}{\vartheta_\mu} \dd\mu = \lambda_\mu(U).\]
Hence, $|\nu_\mu(B)-\lambda(B)|<2 \varepsilon$ for any $\varep>0$, in other words
\[ \nu_\mu(B) = \lambda_\mu(B) = \int_S \Mo \1_B \dd\mu.\]

However, by hypothesis we have $\nu_\mu(A)\in (0,1)$; moreover for $\mu$-a.e.\ $x\in S$ one has $\dot\gamma_x \notin A$, hence (because $A$ is invariant under the geodesic flow) $(\Mo \1_A)(x)=0$ and so $\int_S \Mo \1_A \dd\mu = 0$. 
This is a contradiction with the above equality. 
\medskip

\textbf{Almost every tracking geodesic is included in the support of $\nu_{\mu}$:}

Let $A = \textnormal{supp}(\nu_{\mu})$. By the proof of ergodicity of $\nu_{\mu}$, we know that 
\[1 = \nu_{\mu}(A) = \lambda_\mu(A) = \frac{1}{\vartheta_{\mu}} \int_S (\Mo \1_A) (x) \dd \mu = \frac{1}{\vartheta_{\mu}} \int_S  \int_0^{T(x)} \1_A(\dgamma_x(t)) \dd t  \dd \mu. \]
Note that for every $x\in S$ we have $(\Mo \1_A) (x) \leq T(x)$, as we are integrating a function which is bounded by $1$. Furthermore, for every $x$, $(\Mo \1_A) (x)$ is either equal to 0 or to $T(x)$, because $A$ is invariant under the geodesic flow. We know by Remark~\ref{rem:limitofT_n}, that 
\[\frac{1}{\vartheta_{\mu}} \int_S T(x) \dd \mu = 1.\]
Given that these two integrals are equal, we obtain that for $\mu$-a.e.\ point, 
\[\int_0^{T(x)} \1_A(\dgamma_x(t)) \dd t = T(x),\]
from where we conclude that $\dgamma_x \in A$, as desired. 

Finally, let us check that \(\mu\)-a.e.\ \(x\), we have that \(\dgamma_x(\R)\) is dense in \( \textnormal{supp}(\nu_{\mu}) \). Take an open neighbourhood $V$ of a vector $v \in \textnormal{supp}(\nu_{\mu})$. Given $\nu_{\mu}(V)$ is positive, and that $\dgamma_x$ equidistributes to $\nu_{\mu}$, we have that $\dgamma_x$ must intersect $V$, which concludes the proof.
\end{proof}

\subsection{Minimal laminations for simple tracking geodesics}

We say a geodesic \(\gamma:\R \to S\) is \emph{simple} if it has no transverse self-intersections; that is: \(\gamma(t) = \gamma(s)\) implies \(\dgamma(t) = \dgamma(s)\). 

Recall that Theorem~\ref{thm:equidistributiontheoremintro} ensures that for \(\mu\)-a.e.\ \(x \in S\), we have
\[\textnormal{supp}(\nu_{\mu}) = \overline{\dgamma_x(\R)},\]

Moreover, again from Theorem~\ref{thm:equidistributiontheoremintro}, we get that \(\gamma_x\) is recurrent for \(\mu\)-a.e.\ \(x \in S\), by which we mean that for \(t\) there exists \(t_n \to +\infty\) such that
\[\dgamma_x(t) = \lim\limits_{n \to +\infty}\dgamma_x(t_n).\]

Theorem~\ref{thm:ifsimpletheoremintro} now follows immediately from:

\begin{lemma}[Recurrent simple geodesics yield minimal laminations]
  If \(\gamma:\R \to S\) is a recurrent simple geodesic then
  \[\Lambda = \overline{\gamma(\R)},\]
  is a minimal geodesic lamination.
\end{lemma}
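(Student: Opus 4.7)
The plan is to prove the statement in two stages: first that $\Lambda = \overline{\gamma(\R)}$ is a geodesic lamination, and then that it is minimal.

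\textbf{Lamination structure.} For each $y\in\Lambda$, I would construct a leaf through $y$ as follows. Write $y = \lim_n \gamma(t_n)$ and, using compactness of $\textnormal{T}^1 S$, extract a subsequence with $\dgamma(t_n)\to v$ for some $v$ whose basepoint is $y$. Let $\beta_y$ be the geodesic with $\dot\beta_y(0) = v$; continuous dependence of the geodesic flow on initial conditions yields $\dgamma(t_n + s)\to \dot\beta_y(s)$ uniformly on compact subsets of $\R$, so each $\beta_y(s)$ belongs to $\Lambda$ and hence $\beta_y(\R)\subset\Lambda$. To check that $\beta_y$ is simple and that two limit leaves $\beta_y,\beta_z$ are either disjoint or coincide, I would use stability of transverse intersections under $C^1$ approximation: a transverse self-intersection of $\beta_y$ at parameters $s_1\neq s_2$ would produce, by approximation, a transverse self-intersection of $\gamma$ on the arcs of parameters near $t_n+s_1$ and $t_n+s_2$, contradicting simplicity of $\gamma$. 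The same argument forbids transverse intersections between two distinct limit leaves, and a tangential intersection would force them to share a tangent vector and hence coincide as geodesics. This establishes that $\Lambda$ is a geodesic lamination.

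\textbf{Minimality.} Fix $y\in\Lambda$; only the inclusion $\gamma(\R)\subset\overline{\beta_y(\R)}$ is non-trivial. The set $\overline{\beta_y(\R)}$ is itself a closed sublamination of $\Lambda$ (by the argument above applied to $\beta_y$ in place of $\gamma$), so by a Zorn's lemma argument $\Lambda$ contains at least one closed sublamination $M$ that is minimal, in the sense that $\overline{\alpha(\R)} = M$ for every leaf $\alpha$ of $M$. I would then show $\gamma\subset M$; the conclusion follows at once because $\Lambda = \overline{\gamma(\R)}\subset M\subset\Lambda$ and $M$ is minimal. To see $\gamma\subset M$, suppose for contradiction that $\gamma$ escapes $M$. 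The lamination structure of $\Lambda$ then forces the ends of $\gamma$ to accumulate only on a proper closed sublamination $M'\subsetneq\Lambda$, so $\dist(\gamma(t),M')\to 0$ as $t\to+\infty$. But this is incompatible with the recurrence hypothesis $\dgamma(0) = \lim_n \dgamma(t_n)$ with $t_n\to+\infty$: taking limits forces $\dgamma(0)$ to lie in the lift of $M'$ to $\textnormal{T}^1 S$, and by flow-invariance the whole orbit of $\dgamma$ lies there too, giving $\Lambda = \overline{\gamma(\R)}\subset M'$, a contradiction.

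\textbf{Main obstacle.} The delicate point is the minimality step. Recurrence alone does not imply that an orbit closure is a minimal invariant set; a general recurrent orbit can have additional, non-recurrent ``asymptotic'' orbits in its closure. Simplicity of $\gamma$ must therefore be used essentially, through the restrictive local geometry of $\Lambda$ (pairwise disjoint leaves), to impose the dichotomy ``lies in a minimal sublamination'' versus ``ends accumulate on a proper sublamination''. Making this dichotomy rigorous at the level of an individual leaf, without simply invoking the general structure theory of geodesic laminations on closed hyperbolic surfaces, is where I expect the bulk of the technical work of the proof to lie; a clean way to carry it out is to use a small transversal arc $\tau$ to $\Lambda$ and analyze the first-return dynamics of $\gamma$ on $\tau\cap\Lambda$, turning recurrence into the density of the $\gamma$-orbit inside $\tau\cap\Lambda$, which transfers back to density of $\beta_y$ in $\Lambda$.
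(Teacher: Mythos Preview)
Your lamination-structure paragraph is fine and matches what the paper dismisses in one line as ``immediate''. The divergence is in the minimality step, and here your proposal is both different from the paper's argument and incomplete. You reduce minimality to the dichotomy ``either $\gamma$ lies in a minimal sublamination $M$, or its ends accumulate only on a proper sublamination $M'$'', and you are honest that justifying this dichotomy is where the work lies. Your transversal-arc sketch does not actually close the gap: density of the $\gamma$-returns in $\tau\cap\Lambda$ does follow from $\Lambda=\overline{\gamma(\R)}$, but transferring this to density of the $\beta_y$-returns for an arbitrary limit leaf $\beta_y$ is precisely the minimality statement you are after, so the argument is circular as written. The dichotomy you want is in fact a theorem (every leaf of a geodesic lamination on a closed hyperbolic surface either lies in a minimal sublamination or is isolated and spirals onto one), but proving it from scratch is essentially the structure theory you said you wanted to avoid.

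The paper sidesteps this entirely by quoting a criterion from Casson--Bleiler (Corollary~4.7.2 in their book): a geodesic lamination is minimal provided it is \emph{connected} and has \emph{no isolated leaf}. Connectedness is free, since $\Lambda$ is the closure of the connected set $\gamma(\R)$. For the second condition, if some $y\in\Lambda$ had a disk neighbourhood $U$ with $U\cap\Lambda$ a single arc, recurrence of $\gamma$ produces $t_n\to+\infty$ with $\gamma(t_n)\in U\cap\Lambda$, so all $\gamma(t_n)$ lie on the leaf through $y$; since $\gamma$ is simple this forces $\gamma$ to be periodic, which is handled separately. Thus the structural work you flagged as the main obstacle is outsourced to the cited reference, and what remains is a three-line verification rather than a Zorn/dichotomy argument.
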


\begin{proof}
The case where \(\gamma\) is periodic is obvious, so we assume from now on that \(\gamma\) is injective.

  It is immediate that \(\Lambda\) is a geodesic lamination. By \cite[corollary 4.7.2]{casson} to conclude that \(\Lambda\) is minimal it suffices to show that
  \begin{itemize}
    \item \(\Lambda\) is connected, and
    \item \(\Lambda\) has no isolated leaves.  
  \end{itemize}

  The first property is evident since the closure of a connected set is connected.

  We recall that the existence of an isolated leaf means there is some point \(y \in \Lambda\) with a neighborhood \(U\) homeomorphic to the unit disc \(\D\) in such a way that \(U \cap \Lambda\) is sent to the diameter \([-1,1]\) of \(\D\).   

  If such a pair \(y,U\) existed then, by recurrence of \(\gamma\), there would exist a sequence \(t_n \to +\infty\) such that \(\gamma(t_n) \in U\) for all \(n\). In particular one would have \(\gamma(t_n) \in \Lambda \cap U\) for all \(n\), and it follows that \(\gamma(t_n)\) belongs to the leaf of \(y\) in \(\Lambda\) for all \(n\). However, this would imply that \(\gamma\) is periodic.  Hence, \(\Lambda\) has no isolated leaves and therefore is minimal, concluding the proof.
\end{proof}

\section{Rotation vectors\label{sec:rotationsection}}

The purpose of this section is to relate the current work to the several different notions measuring rotation for homeomorphisms on closed hyperbolic surfaces.

\begin{itemize}
\item We give a necessary and sufficient condition for \(\M\) to be non-empty in terms of growth of the diameter of iterates of a fundamental domain (Proposition~\ref{prop:rotationcriteria}). This is the occasion to discuss the relation of tracking geodesics with the homotopical rotation vectors of \cite{pa}. 
\item We show that if there exists an ergodic measure with a non-zero homological rotation vector then this measure belongs to \(\M\) and its tracking geodesics ``converge'' to the corresponding homology vector (Proposition~\ref{prop:ErgMeasuresConstHomRotation}). 
\item Finally we prove in Proposition~\ref{prop:InterHomoImpliesInterGeod} that if two ergodic measures $\mu_1,\mu_2$ satisfy \(\rho_{H_1}(\mu_1) \wedge \rho_{H_1}(\mu_2)\neq 0 \), then for typical $x_1$ for $\mu_1$, $x_2$ for $\mu_2$, their tracking geodesics $\gamma_{x_1}$ and $\gamma_{x_2}$ intersect transversally, \emph{i.e.} are dynamically transverse, see Lemma~\ref{ClaimSpeedTransverseTraj} (moral: ``intersection in homology implies intersection of tracking geodesics''). 
\end{itemize}

Recall that the fundamental group $\Gamma$ of our surface $S$ acts by isometries on the universal cover $\tilde{S}$, in fact $\Gamma$ coincides with the group of deck transformations of $\tilde{S}$. Also, note that $\tilde{S}$ is a length space, i.e. the distance between two points can be defined as the infimum of lengths among curves joining those points.

\subsection{Condition for non-trivial rotation}

We consider a bounded fundamental domain \(D\subset \tilde S\) for the action of \(\Gamma\) on \(\tS \simeq \D\).  It is known that there exists at least one fixed point of \(\tf\) in \(D\) (this is Lefschetz fixed point theorem).  Therefore, if \(\M\) is non-empty then by Theorem~\ref{thm:trackinggeodesictheorem} the diameter of \(\tf^n(D)\) must grow linearly.  We show that this condition is in fact also sufficient for \(\M\) to be non-empty. This proposition was the initial motivation of the present work: it started from here!

\begin{proposition}\label{prop:rotationcriteria}
	Let \(D\) be a bounded fundamental domain for the action of \(\Gamma\) on \(\tS\).   If
	\[\limsup\limits_{n \to +\infty}\frac{1}{n}\diam(\tf^n(D)) = C > 0,\]
	then there exists \(\mu \in \M\) with \(\vartheta_\mu \ge C/2\). 
\end{proposition}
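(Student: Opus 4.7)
The overall strategy combines a Krylov--Bogolyubov construction with Kingman's subadditive ergodic theorem applied to the continuous, subadditive cocycle $L_n(x)=\tdist(\tx,\tf^n(\tx))$ (the inequality $L_{n+m}(x)\le L_n(x)+L_m(f^n(x))$ being immediate from the triangle inequality and the equivariance~\eqref{eq:preferedliftequation}). The first step is to translate the hypothesis on diameters into large displacements along a single orbit: choosing $n_k\to+\infty$ with $\diam(\tf^{n_k}(D))/n_k\to C$, there exist $\tilde x_k,\tilde y_k\in\overline D$ with $\tdist(\tf^{n_k}(\tilde x_k),\tf^{n_k}(\tilde y_k))\ge (C-o(1))n_k$, and since $\tilde x_k,\tilde y_k$ stay at bounded distance inside $\overline D$, the triangle inequality gives $L_{n_k}(x_k)+L_{n_k}(y_k)\ge (C-o(1))n_k-\diam(\overline D)$. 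After relabeling, this produces $x_k\in S$ with $\liminf_{k}L_{n_k}(x_k)/n_k\ge C/2$.

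Next, I form the empirical measures $\mu_k=\frac{1}{n_k}\sum_{j=0}^{n_k-1}\delta_{f^j(x_k)}$ and extract, by compactness of $S$, a weak-$*$ accumulation point $\mu$, which is $f$-invariant by the standard Krylov--Bogolyubov computation. The heart of the proof is a lower bound on $\int L_m\,\mathrm d\mu$ for each fixed $m\in\N$: setting $a=\lfloor n_k/m\rfloor$, the bijection $(i,j)\mapsto i+jm$ from $\{0,\dots,m-1\}\times\{0,\dots,a-1\}$ onto $\{0,\dots,am-1\}$ combined with subadditivity yields
\[\sum_{\ell=0}^{am-1}L_m(f^\ell(x_k))\ge \sum_{i=0}^{m-1}L_{am}(f^i(x_k)).\]
For each $i$, a further triangle inequality along the geodesic from $\tx_k$ to $\tf^{n_k}(\tx_k)$ gives $L_{am}(f^i(x_k))\ge L_{n_k}(x_k)-2mB$ with $B=\max L_1$, so the whole sum is bounded below by $m\,L_{n_k}(x_k)-2m^2B$. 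Dividing by $n_k$ and letting $k\to\infty$, the continuity of $L_m$ passes the inequality to the limit: $\int L_m\,\mathrm d\mu\ge m\cdot C/2$ for every $m\in\N$.

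To conclude, Kingman's subadditive ergodic theorem applied to $L_n$ and the invariant probability $\mu$ ensures that $\rho(x):=\lim_n L_n(x)/n$ exists $\mu$-a.e.\ and that $\int \rho\,\mathrm d\mu=\inf_m \frac{1}{m}\int L_m\,\mathrm d\mu\ge C/2$. Decomposing $\mu=\int \mu_\omega\,\mathrm d\tau(\omega)$ into ergodic components and using that $\rho$ is $\mu_\omega$-a.e.\ equal to the constant $\rho_{\mu_\omega}$ (Lemma~\ref{LemErgoRotSpeed}), one obtains $\int \rho_{\mu_\omega}\,\mathrm d\tau(\omega)\ge C/2$, so there exists an ergodic component with $\rho_{\mu_\omega}\ge C/2>0$, which therefore belongs to $\M$. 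The main obstacle is the middle paragraph: one must reverse the direction of the subadditive inequality by averaging over the $m$ possible starting phases of the block decomposition, and this is exactly what forces the loss of a factor two in the final bound $\rho_\mu\ge C/2$ rather than $C$.
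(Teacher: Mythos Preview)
Your proof is correct and follows essentially the same route as the paper's: pigeonhole on the two endpoints to get a single sequence with displacement $\ge (C/2)n_k$, Krylov--Bogolyubov, the phase-averaged block decomposition to lower-bound $\int L_m\,\mathrm d\mu$, and Kingman plus ergodic decomposition to extract a component with $\rho_\mu\ge C/2$. One small expository correction: the factor-two loss comes from the first step (only one of the two endpoints is guaranteed large displacement), not from the phase averaging, which passes the bound $L_{n_k}(x_k)\ge (C/2)n_k$ through without further loss.
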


In particular, for every \(x \in S_T\) with tracking geodesic \(\gamma\), each lift \(\tx\) yields a rotation vector \((\alpha,\omega,v)\) in the sense of \cite{pa} (see Definition~\ref{Def:RotVect}),
with \(\alpha,\omega\) the limit points in $\partial \tilde S$ of the lift \(\tgamma\) of the tracking geodesic satisfying the tracking Equation~\eqref{eq:trackingequation}, and \(v = \vartheta(x)\). Hence, Proposition~\ref{prop:rotationcriteria} gives a criteria under which there are rotation vectors of $\tf$ that are realized by a sequence \(\tx_k\) belonging to a single \(\tf\)-orbit.

\begin{proof}
	We let \(\delta = \diam(D)\) and consider sequences \(x_k,y_k \in D\) and \(n_k \to +\infty\) such that
	\[\lim\limits_{k \to +\infty}\frac{1}{n_k}\tdist\big(\tf^{n_k}(\tx_k),\tf^{n_k}(\ty_k)\big) = C.\]
	
	By the triangle inequality we have
	\begin{align*}
		\tdist(\tf^{n_k}(\tx_k),\tf^{n_k}(\ty_k)) 
		&\le \tdist\big(\tf^{n_k}(\tx_k),\tx_k\big) + \tdist\big(\tx_k,\ty_k\big) + \tdist\big(\ty_k,\tf^{n_k}(\ty_k)\big) \\
		&\le \delta +   \tdist\big(\tf^{n_k}(\tx_k),\tx_k\big) + \tdist\big(\ty_k,\tf^{n_k}(\ty_k)\big),
	\end{align*}
	so (possibly exchanging \(x_k\) and \(y_k\) and taking a subsequence) we may assume without loss of generality that
	\begin{equation}\label{Eq:HypCroiss}
		d(\tx_k, \tf^{n_k}(\tx_k)) \ge \frac{C}{2}n_k-\delta,
	\end{equation}
	for all \(k\) large enough.
	
	Now, define 
	\[\mu_k = \frac{1}{n_k+1}\sum_{i=0}^{n_k} \delta_{f^i(x_k)}.\]
	By taking a subsequence if necessary, one can suppose that $\lim_{k \to +\infty}\mu_k = \mu$ for the weak-* topology. 
	
	It is immediate that \(\mu\) is \(f\)-invariant. From Kingman's subadditive ergodic theorem, for \(\mu\)-almost every \(x \in S\), one has ($L_n$ is defined in \eqref{eq:rotationdistanceequation})
	\[\vartheta(x) = \lim\limits_{n \to +\infty}\frac{1}{n}L_n(x),\]
and by dominated convergence theorem it follows that
\begin{equation}\label{eq:consDominated}
\int_{S} \vartheta(x) \dd\mu(x) = \lim\limits_{n \to +\infty}\frac{1}{n}\int_{S} L_n(x) \dd\mu(x).
\end{equation}
	
	We claim that
	\[\lim\limits_{n \to +\infty}\frac{1}{n}\int_{S}L_n(x) \dd\mu(x) \ge \frac{1}{2}C.\]
	From this it follows that some ergodic component of \(\mu\) yields the desired element of \(\M\) with rotation speed at least \(\frac{1}{2}C\).
	
	To prove the claim we observe that since \(L_m\) is continuous for each \(m\) we have 
	\begin{align*}
		\int_S L_m(x)\dd\mu(x) &= \lim\limits_{k \to +\infty}\int_S L_{m}(x)\dd \mu_k(x) 
		\\ &= \lim\limits_{k \to +\infty}\frac{1}{n_k+1}\int_S \tdist(\tx,\tf^m(\tx)) \dd\left(\sum_{i=0}^{n_k} \delta_{f^i(x_k)}\right)(x)
		\\ & = \lim\limits_{k \to +\infty}\frac{1}{n_k+1} \sum_{i=0}^{n_k} \tdist\big(\tf^i(\tx_k),\tf^{i+m}(\tx_k)\big).
	\end{align*}
	
	We separate the sum on the right into disjoint arithmetic progressions in the iterates of common difference $m$, plus a remainder which will be bounded:
	\[\int_S L_{m}(x)\dd \mu_k(x) 
	= \frac{1}{n_k+1} \left(\left(\sum_{j=0}^{m-1}\sum_{\ell=0}^{\lfloor n_k/m\rfloor-1} \tdist\big(\tf^{j+\ell m}(\tx_k),\tf^{j+(\ell+1) m}(\tx_k)\big)\right)-R_m\right),\]
	where $|R_m| \le m\max L_m\). The last sum, for it part, satisfies, with $|R'_m|\le 2\max_{1\le i\le m}L_i$ (by triangle inequality, the last inequality comes from \eqref{Eq:HypCroiss}),
	\begin{align*}
		\sum_{\ell=0}^{\lfloor n_k/m\rfloor-1} \tdist\big(\tf^{j+\ell m}(\tx_k),\tf^{j+(\ell+1) m}(\tx_k)\big) 
		& \ge \tdist\big(\tf^{j}(\tx_k),\tf^{j+\lfloor n_k/m\rfloor m}(\tx_k)\big) \\
		& \ge \tdist\big(\tx_k,\tf^{n_k}(\tx_k)\big) - R'_m\\
		& \ge \frac{C}{2} n_k-\delta - R'_m,
	\end{align*}
	
	Finally we obtain,
	\begin{align*}
		\int_{S}L_m(x)\dd\mu(x) &= \lim\limits_{k \to +\infty}\int_S L_{m}(x)\dd \mu_k(x)
		\\ & \ge \liminf\limits_{k \to +\infty}\left(\frac{1}{n_k+1} \sum_{j=0}^{m-1}\Big(\frac{C}{2} n_k-\delta - R'_m\Big)-\frac{R_m}{n_k+1}\right)
		\\ & \ge \liminf\limits_{k \to +\infty}\frac{m\frac{C}{2} n_k-m\delta - mR'_m-R_m}{n_k+1} = \frac{Cm}{2},
	\end{align*}
hence, by \eqref{eq:consDominated},
\[\vartheta_\mu = \int_{S} \vartheta(x) \dd\mu(x) \ge \frac{C}{2},\]
	which concludes the proof.
\end{proof}

\subsection{Homological rotation vectors and tracking geodesics}

We now relate tracking geodesics to the homological rotation vectors of \cite{MR88720} and \cite{pollicott}.

\begin{definition}
	Fix a bounded fundamental domain $D$. For every curve $\tilde{\gamma}: [0,t] \to \tS$, we define its \emph{homological class} $[\tgamma]_D = [a]$, where $a \in \Gamma$ is the only element such that $ba^{-1} \tgamma(t), b\tgamma(0) \in D$, for some other $b \in \Gamma$.     
\end{definition}

While this definition relies on the chosen fundamental domain, it is roughly independent for long curves: 

\begin{remark}\label{Rem:indpFond}
	Fix two bounded fundamental domains $D_1, D_2$. For every $\varepsilon > 0$, there exists $L > 0$ such that for any geodesic $\tgamma$ with length greater than $L$, we have that $$\frac{[\tgamma]_{D_1} - [\tgamma]_{D_2}}{L} < \varepsilon.$$ 
\end{remark}

This implies that the following notion does not depend on the choice of fundamental domain:

\begin{definition}[Homological class for geodesics]
	Given a parametrized geodesic $\tilde{\gamma} \subset \tS$ we define its \emph{homological class} $[\tgamma]$ as (when the limits are well defined)
	$$ [\tgamma] = \lim \limits_{t \to +\infty} \frac{1}{t} [\tgamma(t)] = \lim \limits_{t \to +\infty} \frac{1}{t}[\tgamma|_{[0,t]}],$$
	where the second equality will be a notation convention. 
\end{definition}

Proofs for the following lemma can be found in \cite[p.~140]{bridson}, and \cite{brodskiy}.

\begin{lemma}[\v{S}varc-Milnor Lemma]
	Let $\Gamma$ be a group which acts properly and cocompactly by isometries on a proper length space $X$. Then, $\Gamma$ is finitely generated (and inherits from it a metric by wordlength that is canonical up to quasi-isometry) and for every $x \in X$, the function $\gamma \mapsto \gamma x$ is a quasi-isometry.
\end{lemma}

\begin{proposition}[Ergodic measures' homological rotation is a.e.\ constant]\label{prop:ErgMeasuresConstHomRotation}
If \(\mu\) is an ergodic invariant measure for which \(\rho_{H_1}(\mu) \neq 0\), then \(\mu \in \M\) and furthermore
\[\rho_{H_1}(\mu) = [\tgamma_{\tx}],\]
for \(\mu\)-almost every \(x \in S\), where \(\tgamma_\tx\) is the lift of the tracking geodesic \(\gamma_x\) which satisfies \eqref{eq:trackingequation}.
\end{proposition}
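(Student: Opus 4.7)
The plan rests on the Švarc–Milnor lemma as the bridge between the metric geometry of $(\tS,\tdist)$ and the word geometry of the group $\Gamma$ and its abelianization $\Gamma'$.

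First, to show $\mu\in\M$, I would exploit the cocycle coming from Equation~\eqref{eq:preferedliftequation}: a direct induction gives
\[
\tf^n(\tx) = A_n(x)\cdot \widetilde{f^n(x)}, \qquad A_n(x) := a_x\, a_{f(x)}\cdots a_{f^{n-1}(x)}\in\Gamma,
\]
with $\tx,\widetilde{f^n(x)}\in D$. The image $[A_n(x)]=\sum_{k=0}^{n-1}[a_{f^k(x)}]$ in $\Gamma'$ is precisely the Birkhoff sum appearing in Equation~\eqref{eq:homologyequation}, so by Birkhoff's theorem $[A_n(x)]/n\to \rho_{H_1}(\mu)\neq 0$ for $\mu$-a.e.\ $x$. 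Endowing $\Gamma$ with a word metric and $\Gamma'$ with the quotient norm, the abelianization map $\Gamma\to\Gamma'$ is $1$-Lipschitz, so the word length of $A_n(x)$ in $\Gamma$ grows at least linearly in $n$. The Švarc–Milnor lemma then yields $\tdist(\tx,\tf^n(\tx))=\tdist(0,A_n(x)\cdot 0)+O(\diam(D))\geq cn$, whence $L_n(x)/n$ is eventually bounded below by a positive constant and Lemma~\ref{LemErgoRotSpeed} gives $\rho_\mu>0$.

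Second, to identify $[\tgamma_\tx]$ with the homological rotation vector: by step one $\mu\in\M$, so Theorem~\ref{thm:trackinggeodesictheorem} furnishes a normalized tracking geodesic $\tgamma_\tx$ with $\tdist(\tf^n(\tx),\tgamma_\tx(n\rho_\mu)) = o(n)$. Let $h_n\in\Gamma$ be the unique element with $\tgamma_\tx(n\rho_\mu)\in h_n\cdot D$; combining the tracking estimate with $\diam(D)<\infty$ yields $\tdist(0, A_n(x)^{-1}h_n\cdot 0) = o(n)$. Švarc–Milnor and the $1$-Lipschitz abelianization then give $[A_n(x)]-[h_n]=o(n)$ in $\Gamma'$, so $[h_n]/n\to\rho_{H_1}(\mu)$. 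On the other hand $[\tgamma_\tx|_{[0,n\rho_\mu]}]_D$ differs from $[h_n]$ only by the (fixed) deck element of $\tgamma_\tx(0)$; since the map $t\mapsto [\tgamma_\tx|_{[0,t]}]_D$ has bounded oscillation on unit intervals (again by Švarc–Milnor), the defining limit of $[\tgamma_\tx]$ can be tested along the arithmetic subsequence $t_n=n\rho_\mu$, and the stated identity follows after the rescaling relating arc-length parameter to iteration time.

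The most delicate bookkeeping lies in verifying that the errors coming from (i) the tracking inequality, (ii) the additive distortion in Švarc–Milnor, and (iii) the $\diam(D)$ correction for returning points to the fundamental domain all survive as $o(n)$ after being pushed through the abelianization; once this is under control, the two convergences $[A_n(x)]/n\to \rho_{H_1}(\mu)$ and $[h_n]/n\to\rho_\mu[\tgamma_\tx]$ glue together via the tracking relation, yielding both conclusions of the proposition.
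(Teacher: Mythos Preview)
Your approach is essentially identical to the paper's: both parts rest on the \v{S}varc--Milnor quasi-isometry between $(\tS,\tdist)$ and the word metric on $\Gamma$, composed with the $1$-Lipschitz abelianization, and the second part feeds the tracking estimate from Theorem~\ref{thm:trackinggeodesictheorem} through this same machinery. Your writeup is in fact more explicit than the paper's (which compresses the whole second step into a single displayed line), and you correctly isolate the rescaling between the arc-length parameter on $\tgamma_\tx$ and the iteration index $n$ --- a normalization by $\rho_\mu$ that the paper's terse proof glosses over but which your computation $[h_n]/n\to\rho_\mu[\tgamma_\tx]$ makes visible; this factor is harmless for every downstream use (only the \emph{direction} of $[\tgamma_\tx]$ in $H_1$ matters).
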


\begin{proof}
Let $x$ be a typical point for $\mu$. Recall that $a_y$ was defined as the element of $\Gamma$ satisfying $a_y^{-1}\tf(\ty)\in D$.
By \v{S}varc-Milnor Lemma, the distance $\tdist(\tx,\tf^n(\tx))$ is, up to a constant, at least the word length of \(a_{f^{n-1}(x)}\cdots a_{x}\) with respect to some fixed finite symmetric generating set of \(\Gamma\). This is bounded from below by the norm of $[a_{f^{n-1}(x)}\dots a_{x}]$ which is roughly \(n\|\rho_{H_1}(\mu)\|\) by \eqref{eq:homologyequation}.
	
	Hence, we have
\[\vartheta_\mu = \lim\limits_{n \to +\infty}\frac{1}{n}\tdist(\tx, \tf^n(\tx)) \ge C\|\rho_{H_1}(\mu)\|\]
for some $C>0$, from which we deduce that \(\mu \in \M\).
	
	By Theorem~\ref{thm:trackinggeodesictheorem}, if \(x\) is typical with respect to \(\mu\), then \(\gamma_x\) has a lift \(\tgamma_x\) satisfying the tracking Equation~\eqref{eq:trackingequation}.
	It follows that
	\[\lim\limits_{t \to +\infty}\frac{1}{t}\left[\tgamma_x(t)\right] = \lim\limits_{n \to +\infty}\frac{1}{n}\left(\left[a_{f^{n-1}(x)}\dots a_{x}\right] + o(n)\right) = \rho_{H_1}(\mu),\]
	as claimed.
\end{proof}

\begin{proposition}\label{prop:InterHomoImpliesInterGeod}
	If \(\mu_1,\mu_2\) are ergodic invariant measures for which \(\rho_{H_1}(\mu_1) \wedge \rho_{H_1}(\mu_2)\neq 0 \), then for typical $x_1$ for $\mu_1$, $x_2$ for $\mu_2$, their tracking geodesics $\gamma_{x_1}$ and $\gamma_{x_2}$ intersect transversally.  
\end{proposition}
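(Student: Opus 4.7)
The plan is to use Proposition~\ref{prop:ErgMeasuresConstHomRotation} to transfer the homological intersection hypothesis to the tracking geodesics themselves, and then force a geometric intersection by a closing-up argument on long pieces of them.

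By Proposition~\ref{prop:ErgMeasuresConstHomRotation}, for $\mu_i$-typical $x_i$ the tracking geodesic $\gamma_{x_i}$ has a well-defined homological class $[\gamma_{x_i}]$ equal (up to a positive scalar) to $\rho_{H_1}(\mu_i)$, so the hypothesis becomes $[\gamma_{x_1}]\wedge[\gamma_{x_2}]\neq 0$. Two distinct geodesics on a hyperbolic surface cannot meet tangentially---any tangency at a point would lift to two geodesics of $\tS$ with a common tangent vector, forcing them to coincide---so it suffices to prove that the images of $\gamma_{x_1}$ and $\gamma_{x_2}$ in $S$ meet. If these images coincided as sets, the classes $[\gamma_{x_i}]$ would be proportional and their wedge would vanish; hence we may assume the two geodesics are distinct and argue by contradiction, supposing $\gamma_{x_1}(\R)\cap\gamma_{x_2}(\R)=\varnothing$.

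For $T$ large, set $\alpha_i=\gamma_{x_i}|_{[0,T]}$ and close $\alpha_i$ into a loop $c_i$ by concatenating a shortest geodesic segment $\beta_i$ in $S$ from $\gamma_{x_i}(T)$ back to $\gamma_{x_i}(0)$; by compactness of $S$, $\len(\beta_i)\le\diam(S)$. From the definition of the homological class of a geodesic and the bounded contribution of $\beta_i$, we expect
\[
[c_i] \;=\; T\,[\gamma_{x_i}] + o(T),
\]
and hence by bilinearity of the intersection form,
\[
[c_1]\wedge[c_2] \;=\; T^{2}\,\bigl([\gamma_{x_1}]\wedge[\gamma_{x_2}]\bigr) + o(T^{2}),
\]
so $|[c_1]\wedge[c_2]|\ge cT^{2}$ for some $c>0$ and all $T$ large. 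After a small perturbation putting them in general position, the usual bound $|[c_1]\wedge[c_2]|\le |c_1\cap c_2|$ applies. Since $\alpha_1\cap\alpha_2=\varnothing$ by hypothesis, every intersection point of $c_1$ and $c_2$ involves at least one of the bounded-length closing arcs. A lift-counting argument in $\tS\cong\Hy^2$---using that two distinct geodesics of $\Hy^2$ meet in at most one point, and that the $\diam(S)$-tubular neighborhood of a geodesic of length $T$ has hyperbolic area $O(T)$, hence contains only $O(T)$ $\Gamma$-translates of a short curve---yields $|\alpha_i\cap\beta_j|=O(T)$ and $|\beta_1\cap\beta_2|=O(1)$. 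Thus $|c_1\cap c_2|=O(T)$, contradicting the lower bound $|[c_1]\wedge[c_2]|\ge cT^{2}$ as soon as $T$ is large enough. This contradicts the assumption $\gamma_{x_1}(\R)\cap\gamma_{x_2}(\R)=\varnothing$, and by the preceding tangency discussion the intersection is transverse.

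The main obstacle will be carefully justifying $[c_i]=T[\gamma_{x_i}]+o(T)$: it combines the definitional limit $[\tilde\gamma_{\tilde x_i}(T)]/T\to[\gamma_{x_i}]$ in $H_1(S,\R)$ with the $O(1)$ correction coming from the bounded closing arc $\beta_i$, which is controlled by a \v{S}varc--Milnor-type quasi-isometry between $\Gamma$ and $\tS$. The linear bound $|\alpha_i\cap\beta_j|=O(T)$ is secondary but requires some bookkeeping of $\Gamma$-translates of $\tilde\beta_j$ inside a tube around $\tilde\alpha_i$.
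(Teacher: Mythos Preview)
Your proof is correct and follows a route similar in spirit to the paper's, but with genuine differences worth noting.

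Both arguments close up long pieces of tracking geodesics and compare homological to geometric intersection. The paper closes up only $\gamma_{x_1}$, and does so using the recurrence of $\dot\gamma_{x_1}$ in $\textnormal{T}^1S$ furnished by Theorem~\ref{thm:equidistributiontheoremintro}, returning to a short transversal arc $I$; it then lets a piece of $\gamma_{x_2}$ grow, bounds its intersections with the short closing arc $I_k$ linearly, and concludes that $\gamma_{x_1}|_{[0,t_k]}$ must meet $\gamma_{x_2}$ at a positive linear rate. Your approach is more symmetric: you close up both geodesics with shortest segments $\beta_i$ of length $\le\diam(S)$ and contrast the quadratic growth $[c_1]\wedge[c_2]\sim T^{2}\bigl([\gamma_{x_1}]\wedge[\gamma_{x_2}]\bigr)$ with the linear bound $|c_1\cap c_2|=O(T)$ coming from the hyperbolic tube--area estimate. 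Your argument is slightly more self-contained in that it does not invoke the equidistribution theorem, relying only on Proposition~\ref{prop:ErgMeasuresConstHomRotation}; on the other hand the paper's version yields the stronger quantitative conclusion (linear crossing rate) rather than bare existence of a crossing.

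One small remark on the bookkeeping you flag at the end: under your contradiction hypothesis $\gamma_{x_1}(\R)\cap\gamma_{x_2}(\R)=\varnothing$, the geodesic segment $\beta_j$ cannot lie on the geodesic $\gamma_{x_i}$, since otherwise its endpoints $\gamma_{x_j}(0),\gamma_{x_j}(T)\in\gamma_{x_j}(\R)$ would also lie on $\gamma_{x_i}(\R)$. Hence every lift of $\beta_j$ meets $\tilde\alpha_i$ in at most one point, and the $O(T)$ bound on $|\alpha_i\cap\beta_j|$ follows directly from the area count without any perturbation.
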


\begin{proof}
	Let us take a small geodesic arc $I$ which has $\gamma_{x_1}(0)$ in its interior and is transverse to $\gamma_{x_1}$. We moreover suppose that $I$ is not included in $\gamma_{x_2}$. From Theorem~\ref{thm:equidistributiontheoremintro} we have that for typical points, the geodesic $\dot{\gamma}_{x_1} \subset \textnormal{T}^1S$ is recurrent. Then, there exists a sequence $\{t_k\}_{k \in \N}$ of growing-to-infinity times, such that $\lim_{k \to +\infty} \dgamma_{x_1}(t_k) = \dgamma_{x_1}(0)$, and $\gamma_{x_1}(t_k) \in I$. 

Up to taking a subsequence we may complete each open geodesic $\gamma_{x_1} |_ {[0, t_k]}$ with a small arc $I_k \subset I$ of length less than $\frac{1}{2^k}$ (with speed near $\vartheta_{\mu_1}$), to obtain a simple closed curve $\gamma^1_k$. Let us first check that, setting
\[\lim_{t \to +\infty} \frac{1}{t}[\gamma^1_k|_{[0,t]}] := \rho_{H_1}(\gamma^1_k),\]
we have
\begin{equation*}\label{eq:ClosedGeodesicConvergeToTracking}
	\lim \limits_{k \to +\infty} \rho_{H_1}(\gamma^1_k)  = \lim \limits_{t \to +\infty} \frac{1}{t}[\gamma_{x_1}(t)] =  \rho_{H_1}(\mu_1).	
\end{equation*}
Indeed, the last of the equalities is true by Proposition~\ref{prop:ErgMeasuresConstHomRotation}. For the first one, parametrizing $\gamma^1_k$ by arclenght, we have
\[\rho_{H_1}(\gamma^1_k) = \frac{1}{t_k+\len(I_k)}[\gamma^1_k|_{[0,t_k+\len(I_k)]}] = \frac{[\gamma^1_k|_{[0,t_k]}]}{t_k+\len(I_k)} + \frac{[\gamma^1_k|_{[t_k,t_k+\len(I_k)]}]}{t_k+\len(I_k)}.\]
The numerator of the second fraction is bounded, as $\len(I_k)$, hence
\[\lim_{k \to +\infty}\rho_{H_1}(\gamma^1_k) = \lim_{k \to +\infty}\frac{[\gamma^1_k|_{[0,t_k]}]}{t_k} = \lim_{k \to +\infty}\frac{[\gamma_{x_1}|_{[0,t_k]}]}{t_k} = \rho_{H_1}(\mu_1).\]

This implies that for any $k$ large enough, we have
\[\big|\rho_{H_1}(\gamma_k^1)\wedge \rho_{H_1}(\mu_2)\big|\ge \frac{\big|\rho_{H_1}(\mu_1)\wedge \rho_{H_1}(\mu_2)\big|}{2} := \frac{r}{2}>0.\]	
Hence, decomposing $\gamma^1_k$ into a piece of $\gamma_{x_1}$ and $I_k$, one gets:
\[\liminf_{t\to+\infty}\frac{\Big| [\gamma_{x_1}|_{[0,t_k]}]\wedge [\gamma_{x_2}|_{[0,t]}]\Big| + \Big| I_k \wedge [\gamma_{x_2}|_{[0,t]}]\Big|}{t(t_k+\len(I_k))}
\ge \lim_{t\to+\infty}\frac{\Big| [\gamma_k^1|_{[0,t_k+\len(I_k)]}]\wedge [\gamma_{x_2}|_{[0,t]}]\Big|}{t(t_k+\len(I_k))}
\ge \frac{r}{2}.\]	
However, there exists some $r'>0$ such that, for any $t\ge 1$ and any $k\in\N$,
\[\big|I_k \wedge [\gamma_{x_2}|_{[0,t]}]\big| \le r't\]
(because the return time of $\gamma_{x_2}$ to $I_k$ is bounded from below by the geometry of the surface). This implies that
\[\liminf_{t\to+\infty}\frac{\Big| [\gamma_{x_1}|_{[0,t_k]}]\wedge [\gamma_{x_2}|_{[0,t]}]\Big|}{t(t_k+\len(I_k))}
\ge \frac{r}{2}-\frac{r'}{t_k+\len(I_k)}.\]	
Hence, for $k$ large enough we have 
\[\liminf_{t\to+\infty}\frac{\Big| [\gamma_{x_1}|_{[0,t_k]}]\wedge [\gamma_{x_2}|_{[0,t]}]\Big|}{t(t_k+\len(I_k))}>0,\]
which implies that $\gamma_{x_2}$ intersects $\gamma_{x_1}|_{[0,t_k]}$ transversely at a linear rate, which concludes the proof. 	
\end{proof}

\begin{coro}
	Let $\mu_1, \mu_2 \in \M$ such that their typical tracking geodesics belong to the same geodesic lamination. Then we have $\rho_{H_1}(\mu_1) \wedge \rho_{H_1}(\mu_2) = 0$.
\end{coro}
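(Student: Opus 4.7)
The plan is to argue by contrapositive, combining Proposition~\ref{prop:InterHomoImpliesInterGeod} with the defining property of a geodesic lamination. Suppose that $\rho_{H_1}(\mu_1) \wedge \rho_{H_1}(\mu_2) \neq 0$. Then in particular both homological rotation vectors are nonzero, so by Proposition~\ref{prop:ErgMeasuresConstHomRotation} we have $\mu_1,\mu_2 \in \M$ (already assumed here) and Proposition~\ref{prop:InterHomoImpliesInterGeod} applies: there exist $\mu_1$-typical $x_1$ and $\mu_2$-typical $x_2$ such that their normalized tracking geodesics $\gamma_{x_1}$ and $\gamma_{x_2}$ intersect transversally in $S$.

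On the other hand, the hypothesis of the corollary says that $\mu_i$-typical tracking geodesics lie in a common geodesic lamination $\Lambda \subset S$. By the very definition of a geodesic lamination recalled before Theorem~\ref{thm:ifsimpletheoremintro}, two leaves of $\Lambda$ are either disjoint or coincide; in particular, no two leaves of $\Lambda$ can cross transversally. Hence $\gamma_{x_1}$ and $\gamma_{x_2}$, being contained in $\Lambda$, cannot intersect transversally, contradicting the previous paragraph. This forces $\rho_{H_1}(\mu_1) \wedge \rho_{H_1}(\mu_2) = 0$.

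The only subtlety worth checking is that Proposition~\ref{prop:InterHomoImpliesInterGeod} produces a transverse intersection for \emph{$\mu_i$-typical} points $x_i$, while the hypothesis of the corollary controls the tracking geodesics only on a full measure subset for each $\mu_i$. This is not an obstacle: we choose $x_1$ and $x_2$ inside the intersection of the full measure sets given by the proposition and by the hypothesis (both are of full measure for the corresponding $\mu_i$, hence nonempty). The rest is then immediate, and no further estimate is needed.
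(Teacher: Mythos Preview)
Your proof is correct and follows exactly the approach the paper intends: the corollary is stated immediately after Proposition~\ref{prop:InterHomoImpliesInterGeod} with no proof, precisely because it is the contrapositive combined with the definition of a geodesic lamination. Your additional remark about intersecting the full-measure sets is a harmless clarification.
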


\section{Intersection of tracking geodesics --- proof of Theorem~\ref{thm:intertrackingimpliesentropy}\label{sec:forcing2}}

The aim of this section is to prove Theorem~\ref{thm:intertrackingimpliesentropy} of the introduction by means of the so-called \emph{forcing theory} of Le Calvez and Tal \cite{lecalveztalforcing,lct2,guiheneufforcing}. The proof strategy is inspired by the works of Lellouch \cite{lellouch} and Guihéneuf and Militon \cite{pa}. As premilimaries, we first introduce the notion of rotational topological horseshoe and some statements of forcing theory.

\subsection{Preliminaries on rotational horseshoes}

The following notions are thoroughly explained in \cite{zbMATH06864334}, \cite{lct2} and \cite[Section 9.2]{pa}. 

Given a homeomorphism $f$ of a surface $S$, we will say that $g$ is an \emph{extension of $f$} (and equivalently, we will also say that $f$ is a \emph{factor of $g$}) if there is a semiconjugacy from $g$ to $f$. We will say we  have a $m$-finite extension, when the fibres of the factor map are finite with cardinality uniformly bounded by $m\in\N$. 

\begin{definition}
	We will say a compact connected set $R \subset S$ is a \emph{rectangle} if it is homeomorphic to $[0,1]^2$ by a homeomorphism $h : [0,1]^2 \to h([0,1]^2) \subset S$. We will call \textit{sides} of $R$ the image of the sides of $[0,1]^2$ by $h$, the \textit{horizontal sides} being $R^- = h([0,1]\times \{0\})$ and $R^+ = h([0,1]\times \{1\})$. 
\end{definition}

\begin{definition}\label{def:MarkovianIntersection}
	Let $R_1, R_2 \subset S$ be two rectangles. We will say that $R_1 \cap R_2$ is a \textit{Markovian intersection} if there exists a homeomorphism $h$ from a neighbourhood of $R_1 \cup R_2$ to an open subset of $\R^2$, such that
	\begin{itemize}
		\item $h(R_2) = [0,1]^2$;
		\item Either $h(R_1^+) \subset \{(x,y) \in \R^2 : y > 1\}$ and $h(R_1^-) \subset \{(x,y) \in \R^2 : y < 0\}$; or $h(R_1^+) \subset \{(x,y) \in \R^2 : y < 0\}$ and $h(R_1^-) \subset \{(x,y) \in \R^2 : y > 1\}$;	
		\item $h(R_1) \subset \{(x,y) \in \R^2 \mid y > 1\} \cup [0,1]^2 \cup \{(x,y) \in \R^2 \mid y < 0\}$.
	\end{itemize}
\end{definition}

\begin{definition}[Rotational Horseshoe \cite{pa}]\label{def:rotationalhorseshoe}
	Let $f:S \to S$ be a homeomorphism of a hyperbolic surface $S$. We say that $f$ has a \emph{rotational horseshoe} associated to the covering transformations $U_1, \dots, U_r$ if there exists a rectangle $R \subset \tilde S$ and $j\ge 1$ such that for any $1 \le i \le r$, the intersections $\tilde f^j (R) \cap U_i(R)$ are Markovian. The set
\[X = \bigcap_{n\in\Z} f^{nj}(R)\]
is the rotational horseshoe and $j$ is its \emph{period}.
\end{definition}

The proof of the following result can be found in \cite{pa}.

\begin{proposition}\label{prop:topologicalhorseshoe}
Let $f \in \textnormal{Homeo}(S)$. Suppose that $f$ has a rotational horseshoe $X$ of period $j$ with associated covering transformations $U_1, \dots , U_r$ (with $r\ge 2$), which form a free group. Then
\begin{itemize}
	\item the map $f^j |_X$ has an $m$-finite extension $g: Y \to Y$, which on its turn is an extension of the Bernoulli shift $\sigma: \{1,\dots,r\}^{\mathbb{Z}} \to \{1,\dots,r\}^{\mathbb{Z}}$ in $r$ symbols.
In particular, the following diagram commutes:

\begin{center}
\begin{tikzpicture}[scale=1]
\node (A) at (0,0) {$X$};
\node (B) at (3,0) {$X$};
\node (C) at (0,1.5) {$Y$};
\node (D) at (3,1.5) {$Y$};
\node (E) at (0,3) {$\{1,\dots,r\}^\Z$};
\node (F) at (3,3) {$\{1,\dots,r\}^\Z$};

\draw[->,>=latex,shorten >=3pt, shorten <=3pt] (A) to node[midway, above]{$f^j$} (B);
\draw[->,>=latex,shorten >=3pt, shorten <=3pt] (C) to node[midway, above]{$g$} (D);
\draw[->,>=latex,shorten >=3pt, shorten <=3pt] (E) to node[midway, above]{$\sigma$} (F);
\draw[->,>=latex,shorten >=3pt, shorten <=3pt] (C) to node[midway, left]{$h$} (E);
\draw[->,>=latex,shorten >=3pt, shorten <=3pt] (D) to node[midway, right]{$h$} (F);
\draw[->,>=latex,shorten >=3pt, shorten <=3pt] (C) to node[midway, left]{$\pi$} (A);
\draw[->,>=latex,shorten >=3pt, shorten <=3pt] (D) to node[midway, right]{$\pi$} (B);
\end{tikzpicture}
\end{center}

\item The preimage of any $\sigma$-periodic sequence by the factor map $h : Y \to \{1,\dots,r\}^{\mathbb{Z}}$ contains a periodic point of $g$.  
\end{itemize}

\end{proposition}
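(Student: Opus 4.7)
The plan is to build the extension by coding $\tf$-orbits through their itineraries across translates of a lifted rectangle, with the Markovian intersection hypothesis providing both surjectivity onto the full shift and the existence of periodic points via a Brouwer-type fixed-point argument in the universal cover. Fix a lift $\widetilde R \subset \tS$ of $R$. The hypothesis is that $\tf(\widetilde R) \cap U_i(\widetilde R)$ is a Markovian crossing for each $i = 1,\dots,r$. Iterating this equivariantly under $\Gamma$, each finite cylinder
\[
\widetilde R_{[\omega_{-n},\dots,\omega_{n-1}]} = \bigcap_{k=-n}^{n-1} \tf^{-k}\bigl(U_{\omega_0}U_{\omega_1}\cdots U_{\omega_{k-1}}(\widetilde R)\bigr)
\]
is a nonempty sub-rectangle of $\widetilde R$, so by nested compactness the full intersection $\widetilde C_\omega$ is nonempty for every $\omega \in \{1,\dots,r\}^\Z$. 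I would set $Y = \{(\tx,\omega) : \tx \in \widetilde C_\omega\}$ with the natural topology, $h : Y \to \{1,\dots,r\}^\Z$ the projection to the second coordinate, $X$ the image of $\bigcup_\omega \widetilde C_\omega$ under the covering map, and $\pi : Y \to X$ the induced projection.

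An iterate $j$ is chosen so that the coding closes up invariantly on $X$ under $f^j$; then $f^j(X) = X$, and the equivariant formula $g(\tx,\omega) = (U_{\omega_0}^{-1}\tf^j(\tx), \sigma^j \omega)$ defines a continuous self-map of $Y$ satisfying $h \circ g = \sigma^j \circ h$ and $\pi \circ g = f^j \circ \pi$, so the required diagram commutes. The freeness of $\langle U_1, \dots, U_r\rangle$ is the crucial ingredient for $m$-finiteness: distinct reduced words act as distinct isometries on $\widetilde R$, so two sequences $\omega \neq \omega'$ can only be identified at boundary points of cylinder rectangles, and since the Markovian crossings happen in a bounded region, only finitely many translates of $\widetilde R$ intersect any given cylinder; this yields a uniform bound $m$ on the cardinality of $\pi^{-1}(x)$.

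For a $\sigma$-periodic sequence $\omega$ of minimal period $p$, finding a periodic point of $g$ in $h^{-1}(\omega)$ reduces to solving a fixed-point problem in $\widetilde C_\omega$ for the map $\phi = (U_{\omega_0} U_{\omega_1}\cdots U_{\omega_{p-1}})^{-1}\circ \tf^{jp}$. By construction, $\phi$ sends the cylinder $\widetilde R_{[\omega_0,\dots,\omega_{p-1}]} \subset \widetilde R$ Markovianly through itself---the image crosses vertically, straddling both horizontal sides---so the classical Brouwer-type horseshoe fixed-point theorem produces such a fixed point $\tx$. Its projection to $S$ is then a periodic point of $f^{jp}$ whose lifted orbit realizes the itinerary $\omega$, as required.

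The subtle point, and what I anticipate as the main obstacle, is the verification of the uniform $m$-bound on the fibres of $\pi$. Freeness of $\langle U_1, \dots, U_r\rangle$ rules out algebraic cancellations between distinct symbol sequences in the universal cover, but one must still control boundary identifications where a single point of $R$ may belong to the closure of several cylinders at once. This is resolved by a geometric argument: the Markovian transversality of the crossings together with the proper discontinuity of $\Gamma$ prevent accumulation of infinitely many cylinders at any point, so the factor map is genuinely uniformly finite-to-one.
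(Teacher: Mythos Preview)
The paper does not give a proof of this proposition: immediately before the statement it says ``The proof of the following result can be found in \cite{pa}'', and nothing further is written. So there is no in-paper argument to compare your proposal against.

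Your sketch is broadly along the expected lines of the standard construction (coding by itineraries of a lifted rectangle under the $U_i$-translates, nested cylinders, Brouwer/Markovian fixed points for periodic orbits). Two places deserve more care if you want this to stand on its own. First, the appearance of the iterate $j$ is left unexplained: you write ``an iterate $j$ is chosen so that the coding closes up invariantly'', but you have not said what obstruction forces $j>1$ nor how $j$ is determined; in the usual setup (Markovian crossings for $\tf$ itself) one has $j=1$, and the need for a power typically comes from orientation or from permutations among sub-rectangles, which you have not identified here. Second, your $m$-finiteness argument is not quite aimed at the right map. You discuss how many sequences $\omega$ can share a point $\tx\in\widetilde R$, but the relevant fibre is $\pi^{-1}(x)$ for $x\in X\subset S$, and the bound must be uniform over $x$. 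The freeness of $\langle U_1,\dots,U_r\rangle$ is indeed the key ingredient, but the mechanism is that distinct infinite words give distinct $\Gamma$-orbits of cylinders, so a single point of $S$ can only lift into boundedly many coded pieces; your ``boundary identifications plus proper discontinuity'' paragraph gestures at this without pinning down the actual counting.
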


\begin{remark}
If $f \in \textnormal{Homeo}(S)$ has a rotational horseshoe $X$ in $r$ symbols, then $h_{\textnormal{top}}(f) \geq \textnormal{log}(r)$. In particular, the existence of a topological horseshoe for $f$ implies positiveness of topological entropy of $f$.  
\end{remark}


\subsection{Preliminaries on forcing theory}

Given an isotopy $I = \{f_t\}_{t\in[0,1]}$ from the identity map to $f$, we define its fixed point set $\textnormal{Fix}(I) = \bigcap_{t\in[0,1]} \textnormal{Fix}(f_t)$, and denote its \emph{domain} $\Sigma = \textnormal{dom}(I) = S \backslash \textnormal{Fix}(I)$. Note that $\dom(I)$ is an oriented boundaryless surface, not necessarily closed, not necessarily connected.	

In this section we will consider an oriented surface $\Sigma$ without boundary, not necessarily closed or connected, and a non singular oriented topological foliation $\F$ on $\Sigma$. We will consider:
\begin{itemize}
	\item the universal covering space $\hat\Sigma$ of $\Sigma$;
	\item the covering projection $\hat\pi: \hat\Sigma\to \Sigma$;
	\item the group ${\mathcal G}$ of covering automorphisms;
	\item the lifted foliation $\hat{\F}$ on $\hat{\Sigma}$.
\end{itemize}

For every point $z\in\Sigma$, we denote $\phi_{z}$ the leaf of $\F$ that contains $z$. 
The complement of any simple injective proper path $\hat\alpha$ of $\hat\Sigma$ has two connected components, denoted by $L(\hat\alpha)$ and $R(\hat\alpha)$, chosen accordingly to some fixed orientation of $\hat\Sigma$ and the orientation of $\hat\alpha$.
Given a simple injective oriented proper path $\hat\alpha$ and $\hat z\in \hat\alpha$, we denote $\hat\alpha^+_{\hat z}$ and $\hat\alpha^-_{\hat z}$ the connected components of $\hat\alpha\setminus\{\hat z\}$, chosen accordingly to the orientation of $\hat\alpha$.

\paragraph{$\F$-transverse paths and $\F$-transverse intersections}

A path $\eta:J\to\Sigma$ is called \emph{positively transverse}\footnote{In the sequel, ``transverse'' will mean ``positively transverse''.} to $\F$ if it locally crosses each leaf of $\F$ from the right to the left. Observe that every lift  $\hat\eta:J\to\hat\Sigma$ of $\eta$ is positively transverse to $\hat{\F}$ and that for every $a<b$ in $J$: 
\begin{itemize}
	\item $\hat\eta|_{[a,b]}$ meets once every leaf $\hat\phi$ of $\hat \F$ such that $R(\hat\phi_{\hat \eta(a)})\subset R(\hat\phi)\subset R(\hat\phi_{\hat \eta(b)})$;
	\item  $\hat\eta|_{[a,b]}$ does not meet any other leaf.
\end{itemize}
We will say that two transverse paths $\hat\eta_1:J_1\to\hat\Sigma$ and $\hat\eta_2:J_2\to\hat\Sigma$ are \emph{equivalent} if they meet the same leaves of $\hat{\F}$. Two transverse paths $\eta_1:J_1\to\Sigma$ and $\eta_2:J_2\to\Sigma$ are \emph{equivalent} if there exists a lift  $\hat\eta_1:J_1\to\hat\Sigma$ of $\eta$ and a lift $\hat\eta_2:J_2\to\hat\Sigma$ of $\eta_2$ which are equivalent.

Let $\hat\eta_1:J_1\to \hat\Sigma$ and $\hat\eta_2:J_2\to \hat \Sigma$ be two transverse paths such that there exist $t_1\in J_1$ and $t_2\in J_2$ satisfying $\hat\eta_1(t_1)=\hat\eta_2(t_2)$. We will say that $\hat\eta_1$ and $\hat\eta_2$ have an \emph{$\hat{\F}$-transverse intersection} at $\hat\eta_1(t_1)=\hat\eta_2(t_2)$ (see Figure~\ref{Fig:extransverse}) if there exist $a_1, b_1\in J_1$ satisfying $a_1<t_1<b_1$  and $a_2, b_2\in J_2$ satisfying $a_2<t_2<b_2$ such that:
\begin{itemize}
	\item $\hat\phi_{\hat\eta_1(a_1)}\subset L(\hat\phi_{\hat\eta_2(a_2)}),\enskip \hat\phi_{\hat\eta_2(a_2)}\subset L(\hat\phi_{\widehat\eta_1(a_1)})$;
	
	\item $\hat\phi_{\widehat\eta_1(b_1)}\subset R(\hat\phi_{\hat\eta_2(b_2)}),\enskip \hat\phi_{\hat\eta_2(b_2)}\subset R(\hat\phi_{\widehat\eta_1(b_1)})$;
	
	\item every path joining $\hat\phi_{\hat\eta_1(a_1)}$ to $\hat\phi_{\hat\eta_1(b_1)}$ and every path joining $\hat\phi_{\hat\eta_2(a_2)}$ to $\hat\phi_{\hat\eta_2(b_2)}$ intersect. 
\end{itemize}

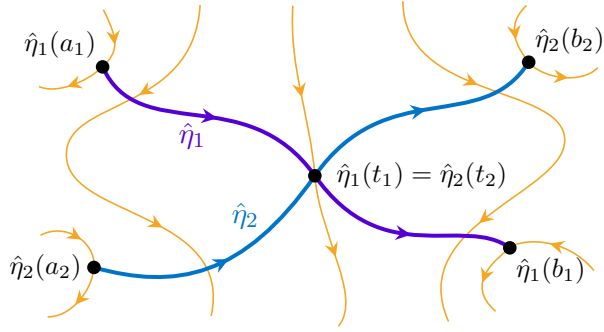
\begin{figure}
	\begin{center}
		
		\tikzset{every picture/.style={line width=0.6pt}} 
		
		\begin{tikzpicture}[x=0.75pt,y=0.75pt,yscale=-.9,xscale=.9]
			
			\draw [color={rgb, 255:red, 245; green, 166; blue, 35 }  ,draw opacity=1 ]   (261.43,50.73) .. controls (259.25,85.76) and (271.02,112.03) .. (273.64,145.74) .. controls (276.25,179.45) and (300.23,213.16) .. (286.72,229.8) ;
			\draw [shift={(266.37,101.91)}, rotate = 258.81] [fill={rgb, 255:red, 245; green, 166; blue, 35 }  ,fill opacity=1 ][line width=0.08]  [draw opacity=0] (8.04,-3.86) -- (0,0) -- (8.04,3.86) -- (5.34,0) -- cycle    ;
			\draw [shift={(285.23,191.05)}, rotate = 251.75] [fill={rgb, 255:red, 245; green, 166; blue, 35 }  ,fill opacity=1 ][line width=0.08]  [draw opacity=0] (8.04,-3.86) -- (0,0) -- (8.04,3.86) -- (5.34,0) -- cycle    ;
			\draw [color={rgb, 255:red, 245; green, 166; blue, 35 }  ,draw opacity=1 ]   (156.35,52.92) .. controls (164.64,69.56) and (165.07,76.12) .. (155.92,84.88) .. controls (146.76,93.64) and (130.19,100.2) .. (120.6,95.83) ;
			\draw [shift={(162.66,72.57)}, rotate = 263.3] [fill={rgb, 255:red, 245; green, 166; blue, 35 }  ,fill opacity=1 ][line width=0.08]  [draw opacity=0] (8.04,-3.86) -- (0,0) -- (8.04,3.86) -- (5.34,0) -- cycle    ;
			\draw [shift={(136.28,95.94)}, rotate = 340.46] [fill={rgb, 255:red, 245; green, 166; blue, 35 }  ,fill opacity=1 ][line width=0.08]  [draw opacity=0] (8.04,-3.86) -- (0,0) -- (8.04,3.86) -- (5.34,0) -- cycle    ;
			\draw [color={rgb, 255:red, 245; green, 166; blue, 35 }  ,draw opacity=1 ]   (391,50.6) .. controls (378.79,61.98) and (385.69,73.93) .. (392.23,82.25) .. controls (398.77,90.57) and (420.57,95.39) .. (430.6,85.32) ;
			\draw [shift={(385.1,69.68)}, rotate = 262.21] [fill={rgb, 255:red, 245; green, 166; blue, 35 }  ,fill opacity=1 ][line width=0.08]  [draw opacity=0] (8.04,-3.86) -- (0,0) -- (8.04,3.86) -- (5.34,0) -- cycle    ;
			\draw [shift={(414.19,90.98)}, rotate = 185.63] [fill={rgb, 255:red, 245; green, 166; blue, 35 }  ,fill opacity=1 ][line width=0.08]  [draw opacity=0] (8.04,-3.86) -- (0,0) -- (8.04,3.86) -- (5.34,0) -- cycle    ;
			\draw [color={rgb, 255:red, 245; green, 166; blue, 35 }  ,draw opacity=1 ]   (209.98,50.29) .. controls (206.49,112.03) and (133.24,101.96) .. (135.86,135.67) .. controls (138.48,169.38) and (228.73,165.88) .. (216.09,224.11) ;
			\draw [shift={(176.16,100.54)}, rotate = 329.51] [fill={rgb, 255:red, 245; green, 166; blue, 35 }  ,fill opacity=1 ][line width=0.08]  [draw opacity=0] (8.04,-3.86) -- (0,0) -- (8.04,3.86) -- (5.34,0) -- cycle    ;
			\draw [shift={(189.24,174.67)}, rotate = 208.12] [fill={rgb, 255:red, 245; green, 166; blue, 35 }  ,fill opacity=1 ][line width=0.08]  [draw opacity=0] (8.04,-3.86) -- (0,0) -- (8.04,3.86) -- (5.34,0) -- cycle    ;
			\draw [color={rgb, 255:red, 245; green, 166; blue, 35 }  ,draw opacity=1 ]   (322.47,48.1) .. controls (322.47,103.27) and (410.46,94.79) .. (412.2,125) .. controls (413.94,155.21) and (323.34,172.01) .. (347.76,227.17) ;
			\draw [shift={(364.02,94.87)}, rotate = 202.65] [fill={rgb, 255:red, 245; green, 166; blue, 35 }  ,fill opacity=1 ][line width=0.08]  [draw opacity=0] (8.04,-3.86) -- (0,0) -- (8.04,3.86) -- (5.34,0) -- cycle    ;
			\draw [shift={(364.67,172.23)}, rotate = 320.57] [fill={rgb, 255:red, 245; green, 166; blue, 35 }  ,fill opacity=1 ][line width=0.08]  [draw opacity=0] (8.04,-3.86) -- (0,0) -- (8.04,3.86) -- (5.34,0) -- cycle    ;
			\draw [color={rgb, 255:red, 84; green, 0; blue, 213 }  ,draw opacity=1 ][line width=1.5]    (155.92,84.88) .. controls (175.97,129.54) and (235.71,91.89) .. (273.64,145.74) .. controls (311.57,199.59) and (356.04,167.19) .. (381.77,186.02) ;
			\draw [shift={(219.54,113.55)}, rotate = 188.99] [fill={rgb, 255:red, 84; green, 0; blue, 213 }  ,fill opacity=1 ][line width=0.08]  [draw opacity=0] (8.75,-4.2) -- (0,0) -- (8.75,4.2) -- (5.81,0) -- cycle    ;
			\draw [shift={(327.1,178.63)}, rotate = 187.51] [fill={rgb, 255:red, 84; green, 0; blue, 213 }  ,fill opacity=1 ][line width=0.08]  [draw opacity=0] (8.75,-4.2) -- (0,0) -- (8.75,4.2) -- (5.81,0) -- cycle    ;
			\draw [color={rgb, 255:red, 0; green, 116; blue, 201 }  ,draw opacity=1 ][line width=1.5]    (151.12,196.96) .. controls (196.46,209.66) and (233.96,202.65) .. (273.64,145.74) .. controls (313.31,88.82) and (366.07,123.85) .. (392.23,82.25) ;
			\draw [shift={(225.84,192.34)}, rotate = 153.5] [fill={rgb, 255:red, 0; green, 116; blue, 201 }  ,fill opacity=1 ][line width=0.08]  [draw opacity=0] (8.75,-4.2) -- (0,0) -- (8.75,4.2) -- (5.81,0) -- cycle    ;
			\draw [shift={(335.43,108.69)}, rotate = 169.41] [fill={rgb, 255:red, 0; green, 116; blue, 201 }  ,fill opacity=1 ][line width=0.08]  [draw opacity=0] (8.75,-4.2) -- (0,0) -- (8.75,4.2) -- (5.81,0) -- cycle    ;
			\draw [color={rgb, 255:red, 245; green, 166; blue, 35 }  ,draw opacity=1 ]   (121.47,179.01) .. controls (134.99,171.57) and (152.43,182.51) .. (151.12,196.96) .. controls (149.81,211.41) and (137.17,224.98) .. (125.4,224.55) ;
			\draw [shift={(144.21,181.45)}, rotate = 211.25] [fill={rgb, 255:red, 245; green, 166; blue, 35 }  ,fill opacity=1 ][line width=0.08]  [draw opacity=0] (8.04,-3.86) -- (0,0) -- (8.04,3.86) -- (5.34,0) -- cycle    ;
			\draw [shift={(140.71,217.82)}, rotate = 311.55] [fill={rgb, 255:red, 245; green, 166; blue, 35 }  ,fill opacity=1 ][line width=0.08]  [draw opacity=0] (8.04,-3.86) -- (0,0) -- (8.04,3.86) -- (5.34,0) -- cycle    ;
			\draw [color={rgb, 255:red, 245; green, 166; blue, 35 }  ,draw opacity=1 ]   (428.42,198.71) .. controls (412.72,179.45) and (387.87,180.76) .. (381.77,186.02) .. controls (375.66,191.27) and (355.61,205.72) .. (373.92,221.48) ;
			\draw [shift={(403.94,183.63)}, rotate = 16.09] [fill={rgb, 255:red, 245; green, 166; blue, 35 }  ,fill opacity=1 ][line width=0.08]  [draw opacity=0] (8.04,-3.86) -- (0,0) -- (8.04,3.86) -- (5.34,0) -- cycle    ;
			\draw [shift={(366.62,205.34)}, rotate = 290.52] [fill={rgb, 255:red, 245; green, 166; blue, 35 }  ,fill opacity=1 ][line width=0.08]  [draw opacity=0] (8.04,-3.86) -- (0,0) -- (8.04,3.86) -- (5.34,0) -- cycle    ;
			\draw  [fill={rgb, 255:red, 0; green, 0; blue, 0 }  ,fill opacity=1 ] (270.32,145.74) .. controls (270.32,143.9) and (271.81,142.41) .. (273.64,142.41) .. controls (275.47,142.41) and (276.95,143.9) .. (276.95,145.74) .. controls (276.95,147.58) and (275.47,149.07) .. (273.64,149.07) .. controls (271.81,149.07) and (270.32,147.58) .. (270.32,145.74) -- cycle ;
			\draw  [fill={rgb, 255:red, 0; green, 0; blue, 0 }  ,fill opacity=1 ] (388.92,82.25) .. controls (388.92,80.41) and (390.4,78.92) .. (392.23,78.92) .. controls (394.06,78.92) and (395.55,80.41) .. (395.55,82.25) .. controls (395.55,84.09) and (394.06,85.58) .. (392.23,85.58) .. controls (390.4,85.58) and (388.92,84.09) .. (388.92,82.25) -- cycle ;
			\draw  [fill={rgb, 255:red, 0; green, 0; blue, 0 }  ,fill opacity=1 ] (378.45,186.02) .. controls (378.45,184.18) and (379.94,182.69) .. (381.77,182.69) .. controls (383.6,182.69) and (385.08,184.18) .. (385.08,186.02) .. controls (385.08,187.86) and (383.6,189.35) .. (381.77,189.35) .. controls (379.94,189.35) and (378.45,187.86) .. (378.45,186.02) -- cycle ;
			\draw  [fill={rgb, 255:red, 0; green, 0; blue, 0 }  ,fill opacity=1 ] (147.8,196.96) .. controls (147.8,195.12) and (149.29,193.63) .. (151.12,193.63) .. controls (152.95,193.63) and (154.44,195.12) .. (154.44,196.96) .. controls (154.44,198.8) and (152.95,200.29) .. (151.12,200.29) .. controls (149.29,200.29) and (147.8,198.8) .. (147.8,196.96) -- cycle ;
			\draw  [fill={rgb, 255:red, 0; green, 0; blue, 0 }  ,fill opacity=1 ] (152.6,84.88) .. controls (152.6,83.04) and (154.09,81.55) .. (155.92,81.55) .. controls (157.75,81.55) and (159.23,83.04) .. (159.23,84.88) .. controls (159.23,86.72) and (157.75,88.21) .. (155.92,88.21) .. controls (154.09,88.21) and (152.6,86.72) .. (152.6,84.88) -- cycle ;
			
			\draw (284.07,145.38) node [anchor=west] [inner sep=0.75pt]  [font=\small]  {$\hat{\eta }_{1}( t_{1}) =\hat{\eta }_{2}( t_{2})$};
			\draw (205.68,114.4) node [anchor=north] [inner sep=0.75pt]  [color={rgb, 255:red, 84; green, 0; blue, 213 }  ,opacity=1 ]  {$\hat{\eta }_{1}$};
			\draw (244.15,176.67) node [anchor=south east] [inner sep=0.75pt]  [color={rgb, 255:red, 0; green, 116; blue, 201 }  ,opacity=1 ]  {$\hat{\eta }_{2}$};
			\draw (394.23,78.85) node [anchor=south west] [inner sep=0.75pt]  [font=\small]  {$\hat{\eta }_{2}( b_{2})$};
			\draw (383.77,189.42) node [anchor=north west][inner sep=0.75pt]  [font=\small]  {$\hat{\eta }_{1}( b_{1})$};
			\draw (145.8,196.96) node [anchor=east] [inner sep=0.75pt]  [font=\small]  {$\hat{\eta }_{2}( a_{2})$};
			\draw (153.92,81.48) node [anchor=south east] [inner sep=0.75pt]  [font=\small]  {$\hat{\eta }_{1}( a_{1})$};

		\end{tikzpicture}
		
		\caption{Example of $\hat{\F}$-transverse intersection.\label{Fig:extransverse}}
	\end{center}
\end{figure}

A transverse intersection means that there is a ``crossing'' between the two paths naturally defined by $\hat\eta_1$ and $\hat\eta_2$ in the space of leaves of $\widehat{\F}$, which is a one-dimensional topological manifold, usually non Hausdorff.

\medskip
Now, let $\eta_1:J_1\to \Sigma$ and $\eta_2:J_2\to \Sigma$ be two transverse paths such that there exist $t_1\in J_1$ and $t_2\in J_2$ satisfying $\eta_1(t_1)=\eta_2(t_2)$. We say that $\eta_1$ and $\eta_2$ have an \emph{${\F}$-transverse intersection} at $\eta_1(t_1)=\eta_2(t_2)$ if, given $\hat\eta_1:J_1\to \hat \Sigma$ and $\hat\eta_2:J_2\to \hat \Sigma$ any two lifts of $\eta_1$ and $\eta_2$ such that $\hat\eta_1(t_1)=\hat\eta_2(t_2)$, we have that $\hat\eta_1$ and $\hat\eta_2$ have a ${\hat{\F}}$-transverse intersection at $\hat\eta_1(t_1)=\hat\eta_2(t_2)$, 
If $\eta_1=\eta_2$ one speaks of a \emph{$\F$-transverse self-intersection}. In this case, if $\widehat \eta_1$ is a lift of $\eta_1$, then there exists $T\in\mathcal G$ such that $\widehat\eta_1$ and $T\widehat\eta_1$ have a $\widehat{\F}$-transverse intersection at $\widehat\eta_1(t_1)=T\widehat\eta_1(t_2)$.

\paragraph{Recurrence, equivalence and accumulation} 

We will say a transverse path $\eta:\R\to \Sigma$ is \emph{positively recurrent} if, for every $a<b$, there exist $c<d$, with $b<c$, such that $\eta|_{[a,b]}$ and $\eta|_{[c,d]}$  are equivalent. Similarly $\eta$ is {\it negatively recurrent} if, for every $a<b$, there exist  $c<d$, with $d<a$,  such that $\eta|_{[a,b]}$ and $\eta|_{[c,d]}$  are equivalent. Finally $\eta$ is {\it recurrent} if it is both positively and negatively recurrent.
\medskip 

Two transverse paths $\eta_1:\R\to \Sigma$ and $\eta_2:\R\to \Sigma$ are {\it equivalent at $+\infty$} if there exists $a_1$ and $a_2$ in $\R$ such that $\eta_1{}|_{[a_1,+\infty)}$ and  $\eta_2{}|_{[a_2,+\infty)}$ are equivalent. Similarly $\eta_1$ and $\eta_2$ are {\it equivalent at $-\infty$} if there exists $b_1$ and $b_2$ in $\R$ such that $\eta_1{}|_{(-\infty,b_1]}$ and  $\eta_2{}|_{(-\infty,b_2]}$ are equivalent.
\medskip

\begin{figure}
\begin{center}

\tikzset{every picture/.style={line width=0.75pt}} 

\begin{tikzpicture}[x=0.75pt,y=0.75pt,yscale=-1,xscale=1]

\draw [color={rgb, 255:red, 208; green, 2; blue, 27 }  ,draw opacity=1 ]   (380,170) .. controls (383.33,138.17) and (377.33,65.5) .. (380,10) ;
\draw [shift={(380.14,94.49)}, rotate = 268.55] [fill={rgb, 255:red, 208; green, 2; blue, 27 }  ,fill opacity=1 ][line width=0.08]  [draw opacity=0] (8.04,-3.86) -- (0,0) -- (8.04,3.86) -- (5.34,0) -- cycle    ;
\draw [color={rgb, 255:red, 245; green, 166; blue, 35 }  ,draw opacity=1 ]   (355,170) .. controls (352.67,137.17) and (357,65.83) .. (355,10) ;
\draw [shift={(354.96,94.27)}, rotate = 271.05] [fill={rgb, 255:red, 245; green, 166; blue, 35 }  ,fill opacity=1 ][line width=0.08]  [draw opacity=0] (8.04,-3.86) -- (0,0) -- (8.04,3.86) -- (5.34,0) -- cycle    ;
\draw [color={rgb, 255:red, 245; green, 166; blue, 35 }  ,draw opacity=1 ]   (315,170) .. controls (318.33,138.17) and (320,46.83) .. (315,10) ;
\draw [shift={(318.11,93.76)}, rotate = 270.34] [fill={rgb, 255:red, 245; green, 166; blue, 35 }  ,fill opacity=1 ][line width=0.08]  [draw opacity=0] (8.04,-3.86) -- (0,0) -- (8.04,3.86) -- (5.34,0) -- cycle    ;
\draw [color={rgb, 255:red, 245; green, 166; blue, 35 }  ,draw opacity=1 ]   (265,170) .. controls (259.67,135.83) and (262.33,65.5) .. (265,10) ;
\draw [shift={(262.04,94.53)}, rotate = 270.94] [fill={rgb, 255:red, 245; green, 166; blue, 35 }  ,fill opacity=1 ][line width=0.08]  [draw opacity=0] (8.04,-3.86) -- (0,0) -- (8.04,3.86) -- (5.34,0) -- cycle    ;
\draw [color={rgb, 255:red, 245; green, 166; blue, 35 }  ,draw opacity=1 ]   (370,170) .. controls (372,137.83) and (368,51.5) .. (370,10) ;
\draw [shift={(370.01,94.42)}, rotate = 269.07] [fill={rgb, 255:red, 245; green, 166; blue, 35 }  ,fill opacity=1 ][line width=0.08]  [draw opacity=0] (8.04,-3.86) -- (0,0) -- (8.04,3.86) -- (5.34,0) -- cycle    ;
\draw [color={rgb, 255:red, 0; green, 99; blue, 217 }  ,draw opacity=1 ]   (200,120) .. controls (242.67,119.17) and (368,119.5) .. (375,10) ;
\draw [shift={(316.31,99.16)}, rotate = 154.81] [fill={rgb, 255:red, 0; green, 99; blue, 217 }  ,fill opacity=1 ][line width=0.08]  [draw opacity=0] (8.04,-3.86) -- (0,0) -- (8.04,3.86) -- (5.34,0) -- cycle    ;
\draw [color={rgb, 255:red, 0; green, 99; blue, 217 }  ,draw opacity=1 ]   (200,145) .. controls (243.67,141.83) and (373,147.83) .. (425,145) ;
\draw [shift={(316.02,144.99)}, rotate = 180.97] [fill={rgb, 255:red, 0; green, 99; blue, 217 }  ,fill opacity=1 ][line width=0.08]  [draw opacity=0] (8.04,-3.86) -- (0,0) -- (8.04,3.86) -- (5.34,0) -- cycle    ;

\draw (240.25,145.4) node [anchor=north] [inner sep=0.75pt]  [color={rgb, 255:red, 0; green, 99; blue, 217 }  ,opacity=1 ]  {$\eta _{2}$};
\draw (299,102.6) node [anchor=south east] [inner sep=0.75pt]  [color={rgb, 255:red, 0; green, 99; blue, 217 }  ,opacity=1 ]  {$\eta _{1}$};
\draw (386,96.88) node [anchor=west] [inner sep=0.75pt]  [color={rgb, 255:red, 208; green, 2; blue, 27 }  ,opacity=1 ]  {$\phi _{\eta _{2}( b_{2})}$};

\end{tikzpicture}

\caption{An accumulation configuration in $\R^2$: the transverse path $\eta_1$ {accumulates positively} on the transverse path $\eta_2$. The leaves are in orange, the limit leaf in red.\label{FigDefAccumulate}}
\end{center}
\end{figure}
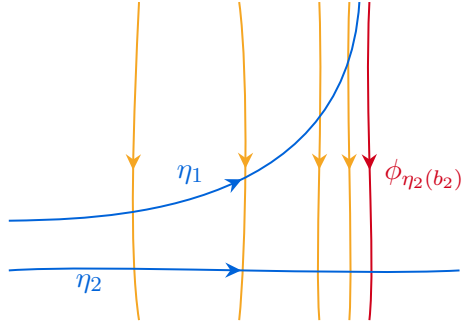

\label{DefAccumulate}
A transverse path $\eta_1:\R\to \Sigma$ \emph{accumulates positively} on the transverse path $\eta_2:\R\to \Sigma$ if there exist real numbers $a_1$ and $a_2<b_2$ such that  $\eta_1{}|_{[a_1,+\infty)}$ and $\eta_2{}|_{[a_2,b_2)}$ are equivalent (see Figure~\ref{FigDefAccumulate}). Similarly, $\eta_1$  {\it accumulates negatively} on $\eta_2$ if there exist real numbers $b_1$ and $a_2<b_2$ such that  $\eta_1{}|_{(-\infty,b_1]}$ and $\eta_2{}|_{(a_2,b_2]}$ are equivalent. 
Finally $\eta_1$ {\it accumulates} on $\eta_2$ if it accumulates  positively or negatively on $\eta_2$.

\paragraph{Strips}

We fix $T\in {\mathcal G}\setminus\{0\}$, and $\hat\alpha$ a $T$-invariant $\hat\F$-transverse path.
The set
\[\hat B=\{\hat z \in \hat\Sigma\,\vert\enskip \hat\phi_{\hat z} \cap \hat\alpha\not=\varnothing\}\]
is an $\hat{\F}$-saturated and $T$-invariant plane. We will call such a set a {\it strip} or a {\it $T$-strip}.
The boundary $\partial \hat B$ of $\hat B$ is a union of leaves (possibly empty) and can be written as $\partial\hat B=\partial\hat B^R\sqcup\partial\hat B^L$, where 
\[\partial\hat B^R=\partial\hat B\cap R(\hat\alpha)\,,\qquad \partial\hat B^L=\partial\hat B\cap L(\hat\alpha).\]
Let us state some facts that can be proven easily (see \cite{lct2} or \cite{lellouch}). Note first that if there is a $T$-invariant leaf $\hat \phi\subset \partial\hat B$, then the set $ \partial\hat B^R$ or $ \partial\hat B^L$ that contains $\hat\phi$ is reduced to this leaf.

Suppose now that $\hat\eta:\R\to\hat\Sigma$ is transverse to $\hat{\F}$ and that 
$$\big\{ t\in\R\,\vert\enskip \eta(t)\in\hat B\big\}=(a,b),$$ 
where $-\infty\leq a<b\leq\infty$. Say that
\begin{itemize}
	\item $\hat\eta$ {\it draws $\hat B$} if there exist $t<t'$ in $(a,b)$ such that $\hat\phi_{\hat\eta(t')}=T\hat\phi_{\hat\eta(t))}$.
\end{itemize}

If, moreover, we have $-\infty<a<b<+\infty$, say that:
\begin{itemize}
	\item $\hat\eta$ {\it crosses $\hat B$ from the left to the right} if $\hat\eta(a)\in \partial\hat B^L$ and $\hat\eta(b)\in \partial\hat B^R$;
	\item $\hat\eta$ {\it crosses $\hat B$ from the right to the left }if $\hat\eta(a)\in \partial\hat B^R$ and $\hat\eta(b)\in \partial\hat B^L$;
	\item $\hat\eta$ {\it visits $\hat B$ on the left} if $\hat\eta(a)\in \partial\hat B^L$ and $\hat\eta(b)\in \partial\hat B^L$;
	\item $\hat\eta$ {\it visits $\hat B$ on the right} if $\hat\eta(a)\in \partial\hat B^R$ and $\hat\eta(b)\in \partial\hat B^R$.
\end{itemize}
We will say that $\hat\eta$ {\it crosses} $\hat B$ if it crosses it from the right to the left or from the left to the right. Similarly, we will say that $\hat\eta$ {\it visits} $\hat B$ if it visits it on the right or on the left. Finally, observe that at least one of the following situations occurs (the two last assertions are not incompatible):
\begin{itemize}\label{ListPossibBand}
	\item $\hat\eta$ crosses $\hat B$;
	\item $\hat\eta$ visits $\hat B$;
	\item $\hat\eta$ is equivalent to $\hat \alpha$ at $+\infty$ or at $-\infty$;
	\item $\hat\eta$ accumulates on $\hat\alpha$ positively or negatively.
\end{itemize}

%

\paragraph{Brouwer foliations}

Let $\F$ be a singular foliation of a surface $S$; $\textnormal{Sing}(\F)$ the set of singularities of $\F$, and let us call $\textnormal{dom}(\F) := S \backslash \textnormal{Sing}(\F)$. 
The forcing theory grounds on the following result of existence of transverse foliations, which can be obtained as a combination of \cite{lecalvezfoliations} with \cite{bguin2016fixed}.

\begin{theorem}\label{thm:ThExistIstop}
	Let $S$ be a surface and $f\in\Homeo_0(S)$.
	Then there exists an isotopy $I$ linking $\Id$ to $f$, and a transverse topological oriented singular foliation $\F$ of $S$ with $\dom(\F) = S \backslash \textnormal{Fix}(I) = \Sigma$, such that:

	For any $z\in \Sigma$, there exists an $\F$-transverse path denoted by $\big(I_\F^t(z)\big)_{t\in[0,1]}$, linking $z$ to $f(z)$, that is homotopic in $\Sigma$, relative to its endpoints, to the arc $(I^t(z))_{t\in[0,1]}$.
\end{theorem}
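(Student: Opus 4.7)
The plan is to build the required isotopy and foliation in two stages: first upgrade an arbitrary isotopy to a \emph{maximal} one, then apply Le Calvez's foliation theorem to it. Since $f \in \Homeo_0(S)$, some isotopy $I_0$ from $\Id$ to $f$ exists by hypothesis. The issue with $I_0$ is that $\fix(I_0)$ may be a strict subset of the set of \emph{contractible} fixed points of $f$ --- i.e.\ points $p$ such that for some (any) lift $\wt p$ we have $\wt f(\wt p)=\wt p$, equivalently such that the loop $t\mapsto I_0^t(p)$ is contractible in $S$. We will eliminate these ``hidden'' fixed points one at a time.

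The first key step is to put a partial order on isotopies from $\Id$ to $f$: set $I \preceq I'$ when $\fix(I)\subseteq \fix(I')$ and $I'$ is homotopic to $I$ (with fixed endpoints) in $S\setminus \fix(I)$. I would then apply Zorn's lemma to obtain a maximal isotopy $I$. The verification that chains have upper bounds, and more importantly that a genuinely maximal element has the property that \emph{all} contractible fixed points of $f$ already lie in $\fix(I)$, is precisely what is supplied by the equivariant fixed point theorem of Béguin--Crovisier--Le Roux \cite{bguin2016fixed}: whenever $p\notin\fix(I)$ is a contractible fixed point of $f$, one can modify $I$ to a new isotopy $I'$ with $\fix(I')=\fix(I)\cup\{p\}$ and $I'\sim I$ rel.\ endpoints in $S\setminus\fix(I)$. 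Without this input the Zorn argument would stall, because the naive union of a chain of isotopies is only a continuous family of homeomorphisms with possibly non-closed fixed-set complement.

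Once a maximal isotopy $I$ is at hand, with $\Sigma = S\setminus \fix(I)$, the second step is to invoke Le Calvez's theorem \cite{lecalvezfoliations}. Lifting $I$ to the universal cover $\wt\Sigma$ yields a free Brouwer-like homeomorphism (no contractible fixed point survives by maximality), and Le Calvez's construction produces an oriented nonsingular topological foliation $\wh\F$ of $\wt\Sigma$ whose leaves are Brouwer lines for the lift of $f_1$. Projecting $\wh\F$ to $\Sigma$ and declaring $\fix(I)$ to be the singular set gives the singular foliation $\F$ on $S$. The transversality statement is built into Le Calvez's construction: for each $z\in\Sigma$, the lifted isotopy arc $t\mapsto \wt I^t(\wt z)$ goes from one leaf to another, and can be pushed (via a homotopy rel.\ endpoints inside the lift of $\Sigma$) to a path that is positively transverse to $\wh\F$; projecting gives the required $I_\F^t(z)$.

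The main obstacle is the first step: proving that maximal isotopies exist and really have no contractible fixed points in their complement. That is precisely the content of \cite{bguin2016fixed}, which builds on delicate equivariant Brouwer theory; in particular it is the step that requires $f$ to be \emph{isotopic} to the identity rather than merely homotopic. Once this is granted, the Le Calvez foliation is a direct application, and the transverse path $I_\F^t(z)$ is then essentially part of the output of the construction.
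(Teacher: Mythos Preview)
Your proposal is correct and matches the paper's approach exactly: the paper does not give a proof of this statement at all, but simply states that it ``can be obtained as a combination of \cite{lecalvezfoliations} with \cite{bguin2016fixed}'' --- precisely the two-step strategy (maximal isotopy via Béguin--Crovisier--Le Roux, then Le Calvez's transverse foliation) that you have outlined in detail.
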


This allows to define the path $I_{\F}^\Z (x)$ as the concatenation of the paths $\big(I_\F^t(f^n(z))\big)_{t\in[0,1]}$ for $n\in\Z$.

In the sequel we will need the following result about the local transversality of trajectories.

\begin{proposition}\label{LemLocalTransverse}
Let $\Sigma$ be a surface, $\F$ a singular foliation on $\Sigma$ and $f\in\Homeo_0(\Sigma)$.
Suppose that for any $x\in \Sigma\setminus\sing \F$ we are given an $\F$-transverse trajectory $I_\F(x)$ linking $x$ to $f(x)$ and homotopic relative to fixed points to a fixed isotopy $I$ from identity to $f$. 
Then for any neighbourhood $V_0\subset \Sigma$ of $\sing \F$, there exists a neighbourhood $U_0\subset V_0$ of $\sing \F$ such that for any $x\in U_0\setminus \sing \F$, there exists an $\F$-transverse trajectory $I'_\F(x)$ linking $x$ to $f(x)$, homotopic to $I_\F(x)$ and included in $V_0$. 
\end{proposition}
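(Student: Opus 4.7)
The plan is to localize the problem near $\sing \F$. For each $s \in \sing \F$ choose a small open disk $W_s \subset V_0$ around $s$; by uniform continuity of the isotopy $(f_t)_{t \in [0,1]}$, there is a smaller neighborhood $U_s \subset W_s$ of $s$ such that for every $x \in U_s$ the whole isotopy arc $(I^t(x))_{t \in [0,1]}$ lies in $W_s$. In particular both $x$ and $f(x)$ belong to $W_s$. Taking $U_0$ to be the union of suitably shrunk $U_s$ gives an open neighborhood of $\sing\F$ contained in $V_0$, and the problem reduces to showing that for every $x \in U_s \setminus \sing\F$ there is an $\F$-transverse path from $x$ to $f(x)$ contained in $W_s$ and homotopic in $\Sigma$ rel endpoints to $I_\F(x)$.

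By assumption the given path $I_\F(x)$ is homotopic in $\Sigma$ rel endpoints to the isotopy arc, and the latter lies in $W_s$. Hence the homotopy class we must realise already has a representative inside $W_s$. Passing to the universal cover of $W_s \setminus \sing\F$ and lifting the isotopy arc, it crosses a specific finite collection of leaves of the lifted foliation; this collection determines the equivalence class of any $\F$-transverse path in its homotopy class. Any $\F$-transverse path crossing exactly these leaves in the correct order will therefore lie in the correct homotopy class in $\Sigma$ rel endpoints.

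The core of the proof is the construction of such an $\F$-transverse representative inside $W_s$. I would exploit the local structure of a singular foliation near $s$: a decomposition of a punctured neighborhood of $s$ into finitely many hyperbolic, parabolic and elliptic sectors, within each of which short transverse arcs between two close points exist in any prescribed equivalence class (there is essentially a unique such class inside a hyperbolic or parabolic sector, and a $\Z$-family in an elliptic one, parametrized by the winding number around $s$). Concatenating such local pieces according to the itinerary of the isotopy arc through the sectors yields the desired trajectory $I'_\F(x)$, entirely inside $W_s$.

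The main obstacle is precisely this last construction: realising a prescribed equivalence class of $\F$-transverse paths by a short path inside $W_s$, in a unified way across the various sector types, while controlling for the possibility of the isotopy arc winding nontrivially around $s$. This is presumably the delicate argument for which the authors thank P.~Le Calvez. A natural line of attack is an induction on the number of sectors traversed by the isotopy arc, using at each step the cross/visit/accumulate/equivalent-at-infinity dichotomy for strips recalled in this section to prune redundant pieces and show that the locally constructed concatenation is equivalent to the restriction of $I_\F(x)$ to the corresponding sub-arc.
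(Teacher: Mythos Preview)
Your proposal identifies the right localization step (finding $U_s$ so that the isotopy arc stays in $W_s$) but, as you yourself acknowledge, the hard part---constructing an $\F$-transverse representative inside $W_s$---is not carried out. There are two concrete obstacles to your suggested route.

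First, the sector decomposition you invoke is not available in this generality. The foliation $\F$ is only topological, and $\sing\F=\fix(I)$ need not consist of isolated points; even when it does, a topological singularity admits no finite hyperbolic/parabolic/elliptic sector picture in general. The paper in fact begins by collapsing connected components of $\sing\F$ to points precisely to avoid assuming any local model.

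Second, and more fundamentally, the obstruction to finding a short transverse path is global, not local. Knowing that the isotopy arc stays in $W_s$ tells you the \emph{homotopy} class has a representative there, but an $\F$-transverse representative must cross a prescribed family of leaves, and those leaves are pieces of global leaves of $\F$ which may exit and re-enter $W_s$. The paper makes this obstruction explicit: if no transverse path in $V_0$ exists, then in the universal cover $\widehat{\dom\F}$ there is a connected component $\check\phi_n$ of $\widehat\phi_n\cap\widehat V_0$ separating $\widehat z_n$ from $\widehat f(\widehat z_n)$, and this component necessarily comes from a leaf meeting the complement of $V_0$. The argument then proceeds by contradiction: one extracts Hausdorff limits of the $\check\phi_n$, analyses them via a Poincar\'e--Bendixson-type description (using that $\sing\F$ is totally disconnected after the initial reduction), and shows with a key claim---that for a leaf $\widehat\phi$ with $\omega(\widehat\phi)\subset\widehat V_1$, points near $\sing\F$ in $\overline{L(\check\phi)}$ are mapped by $\widehat f$ back into $\overline{L(\check\phi)}$---that the separating leaves cannot persist. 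This is where the delicate analysis lies, and it is orthogonal to any sectorial induction.
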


We postpone the proof of this proposition to Appendix~\ref{SecAppendix}.

\subsection{Intersection of tracking geodesics}\label{SubsecInter}

The goal of this subsection is to prove the following statement, which specifies Theorem~\ref{thm:intertrackingimpliesentropy} of the introduction. 

\begin{theorem}\label{TheoInterGeod}
Consider two $f$-invariant ergodic probability measures $\mu_1,\mu_2\in \M$ that are dynamically transverse (defined in Definition~\ref{def:dynatrans}). Then:
\begin{itemize}
\item The homeomorphism $f$ has a topological horseshoe (see Definition~\ref{def:rotationalhorseshoe}), in particular the topological entropy of $f$ is positive. 
\item For $\mu_1$-a.e.\ $x_1\in S$ and $\mu_2$-a.e.\ $x_2\in S$, the triples $\big(\alpha(\tilde x_1),\omega(\tilde x_2),\max(\vartheta_{\mu_1},\vartheta_{\mu_2})\big)$ and $\big(\alpha(\tilde x_2),\omega(\tilde x_1),\max(\vartheta_{\mu_1},\vartheta_{\mu_2})\big)$ are rotation vectors in the sense of \cite{pa} (Definition~\ref{Def:RotVect}).
\item For any $r\in H_1(S,\R)$ in the triangle spanned by $0, \rho_{H_1}(\mu_1), \rho_{H_1}(\mu_2)$ and any $\varepsilon>0$, there exists a periodic point $z\in S$ and four lifts $(\tilde z_j)_{1\le j\le 4}$ of $z$ in $\tilde S$ such that, denoting $d$ the distance on geodesics induced by some distance on $\Sp^1\times \Sp^1$ on the couples of their endpoints,
\begin{itemize}
\item $\rho_{H_1}(z)\in B(r,\varepsilon)$;
\item $d(\tgamma_{\wt x_1},\tgamma_{\wt z_1})<\varepsilon$;
\item $d(\tgamma_{\wt x_2},\tgamma_{\wt z_2})<\varepsilon$;
\item $d\big(\big(\alpha(\tilde x_1),\omega(\tilde x_2)\big),\, \tgamma_{\wt z_3}\big)<\varepsilon$;
\item $d\big(\big(\alpha(\tilde x_2),\omega(\tilde x_1)\big),\, \tgamma_{\wt z_4}\big)<\varepsilon$.
\end{itemize}
\end{itemize}
\end{theorem}

The last point expresses that any convex combination of $0, \rho_{H_1}(\mu_1), \rho_{H_1}(\mu_2)$ is accumulated by rotation vectors of periodic points of $f$, such that some lifts of the axes of these points accumulate on both tracking geodesics $\tgamma_{\wt x_1}$ and $\tgamma_{\wt x_2}$.

Let us start with a lemma stating that if $\mu_1$ and $\mu_2$ are {dynamically transverse}, then tracking geodesics for typical points of respectively $\mu_1$ and $\mu_2$ intersect transversally with an angle bounded away from 0.

\begin{lemma}\label{lem:DynTransThenAlmostEveryTrackGeodTrans}
	Let $\mu_1, \mu_2 \in \M$ be two dynamically transverse ergodic measures. Then there exists $\theta_0>0$ such that for $\mu_1$-a.e.\ $x_1$ and $\mu_2$-a.e.\ $x_2$, the geodesics $\gamma_{x_1}$ and $\gamma_{x_2}$ intersect transversally with angle of intersection bigger than $\theta_0$. In particular, given a lift $\tgamma_{\tx_1}$ of $\gamma_{x_1}$, there exists a lift $\tgamma_{\tx_2}$ of $\gamma_{x_2}$ such that $\tgamma_{\tx_1}$ and $\tgamma_{\tx_2}$ intersect transversally. 
\end{lemma}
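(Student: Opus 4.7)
By the definition of dynamical transversality, pick $v\in\textnormal{supp}(\nu_\mu)$ and $v'\in\textnormal{supp}(\nu_{\mu'})$ with common base point $p:=\pi_S(v)=\pi_S(v')$ and $v\neq v'$; let $\theta\in(0,\pi]$ denote the angle between $v$ and $v'$ inside the fibre $T^1_p S$, and set $\theta_0:=\theta/2>0$. This constant depends only on the pair $\mu,\mu'$.

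Theorem~\ref{thm:equidistributiontheoremintro} provides, for $\mu$-a.e.\ $x\in S$, the equality $\overline{\dgamma_x(\R)}=\textnormal{supp}(\nu_\mu)$; in particular one can find $t_n\to+\infty$ with $\dgamma_x(t_n)\to v$ in $T^1S$. Likewise, for $\mu'$-a.e.\ $x'\in S$ there exist $s_m\to+\infty$ with $\dgamma_{x'}(s_m)\to v'$. Fix such typical $x$ and $x'$.

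Now lift and use a continuity argument. Fix an arbitrary lift $\tgamma_{\tx}$ of $\gamma_x$ and set $\tilde p_n:=\tgamma_{\tx}(t_n)\in\tS$. Since $\gamma_x(t_n)\to p$ and $\gamma_{x'}(s_m)\to p$ in $S$ and the covering $\tS\to S$ is a local isometry, we can choose for each large $n$ an index $m=m(n)$ and a lift $\tilde q_n\in \tS$ of $\gamma_{x'}(s_{m(n)})$ with $\tdist(\tilde p_n,\tilde q_n)\to 0$; this uniquely determines a lift $\tgamma_{\tx'}$ of $\gamma_{x'}$, namely the one passing through $\tilde q_n$ at time $s_{m(n)}$. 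The same local-isometry property shows that $\dot{\tgamma}_{\tx}(t_n)$ and $\dot{\tgamma}_{\tx'}(s_{m(n)})$, transported to a common tangent space along the short geodesic joining $\tilde q_n$ to $\tilde p_n$, make an angle which converges to $\theta$, hence exceeds $\theta_0$ for every $n$ sufficiently large.

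The proof concludes via the following standard quantitative fact of hyperbolic geometry: for every $\theta_0>0$ there exists $\varepsilon_0=\varepsilon_0(\theta_0)>0$ such that any two geodesics of $\D$ with unit tangent vectors at points $\tilde a,\tilde b$ satisfying $\tdist(\tilde a,\tilde b)\le\varepsilon_0$ and making (after parallel transport) an angle greater than $\theta_0$, must intersect transversely at a point close to $\tilde a$ with intersection angle at least $\theta_0$. This is a direct application of the hyperbolic law of cosines to the geodesic triangle with vertices $\tilde a$, $\tilde b$, and the intersection point. Applying it to $\tgamma_{\tx}$ and $\tgamma_{\tx'}$ for $n$ large yields the desired transverse intersection in $\tS$, which descends to a transverse intersection of $\gamma_x$ and $\gamma_{x'}$ in $S$ of angle at least $\theta_0$. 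The main content of the lemma is therefore Theorem~\ref{thm:equidistributiontheoremintro} together with the above continuity argument; no further forcing-theoretic input is needed.
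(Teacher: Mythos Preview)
Your proposal is correct and follows essentially the same approach as the paper: pick $v,v'$ witnessing dynamical transversality, set $\theta_0=\tfrac12\angle(v,v')$, use Theorem~\ref{thm:equidistributiontheoremintro} to get that $\mu$-a.e.\ (resp.\ $\mu'$-a.e.) tracking geodesic has its lift to $T^1S$ passing arbitrarily close to $v$ (resp.\ $v'$), and conclude by the elementary hyperbolic-geometry fact that two geodesics with nearby basepoints and tangent directions forming an angle $>\theta_0$ must intersect at angle $>\theta_0$. The paper compresses all this into two sentences, whereas you spell out the lifting and the continuity argument in detail, but the content is identical.
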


\begin{proof}
Because $\mu_1$ and $\mu_2$ are dynamically transverse, there exist $v_1 \in \dotLambda_{\mu_1}, \ v_2 \in \dotLambda_{\mu_2}$ such that $\pi_S(v_1) = \pi_S(v_2)$ and $v_1 \neq v_2$. 
Provided $\varepsilon$ is sufficiently small, any two geodesics $\gamma_1$ passing $\varepsilon$-close to $v_1$, and $\gamma_2$ passing $\varepsilon$-close to $v_2$, must intersect transversally, with angle at least $\frac12 \angle(v_1,v_2):=\theta_0$. The proof is then a direct consequence of the density in $\textnormal{supp}(\nu_{\mu_1})$ of $\mu_1$-a.e.\ tracking geodesic (and the same for $\mu_2$), stated in Theorem~\ref{thm:equidistributiontheoremintro}.
\end{proof}

\begin{proof}[Proof of Theorem~\ref{TheoInterGeod}]
Let $\tilde x_1$ and $\tilde x_2$ be two points as in the statement of Lemma~\ref{lem:DynTransThenAlmostEveryTrackGeodTrans}: they are lifts of $x_1$, $x_2$ that are typical for respectively $\mu_1$ and $\mu_2$, and their tracking geodesics $\tgamma_{\tx_1}$ and $\tgamma_{\tx_2}$ intersect. Later on, we will make additional assumptions on these points, assumptions that will be typical for the measures $\mu_1$ and $\mu_2$.
Let $\F$ be a transverse foliation given by Theorem~\ref{thm:ThExistIstop} and $\tilde \F$ a lift of it to $\tilde S$. 
Let $\tilde I_{\tilde \F}^\Z(\tilde x_1)$ and $\tilde I_{\tilde \F}^\Z(\tilde x_2)$ be the transverse trajectories (given by Theorem~\ref{thm:ThExistIstop}) --- parametrized by $\R$ --- in $\tilde S$ associated to these orbits. We remark that here we lift to $\tilde S$ and not the universal cover of the domain of the isotopy.

\begin{lemma}\label{ClaimSpeedTransverseTraj}
Let $\mu_1, \mu_2 \in \M$ be two dynamically transverse ergodic measures. Then for $\mu_1$-a.e.\ $x_1\in S$, $\mu_2$-a.e.\ $x_2\in S$, for $i=1,2$, there exists an $\F$-transverse loop $\beta_{i+1}$ associated to a deck transformation $T$ whose axis is close enough to $\gamma_{x_{i+1}}$ so that it intersects $\gamma_{x_i}$ with angle at least $\theta_0/2$.
\end{lemma}

\begin{proof}
We fix a regular fundamental domain $D$ of $S$ in $\tilde S$ (its boundary has 0 $\mu_{i+1}$-measure) and define the map $x\in S\mapsto \tx\in\tilde S$ accordingly.
By Lusin theorem, there exists a set $A$ of continuity of the map $x\mapsto\tgamma_{\tx}$ of positive $\mu_{i+1}$-measure. If $x_{i+1}\in A$ is $\mu_{i+1}$-typical, then it has a well defined tracking geodesic $\tgamma_{\tx_{i+1}}$ that intersects some lift of the tracking geodesic $\gamma_{x_i}$ of a $\mu_i$-typical point $x_i$ with angle $\ge \theta_0$ (by Lemma~\ref{lem:DynTransThenAlmostEveryTrackGeodTrans}); moreover the point $x_{i+1}$ is recurrent in $A$. 
Let $W$ be a small open chart of the singular foliation $\F$ on $S$ around $x_{i+1}$, that is included in the projection of the interior of $D$.
Let $n>0$ (to be chosen large thereafter) such that $f^n(x_{i+1})\in A\cap W$. Let $T$ be the deck transformation such that if $\tilde W$ is the lift of $W$ satisfying $\tx_{i+1}\in\tilde W$, then $\tilde f^n(\tx_{i+1})\in T \tilde W$.

Let us explain why if $W$ is chosen small enough, and if $n$ is large enough, then the axis of the deck transformation $T$ is close to $\tgamma_{\tx_{i+1}}$. 
Indeed, on the one hand, if $n$ is large enough, then $\pr_{\tgamma_{\tx_{i+1}}}(\tilde f^n(\tx_{i+1}))$ is large; in particular $T\tx_{i+1}$, which belongs to the same fundamental domain as $\tilde f^n(\tx_{i+1})$, is close to $\omega(\tx_{i+1})$. 
On the other hand,
\begin{equation}\label{eqAxisBeta}
-\pr_{\tgamma_{T^{-1}\tf^n(\tx_{i+1})}}(T^{-1}\tx_{i+1}) = -\pr_{\tgamma_{\tf^n(\tx_{i+1})}}(\tx_{i+1}) = \pr_{\tgamma_{\tx_{i+1}}}(\tf^n(\tx_{i+1})) \text{ is large.}
\end{equation}
If $W$ is small enough, then by continuity of $x\mapsto\tgamma_{\tx}$ on $A$, we have that the (unparametrized) geodesics $\tgamma_{T^{-1}\tf^n(\tx_{i+1})}$ and $\tgamma_{\tx_{i+1}}$ are close; moreover $T^{-1}\tf^n(\tx_{i+1})$ and $\tx_{i+1}$ lie in the same fundamental domain; this implies that the parametrized geodesics $\tgamma_{T^{-1}\tf^n(\tx_{i+1})}$ and $\tgamma_{\tx_{i+1}}$ are close. By \eqref{eqAxisBeta} we deduce that $-\pr_{\tgamma_{\tx_{i+1}}}(T^{-1}\tx_{i+1})$ is large, in other words that $T^{-1}\tx_{i+1}$ is close to $\alpha(\tx_{i+1})$. Hence, $T$ maps $\tx_{i+1}$ close to $\omega(\tx_{i+1})$ and $T^{-1}$ maps $\tx_{i+1}$ close to $\alpha(\tx_{i+1})$; by elementary hyperbolic geometry we deduce that the axis of $T$ is close to $\tgamma_{\tx_{i+1}}$ for $n$ large enough.
\end{proof}

\begin{proposition}\label{LemSpeedTransverseTraj}
Let $\mu_1, \mu_2 \in \M$ be two dynamically transverse ergodic measures. Then for $i=1,2$, for $\mu_i$-a.e.\ $x_i\in S$, we have (we identify $\tgamma_{\tx_i}$ with $\R$ via its parametrization by arclength):
\[\frac{1}{t}\pr_{\tgamma_{\tx_i}}\Big(\tilde I^t_{\tilde \F}(\tx_i)\Big) \underset{t\to\pm\infty}\longrightarrow \vartheta_{\mu_i}.\]
\end{proposition}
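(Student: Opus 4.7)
The plan is to reduce the statement at continuous time $t$ to the analogous statement at integer time $n$, then read off the conclusion from the tracking property of $\tgamma_{\tx_i}$. Throughout, fix $i\in\{1,2\}$, write $x = x_i$, $\rho = \rho_{\mu_i}$, and $\tgamma = \tgamma_{\tilde x_i}$, parametrized by arclength, and denote by $P(\tilde y)\in\R$ the signed arclength parameter of the orthogonal projection of $\tilde y\in\tS$ onto $\tgamma$.

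First I would establish a uniform bound on the diameter of the transverse trajectory between successive iterates: there exists a constant $D>0$, depending only on $f$ and on the foliation $\F$ produced by Theorem~\ref{thm:ThExistIstop}, such that for every $\tilde y\in\tS$ and every $s\in[0,1]$,
\[
\tdist\bigl(\tilde I^{s}_{\tilde\F}(\tilde y),\,\tilde y\bigr)\le D.
\]
This follows because the map $y\mapsto \diam\bigl(I_{\F}^{[0,1]}(y)\bigr)$ descends to a bounded function on the compact surface $S$ (since $\tilde f$ commutes with $\Gamma$, cf.\ Equation~\eqref{eq:preferedliftequation}, and the transverse trajectory is defined up to deck transformations); by concatenation the same bound $D$ controls $\tdist\bigl(\tilde I^{t}_{\tilde\F}(\tx),\,\tilde f^{\lfloor t\rfloor}(\tx)\bigr)$ for every $t\in\R$.

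Next I would use the tracking property of $\gamma_x$ (Definition~\ref{DefTrackGeod}), together with a short computation in Fermi coordinates along $\tgamma$. Recall that if $\tilde y\in\tS$ has Fermi coordinates $(P(\tilde y),r(\tilde y))$ relative to $\tgamma$, then for any $s\in\R$,
\[
\cosh\bigl(\tdist(\tilde y,\tgamma(s))\bigr)=\cosh(r(\tilde y))\,\cosh\bigl(|P(\tilde y)-s|\bigr),
\]
so in particular $|P(\tilde y)-s|\le \tdist(\tilde y,\tgamma(s))$. Applied to $\tilde y=\tilde f^{n}(\tx)$ and $s=n\rho$, the tracking Equation~\eqref{eq:trackingequation} yields
\[
\bigl|P(\tilde f^{n}(\tx))-n\rho\bigr|\le \tdist\bigl(\tilde f^{n}(\tx),\tgamma(n\rho)\bigr)=o(n)\quad\text{as }n\to\pm\infty,
\]
for $\mu_i$-a.e.\ $x$. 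Dividing by $n$ gives $P(\tilde f^{n}(\tx))/n\to\rho$.

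Finally I would interpolate to all real times. For $t\in\R$, set $n=\lfloor t\rfloor$. Since the orthogonal projection onto a geodesic is $1$-Lipschitz, the diameter bound from the first step gives
\[
\bigl|P(\tilde I^{t}_{\tilde\F}(\tx))-P(\tilde f^{n}(\tx))\bigr|\le \tdist\bigl(\tilde I^{t}_{\tilde\F}(\tx),\tilde f^{n}(\tx)\bigr)\le D,
\]
and therefore
\[
\bigl|P(\tilde I^{t}_{\tilde\F}(\tx))-t\rho\bigr|\le D+\bigl|P(\tilde f^{n}(\tx))-n\rho\bigr|+\rho\le o(n)+O(1).
\]
Dividing by $t$ and letting $t\to\pm\infty$ gives the desired limit $\rho=\rho_{\mu_i}$. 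The main subtlety is the uniform diameter bound $D$ for the pieces of the transverse trajectory; once this is in place (and it follows from compactness of $S$ combined with the equivariance of the construction), the rest is a short hyperbolic-geometry computation combined with the tracking property.
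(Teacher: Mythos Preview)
The gap is in your first step. The map $y\mapsto \diam_{\tilde S}\bigl(\tilde I^{[0,1]}_{\tilde\F}(\tilde y)\bigr)$ is defined only on $\dom\F = S\setminus\sing\F$; equivariance under $\Gamma$ makes it descend, but compactness of $S$ does \emph{not} give boundedness, because nothing prevents the chosen transverse arc from making arbitrarily large excursions as $y$ approaches a singularity of $\F$. Controlling what happens near $\sing\F$ is exactly the content of Proposition~\ref{LemLocalTransverse} (whose proof occupies the Appendix), and this is the nontrivial input you are missing. With that proposition your strategy can be repaired --- apply it with $V_0$ a union of small disks about $\sing\F$, rechoose $I_\F$ on the resulting $U_0$ to stay in $V_0$, and combine with your compactness bound on $S\setminus U_0$ --- after which your steps~2 and~3 go through unchanged.

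The paper also invokes Proposition~\ref{LemLocalTransverse} but uses it differently, and for a reason. It exploits the dynamical transversality with the \emph{other} measure $\mu_{i+1}$ to build a closed transverse loop $\beta_{i+1}$ whose lift stays close to an axis crossing $\tgamma_{\tx_i}$ at a definite angle; by recurrence of $\dot\gamma_{x_i}$ in $\mathrm{T}^1S$ (Theorem~\ref{thm:equidistributiontheoremintro}) translates of $\tilde\beta_{i+1}$ cut $\tgamma_{\tx_i}$ with linear frequency. Proposition~\ref{LemLocalTransverse} with $V_0 = S\setminus\beta_{i+1}$ then ensures that on a neighbourhood $U_0$ of $\sing\F$ the transverse arcs avoid $\beta_{i+1}$, so whenever $f^n(x_i)\in U_0$ the piece $\tilde I^{[n,n+1]}_{\tilde\F}(\tx_i)$ is trapped between two consecutive barrier translates --- controlling its projection onto $\tgamma_{\tx_i}$ without ever bounding its diameter --- while on $S\setminus U_0$ a genuine diameter bound holds as in your argument. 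This is longer than necessary for the proposition alone; the point is that the barriers $\tilde\beta_i$ are reused as the basic building blocks in the proof of the next result, Lemma~\ref{LemConsRecurGeod}.
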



\begin{proof}
Note that the proposition is true when $t$ is restricted to the set of integers: if $n\in\Z$, then $\tilde I^n_{\tilde \F}(\tx_i)=\tilde f^n(\tx_i)$, and the result follows from Theorem~\ref{thm:trackinggeodesictheorem}. Let us prove that the result remains true for arbitrary real numbers $t$. In the sequel, we identify $\{1,2\}$ with $\Z/2\Z$. The idea of the proof is depicted in Figure~\ref{fig:usebeta}.

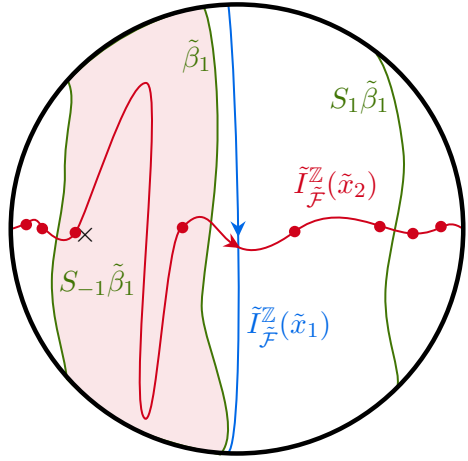
\begin{figure}
\begin{center}
\tikzset{every picture/.style={line width=0.75pt}} 

\begin{tikzpicture}[x=0.75pt,y=0.75pt,yscale=-1,xscale=1]

\draw [color={rgb, 255:red, 0; green, 104; blue, 230 }  ,draw opacity=1 ]   (324.41,37.6) .. controls (328.98,73.97) and (333.48,255.48) .. (324.41,263.48) ;
\draw [shift={(329.45,155.05)}, rotate = 269.15] [fill={rgb, 255:red, 0; green, 104; blue, 230 }  ,fill opacity=1 ][line width=0.08]  [draw opacity=0] (8.04,-3.86) -- (0,0) -- (8.04,3.86) -- (5.34,0) -- cycle    ;
\draw  [color={rgb, 255:red, 65; green, 117; blue, 5 }  ,draw opacity=1 ][fill={rgb, 255:red, 208; green, 2; blue, 27 }  ,fill opacity=0.1 ] (244.38,76.38) .. controls (261.38,58.38) and (285.38,44.38) .. (311.88,38.88) .. controls (315.85,44.79) and (320.23,78.78) .. (318.88,112.88) .. controls (317.5,147.51) and (309.86,183.3) .. (311.38,192.88) .. controls (312.88,210.88) and (332.57,251.19) .. (320.38,264.38) .. controls (283.88,256.38) and (267.38,250.38) .. (243.88,226.38) .. controls (242.55,212.49) and (236.19,197.65) .. (236.38,176.88) .. controls (236.52,160.47) and (243.18,138.48) .. (242.88,123.38) .. controls (242.4,99.75) and (249.38,87.38) .. (244.38,76.38) -- cycle ;
\draw [color={rgb, 255:red, 65; green, 117; blue, 5 }  ,draw opacity=1 ]   (398.38,62.88) .. controls (404.66,71.37) and (409.68,91.74) .. (411.88,112.38) .. controls (414.07,133.01) and (403.35,161.69) .. (402.88,183.38) .. controls (402.4,205.06) and (410.69,222.03) .. (408.88,230.38) ;
\draw  (253,154) node{$\times$}; 
\draw  [draw opacity=0][fill={rgb, 255:red, 208; green, 2; blue, 27 }  ,fill opacity=1 ] (245.25,152.75) .. controls (245.25,151.16) and (246.54,149.88) .. (248.13,149.88) .. controls (249.71,149.88) and (251,151.16) .. (251,152.75) .. controls (251,154.34) and (249.71,155.63) .. (248.13,155.63) .. controls (246.54,155.63) and (245.25,154.34) .. (245.25,152.75) -- cycle ;
\draw  [draw opacity=0][fill={rgb, 255:red, 208; green, 2; blue, 27 }  ,fill opacity=1 ] (298.75,150.25) .. controls (298.75,148.66) and (300.04,147.38) .. (301.63,147.38) .. controls (303.21,147.38) and (304.5,148.66) .. (304.5,150.25) .. controls (304.5,151.84) and (303.21,153.13) .. (301.63,153.13) .. controls (300.04,153.13) and (298.75,151.84) .. (298.75,150.25) -- cycle ;
\draw  [draw opacity=0][fill={rgb, 255:red, 208; green, 2; blue, 27 }  ,fill opacity=1 ] (354.75,152.25) .. controls (354.75,150.66) and (356.04,149.38) .. (357.63,149.38) .. controls (359.21,149.38) and (360.5,150.66) .. (360.5,152.25) .. controls (360.5,153.84) and (359.21,155.13) .. (357.63,155.13) .. controls (356.04,155.13) and (354.75,153.84) .. (354.75,152.25) -- cycle ;
\draw  [draw opacity=0][fill={rgb, 255:red, 208; green, 2; blue, 27 }  ,fill opacity=1 ] (397.25,149.75) .. controls (397.25,148.16) and (398.54,146.88) .. (400.13,146.88) .. controls (401.71,146.88) and (403,148.16) .. (403,149.75) .. controls (403,151.34) and (401.71,152.63) .. (400.13,152.63) .. controls (398.54,152.63) and (397.25,151.34) .. (397.25,149.75) -- cycle ;
\draw  [draw opacity=0][fill={rgb, 255:red, 208; green, 2; blue, 27 }  ,fill opacity=1 ] (413.75,153.25) .. controls (413.75,151.66) and (415.04,150.38) .. (416.63,150.38) .. controls (418.21,150.38) and (419.5,151.66) .. (419.5,153.25) .. controls (419.5,154.84) and (418.21,156.13) .. (416.63,156.13) .. controls (415.04,156.13) and (413.75,154.84) .. (413.75,153.25) -- cycle ;
\draw  [draw opacity=0][fill={rgb, 255:red, 208; green, 2; blue, 27 }  ,fill opacity=1 ] (427.75,149.75) .. controls (427.75,148.16) and (429.04,146.88) .. (430.63,146.88) .. controls (432.21,146.88) and (433.5,148.16) .. (433.5,149.75) .. controls (433.5,151.34) and (432.21,152.63) .. (430.63,152.63) .. controls (429.04,152.63) and (427.75,151.34) .. (427.75,149.75) -- cycle ;
\draw  [draw opacity=0][fill={rgb, 255:red, 208; green, 2; blue, 27 }  ,fill opacity=1 ] (228.75,150.75) .. controls (228.75,149.16) and (230.04,147.88) .. (231.63,147.88) .. controls (233.21,147.88) and (234.5,149.16) .. (234.5,150.75) .. controls (234.5,152.34) and (233.21,153.63) .. (231.63,153.63) .. controls (230.04,153.63) and (228.75,152.34) .. (228.75,150.75) -- cycle ;
\draw  [draw opacity=0][fill={rgb, 255:red, 208; green, 2; blue, 27 }  ,fill opacity=1 ] (220.75,148.75) .. controls (220.75,147.16) and (222.04,145.88) .. (223.63,145.88) .. controls (225.21,145.88) and (226.5,147.16) .. (226.5,148.75) .. controls (226.5,150.34) and (225.21,151.63) .. (223.63,151.63) .. controls (222.04,151.63) and (220.75,150.34) .. (220.75,148.75) -- cycle ;
\draw [color={rgb, 255:red, 208; green, 2; blue, 27 }  ,draw opacity=1 ]   (215.88,150.94) .. controls (225.12,148.77) and (226.63,144.19) .. (229.63,147.44) .. controls (232.63,150.69) and (241.88,162.56) .. (248.13,152.75) .. controls (254.38,142.94) and (273.63,77.03) .. (283.41,77.49) .. controls (293.2,77.95) and (275.02,227.87) .. (281.81,244.69) .. controls (288.6,261.5) and (292.88,151.44) .. (301.63,150.25) ;
\draw [color={rgb, 255:red, 208; green, 2; blue, 27 }  ,draw opacity=1 ]   (301.63,150.25) .. controls (321.25,128.44) and (317.63,182.25) .. (357.63,152.25) ;
\draw [shift={(330.44,160.53)}, rotate = 210.73] [fill={rgb, 255:red, 208; green, 2; blue, 27 }  ,fill opacity=1 ][line width=0.08]  [draw opacity=0] (8.04,-3.86) -- (0,0) -- (8.04,3.86) -- (5.34,0) -- cycle    ;
\draw [color={rgb, 255:red, 208; green, 2; blue, 27 }  ,draw opacity=1 ]   (357.63,152.25) .. controls (379.27,136.02) and (401.75,152.81) .. (416.63,153.25) .. controls (431.5,153.69) and (432.25,144.44) .. (441.75,150.94) ;
\draw  [line width=1.7]  (215.88,150.94) .. controls (215.88,88.56) and (266.44,38) .. (328.81,38) .. controls (391.19,38) and (441.75,88.56) .. (441.75,150.94) .. controls (441.75,213.31) and (391.19,263.88) .. (328.81,263.88) .. controls (266.44,263.88) and (215.88,213.31) .. (215.88,150.94) -- cycle ;

\draw (332,200) node [anchor=west] [inner sep=0.75pt]  [color={rgb, 255:red, 0; green, 104; blue, 230 }  ,opacity=1 ]  {$\tilde{I}_{\tilde\F}^{\Z}(\tilde{x}_{1})$};
\draw (378.21,141.17) node [anchor=south] [inner sep=0.75pt]  [color={rgb, 255:red, 208; green, 2; blue, 27 }  ,opacity=1 ]  {$\tilde{I}_{\tilde\F}^{\Z}(\tilde{x}_{2})$};
\draw (317.6,63.9) node [anchor=east] [inner sep=0.75pt]  [color={rgb, 255:red, 65; green, 117; blue, 5 }  ,opacity=1 ]  {$\tilde{\beta }_{1}$};
\draw (406.4,84.7) node [anchor=east] [inner sep=0.75pt]  [color={rgb, 255:red, 65; green, 117; blue, 5 }  ,opacity=1 ]  {$S_{1}\tilde{\beta }_{1}$};
\draw (238.38,176.88) node [anchor=west] [inner sep=0.75pt]  [color={rgb, 255:red, 65; green, 117; blue, 5 }  ,opacity=1 ]  {$S_{-1}\tilde{\beta }_{1}$};

\end{tikzpicture}
\caption{The idea of the proof of Proposition~\ref{LemSpeedTransverseTraj}: if the point $\tf^n(\tx_1)$ is close to a singularity of the foliation $\F$ (black cross), then the transverse trajectory $I_{\tilde\F}(\tx_1)$ stays in the domain between two curves $S_k\tilde\beta_1$ and $S_{k+1}\tilde\beta_1$.\label{fig:usebeta}}
\end{center}
\end{figure}

Let $\beta_{i+1}$ and $T$ the transverse loop and its deck transformation given by Lemma~\ref{ClaimSpeedTransverseTraj}. We suppose that the axis of $T$ intersects some lift of the tracking geodesic $\gamma_{x_i}$ of a $\mu_i$-typical point $x_i$ with angle $\ge \theta_0/2$.

By local modification of the transverse trajectory $\tilde I_{\tilde \F}^n(\tilde x_{i+1})$ in $\tilde W$, it is possible to create another transverse path $\tilde\beta_{i+1} : [0,1]\to \tilde S$ such that $\tilde\beta_{i+1}(1) = T\tilde\beta_{i+1}(0)$. We can concatenate $\tilde\beta_{i+1}$ with the sequence $(T^i\tilde\beta_{i+1})_i$ to get an extension $\tilde\beta_{i+1} : \R\to \tilde S$ that is $\tilde\F$-transverse and $T$-invariant. We denote by $\beta_{i+1}$ the projection of $\tilde\beta_{i+1}$ to $S$.

Let us apply Proposition~\ref{LemLocalTransverse} to $V_0$ equal to the complement of $\beta_{i+1}$: there exists $U_0\subset S$ such that if $x\in U_0\setminus\sing\F$, then $I_\F(x)$ does not meet $\beta_{i+1}$. Moreover, by local triviality of the foliation in a neighbourhood of a transverse trajectory in the universal cover of $S\setminus\sing \F$, we deduce that the transverse trajectories can be supposed locally bounded and hence bounded in the compact set $S\setminus U_0$: there exists $C>0$ such that if $x\in S\setminus U_0$, then $\diam(\tilde I_{\tilde\F}(\tx))<C$.

Recall that $x_i$ is a point that is $\mu_i$-typical, and that some lift of the geodesic $\gamma_{x_i}$ intersects $\tilde\beta_{i+1}$ with angle $\ge \theta_0/2$. In the sequel, we will suppose this lift is $\tgamma_{\tx_i}$ (this can be made by changing $\tilde\beta_{i+1}$ to some translate of it if necessary).
As the geodesic $\tgamma_{\tx_i}$ is recurrent (Theorem~\ref{thm:equidistributiontheoremintro}), it will cross translates of $\tilde\beta_{i+1}$ with positive frequency: there exists $\kappa>0$, a sequence $(t_m)_{m\in\Z}$ such that $t_m\sim_{\pm\infty} \kappa m$, and a sequence $(S_m)_{m\in\Z}$ of deck transformations such that $\tgamma_{\tx_i}(t_m)\in S_m \tilde\beta_{i+1}$, the angle of the intersection between $\tgamma_{\tx_i}$ and $S_m\tilde\beta_{i+1}$ being $\ge\theta_0/4$. This property on the angle of the intersection, together with the fact that $\tilde\beta_{i+1}$ stays at bounded distance from the axis of $T$, implies that 
\[
\inf\pr_{\tgamma_{\tx_i}}\big( S_m \tilde\beta_{i+1}\big) \underset{m\to\pm\infty}{\sim}
\sup \pr_{\tgamma_{\tx_i}}\big( S_m \tilde\beta_{i+1}\big)
\underset{m\to\pm\infty}{\sim} t_m.
\]
Moreover, by Theorem~\ref{thm:trackinggeodesictheorem}, we have that 
\begin{equation}\label{EqMN0}
\pr_{\tgamma_{\tx_i}}\big( \tilde f^n(\tx_i)\big) \underset{n\to\pm\infty}{\sim}
\vartheta_{\mu_i}n.
\end{equation} 
Hence, for any $n\in\N$ (using $t_m\sim \kappa m$), there exist $m_n,m'_n$ such that 
\begin{equation}\label{EqMN}
\left\{\begin{array}{l}
\sup \pr_{\tgamma_{\tx_i}}\big( S_{m_n} \tilde\beta_{i+1}\big) 
\le \pr_{\tgamma_{\tx_i}}\big( \tilde f^n(\tx_i)\big) \le
\inf \pr_{\tgamma_{\tx_i}}\big( S_{m'_n} \tilde\beta_{i+1}\big)\\
m_n\sim_{\pm\infty} m'_n \sim_{\pm\infty} \vartheta_{\mu_i}n.
\end{array}\right.
\end{equation} 

To conclude we have two cases to consider:
\begin{itemize}
\item Either $f^n(x_i)\in U_0$; in this case by definition of $U_0$ the transverse trajectory $\tilde I_{\tilde \F}(\tilde f^n(\tx_i))$ does not meet neither $S_{m_n} \tilde\beta_{i+1}$ nor $S_{m'_n} \tilde\beta_{i+1}$, in particular we have, by \eqref{EqMN}, 
\begin{multline*}
\vartheta_{\mu_i}n\underset{m\to\pm\infty}{\sim}
\sup_{t\in\R} \pr_{\tgamma_{\tx_i}}\big( S_{m_n} \tilde\beta_{i+1}(t)\big) \\
\le 
\inf_{s\in[0,1]}\pr_{\tgamma_{\tx_i}}\big(\tilde I_{\tilde \F}^s(\tilde f^n(\tx_i))\big) 
\le \sup_{s\in[0,1]}\pr_{\tgamma_{\tx_i}}\big(\tilde I_{\tilde \F}^s(\tilde f^n(\tx_i))\big) 
\le\\
\inf_{t\in\R} \pr_{\tgamma_{\tx_i}}\big( S_{m'_n} \tilde\beta_{i+1}(t)\big)\underset{m\to\pm\infty}{\sim}\vartheta_{\mu_i}n;
\end{multline*}
\item Or $f^n(x_i)\notin U_0$; in this case by the above argument, the diameter of the transverse trajectory $\tilde I_{\tilde \F}(\tilde f^n(\tx_i))$ is uniformly bounded by $C$, which implies that we have, by \eqref{EqMN0},
\[\tilde I_{\tilde \F}^s(\tilde f^n(\tx_i))\underset{m\to\pm\infty}{\sim}\vartheta_{\mu_i}n\]
uniformly in $s\in [0,1]$.
\end{itemize}
\end{proof}

There are two cases to consider, which are equivalent: either $\tilde I^\Z_{\tilde \F}(\tilde x_1)$ crosses $\tilde I^\Z_{\tilde \F}(\tilde x_2)$ from left to right or from right to left. We choose the first one for the proof, the other being identical.

\begin{lemma}\label{LemConsRecurGeod}
There exists $R_0>0$ such that given $R>R_0+1$, there exist deck transformations $T_1, T_1', T_2, T_2'$ satisfying the following (see Figure~\ref{FigConsRecurGeod}): 
\begin{itemize}
\item we have the inclusions: 
\[\pr_{\tilde\gamma_{\tilde x_2}}\Big(T_1\Big(\tilde I_{\tilde \F}^\Z(\tilde x_1) \cup T_2\tilde I_{\tilde \F}^\Z(\tilde x_2)\Big)\Big)\subset [-2R, -R],\]
\[\pr_{\tilde\gamma_{\tilde x_2}}\Big(\tilde I_{\tilde \F}^\Z(\tilde x_1) 
\cup T_2'\Big(\tilde I_{\tilde \F}^\Z(\tilde x_2)\cup T_1' \tilde I_{\tilde \F}^\Z(\tilde x_1)\Big)
\cup T_2\Big(\tilde I_{\tilde \F}^\Z(\tilde x_2)\cup T_1\tilde I_{\tilde \F}^\Z(\tilde x_1)\Big)\Big)\subset [-R_0,R_0],\]
\[\pr_{\tilde\gamma_{\tilde x_2}}\Big(T_1'\Big(\tilde I_{\tilde \F}^\Z(\tilde x_1)\cup T'_2 \tilde I_{\tilde \F}^\Z(\tilde x_2)\Big)\Big)\subset [R,2R]; \] 
\item we have the inclusions:  
\[\pr_{\tilde\gamma_{\tilde x_1}}\Big(T_2\Big(\tilde I_{\tilde \F}^\Z(\tilde x_2) \cup T_1\tilde I_{\tilde \F}^\Z(\tilde x_1)\Big)\Big)\subset [-2R, -R],\]
\[\pr_{\tilde\gamma_{\tilde x_1}}\Big(\tilde I_{\tilde \F}^\Z(\tilde x_2) 
\cup T_1\Big(\tilde I_{\tilde \F}^\Z(\tilde x_1)\cup T_2 \tilde I_{\tilde \F}^\Z(\tilde x_2)\Big)
\cup T_1'\Big(\tilde I_{\tilde \F}^\Z(\tilde x_1)\cup T'_2\tilde I_{\tilde \F}^\Z(\tilde x_2)\Big)\Big)\subset [-R_0,R_0],\]
\[\pr_{\tilde\gamma_{\tilde x_1}}\Big(T_2'\Big(\tilde I_{\tilde \F}^\Z(\tilde x_2)\cup T_1' \tilde I_{\tilde \F}^\Z(\tilde x_1)\Big)\Big)\subset [R,2R]. \]
\end{itemize}
\end{lemma}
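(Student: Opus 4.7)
The plan is to build the four deck transformations $T_1,T_1',T_2,T_2'$ from recurrence of the tracking geodesics in $\textnormal{T}^1 S$, arranged so that each acts on $\tilde S\simeq\H^2$ as an approximate hyperbolic translation along one of the two tracking geodesics; then to translate the translation-length data into boundary behaviour, and finally to read off the projections from the locations of the clusters on $\partial\tilde S$. Write $\tilde z^*=\tilde\gamma_{\tilde x_1}(s_0)=\tilde\gamma_{\tilde x_2}(u_0)$ for the intersection point (at uniformly bounded distance from $\tilde x_1,\tilde x_2$, a bound absorbed into $R_0$); Lemma~\ref{lem:DynTransThenAlmostEveryTrackGeodTrans} gives that the angle there is at least $\theta_0>0$, so the four boundary points $\alpha(\tilde x_i),\omega(\tilde x_i)$ for $i=1,2$ are pairwise distinct.

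For the construction, let $\bar v_i\in\textnormal{supp}(\nu_{\mu_i})$ be the image in $\textnormal{T}^1 S$ of $\dot{\tilde\gamma}_{\tilde x_i}(u_0)$. By Theorem~\ref{thm:equidistributiontheoremintro}, $\dot\gamma_{x_2}$ is recurrent at $\bar v_2$ in both positive and negative time; given $\varepsilon>0$, I will pick $t^-,t^+$ within $\varepsilon$ of $-3R/2,+3R/2$ respectively at which $\dot\gamma_{x_2}$ comes $\varepsilon$-close to $\bar v_2$, and lift them to deck transformations $T_1,T_1'\in\Gamma$ satisfying $T_1\dot{\tilde\gamma}_{\tilde x_2}(u_0)\approx\dot{\tilde\gamma}_{\tilde x_2}(t^-)$ and similarly for $T_1'$. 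Analogously, recurrence of $\dot\gamma_{x_1}$ will produce $T_2,T_2'$ approximating translations along $\tilde\gamma_{\tilde x_1}$ by $-3R/2$ and $+3R/2$. For $R$ large and $\varepsilon(R)\to 0$, $T_1$ will then be a hyperbolic isometry of $\H^2$ with axis close to $\tilde\gamma_{\tilde x_2}$, attractive fixed point close to $\alpha(\tilde x_2)$, repelling fixed point close to $\omega(\tilde x_2)$, and translation length close to $3R/2$; and analogously for the other three.

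The key computation is then the boundary action. In the disk model, a boundary point at Euclidean distance $\delta$ from the repeller of $T_1$ gets mapped to a point at distance of order $e^{-3R/2}\delta$ from $\alpha(\tilde x_2)$, and by transversality all relevant boundary points (the endpoints of $\tilde\gamma_{\tilde x_1},\tilde\gamma_{\tilde x_2}$, and the $T_2,T_2'$-images of the latter) stay away from $\omega(\tilde x_2)$. So $T_1$ collapses all of them into a disc of radius $\asymp e^{-3R/2}$ around $\alpha(\tilde x_2)$, and symmetrically for the others. Now for any boundary point $z$ at distance $\delta$ from $\alpha(\tilde x_2)$, the orthogonal projection to $\tilde\gamma_{\tilde x_2}$ lies near parameter $-\log(1/\delta)+O(1)$, while the projection to $\tilde\gamma_{\tilde x_1}$ lies near a fixed bounded point (the projection of $\alpha(\tilde x_2)$ itself, which is finite because $\alpha(\tilde x_2)\notin\{\alpha(\tilde x_1),\omega(\tilde x_1)\}$). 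Hence every translated geodesic whose two endpoints cluster in an $e^{-3R/2}$-neighborhood of $\alpha(\tilde x_2)$ (such as $T_1\tilde\gamma_{\tilde x_1}$ and $T_1T_2\tilde\gamma_{\tilde x_2}$) projects onto $\tilde\gamma_{\tilde x_2}$ in a bounded interval around $-3R/2$, and onto $\tilde\gamma_{\tilde x_1}$ in a bounded interval around a fixed point in $[-R_0,R_0]$; this gives both the ``far'' and the ``central'' inclusions for those geodesics, and similar statements hold with the other three attractors.

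Finally, to pass from the geodesics to the $\tilde I_{\tilde\F}^\Z$ trajectories, I will use that $T_k\tilde I_{\tilde\F}^\Z(\tilde x_j)$ tracks $T_k\tilde\gamma_{\tilde x_j}$ at sublinear distance (Theorem~\ref{thm:trackinggeodesictheorem} combined with the $\Gamma$-equivariance of $\tilde f$), together with the fact that in Fermi coordinates around any transverse geodesic the tangential coordinate changes by at most $d/\cosh(r)$ under a perpendicular displacement of size $d$, so the projection of the trajectory stays within bounded additive error of the projection of the tracking geodesic. The main obstacle I expect is uniformity in $R$: I must choose $\varepsilon(R)$ decaying fast enough to ensure that the $e^{-3R/2}$-clustering persists after composing two $T_k$'s (e.g.\ in $T_1T_2\tilde\gamma_{\tilde x_2}$), and that the sublinear drift of the transverse trajectory does not spoil the bounded-interval conclusion. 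Both are controllable because exponential contraction on a basin of attraction dominates any sublinear error, but writing down all the cases carefully is the heart of the argument.
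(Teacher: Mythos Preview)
Your construction of $T_1,T_1',T_2,T_2'$ via recurrence of the tracking geodesics in $\textnormal{T}^1S$, and your boundary-clustering analysis of where the translated \emph{geodesics} project, are sound and parallel the paper's route. The gap is in the last step, where you pass from the geodesics to the transverse trajectories $\tilde I_{\tilde\F}^\Z(\tilde x_j)$. You assert that $\tilde I_{\tilde\F}^\Z(\tilde x_j)$ tracks $\tilde\gamma_{\tilde x_j}$ at sublinear distance, citing Theorem~\ref{thm:trackinggeodesictheorem}; but that theorem controls only the orbit points $\tilde f^n(\tilde x_j)=\tilde I_{\tilde\F}^n(\tilde x_j)$ at integer times. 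The piece $\tilde I_{\tilde\F}^{[n,n+1]}(\tilde x_j)$ is merely an $\F$-transverse path homotopic in $\dom\F$ to the isotopy arc, and when $f^n(x_j)$ approaches a singularity of $\F$ it can have arbitrarily large diameter in $\tilde S$. Your Fermi estimate $|\Delta t|\le d/\cosh(r)$ requires a bound $d$ on the distance from the transverse path to $\tilde\gamma_{\tilde x_j}$, and none is available (Proposition~\ref{LemSpeedTransverseTraj} controls only the tangential projection onto the \emph{same} geodesic, not the perpendicular distance). So nothing you cite prevents $\pr_{\tilde\gamma_{\tilde x_{2}}}\big(\tilde I_{\tilde\F}^\Z(\tilde x_1)\big)$ from being unbounded.

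This is precisely the obstacle the paper's proof is built around. It reuses, from the proof of Proposition~\ref{LemSpeedTransverseTraj}, closed $\F$-transverse ``barrier'' loops $\tilde\beta_1,\tilde\beta_2$ shadowing each tracking geodesic, together with Proposition~\ref{LemLocalTransverse} (proved in the appendix), which furnishes a neighborhood $U_0$ of $\sing\F$ on which the transverse trajectory can be chosen disjoint from $\beta_1\cup\beta_2$, while outside $U_0$ the transverse trajectories have uniformly bounded diameter $C$. The inclusions in the lemma then follow from a dichotomy at each orbit point: if $f^n(x_i)\notin U_0$, the diameter bound suffices; if $f^n(x_i)\in U_0$, the transverse piece cannot cross the appropriate translate of $\tilde\beta_{i+1}$ and hence stays on the correct side of it. Without this barrier mechanism the lemma does not follow from your argument.
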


In particular, the sets
\[T_1\Big(\tilde I_{\tilde \F}^\Z(\tilde x_1) \cup T_2\tilde I_{\tilde \F}^\Z(\tilde x_2)\Big),\]
\[\tilde I_{\tilde \F}^\Z(\tilde x_1) 
\cup T_2'\Big(\tilde I_{\tilde \F}^\Z(\tilde x_2)\cup T_1' \tilde I_{\tilde \F}^\Z(\tilde x_1)\Big)
\cup T_2\Big(\tilde I_{\tilde \F}^\Z(\tilde x_2)\cup T_1\tilde I_{\tilde \F}^\Z(\tilde x_1)\Big),\]
\[T_1'\Big(\tilde I_{\tilde \F}^\Z(\tilde x_1)\cup T'_2 \tilde I_{\tilde \F}^\Z(\tilde x_2)\Big) \] 
are pairwise disjoint, and cross $\tilde\gamma_{\tilde x_2}$ in an increasing order (and the same for the crossings of $\tilde\gamma_{\tilde x_1}$ with the other sets).

In the proof we will reuse the paths $\tilde\beta_i$ built in Lemma~\ref{ClaimSpeedTransverseTraj}. 

\begin{figure}
\begin{center}

\tikzset{every picture/.style={line width=0.75pt}} 

\begin{tikzpicture}[x=0.75pt,y=0.75pt,yscale=-1,xscale=1]

\clip (177.46,150.85) .. controls (177.46,76.65) and (237.61,16.5) .. (311.81,16.5) .. controls (386.01,16.5) and (446.17,76.65) .. (446.17,150.85) .. controls (446.17,225.06) and (386.01,285.21) .. (311.81,285.21) .. controls (237.61,285.21) and (177.46,225.06) .. (177.46,150.85) -- cycle ;

\draw [color={rgb, 255:red, 208; green, 2; blue, 27 }  ,draw opacity=1 ][fill={rgb, 255:red, 208; green, 2; blue, 27 }  ,fill opacity=1 ]   (165.67,150.92) -- (496.17,150.42) ;
\draw [shift={(335.92,150.66)}, rotate = 179.91] [fill={rgb, 255:red, 208; green, 2; blue, 27 }  ,fill opacity=1 ][line width=0.08]  [draw opacity=0] (10.72,-5.15) -- (0,0) -- (10.72,5.15) -- (7.12,0) -- cycle    ;
\draw [color={rgb, 255:red, 74; green, 144; blue, 226 }  ,draw opacity=1 ]   (308.67,8.92) -- (310.67,330.42) ;
\draw [shift={(309.7,174.67)}, rotate = 269.64] [fill={rgb, 255:red, 74; green, 144; blue, 226 }  ,fill opacity=1 ][line width=0.08]  [draw opacity=0] (10.72,-5.15) -- (0,0) -- (10.72,5.15) -- (7.12,0) -- cycle    ;
\draw [color={rgb, 255:red, 208; green, 2; blue, 27 }  ,draw opacity=1 ]   (237.17,33.42) .. controls (274.17,61.42) and (370.67,56.92) .. (405.17,18.42) ;
\draw [shift={(329.04,50.22)}, rotate = 174.63] [fill={rgb, 255:red, 208; green, 2; blue, 27 }  ,fill opacity=1 ][line width=0.08]  [draw opacity=0] (10.72,-5.15) -- (0,0) -- (10.72,5.15) -- (7.12,0) -- cycle    ;
\draw [color={rgb, 255:red, 208; green, 2; blue, 27 }  ,draw opacity=1 ]   (236.67,268.42) .. controls (273.67,237.42) and (379.17,237.92) .. (402.17,298.92) ;
\draw [shift={(332.75,250.32)}, rotate = 188.51] [fill={rgb, 255:red, 208; green, 2; blue, 27 }  ,fill opacity=1 ][line width=0.08]  [draw opacity=0] (10.72,-5.15) -- (0,0) -- (10.72,5.15) -- (7.12,0) -- cycle    ;
\draw [color={rgb, 255:red, 74; green, 144; blue, 226 }  ,draw opacity=1 ]   (184.67,96.42) .. controls (218.17,120.92) and (221.67,196.42) .. (174.17,227.42) ;
\draw [shift={(208.93,169.56)}, rotate = 276.01] [fill={rgb, 255:red, 74; green, 144; blue, 226 }  ,fill opacity=1 ][line width=0.08]  [draw opacity=0] (10.72,-5.15) -- (0,0) -- (10.72,5.15) -- (7.12,0) -- cycle    ;
\draw [color={rgb, 255:red, 74; green, 144; blue, 226 }  ,draw opacity=1 ]   (432.17,77.42) .. controls (395.67,106.92) and (404.67,207.92) .. (457.17,235.92) ;
\draw [shift={(412.9,166.94)}, rotate = 259.81] [fill={rgb, 255:red, 74; green, 144; blue, 226 }  ,fill opacity=1 ][line width=0.08]  [draw opacity=0] (10.72,-5.15) -- (0,0) -- (10.72,5.15) -- (7.12,0) -- cycle    ;
\draw [color={rgb, 255:red, 208; green, 2; blue, 27 }  ,draw opacity=1 ]   (177.67,129.42) .. controls (210.67,122.42) and (217.17,86.42) .. (197.17,66.42) ;
\draw [shift={(208.07,99.04)}, rotate = 103.2] [fill={rgb, 255:red, 208; green, 2; blue, 27 }  ,fill opacity=1 ][line width=0.08]  [draw opacity=0] (10.72,-5.15) -- (0,0) -- (10.72,5.15) -- (7.12,0) -- cycle    ;
\draw [color={rgb, 255:red, 74; green, 144; blue, 226 }  ,draw opacity=1 ]   (216.17,47.42) .. controls (237.67,72.92) and (289.67,53.42) .. (267.67,14.42) ;
\draw [shift={(252.94,57.25)}, rotate = 337.58] [fill={rgb, 255:red, 74; green, 144; blue, 226 }  ,fill opacity=1 ][line width=0.08]  [draw opacity=0] (10.72,-5.15) -- (0,0) -- (10.72,5.15) -- (7.12,0) -- cycle    ;
\draw [color={rgb, 255:red, 74; green, 144; blue, 226 }  ,draw opacity=1 ]   (352.17,280.92) .. controls (351.67,236.92) and (383.67,239.92) .. (408.17,249.42) ;
\draw [shift={(362.93,249.33)}, rotate = 328.49] [fill={rgb, 255:red, 74; green, 144; blue, 226 }  ,fill opacity=1 ][line width=0.08]  [draw opacity=0] (10.72,-5.15) -- (0,0) -- (10.72,5.15) -- (7.12,0) -- cycle    ;
\draw [color={rgb, 255:red, 208; green, 2; blue, 27 }  ,draw opacity=1 ]   (421.67,246.42) .. controls (392.17,212.42) and (405.17,184.42) .. (441.17,183.42) ;
\draw [shift={(407.52,200.87)}, rotate = 107.06] [fill={rgb, 255:red, 208; green, 2; blue, 27 }  ,fill opacity=1 ][line width=0.08]  [draw opacity=0] (10.72,-5.15) -- (0,0) -- (10.72,5.15) -- (7.12,0) -- cycle    ;
\draw[orange, ->, >=latex] (330,205) to[bend left] (285,210);
\draw[orange!80!black] (295,192) node{$\phi$};

\draw (269.38,147) node [anchor=south] [inner sep=0.75pt]  [font=\small,color={rgb, 255:red, 150; green, 0; blue, 0 }  ,opacity=1 ]  {$\tilde I_{\tilde \F}^{\Z}(\tilde  x_{2})$};
\draw (210.46,97.89) node [anchor=west] [inner sep=0.75pt]  [font=\small,color={rgb, 255:red, 150; green, 0; blue, 0 }  ,opacity=1 ]  {$T_{1} T_{2} \tilde I_{\tilde \F}^{\Z}(\tilde  x_{2})$};
\draw (332.96,52.9) node [anchor=north west][inner sep=0.75pt]  [font=\small,color={rgb, 255:red, 150; green, 0; blue, 0 }  ,opacity=1 ]  {$T_{2} \tilde I_{\tilde \F}^{\Z}(\tilde  x_{2})$};
\draw (262.74,250) node [anchor=south] [inner sep=0.75pt]  [font=\small,color={rgb, 255:red, 150; green, 0; blue, 0 }  ,opacity=1 ]  {$T'_2\tilde I_{\tilde \F}^{\Z}(\tilde  x_{2})$};
\draw (408,197.6) node [anchor=south east] [inner sep=0.75pt]  [font=\small,color={rgb, 255:red, 150; green, 0; blue, 0 }  ,opacity=1 ]  {$T'_{1} T'_{2} \tilde I_{\tilde \F}^{\Z}(\tilde  x_{2})$};
\draw (333.38,129.88) node [anchor=south] [inner sep=0.75pt]  [font=\small,color={rgb, 255:red, 0; green, 79; blue, 169 }  ,opacity=1 ]  {$\tilde I_{\tilde \F}^{\Z}(\tilde  x_{1})$};
\draw (200.59,204.28) node [anchor=west] [inner sep=0.75pt]  [font=\small,color={rgb, 255:red, 0; green, 79; blue, 169 }  ,opacity=1 ]  {$T_{1} \tilde I_{\tilde \F}^{\Z}(\tilde  x_{1})$};
\draw (253.88,61) node [anchor=north] [inner sep=0.75pt]  [font=\small,color={rgb, 255:red, 0; green, 79; blue, 169 }  ,opacity=1 ]  {$T_{2} T_{1} \tilde I_{\tilde \F}^{\Z}(\tilde  x_{1})$};
\draw (417,101.88) node [anchor=south east] [inner sep=0.75pt]  [font=\small,color={rgb, 255:red, 0; green, 79; blue, 169 }  ,opacity=1 ]  {$T'_{1} \tilde I_{\tilde \F}^{\Z}(\tilde  x_{1})$};
\draw (356.89,238.38) node [anchor=south] [inner sep=0.75pt]  [font=\small,color={rgb, 255:red, 0; green, 79; blue, 169 }  ,opacity=1 ]  {$T'_{2} T'_{1} \tilde I_{\tilde \F}^{\Z}(\tilde  x_{1})$};

\draw  [line width=3.75]  (177.46,150.85) .. controls (177.46,76.65) and (237.61,16.5) .. (311.81,16.5) .. controls (386.01,16.5) and (446.17,76.65) .. (446.17,150.85) .. controls (446.17,225.06) and (386.01,285.21) .. (311.81,285.21) .. controls (237.61,285.21) and (177.46,225.06) .. (177.46,150.85) -- cycle ;
\end{tikzpicture}
\caption{\label{FigConsRecurGeod}The configuration of Lemma~\ref{LemConsRecurGeod}. The orange arrow is an example of leaf.}
\end{center}
\end{figure}
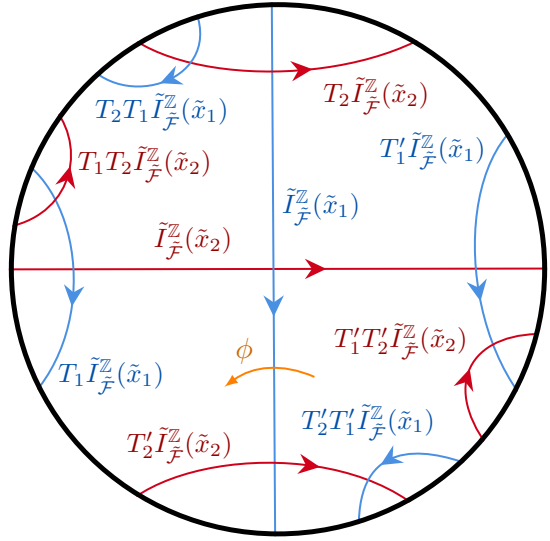

\begin{proof}
Denote $\theta_0$ the angle of intersection between $\tgamma_{\tx_1}$ and $\tgamma_{\tx_2}$. Let $\delta>0$ such that any geodesics $\tgamma_1, \tgamma_2$ of $\wt S$ at distance at most $2\delta$ from respectively $\tgamma_{\tx_1}$ and $\tgamma_{\tx_2}$ intersect transversally with an angle bigger than $\theta_0/2$. Recall that the distance on geodesics comes from a distance on $(\Sp^1)^2$ with the identification of a geodesic of $\tilde S$ with its two endpoints.

Let $\wt\beta_1$ and $\wt\beta_2$ be the paths built in the proof of Proposition~\ref{LemSpeedTransverseTraj}, such that the tracking geodesics of these paths are at distance at most $\delta$ from respectively $\tgamma_{\tx_1}$ and $\tgamma_{\tx_2}$. Let also $U_0\subset S$ be a neighbourhood of $\sing \F$ (obtained, as in the proof of Proposition~\ref{LemSpeedTransverseTraj}, as an application of Proposition~\ref{LemLocalTransverse}) such that if $x\in U_0\setminus\sing\F$, then $I_\F(x)$ does not meet $\beta_{1}$ nor $\beta_2$. Let $C>0$ be such that if $x\in S\setminus U_0$, then $\diam(\tilde I_{\tilde\F}(\tx))<C$.

Let $R_1>0$ such that, for $i=1,2$,
\[\pr_{\tgamma_{\tx_i}}\big(\{\tilde f^n(\tx_{i+1})\mid n\in\Z\}\cup \wt\beta_{i+1}\big)\subset [-R_1,R_1]\]
and $R_0>0$ such that 
\begin{equation}\label{eq:projsmallangle}
\pr_{\tgamma_i}\big(\pr_{\tgamma_{\tx_i}}^{-1}([-R_1,R_1])\big)\subset [-R_0,R_0]
\end{equation}
for any geodesic $\tgamma_i$ of $\wt S$ at distance at most $2\delta$ from $\tgamma_{\tx_i}$. In particular, we get
\begin{equation}\label{eq:projsmallangle2}
\pr_{\tgamma_i}\big(\{\tilde f^n(\tx_{i+1})\mid n\in\Z\}\cup \wt\beta_{i+1}\big)\subset [-R_0,R_0]
\end{equation}
For $i=1,2$, let $W_i$ be a neighbourhood of $\dot\tgamma_{\tx_i}$ in $\mathrm{T}^1 \tilde S$ such that if $\tgamma$ is a geodesic of $\tilde S$ satisfying $\dot\tgamma(0)\in W_i$, then the distance between $\tgamma$ and $\tgamma_{\tx_i}$ is smaller than $\delta$.

Let $\{\tilde y_0\} = \tgamma_{\tx_1}\cap\tgamma_{\tx_2}$ and let us parametrize $\tgamma_{\tx_1}$ and $\tgamma_{\tx_2}$ such that $\tgamma_{\tx_1}(0) = \tgamma_{\tx_2}(0) = \tilde y_0$.

For $i=1,2$, by typicality of the geodesic $\tgamma_{\tx_i}$ with respect to the ergodic measure $\nu_{\mu_i}$ (Theorem~\ref{thm:equidistributiontheoremintro}), there exist a sequence of times $(t_{i,k})_{k\in\Z}$, a sequence of deck transformations $(S_{i,k})_{k\in\Z}$ and $\kappa>0$ such that:
\begin{enumerate}[label=\alph*)]
\item $t_{i,k}\sim_{k\to\pm\infty}\kappa k$;
\item $t_{i,k+1} - t_{i,k}\ge 2R_0+2C$;
\item $S_{i,k}^{-1}(\dot\tgamma_{\tx_i}(t_{i,k}))\in W_i$.
\end{enumerate}
In the end of the proof we will take $T_i = S_{i,-k}$ and $T'_i = S_{i,k}$ for $k$ large enough. The reader is encouraged to follow the proof on Figure~\ref{FigConsRecurGeod}.

By applying \eqref{eq:projsmallangle2} to $\tgamma_i = S_{i,k}^{-1}(\tgamma_{\tx_i})$, condition c) implies that
\begin{equation}\label{eq:projJ}
\pr_{\tgamma_{\tx_i}}\Big(S_{i,k}\big(\{\tilde f^n(\tx_{i+1})\mid n\in\Z\}\cup \wt\beta_{i+1}\big)\Big)\subset [t_{i,k}-R_0,t_{i,k}+R_0].
\end{equation}
In particular, up to deleting the first terms of the sequences $(t_{i,k})_{k\in\Z}$ and $(S_{i,k})_{k\in\Z}$, we can suppose that for $k\neq 0$, we have  
\[\pr_{\tgamma_{\tx_{i+1}}}\Big(S_{i,k}\big(\{\tilde f^n(\tx_{i+1})\mid n\in\Z\}\cup \wt\beta_{i+1}\big)\Big)\subset [-R_1,R_1].\]
Using condition c) again to apply \eqref{eq:projsmallangle} to $\tgamma_i = S_{i,k}^{-1}\tgamma_{\tx_i}$ and to the previous inclusion, one gets that
\begin{equation}\label{eq:conscondc}
\pr_{\tgamma_{\tx_{i}}}\Big(S_{i,k}S_{i+1,\pm k}\big(\{\tilde f^n(\tx_{i})\mid n\in\Z\}\big)\Big)\subset [t_{i,k}-R_0,t_{i,k}+R_0].
\end{equation}

Now, let $k\ge 2$, and consider 
\[\tx \in S_{i,k}\bigcup_{n\in\Z}\big\{\tilde f^n(\tx_{i+1}),S_{i+1,k}\tilde f^n(\tx_{i}),S_{i+1,-k}\tilde f^n(\tx_{i})\big\}.\] 
Then, by \eqref{eq:projJ} and \eqref{eq:conscondc}, we have that 
\[\pr_{\tgamma_{\tx_i}}(\tx)\in [t_{i,k}-R_0,t_{i,k}+R_0],\]
in particular, by \eqref{eq:projJ} and condition b), $\tx$ belongs to the connected component of the complement of $S_{i,k-1}\tilde\beta_{i+1}$ containing $\omega(\tgamma_{\tx_i})$ in its boundary. 
We have two cases:
\begin{itemize}
\item Either the projection of $\tx$ on $S$ belongs to $U_0$. Then the transverse trajectory $\tilde I_{\tilde\F}(\tx)$ does not meet $S_{i,k-1}\tilde\beta_{i+1}$ and hence stays in the connected component of the complement of $S_{i,k-1}\tilde\beta_{i+1}$ containing $\omega(\tgamma_{\tx_i})$ in its boundary. In particular, 
\[\pr_{\tgamma_i}\big(\tilde I_{\tilde\F}(\tx)\big)\subset [t_{i,k-1}-R_0,+\infty);\]
\item Or the projection of $\tx$ on $S$ does not belong to $U_0$. Then the diameter of the transverse trajectory $\tilde I_{\tilde\F}(\tx)$ is smaller than $C$, and by hypothesis on the deck transformations $S_{i,k}$ we have also
\[\pr_{\tgamma_i}\big(\tilde I_{\tilde\F}(\tx)\big)\subset [t_{i,k-1}-R_0,+\infty).\]
\end{itemize}
The exact same reasoning holds for $k\le -2$. The conclusion of the lemma then follows from the fact that $t_{i,k}\sim_{k\to\pm\infty}\kappa k$ and condition a), by taking $T_i = S_{i,-k}$ and $T'_i = S_{i,k}$.
\end{proof}

For $i=1,2$ and $t\in\R$, let $\tilde\phi_i^t$ be the leaf of $\tilde\F$ passing by $\tilde I^t_{\tilde \F}(\tilde x_i)$. We denote by $\alpha(\tilde\phi_i^t)$ and $\omega(\tilde\phi_i^t)$ its alpha and omega limits in the closed disk $\overline{\tilde S} \simeq \overline{\D}$.

\begin{lemma}\label{LemaSubTrajecSimple}
There is an orientable and simple transverse trajectory $\tilde I_2$ in $\tilde S$ that is made of pieces of $T_2\tilde I^\Z_{\tilde \F}(\tilde x_2)$, such that $\lim_{t\to \pm\infty} \tilde I_2(t) = T_2\alpha(\tilde x_2)$ and $\lim_{t\to \mp\infty} \tilde I_2(t) = T_2\omega(\tilde x_2)$.
\end{lemma}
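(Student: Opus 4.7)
The plan is to construct $\tilde I'_2$ by a loop-erasure procedure applied to the $\tilde \F$-transverse trajectory $\eta := T_2 \tilde I^{\R}_{\tilde \F}(\tilde x_2)$. The three things to establish are: (a) $\eta$ converges, in the compactified disk $\overline{\tilde S}$, to the boundary endpoints $T_2\alpha(\tilde x_2)$ and $T_2\omega(\tilde x_2)$ of the tracking geodesic $T_2\tgamma_{\tx_2}$; (b) the image of $\eta$ contains a simple sub-arc joining these two boundary points, made of pieces of $\eta$; (c) this sub-arc is still $\tilde \F$-transverse and orientable.

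For (a), I would use Proposition~\ref{LemSpeedTransverseTraj}: the projection $\pr_{T_2\tgamma_{\tx_2}}(\eta(t))$ grows linearly like $\rho_{\mu_2}t$ as $t\to\pm\infty$. At integer times $\eta(n) = T_2\tilde f^n(\tx_2)$ is tracked by $T_2\tgamma_{\tx_2}$ by Theorem~\ref{thm:trackinggeodesictheorem}, hence stays at sublinear distance from it; between integer times the isotopy $I$ moves points by a uniformly bounded hyperbolic distance (by continuity on the compact surface $S$), so $\eta$ itself stays at sublinear distance from $T_2\tgamma_{\tx_2}$. In Fermi coordinates along this geodesic, linear growth in the geodesic parameter combined with sublinear growth in the orthogonal parameter forces $\eta(\pm\infty)$ to be the boundary endpoints of $T_2\tgamma_{\tx_2}$.

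For (b), for each $p\in\eta(\R)$ the preimage $\eta^{-1}(p)$ is closed, with extremes $t^-(p) = \inf\eta^{-1}(p)$ and $t^+(p) = \sup\eta^{-1}(p)$. I would define $\tilde I'_2$ by collapsing every interval $[t^-(p), t^+(p)]$ to a single time and removing the resulting ``inner loops'', i.e.\ by taking the monotone quotient of $\R$ under the equivalence relation generated by $s\sim t\Longleftrightarrow \eta(s)=\eta(t)$. The existence of a continuous simple sub-path with the prescribed limits is guaranteed by a Zorn's lemma argument on the poset of simple sub-paths of $\eta$ joining $T_2\alpha(\tx_2)$ to $T_2\omega(\tx_2)$ ordered by inclusion; equivalently, by the classical continuum-theoretic fact that the Peano continuum $\eta(\R)\cup\{T_2\alpha(\tx_2),T_2\omega(\tx_2)\}$ (continuous image of a compact interval) is connected by simple arcs between any two of its points.

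For (c), at every cut-point $p$ both branches of $\eta$ that meet at $p$ cross the leaf $\phi_p$ from right to left by positive transversality of $\eta$. In a local foliation chart near $p$ where leaves are horizontal and ``right-to-left'' means ``bottom-to-top'', the loop-erased path enters $p$ from below along one branch and exits upward along the other, so its leaf parameter remains strictly monotone across $p$ and the concatenation crosses every nearby leaf of $\tilde \F$ from right to left. Orientability of $\tilde I'_2$ follows because the forward direction of $\eta$ is preserved throughout. The main obstacle I expect is that the self-intersections of $\eta$ may accumulate in intricate ways (nested loops, densely packed cut-points, cut-points that are limits of other cut-points), so the loop erasure cannot be organized iteratively but must be phrased as a global quotient or maximality argument; correspondingly, preservation of $\tilde \F$-transversality at accumulation points of cut-points needs the local chart analysis to be carried out uniformly in a neighbourhood of such accumulations.
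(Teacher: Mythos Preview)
Your overall strategy---loop erasure on the transverse trajectory---is exactly the paper's approach, and the convergence to the boundary endpoints (your part (a)) is correct, though the paper needs less: it only uses properness of the trajectory, namely that for each $n$ there is $R_n$ with $\eta(t)\notin\eta([-n,n])$ whenever $|t|\ge R_n$. (A small wrinkle: your sentence ``between integer times the isotopy $I$ moves points by a uniformly bounded hyperbolic distance'' conflates the isotopy path $I(z)$ with the transverse trajectory $I_\F(z)$; the latter is only homotopic to the former and need not have bounded diameter. You do cite Proposition~\ref{LemSpeedTransverseTraj}, which is precisely the statement that handles this, so the citation saves you.)

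The genuine gap is in (b)--(c). Your two proposed constructions---the quotient by the equivalence relation generated by $\eta(s)=\eta(t)$, and the Peano/Zorn argument---do not control the \emph{direction} in which the resulting arc traverses pieces of $\eta$. The arc-connectedness of a Peano continuum yields a simple arc in the image set $\eta(\R)$, but such an arc may follow a sub-arc of $\eta$ \emph{backwards}, in which case it crosses leaves from left to right and is not positively transverse. Your part (c) argument (``enters $p$ from below along one branch and exits upward along the other'') presupposes that both branches are traversed in the forward direction of $\eta$, which is exactly what the abstract argument fails to provide. The equivalence-relation quotient has the same problem in disguise: the equivalence classes can be wild (not intervals), the quotient need not be homeomorphic to $\R$, and even when it is there is no reason the induced map respects orientation at accumulation points of cut-points. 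You flag this difficulty yourself but do not resolve it.

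The paper's fix is a single preliminary step that makes everything elementary: replace $\eta$ by an $\F$-equivalent transverse path whose self-intersections are \emph{discrete} (hence finite on each compact piece $[-R_n,R_n]$). Then loop erasure is a finite inductive procedure on each $[-R_n,R_n]$: find $t<t'$ with $\eta(t)=\eta(t')$ and $\eta|_{(t,t')}$ simple, delete $(t,t')$, and repeat. The resulting paths $\tilde I_2^n$ agree on $[-n,n]$ for all large $n$ (by the properness bound $R_n$), so they converge to a simple path $\tilde I'_2$. Transversality is automatic because each $\tilde I_2^n$ is a finite concatenation of forward sub-arcs of $\eta$. This reduction to discrete self-intersections is the missing ingredient in your argument; once you have it, your (c) works verbatim at each of the finitely many cut-points.
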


\begin{figure}
\begin{center}

\tikzset{every picture/.style={line width=0.75pt}} 

\begin{tikzpicture}[x=0.75pt,y=0.75pt,yscale=-1,xscale=1]

\draw [color={rgb, 255:red, 208; green, 2; blue, 27 }  ,draw opacity=1 ]   (382,159.63) .. controls (495.66,131.23) and (542.3,182.8) .. (471.9,181.6) .. controls (401.5,180.4) and (446.74,140.94) .. (465.1,138.4) .. controls (483.46,135.86) and (542.78,156.93) .. (557.83,151.99) ;
\draw [shift={(468.89,152.58)}, rotate = 186.72] [fill={rgb, 255:red, 208; green, 2; blue, 27 }  ,fill opacity=1 ][line width=0.08]  [draw opacity=0] (8.04,-3.86) -- (0,0) -- (8.04,3.86) -- (5.34,0) -- cycle    ;
\draw [shift={(434.28,162.97)}, rotate = 91] [fill={rgb, 255:red, 208; green, 2; blue, 27 }  ,fill opacity=1 ][line width=0.08]  [draw opacity=0] (8.04,-3.86) -- (0,0) -- (8.04,3.86) -- (5.34,0) -- cycle    ;
\draw [shift={(514.98,146.62)}, rotate = 192.84] [fill={rgb, 255:red, 208; green, 2; blue, 27 }  ,fill opacity=1 ][line width=0.08]  [draw opacity=0] (8.04,-3.86) -- (0,0) -- (8.04,3.86) -- (5.34,0) -- cycle    ;
\draw  [line width=1.5]  (381.44,151.99) .. controls (381.44,102.91) and (420.93,63.11) .. (469.64,63.11) .. controls (518.35,63.11) and (557.83,102.91) .. (557.83,151.99) .. controls (557.83,201.08) and (518.35,240.88) .. (469.64,240.88) .. controls (420.93,240.88) and (381.44,201.08) .. (381.44,151.99) -- cycle ;
\draw [color={rgb, 255:red, 208; green, 2; blue, 27 }  ,draw opacity=1 ]   (52.94,150.99) .. controls (108,139.5) and (132.67,176.83) .. (79.33,171.17) .. controls (26,165.5) and (81.67,130.17) .. (120,136.83) .. controls (158.33,143.5) and (173.67,176.17) .. (114.67,170.17) .. controls (55.67,164.17) and (143.25,143.22) .. (174,142.17) .. controls (204.75,141.11) and (236.33,180.83) .. (169,170.17) .. controls (101.67,159.5) and (272,152.83) .. (214.67,165.17) .. controls (157.33,177.5) and (213.74,149.11) .. (229.33,150.99) ;
\draw [shift={(102.38,155.35)}, rotate = 206.17] [fill={rgb, 255:red, 208; green, 2; blue, 27 }  ,fill opacity=1 ][line width=0.08]  [draw opacity=0] (8.04,-3.86) -- (0,0) -- (8.04,3.86) -- (5.34,0) -- cycle    ;
\draw [shift={(75.72,143.5)}, rotate = 153.55] [fill={rgb, 255:red, 208; green, 2; blue, 27 }  ,fill opacity=1 ][line width=0.08]  [draw opacity=0] (8.04,-3.86) -- (0,0) -- (8.04,3.86) -- (5.34,0) -- cycle    ;
\draw [shift={(152.95,163.16)}, rotate = 276.5] [fill={rgb, 255:red, 208; green, 2; blue, 27 }  ,fill opacity=1 ][line width=0.08]  [draw opacity=0] (8.04,-3.86) -- (0,0) -- (8.04,3.86) -- (5.34,0) -- cycle    ;
\draw [shift={(125.49,150.08)}, rotate = 165.3] [fill={rgb, 255:red, 208; green, 2; blue, 27 }  ,fill opacity=1 ][line width=0.08]  [draw opacity=0] (8.04,-3.86) -- (0,0) -- (8.04,3.86) -- (5.34,0) -- cycle    ;
\draw [shift={(207.92,166.35)}, rotate = 276.19] [fill={rgb, 255:red, 208; green, 2; blue, 27 }  ,fill opacity=1 ][line width=0.08]  [draw opacity=0] (8.04,-3.86) -- (0,0) -- (8.04,3.86) -- (5.34,0) -- cycle    ;
\draw [shift={(191.33,158.88)}, rotate = 176.91] [fill={rgb, 255:red, 208; green, 2; blue, 27 }  ,fill opacity=1 ][line width=0.08]  [draw opacity=0] (8.04,-3.86) -- (0,0) -- (8.04,3.86) -- (5.34,0) -- cycle    ;
\draw [shift={(199.2,159.55)}, rotate = 151.19] [fill={rgb, 255:red, 208; green, 2; blue, 27 }  ,fill opacity=1 ][line width=0.08]  [draw opacity=0] (8.04,-3.86) -- (0,0) -- (8.04,3.86) -- (5.34,0) -- cycle    ;
\draw  [line width=1.5]  (52.94,150.99) .. controls (52.94,101.91) and (92.43,62.11) .. (141.14,62.11) .. controls (189.85,62.11) and (229.33,101.91) .. (229.33,150.99) .. controls (229.33,200.08) and (189.85,239.88) .. (141.14,239.88) .. controls (92.43,239.88) and (52.94,200.08) .. (52.94,150.99) -- cycle ;
\draw [color={rgb, 255:red, 255; green, 133; blue, 233 }  ,draw opacity=1 ]   (54.54,162.54) .. controls (53.86,166.43) and (83.95,178.05) .. (105.67,169.86) .. controls (126.81,173.76) and (146.43,171.19) .. (151.95,165.86) .. controls (161.95,171.95) and (199.57,177.1) .. (208.14,167.48) .. controls (219.19,165.38) and (222.33,164.81) .. (228.23,161.62) ;
\draw [shift={(74.51,171.33)}, rotate = 8.81] [fill={rgb, 255:red, 255; green, 133; blue, 233 }  ,fill opacity=1 ][line width=0.08]  [draw opacity=0] (8.04,-3.86) -- (0,0) -- (8.04,3.86) -- (5.34,0) -- cycle    ;
\draw [shift={(124.8,171.74)}, rotate = 358.82] [fill={rgb, 255:red, 255; green, 133; blue, 233 }  ,fill opacity=1 ][line width=0.08]  [draw opacity=0] (8.04,-3.86) -- (0,0) -- (8.04,3.86) -- (5.34,0) -- cycle    ;
\draw [shift={(175.67,172.09)}, rotate = 5.13] [fill={rgb, 255:red, 255; green, 133; blue, 233 }  ,fill opacity=1 ][line width=0.08]  [draw opacity=0] (8.04,-3.86) -- (0,0) -- (8.04,3.86) -- (5.34,0) -- cycle    ;
\draw [shift={(213.91,166.37)}, rotate = 347.35] [fill={rgb, 255:red, 255; green, 133; blue, 233 }  ,fill opacity=1 ][line width=0.08]  [draw opacity=0] (8.04,-3.86) -- (0,0) -- (8.04,3.86) -- (5.34,0) -- cycle    ;
\draw [color={rgb, 255:red, 245; green, 166; blue, 35 }  ,draw opacity=1 ]   (453.9,90.4) .. controls (404.39,119.73) and (416.04,191.81) .. (442.78,206.31) .. controls (469.53,220.82) and (475.83,173.55) .. (484.3,167.2) ;
\draw [shift={(419.84,149.67)}, rotate = 274.82] [fill={rgb, 255:red, 245; green, 166; blue, 35 }  ,fill opacity=1 ][line width=0.08]  [draw opacity=0] (6.25,-3) -- (0,0) -- (6.25,3) -- cycle    ;
\draw [shift={(471.1,193.27)}, rotate = 120.35] [fill={rgb, 255:red, 245; green, 166; blue, 35 }  ,fill opacity=1 ][line width=0.08]  [draw opacity=0] (6.25,-3) -- (0,0) -- (6.25,3) -- cycle    ;
\draw  [draw opacity=0][fill={rgb, 255:red, 0; green, 0; blue, 0 }  ,fill opacity=1 ] (423.8,184.82) .. controls (423.8,183.74) and (424.67,182.86) .. (425.76,182.86) .. controls (426.84,182.86) and (427.72,183.74) .. (427.72,184.82) .. controls (427.72,185.9) and (426.84,186.78) .. (425.76,186.78) .. controls (424.67,186.78) and (423.8,185.9) .. (423.8,184.82) -- cycle ;

\draw (515.8,141.49) node [anchor=south] [inner sep=0.75pt]  [color={rgb, 255:red, 208; green, 2; blue, 27 }  ,opacity=1 ]  {$\tilde{\alpha }$};
\draw (143.93,137) node [anchor=south] [inner sep=0.75pt]  [color={rgb, 255:red, 208; green, 2; blue, 27 }  ,opacity=1 ]  {$T_{2}\tilde{I}_{F}^{Z}\left(\tilde{x}_{2}\right)$};
\draw (151.96,173) node [anchor=north] [inner sep=0.75pt]  [color={rgb, 255:red, 195; green, 99; blue, 177 }  ,opacity=1 ]  {$\tilde{I}_{2}$};
\draw (424,184.11) node [anchor=north east] [inner sep=0.75pt]    {$\tilde{z}$};
\draw (442.5,100.07) node [anchor=north west][inner sep=0.75pt]  [color={rgb, 255:red, 229; green, 143; blue, 0 }  ,opacity=1 ]  {$\tilde{\phi }$};

\end{tikzpicture}

\caption{Left: an example of construction of the path $\tilde I_2$. Right: Illustration of Lemma~\ref{LemEndleaves}.\label{FigI2}}
\end{center}
\end{figure}
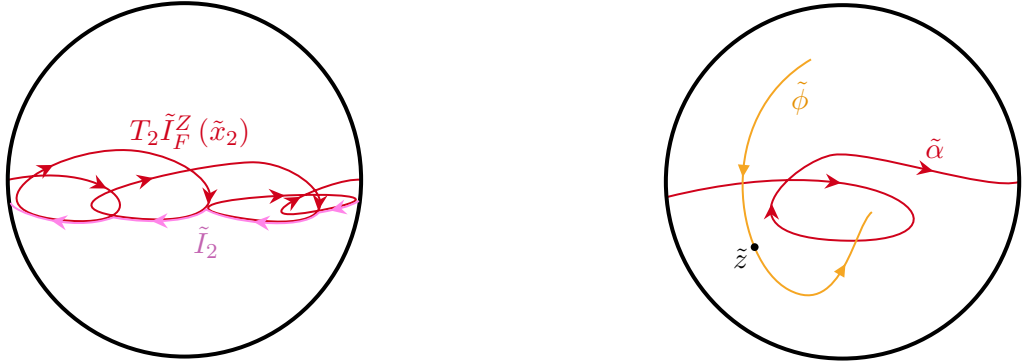

Note that the lemma does not rule out the possibility that $\lim_{t\to +\infty} \tilde I_2(t) = T_2\alpha(\tilde x_2)$ and $\lim_{t\to -\infty} \tilde I_2(t) = T_2\omega(\tilde x_2)$ (see Figure~\ref{FigI2}, left).

\begin{proof}
As the trajectory of $T_2\tilde x_2$ is proper in $\tilde S$, for any $n\in\N$ there exists $R_n>0$ such that if $|t|\ge R_n$, then $T_2\tilde I^t_{\tilde \F}(\tilde x_2)\notin T_2\tilde I^{[-n,n]}_{\tilde \F}(\tilde x_2)$.

Up to modifying the trajectory $T_2\tilde I^\Z_{\tilde \F}(\tilde x_2)$ to an $\F$-equivalent one, one can suppose that the self-intersections of $T_2\tilde I^\Z_{\tilde \F}(\tilde x_2)$ are discrete. Hence, the number of self-intersections of $T_2\tilde I^{[-R_n,R_n]}_{\tilde \F}(\tilde x_2)$ is finite.

Let us apply the following algorithm. Set $\tilde I^0_2 = T_2\tilde I^{\Z}_{\tilde \F}(\tilde x_2)$, and define the paths $\tilde I^n_2$ inductively.
For any $n\ge 0$, consider (if they exist, \emph{i.e.}\ if $\tilde I^n_2|_{[-R_n,R_n]}$ is not simple) $-R_n\le t<t'\le R_n$ such that $\tilde I^n_2(t) = \tilde I^n_2(t')$ and that $\tilde I^n_2|_{(t,t')}$ is simple. We then replace the path $\tilde I^n_2$ by the concatenation $\tilde I^n_2|_{(-\infty,t]}\tilde I^n_2|_{[t',+\infty)}$. This new path is oriented as the previous one, and the number of self-intersections of its restriction to $[-R_n,R_n]$ strictly decreased. Hence, iterating this process, it terminates in finite time.

The obtained path $\tilde I^{n+1}_2$ is simple in restriction to\footnote{The careful reader will have noticed the abuse of notation here: the path $\tilde I^{n+1}_2$ is defined in a subset of $\R$ that is made of a finite union of intervals; in the sequel we will sometimes identify this set with $\R$ in a natural way.} $[-R_n,R_n]$, and links a (Euclidean) neighbourhood of $\alpha(\tilde x_2)$ to a neighbourhood of $\omega(\tilde x_2)$ (these neighbourhoods being arbitrarily small as $n$ goes to infinity). Moreover, by definition of $R_n$, for any $n\in\N$ one has $\tilde I^n_2|_{[-n,n]} = \tilde I^{n+1}_2|_{[-n,n]}$. This implies that the paths $\tilde I^n_2$ converge simply as $n$ goes to infinity. The limit path $\tilde I_2$ satisfies the conclusion of the lemma.
\end{proof}

Note that as the trajectory $\tilde I^\Z_{\tilde \F}(\tilde x_2)$ is proper and satisfies $\lim_{t\to-\infty}\tilde I^t_{\tilde \F}(\tilde x_2)  = \alpha(\tilde x_2)$ and $\lim_{t\to+\infty}\tilde I^t_{\tilde \F}(\tilde x_2) = \omega(\tilde x_2) \neq \alpha(\tilde x_2)$, one can define $L(\tilde I^\Z_{\tilde \F}(\tilde x_2))$ and $R(\tilde I^\Z_{\tilde \F}(\tilde x_2))$ as the connected components of the complement of $\tilde I^\Z_{\tilde \F}(\tilde x_2)$ having for respective intersection with $\partial \Hy^2$ the intervals $(\omega(\tilde x_2), \alpha(\tilde x_2))$ and $(\alpha(\tilde x_2), \omega(\tilde x_2))$.

We split the rest of the proof depending whether the following condition is satisfied or not (see Figure~\ref{fig:casesInterGeod}).

\begin{enumerate}[label=(\Alph{enumi}), ref=(\Alph{enumi}), start=3]
\item \label{PropertyC} There exists $t_0\in\R$ such that for any $t<t_0$, the leaf $\tilde \phi_1^t$ meets both $L(T_2\tilde I^\Z_{\tilde \F}(\tilde x_2))$ and $R(T_2\tilde I^\Z_{\tilde \F}(\tilde x_2))$.
\end{enumerate}

\begin{figure}
\begin{center}
\tikzset{every picture/.style={line width=0.75pt}} 

\begin{tikzpicture}[x=0.75pt,y=0.75pt,yscale=-1,xscale=1]

\draw [color={rgb, 255:red, 245; green, 166; blue, 35 }  ,draw opacity=1 ]   (112.1,197.9) .. controls (121.3,154.3) and (139.13,121.79) .. (188.21,120.61) ;
\draw [shift={(132.47,146.75)}, rotate = 309.43] [fill={rgb, 255:red, 245; green, 166; blue, 35 }  ,fill opacity=1 ][line width=0.08]  [draw opacity=0] (6.25,-3) -- (0,0) -- (6.25,3) -- cycle    ;
\draw [color={rgb, 255:red, 245; green, 166; blue, 35 }  ,draw opacity=1 ]   (106.43,196.4) .. controls (111.63,148) and (119.65,91.18) .. (161.25,89.58) ;
\draw [shift={(116.56,136.34)}, rotate = 287.28] [fill={rgb, 255:red, 245; green, 166; blue, 35 }  ,fill opacity=1 ][line width=0.08]  [draw opacity=0] (6.25,-3) -- (0,0) -- (6.25,3) -- cycle    ;
\draw [color={rgb, 255:red, 245; green, 166; blue, 35 }  ,draw opacity=1 ]   (109.4,197.43) .. controls (118.6,153.83) and (136.49,108.19) .. (185.58,107.01) ;
\draw [shift={(129.48,141.03)}, rotate = 302.94] [fill={rgb, 255:red, 245; green, 166; blue, 35 }  ,fill opacity=1 ][line width=0.08]  [draw opacity=0] (6.25,-3) -- (0,0) -- (6.25,3) -- cycle    ;
\draw [color={rgb, 255:red, 245; green, 166; blue, 35 }  ,draw opacity=1 ]   (107.43,196.74) .. controls (114.9,151.43) and (124.18,97.75) .. (173.26,96.57) ;
\draw [shift={(121.58,137.25)}, rotate = 293.35] [fill={rgb, 255:red, 245; green, 166; blue, 35 }  ,fill opacity=1 ][line width=0.08]  [draw opacity=0] (6.25,-3) -- (0,0) -- (6.25,3) -- cycle    ;
\draw [color={rgb, 255:red, 0; green, 104; blue, 230 }  ,draw opacity=1 ]   (350.12,85.8) .. controls (360.92,110.45) and (357.41,250.92) .. (350.12,263.56) ;
\draw [shift={(356.92,177.99)}, rotate = 270.4] [fill={rgb, 255:red, 0; green, 104; blue, 230 }  ,fill opacity=1 ][line width=0.08]  [draw opacity=0] (8.04,-3.86) -- (0,0) -- (8.04,3.86) -- (5.34,0) -- cycle    ;
\draw [color={rgb, 255:red, 208; green, 2; blue, 27 }  ,draw opacity=1 ]   (265.36,174.99) .. controls (291.81,182.45) and (391.38,191.51) .. (441.75,174.99) ;
\draw [shift={(357.11,184.26)}, rotate = 180.87] [fill={rgb, 255:red, 208; green, 2; blue, 27 }  ,fill opacity=1 ][line width=0.08]  [draw opacity=0] (8.04,-3.86) -- (0,0) -- (8.04,3.86) -- (5.34,0) -- cycle    ;
\draw  [line width=1.5]  (265.36,174.99) .. controls (265.36,125.91) and (304.84,86.11) .. (353.55,86.11) .. controls (402.26,86.11) and (441.75,125.91) .. (441.75,174.99) .. controls (441.75,224.08) and (402.26,263.88) .. (353.55,263.88) .. controls (304.84,263.88) and (265.36,224.08) .. (265.36,174.99) -- cycle ;
\draw [color={rgb, 255:red, 0; green, 104; blue, 230 }  ,draw opacity=1 ]   (546.78,86.47) .. controls (557.59,111.11) and (554.08,251.59) .. (546.78,264.23) ;
\draw [shift={(553.58,178.66)}, rotate = 270.4] [fill={rgb, 255:red, 0; green, 104; blue, 230 }  ,fill opacity=1 ][line width=0.08]  [draw opacity=0] (8.04,-3.86) -- (0,0) -- (8.04,3.86) -- (5.34,0) -- cycle    ;
\draw [color={rgb, 255:red, 208; green, 2; blue, 27 }  ,draw opacity=1 ]   (462.02,175.66) .. controls (488.48,183.12) and (588.05,192.17) .. (638.42,175.66) ;
\draw [shift={(553.77,184.92)}, rotate = 180.87] [fill={rgb, 255:red, 208; green, 2; blue, 27 }  ,fill opacity=1 ][line width=0.08]  [draw opacity=0] (8.04,-3.86) -- (0,0) -- (8.04,3.86) -- (5.34,0) -- cycle    ;
\draw  [line width=1.5]  (462.02,175.66) .. controls (462.02,126.57) and (501.51,86.78) .. (550.22,86.78) .. controls (598.93,86.78) and (638.42,126.57) .. (638.42,175.66) .. controls (638.42,224.75) and (598.93,264.54) .. (550.22,264.54) .. controls (501.51,264.54) and (462.02,224.75) .. (462.02,175.66) -- cycle ;
\draw [color={rgb, 255:red, 0; green, 104; blue, 230 }  ,draw opacity=1 ]   (154.12,85.8) .. controls (164.92,110.45) and (161.41,250.92) .. (154.12,263.56) ;
\draw [shift={(160.92,177.99)}, rotate = 270.4] [fill={rgb, 255:red, 0; green, 104; blue, 230 }  ,fill opacity=1 ][line width=0.08]  [draw opacity=0] (8.04,-3.86) -- (0,0) -- (8.04,3.86) -- (5.34,0) -- cycle    ;
\draw [color={rgb, 255:red, 208; green, 2; blue, 27 }  ,draw opacity=1 ]   (69.36,174.99) .. controls (95.81,182.45) and (195.38,191.51) .. (245.75,174.99) ;
\draw [shift={(161.11,184.26)}, rotate = 180.87] [fill={rgb, 255:red, 208; green, 2; blue, 27 }  ,fill opacity=1 ][line width=0.08]  [draw opacity=0] (8.04,-3.86) -- (0,0) -- (8.04,3.86) -- (5.34,0) -- cycle    ;
\draw  [line width=1.5]  (69.36,174.99) .. controls (69.36,125.91) and (108.84,86.11) .. (157.55,86.11) .. controls (206.26,86.11) and (245.75,125.91) .. (245.75,174.99) .. controls (245.75,224.08) and (206.26,263.88) .. (157.55,263.88) .. controls (108.84,263.88) and (69.36,224.08) .. (69.36,174.99) -- cycle ;
\draw [color={rgb, 255:red, 208; green, 2; blue, 27 }  ,draw opacity=1 ]   (107.28,179.24) -- (107.28,183.4) ;
\draw [color={rgb, 255:red, 245; green, 166; blue, 35 }  ,draw opacity=1 ]   (271.69,187.56) .. controls (273.19,156.31) and (322.69,88.81) .. (361.19,90.06) ;
\draw [shift={(300.5,129.12)}, rotate = 309.98] [fill={rgb, 255:red, 245; green, 166; blue, 35 }  ,fill opacity=1 ][line width=0.08]  [draw opacity=0] (6.25,-3) -- (0,0) -- (6.25,3) -- cycle    ;
\draw [color={rgb, 255:red, 245; green, 166; blue, 35 }  ,draw opacity=1 ]   (281.94,190.06) .. controls (287.37,154.35) and (327.03,100.03) .. (365.89,97.46) ;
\draw [shift={(308.64,134.68)}, rotate = 309.52] [fill={rgb, 255:red, 245; green, 166; blue, 35 }  ,fill opacity=1 ][line width=0.08]  [draw opacity=0] (6.25,-3) -- (0,0) -- (6.25,3) -- cycle    ;
\draw [color={rgb, 255:red, 245; green, 166; blue, 35 }  ,draw opacity=1 ]   (300.44,193.81) .. controls (306.15,154.38) and (324.46,113.18) .. (369.03,110.6) ;
\draw [shift={(317.35,142.69)}, rotate = 303.57] [fill={rgb, 255:red, 245; green, 166; blue, 35 }  ,fill opacity=1 ][line width=0.08]  [draw opacity=0] (6.25,-3) -- (0,0) -- (6.25,3) -- cycle    ;
\draw [color={rgb, 255:red, 245; green, 166; blue, 35 }  ,draw opacity=1 ]   (315.94,191.81) .. controls (321.65,152.38) and (327.32,130.32) .. (371.89,127.75) ;
\draw [shift={(326.77,150.03)}, rotate = 303.09] [fill={rgb, 255:red, 245; green, 166; blue, 35 }  ,fill opacity=1 ][line width=0.08]  [draw opacity=0] (6.25,-3) -- (0,0) -- (6.25,3) -- cycle    ;
\draw [color={rgb, 255:red, 245; green, 166; blue, 35 }  ,draw opacity=1 ]   (480.58,122.03) .. controls (497.15,132.03) and (514.29,97.46) .. (558.86,94.89) ;
\draw [shift={(514.83,109.63)}, rotate = 329.63] [fill={rgb, 255:red, 245; green, 166; blue, 35 }  ,fill opacity=1 ][line width=0.08]  [draw opacity=0] (6.25,-3) -- (0,0) -- (6.25,3) -- cycle    ;
\draw [color={rgb, 255:red, 245; green, 166; blue, 35 }  ,draw opacity=1 ]   (470.01,140.03) .. controls (486.58,150.03) and (523.43,116.6) .. (568,114.03) ;
\draw [shift={(514.19,129.25)}, rotate = 336.84] [fill={rgb, 255:red, 245; green, 166; blue, 35 }  ,fill opacity=1 ][line width=0.08]  [draw opacity=0] (6.25,-3) -- (0,0) -- (6.25,3) -- cycle    ;
\draw [color={rgb, 255:red, 245; green, 166; blue, 35 }  ,draw opacity=1 ]   (490.3,190.6) .. controls (497.15,158.6) and (527.44,136.6) .. (572.01,134.03) ;
\draw [shift={(517.43,150.99)}, rotate = 327.41] [fill={rgb, 255:red, 245; green, 166; blue, 35 }  ,fill opacity=1 ][line width=0.08]  [draw opacity=0] (6.25,-3) -- (0,0) -- (6.25,3) -- cycle    ;

\draw (404.51,178.89) node [anchor=south] [inner sep=0.75pt]  [color={rgb, 255:red, 208; green, 2; blue, 27 }  ,opacity=1 ]  {$T_2\tilde{I}_{\tilde\F}^{\Z}(\tilde{x}_{2})$};
\draw (356.54,228.86) node [anchor=west] [inner sep=0.75pt]  [color={rgb, 255:red, 0; green, 104; blue, 230 }  ,opacity=1 ]  {$\tilde{I}_{\tilde\F}^{\Z}(\tilde{x}_{1})$};
\draw (601.18,179.06) node [anchor=south] [inner sep=0.75pt]  [color={rgb, 255:red, 208; green, 2; blue, 27 }  ,opacity=1 ]  {$T_2\tilde{I}_{\tilde\F}^{\Z}(\tilde{x}_{2})$};
\draw (553.96,227.39) node [anchor=west] [inner sep=0.75pt]  [color={rgb, 255:red, 0; green, 104; blue, 230 }  ,opacity=1 ]  {$\tilde{I}_{\tilde\F}^{\Z}(\tilde{x}_{1})$};
\draw (209.01,178.64) node [anchor=south] [inner sep=0.75pt]  [color={rgb, 255:red, 208; green, 2; blue, 27 }  ,opacity=1 ]  {$T_2\tilde{I}_{\tilde\F}^{\Z}(\tilde{x}_{2})$};
\draw (161.4,227.43) node [anchor=west] [inner sep=0.75pt]  [color={rgb, 255:red, 0; green, 104; blue, 230 }  ,opacity=1 ]  {$\tilde{I}_{\tilde\F}^{\Z}(\tilde{x}_{1})$};
\draw (105.28,182.64) node [anchor=north east] [inner sep=0.75pt]  [color={rgb, 255:red, 208; green, 2; blue, 27 }  ,opacity=1 ]  {$s_{0}$};

\end{tikzpicture}

\caption{The three different cases in the proof of Theorem~\ref{TheoInterGeod}: Condition~\ref{PropertyC} holds and $s_0>-\infty$ (left), Condition~\ref{PropertyC} holds and $s_0=-\infty$ (middle) and Condition~\ref{PropertyC} does not hold (right). The leaves are in orange.\label{fig:casesInterGeod}}
\end{center}
\end{figure}
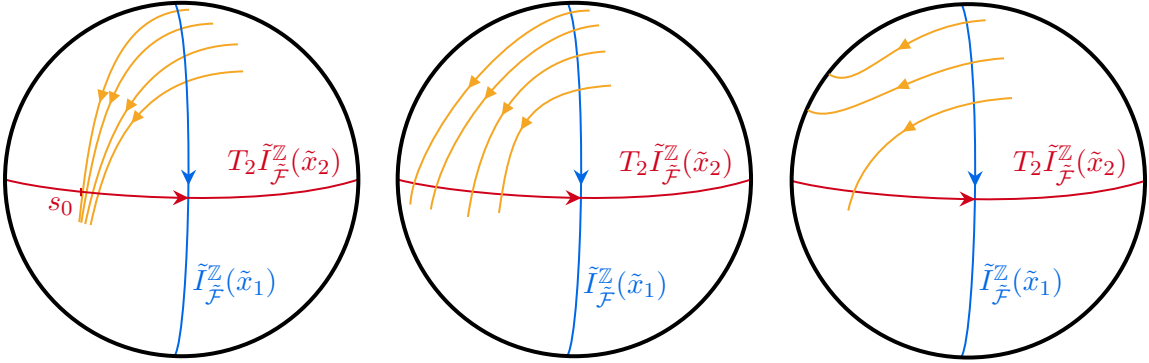

\begin{lemma}\label{LemSimpleOrbit}
If Condition~\ref{PropertyC} holds, then $\tilde I^{(-\infty,t_0]}_{\tilde \F}(\tilde x_1)$ is simple. Moreover, we have $\lim_{t\to -\infty} \tilde I_2(t) = T_2\alpha(\tilde x_2)$ and $\lim_{t\to +\infty} \tilde I_2(t) = T_2\omega(\tilde x_2)$.
\end{lemma}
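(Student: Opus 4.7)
The plan is to exploit Condition~\ref{PropertyC} to isolate a totally ordered Hausdorff subset of the leaf space of $\tilde\F$ containing the family $\{\tilde\phi_1^t : t<t_0\}$, and to deduce strict global monotonicity of $t\mapsto\tilde\phi_1^t$ in this order from the positive $\tilde\F$-transversality of $\tilde I^\Z_{\tilde\F}(\tilde x_1)$.

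First I would consider the set $\mathcal{H}$ of leaves of $\tilde\F$ that cross $T_2\tilde I^\Z_{\tilde\F}(\tilde x_2)$, equivalently those meeting both $L(T_2\tilde I^\Z_{\tilde\F}(\tilde x_2))$ and $R(T_2\tilde I^\Z_{\tilde\F}(\tilde x_2))$. Each such leaf meets the transverse path $T_2\tilde I^\Z_{\tilde\F}(\tilde x_2)$ at a point $T_2\tilde I^{s}_{\tilde\F}(\tilde x_2)$; by positive transversality of $T_2\tilde I^\Z_{\tilde\F}(\tilde x_2)$ with $\tilde\F$, the parameter $s$ gives a well-defined total order on $\mathcal{H}$, making it a Hausdorff (in fact ordered) subspace of the generically non-Hausdorff leaf space of $\tilde\F$.

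Under Condition~\ref{PropertyC}, the map $(-\infty,t_0)\ni t\mapsto \tilde\phi_1^t$ takes values in $\mathcal{H}$. Positive $\tilde\F$-transversality of $\tilde I^\Z_{\tilde\F}(\tilde x_1)$ makes this map continuous and locally strictly monotonic in the leaf space. Because its image is confined to the ordered Hausdorff set $\mathcal{H}$, local monotonicity propagates to global strict monotonicity on $(-\infty,t_0)$, i.e.\ distinct times give distinct leaves. Part~1 then follows directly: if $\tilde I^{t_1}_{\tilde\F}(\tilde x_1)=\tilde I^{t_2}_{\tilde\F}(\tilde x_1)$ with $t_1<t_2\le t_0$, then $\tilde\phi_1^{t_1}=\tilde\phi_1^{t_2}$, contradicting strict monotonicity.

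For Part~2, the construction of $\tilde I'_2$ in Lemma~\ref{LemaSubTrajecSimple} proceeds by finitely many concatenations of the form $\tilde I_2^n|_{(-\infty,t]}\cdot \tilde I_2^n|_{[t',+\infty)}$ with $-R_n\le t<t'\le R_n$ inside each compact parameter window; each such step preserves orientation and leaves the limits at $\pm\infty$ unchanged, and on each compact set the procedure stabilizes. Passing to the limit therefore yields $\lim_{t\to-\infty}\tilde I'_2(t)=T_2\alpha(\tilde x_2)$ and $\lim_{t\to+\infty}\tilde I'_2(t)=T_2\omega(\tilde x_2)$. Condition~\ref{PropertyC} is used here as a consistency check: the monotonicity obtained in the previous step fixes the direction in which leaves $\tilde\phi_1^t$ cross $\tilde I'_2$, and this direction must agree with the positive transversality of $\tilde I'_2$ to $\tilde\F$, ruling out the reversed orientation permitted by Lemma~\ref{LemaSubTrajecSimple}. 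The main obstacle is the passage from local to global monotonicity of $t\mapsto\tilde\phi_1^t$: without Condition~\ref{PropertyC} the map could swing through non-Hausdorff branches of the leaf space, and it is precisely this condition that confines it to the tame subspace $\mathcal{H}$.
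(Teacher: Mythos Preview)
Your approach to Part~1 has a genuine gap. You define the order on $\mathcal{H}$ via the parameter $s$ at which a leaf meets $T_2\tilde I^\Z_{\tilde\F}(\tilde x_2)$, but this path is \emph{not simple}: the whole point of Lemma~\ref{LemaSubTrajecSimple} is to extract a simple subpath $\tilde I'_2$ from it. Since we are working in $\tilde S$ (not in the universal cover of $\dom(\F)$, as the paper explicitly remarks), the lifted foliation $\tilde\F$ has singularities, $\tilde S\setminus\tilde{\sing\F}$ is not simply connected, and a positively transverse path can meet the same leaf at several parameter values. Your ``well-defined total order'' is therefore not well-defined, and the passage from local to global monotonicity collapses.

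The paper fixes this by reversing the order of the two parts. It first settles Part~2 via Lemma~\ref{LemEndleaves}: if $\tilde I'_2$ had the reversed orientation, then $L(T_2\tilde I^\Z_{\tilde\F}(\tilde x_2))\subset R(\tilde I'_2)$ and $R(T_2\tilde I^\Z_{\tilde\F}(\tilde x_2))\subset L(\tilde I'_2)$, and applying Lemma~\ref{LemEndleaves} to both $T_2\tilde I^\Z_{\tilde\F}(\tilde x_2)$ and $\tilde I'_2$ traps each half of $\tilde\phi_1^t$ inside $R(T_2\tilde I^\Z_{\tilde\F}(\tilde x_2))$, contradicting Condition~\ref{PropertyC}. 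Only then, with $\tilde I'_2$ correctly oriented and \emph{simple}, does the paper define a holonomy $\varphi:(-\infty,t_0]\to\R$ by $\tilde I'_2(\varphi(t))\in\tilde\phi_1^t$; this map is now single-valued, hence continuous and strictly monotone, and simplicity of $\tilde I^{(-\infty,t_0]}_{\tilde\F}(\tilde x_1)$ follows.

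Your argument for Part~2---that the surgeries in Lemma~\ref{LemaSubTrajecSimple} preserve the limits at $\pm\infty$---may well be correct, but the paper deliberately leaves both orientations open there and uses Condition~\ref{PropertyC} together with Lemma~\ref{LemEndleaves} to decide. Even granting orientation preservation, you would still need to replace $T_2\tilde I^\Z_{\tilde\F}(\tilde x_2)$ by the simple path $\tilde I'_2$ to make the monotonicity argument in Part~1 go through.
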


Note that the last conclusion of this lemma consists in ruling out the case where $\lim_{t\to +\infty} \tilde I_2(t) = T_2\alpha(\tilde x_2)$ and $\lim_{t\to -\infty} \tilde I_2(t) = T_2\omega(\tilde x_2)$ left possible by Lemma~\ref{LemaSubTrajecSimple}.

During the proof we will need the following lemma of \cite{pa}. We will denote $\overline{\tilde S} = \tilde S \sqcup \partial\tilde S$.

\begin{lemma}[\cite{pa}, Lemma 10.10.2] \label{LemEndleaves}
Let $\tilde \alpha\subset\tilde S$ be a proper transverse trajectory, and $\tilde\phi$ a leaf of $\tilde{\F}$ that contains some point $\tilde{z}$ of $R(\tilde \alpha)$.
If the half-leaf $\tilde\phi_{\tilde z}^+$ meets $\tilde \alpha$, then the $\omega$-limit set of $\tilde\phi$ in $\overline{\tilde S}$ does not meet $\partial \tilde S$; moreover, $\tilde\phi_{\tilde z}^+$ does not meet $L(\tilde \alpha)$, and the intersection $\overline{\tilde\phi_{\tilde z}^+}\cap \overline{R(\tilde \alpha)}\subset \overline{\tilde S}$ is a segment of $\tilde\phi_{\tilde z}^+$.
\end{lemma}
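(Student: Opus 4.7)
The plan is to prove the two assertions in order: the simplicity of $\tilde I^{(-\infty,t_0]}_{\tilde\F}(\tilde x_1)$ first, then the orientation of $\tilde I'_2$. The simplicity is the real topological content of the lemma; the orientation claim should follow once simplicity is in hand.

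For the simplicity claim, I would argue by contradiction. Suppose there exist $t_1 < t_2 \le t_0$ with $\tilde I^{t_1}_{\tilde\F}(\tilde x_1) = \tilde I^{t_2}_{\tilde\F}(\tilde x_1) =: \tilde z$; then $\tilde\phi_1^{t_1}$ and $\tilde\phi_1^{t_2}$ coincide in a single leaf $\tilde\phi$ through $\tilde z$. Since the trajectory is $\tilde\F$-transverse, both crossings at $\tilde z$ are transverse to $\tilde\F$, and the loop $\tilde I^{[t_1,t_2]}_{\tilde\F}(\tilde x_1)$ bounds a Jordan domain $D \subset \tilde S \simeq \Hy^2$. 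Exactly one of the two half-leaves of $\tilde\phi$ at $\tilde z$ enters $D$, the other lies outside. By Condition~\ref{PropertyC}, the leaf $\tilde\phi$ meets both $L$ and $R$ of $T_2\tilde I^\Z_{\tilde\F}(\tilde x_2)$, so at least one of the two half-leaves from $\tilde z$ crosses $T_2\tilde I^\Z_{\tilde\F}(\tilde x_2)$; Lemma~\ref{LemEndleaves} then forces the corresponding end of $\tilde\phi$ to accumulate inside $\tilde S$, not on $\partial\tilde S$.

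The contradiction is obtained by propagating this to nearby leaves. For $t$ slightly below $t_1$, the leaves $\tilde\phi_1^t$ remain close to $\tilde\phi$ in the leaf space, Condition~\ref{PropertyC} still forces them to cross $T_2\tilde I^\Z_{\tilde\F}(\tilde x_2)$, and Lemma~\ref{LemEndleaves} imposes the same interior-accumulation constraint on the relevant end of each of them. On the other hand, by Theorem~\ref{thm:trackinggeodesictheorem}, the trajectory $\tilde I^t_{\tilde\F}(\tilde x_1)$ is geodesically tracked and tends to $\alpha(\tilde x_1)\in\partial\tilde S$ as $t\to -\infty$; the leaves $\tilde\phi_1^t$ must therefore eventually carry an endpoint arbitrarily close to $\alpha(\tilde x_1)$ in $\partial\tilde S$. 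Juxtaposing the interior-accumulation constraint coming from Lemma~\ref{LemEndleaves} with this approach to the boundary along the decreasing sequence of times delivers the required contradiction.

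For Claim~2, I would first observe that the surgery algorithm in the proof of Lemma~\ref{LemaSubTrajecSimple} preserves orientation at every step: each excision replaces $\tilde I^n_2|_{(-\infty,t]}\tilde I^n_2|_{[t,t']}\tilde I^n_2|_{[t',+\infty)}$ by $\tilde I^n_2|_{(-\infty,t]}\tilde I^n_2|_{[t',+\infty)}$, leaving the direction of traversal intact. Since each finite-stage path links a Euclidean neighborhood of $T_2\alpha(\tilde x_2)$ to one of $T_2\omega(\tilde x_2)$ with these neighborhoods shrinking to points as $n\to\infty$, the limit inherits $\lim_{t\to -\infty}\tilde I'_2(t) = T_2\alpha(\tilde x_2)$ and $\lim_{t\to +\infty}\tilde I'_2(t) = T_2\omega(\tilde x_2)$. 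Under Condition~\ref{PropertyC}, the simplicity from Claim~1 and the transverse orientation of $\tilde\F$ make this identification unambiguous: $\tilde I^{(-\infty,t_0]}_{\tilde\F}(\tilde x_1)$ crosses $\tilde I'_2$ from one fixed side to the other, pinning down the orientation as stated. The main obstacle is entirely in Claim~1: Condition~\ref{PropertyC} controls the global structure of each $\tilde\phi_1^t$ via its intersections with $T_2\tilde I^\Z_{\tilde\F}(\tilde x_2)$, while the self-intersection is a purely local phenomenon at $\tilde z$, and turning the local structure into a global obstruction at $\alpha(\tilde x_1)\in\partial\tilde S$ will require a careful uniform version of Lemma~\ref{LemEndleaves} along the whole past of the trajectory.
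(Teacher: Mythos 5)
Your proposal does not prove the statement it was supposed to prove. The statement is Lemma~\ref{LemEndleaves}: for a proper transverse trajectory $\tilde\alpha$ and a leaf $\tilde\phi$ through a point $\tilde z\in R(\tilde\alpha)$ whose negative half-leaf $\tilde\phi_{\tilde z}^-$ meets $\tilde\alpha$, one must show (i) the $\alpha$-limit set of $\tilde\phi$ in $\overline{\tilde S}$ avoids $\partial\tilde S$, (ii) $\tilde\phi_{\tilde z}^-$ never enters $L(\tilde\alpha)$, and (iii) $\overline{\tilde\phi_{\tilde z}^-}\cap\overline{R(\tilde\alpha)}$ is a single segment of the half-leaf. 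None of these three conclusions is addressed anywhere in your text. What you actually sketch is an argument for Lemma~\ref{LemSimpleOrbit} (simplicity of $\tilde I^{(-\infty,t_0]}_{\tilde\F}(\tilde x_1)$ under Condition~\ref{PropertyC}, plus the orientation of $\tilde I'_2$), i.e.\ the result that comes \emph{after} the statement in the paper and that \emph{uses} it.

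Worse, your argument explicitly invokes Lemma~\ref{LemEndleaves} twice (``Lemma~\ref{LemEndleaves} then forces the corresponding end of $\tilde\phi$ to accumulate inside $\tilde S$\dots'', ``a careful uniform version of Lemma~\ref{LemEndleaves} along the whole past of the trajectory''), so as a proof of Lemma~\ref{LemEndleaves} it is circular. Note also that the paper itself gives no proof of this lemma --- it is imported verbatim from \cite{pa} (Lemma 10.10.2) --- so a genuine blind proof would have to supply the underlying foliation argument: use that a positively transverse path meets each leaf of $\tilde\F$ at most once and in a definite direction (from $R(\tilde\phi)$ to $L(\tilde\phi)$), so after $\tilde\phi_{\tilde z}^-$ crosses $\tilde\alpha$ the half-leaf is confined to a region bounded by a subpath of $\tilde\alpha$ and pieces of leaves, which it cannot exit; properness of $\tilde\alpha$ then traps the backward end of $\tilde\phi$ away from $\partial\tilde S$ and yields the connectedness of $\overline{\tilde\phi_{\tilde z}^-}\cap\overline{R(\tilde\alpha)}$. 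As it stands, your submission contains no proof of the assigned statement.
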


Note that the same lemma holds by replacing left with right and positive with negative. See Figure~\ref{FigI2} for an example of the configuration of Lemma~\ref{LemEndleaves}.

\begin{proof}[Proof of Lemma~\ref{LemSimpleOrbit}]
First, suppose that  $\lim_{t\to +\infty} \tilde I_2(t) = \alpha(\tilde x_2)$ and $\lim_{t\to -\infty} \tilde I_2(t) = \omega(\tilde x_2)$. This implies that $L(T_2\tilde I^\Z_{\tilde \F}(\tilde x_2))\subset R(\tilde I_2)$ and $R(T_2\tilde I^\Z_{\tilde \F}(\tilde x_2))\subset L(\tilde I_2)$. Applying Lemma~\ref{LemEndleaves} to both $T_2\tilde I^\Z_{\tilde \F}(\tilde x_2)$ and $\tilde I_2$ implies that picking one point of $\tilde \phi_1^t$ inside $R(T_2\tilde I^\Z_{\tilde \F}(\tilde x_2))$, both half-leaves of $\tilde\phi_1^t$ defined by this point of the leaf stay in the complement of $L(T_2\tilde I^\Z_{\tilde \F}(\tilde x_2))$. This contradicts Hypothesis \ref{PropertyC}.

Hence, $\lim_{t\to -\infty} \tilde I_2(t) = \alpha(\tilde x_2)$ and $\lim_{t\to +\infty} \tilde I_2(t) = \omega(\tilde x_2)$.
The path $\tilde I_2$ separates $L(T_2\tilde I^\Z_{\tilde \F}(\tilde x_2))$ from $R(T_2\tilde I^\Z_{\tilde \F}(\tilde x_2))$. Hence, for $t<t_0$, the leaf $\tilde \phi_1^t$ meets the path $\tilde I_2$. As this path is simple and $\tilde\F$-transverse, it can only meet it once. This allows us to define the holonomy map $\varphi : (-\infty,t_0]\to \R$ by
\begin{equation}\label{eq:Ip2}
\tilde I_2(\varphi(t))\in \tilde \phi_1^t.
\end{equation}
This map $\varphi$ is locally continuous and locally strictly monotonic (as a holonomy), hence it is globally continuous and strictly monotonic. This strict monotonicity prevents $\tilde I^{(-\infty,t_0]}_{\tilde \F}(\tilde x_1)$ from having self-intersections.
\end{proof}

The last conclusion of Lemma~\ref{LemSimpleOrbit} implies that the holonomy map $\varphi$ defined by \eqref{eq:Ip2} is increasing.
Let 
\[s_0 = \inf\varphi\big((-\infty,t_0]\big).\]
We now split the proof into two cases, depending whether $s_0>-\infty$ or $s_0=-\infty$ (see Figure~\ref{fig:casesInterGeod}).

\subsubsection{If \ref{PropertyC} holds and $s_0>-\infty$}

\begin{figure}
\begin{center}
\tikzset{every picture/.style={line width=0.75pt}} 

\begin{tikzpicture}[x=0.75pt,y=0.75pt,yscale=-1,xscale=1]

\draw [color={rgb, 255:red, 0; green, 104; blue, 230 }  ,draw opacity=1 ]   (505.12,89.3) .. controls (515.92,113.95) and (512.41,254.42) .. (505.12,267.06) ;
\draw [shift={(511.92,181.49)}, rotate = 270.4] [fill={rgb, 255:red, 0; green, 104; blue, 230 }  ,fill opacity=1 ][line width=0.08]  [draw opacity=0] (8.04,-3.86) -- (0,0) -- (8.04,3.86) -- (5.34,0) -- cycle    ;
\draw [color={rgb, 255:red, 208; green, 2; blue, 27 }  ,draw opacity=1 ]   (420.36,178.49) .. controls (446.81,185.95) and (546.38,195.01) .. (596.75,178.49) ;
\draw [shift={(512.11,187.76)}, rotate = 180.87] [fill={rgb, 255:red, 208; green, 2; blue, 27 }  ,fill opacity=1 ][line width=0.08]  [draw opacity=0] (8.04,-3.86) -- (0,0) -- (8.04,3.86) -- (5.34,0) -- cycle    ;
\draw  [line width=1.5]  (420.36,178.49) .. controls (420.36,129.41) and (459.84,89.61) .. (508.55,89.61) .. controls (557.26,89.61) and (596.75,129.41) .. (596.75,178.49) .. controls (596.75,227.58) and (557.26,267.38) .. (508.55,267.38) .. controls (459.84,267.38) and (420.36,227.58) .. (420.36,178.49) -- cycle ;
\draw [color={rgb, 255:red, 245; green, 166; blue, 35 }  ,draw opacity=1 ]   (505.12,267.06) .. controls (454.25,194.71) and (580.25,180.21) .. (505.12,89.3) ;
\draw [shift={(516.09,182.24)}, rotate = 301.51] [fill={rgb, 255:red, 245; green, 166; blue, 35 }  ,fill opacity=1 ][line width=0.08]  [draw opacity=0] (6.25,-3) -- (0,0) -- (6.25,3) -- cycle    ;
\draw [color={rgb, 255:red, 245; green, 166; blue, 35 }  ,draw opacity=1 ]   (505.12,267.06) .. controls (450.75,212.71) and (555.25,162.71) .. (505.12,89.3) ;
\draw [shift={(504.31,183.62)}, rotate = 294.31] [fill={rgb, 255:red, 245; green, 166; blue, 35 }  ,fill opacity=1 ][line width=0.08]  [draw opacity=0] (6.25,-3) -- (0,0) -- (6.25,3) -- cycle    ;
\draw [color={rgb, 255:red, 245; green, 166; blue, 35 }  ,draw opacity=1 ]   (505.12,267.06) .. controls (434.25,213.27) and (544.25,155.27) .. (505.12,89.3) ;
\draw [shift={(492.4,184.51)}, rotate = 295.09] [fill={rgb, 255:red, 245; green, 166; blue, 35 }  ,fill opacity=1 ][line width=0.08]  [draw opacity=0] (6.25,-3) -- (0,0) -- (6.25,3) -- cycle    ;
\draw [color={rgb, 255:red, 245; green, 166; blue, 35 }  ,draw opacity=1 ]   (505.12,267.06) .. controls (466.75,204.71) and (594.25,161.21) .. (505.12,89.3) ;
\draw [shift={(525.57,179.27)}, rotate = 298.31] [fill={rgb, 255:red, 245; green, 166; blue, 35 }  ,fill opacity=1 ][line width=0.08]  [draw opacity=0] (6.25,-3) -- (0,0) -- (6.25,3) -- cycle    ;
\draw [color={rgb, 255:red, 245; green, 166; blue, 35 }  ,draw opacity=1 ]   (505.12,267.06) .. controls (487.25,196.21) and (599.25,163.21) .. (505.12,89.3) ;
\draw [shift={(534.13,178.54)}, rotate = 296.28] [fill={rgb, 255:red, 245; green, 166; blue, 35 }  ,fill opacity=1 ][line width=0.08]  [draw opacity=0] (6.25,-3) -- (0,0) -- (6.25,3) -- cycle    ;
\draw  [fill={rgb, 255:red, 74; green, 74; blue, 74 }  ,fill opacity=0.1 ][line width=1.5]  (68.43,157.1) .. controls (68.8,68.8) and (172.93,106.6) .. (218.43,107.1) .. controls (263.93,107.6) and (368.3,68.3) .. (368.43,157.1) .. controls (368.55,245.89) and (271.3,205.8) .. (218.43,207.1) .. controls (165.55,208.39) and (68.05,245.39) .. (68.43,157.1) -- cycle ;
\draw  [fill={rgb, 255:red, 255; green, 255; blue, 255 }  ,fill opacity=1 ][line width=1.5]  (158.29,163.77) .. controls (146.81,167.89) and (135.02,170.31) .. (118.7,163.57) .. controls (128.68,147.05) and (148.57,146.94) .. (158.29,163.77) -- cycle ;
\draw [line width=1.5]    (108.4,157) .. controls (123.48,171.72) and (154.48,171.39) .. (168.4,157) ;

\draw  [fill={rgb, 255:red, 255; green, 255; blue, 255 }  ,fill opacity=1 ][line width=1.5]  (318.49,164.37) .. controls (307.01,168.49) and (295.22,170.91) .. (278.9,164.17) .. controls (288.88,147.65) and (308.77,147.54) .. (318.49,164.37) -- cycle ;
\draw [line width=1.5]    (268.6,157.6) .. controls (283.68,172.32) and (314.68,171.99) .. (328.6,157.6) ;

\draw [color={rgb, 255:red, 0; green, 104; blue, 230 }  ,draw opacity=1 ] [dash pattern={on 0.84pt off 2.51pt}]  (138.6,167.8) .. controls (142.87,168.05) and (142.87,215.33) .. (137.83,215.65) ;
\draw [color={rgb, 255:red, 168; green, 104; blue, 0 }  ,draw opacity=1 ][fill={rgb, 255:red, 139; green, 87; blue, 42 }  ,fill opacity=0.1 ]   (505.12,267.06) .. controls (436.78,214.48) and (506.38,163.29) .. (505.12,89.3) ;
\draw [shift={(479.96,186.21)}, rotate = 287.79] [fill={rgb, 255:red, 168; green, 104; blue, 0 }  ,fill opacity=1 ][line width=0.08]  [draw opacity=0] (6.25,-3) -- (0,0) -- (6.25,3) -- cycle    ;
\draw [color={rgb, 255:red, 168; green, 104; blue, 0 }  ,draw opacity=1 ][fill={rgb, 255:red, 139; green, 87; blue, 42 }  ,fill opacity=0.1 ]   (505.12,267.06) .. controls (540.38,172.88) and (579.51,149.68) .. (505.12,89.3) ;
\draw [shift={(542.9,177.94)}, rotate = 287.03] [fill={rgb, 255:red, 168; green, 104; blue, 0 }  ,fill opacity=1 ][line width=0.08]  [draw opacity=0] (6.25,-3) -- (0,0) -- (6.25,3) -- cycle    ;
\draw [color={rgb, 255:red, 168; green, 104; blue, 0 }  ,draw opacity=1 ]   (124.79,165.76) .. controls (114.79,162.6) and (91.7,210.07) .. (106.95,213.32) ;
\draw [color={rgb, 255:red, 168; green, 104; blue, 0 }  ,draw opacity=1 ] [dash pattern={on 0.84pt off 2.51pt}]  (124.79,165.76) .. controls (133.46,168.93) and (118.92,215.96) .. (106.95,213.32) ;
\draw [color={rgb, 255:red, 168; green, 104; blue, 0 }  ,draw opacity=1 ]   (158.29,163.77) .. controls (168.51,156.22) and (159.58,213.72) .. (151.46,214.6) ;
\draw [color={rgb, 255:red, 168; green, 104; blue, 0 }  ,draw opacity=1 ] [dash pattern={on 0.84pt off 2.51pt}]  (156.73,164.9) .. controls (151.82,168.93) and (143.58,215.6) .. (151.46,214.6) ;
\draw [color={rgb, 255:red, 245; green, 166; blue, 35 }  ,draw opacity=1 ]   (142.08,167.36) .. controls (141.01,167.33) and (135.15,188.48) .. (124.15,194.9) .. controls (113.15,201.33) and (106.82,213.07) .. (112.11,214.21) ;
\draw [color={rgb, 255:red, 245; green, 166; blue, 35 }  ,draw opacity=1 ]   (152.84,165.63) .. controls (157.84,164.77) and (150.11,190.53) .. (141.87,195.62) .. controls (133.63,200.71) and (124,214.5) .. (125.82,214.79) ;
\draw [color={rgb, 255:red, 245; green, 166; blue, 35 }  ,draw opacity=1 ]   (133.96,167.5) .. controls (129.98,167.08) and (132.68,181.07) .. (123.68,185.5) .. controls (114.68,189.93) and (102.98,212.39) .. (108.27,213.54) ;
\draw [color={rgb, 255:red, 245; green, 166; blue, 35 }  ,draw opacity=1 ]   (157.01,164.41) .. controls (162.01,163.56) and (157.01,198.64) .. (149.39,203.21) .. controls (141.77,207.79) and (140.65,215.06) .. (139.79,215.54) ;
\draw [color={rgb, 255:red, 245; green, 166; blue, 35 }  ,draw opacity=1 ]   (146.42,168.06) .. controls (148.36,167.94) and (143.11,187.9) .. (131.46,195.26) .. controls (119.82,202.62) and (115.44,213.99) .. (117.39,214.21) ;
\draw [color={rgb, 255:red, 208; green, 2; blue, 27 }  ,draw opacity=1 ]   (169.03,186.75) .. controls (227.43,159.95) and (183.37,109.25) .. (117,126.6) .. controls (50.63,143.95) and (94.23,222.55) .. (169.03,186.75) -- cycle ;
\draw [shift={(179.16,131.57)}, rotate = 33.6] [fill={rgb, 255:red, 208; green, 2; blue, 27 }  ,fill opacity=1 ][line width=0.08]  [draw opacity=0] (10.72,-5.15) -- (0,0) -- (10.72,5.15) -- (7.12,0) -- cycle    ;
\draw [shift={(100.42,186.85)}, rotate = 218.8] [fill={rgb, 255:red, 208; green, 2; blue, 27 }  ,fill opacity=1 ][line width=0.08]  [draw opacity=0] (10.72,-5.15) -- (0,0) -- (10.72,5.15) -- (7.12,0) -- cycle    ;
\draw  [draw opacity=0][fill={rgb, 255:red, 139; green, 87; blue, 42 }  ,fill opacity=0.1 ] (124.79,165.76) .. controls (136.15,168.52) and (143.15,168.02) .. (158.29,163.77) .. controls (169.28,156.15) and (158.93,213.92) .. (151.46,214.6) .. controls (147.15,215.52) and (113.4,217.02) .. (106.95,213.32) .. controls (91.15,207.77) and (116.15,161.52) .. (124.79,165.76) -- cycle ;
\draw [color={rgb, 255:red, 0; green, 104; blue, 230 }  ,draw opacity=1 ]   (138.6,167.8) .. controls (132.01,168.05) and (132.01,216.33) .. (137.83,215.65) ;
\draw [shift={(133.57,194.95)}, rotate = 270.46] [fill={rgb, 255:red, 0; green, 104; blue, 230 }  ,fill opacity=1 ][line width=0.08]  [draw opacity=0] (8.04,-3.86) -- (0,0) -- (8.04,3.86) -- (5.34,0) -- cycle    ;

\draw (461.51,179.89) node [anchor=south] [inner sep=0.75pt]  [color={rgb, 255:red, 208; green, 2; blue, 27 }  ,opacity=1 ]  {$\tilde{I}_{\tilde\F}^{\Z}(\tilde{x}_{2})$};
\draw (517,241) node [anchor=west] [inner sep=0.75pt]  [color={rgb, 255:red, 0; green, 104; blue, 230 }  ,opacity=1 ]  {$\tilde{I}_{\tilde\F}^{\Z}(\tilde{x}_{1})$};
\draw (200.01,137.56) node [anchor=west] [inner sep=0.75pt]  [color={rgb, 255:red, 208; green, 2; blue, 27 }  ,opacity=1 ]  {$I_{\F}^{\Z}( x_{2})$};
\draw (137.83,219.05) node [anchor=north] [inner sep=0.75pt]  [color={rgb, 255:red, 0; green, 104; blue, 230 }  ,opacity=1 ]  {$I_{\F}^{\Z}( x_{1})$};
\draw (163.43,200.41) node [anchor=west] [inner sep=0.75pt]  [color={rgb, 255:red, 139; green, 87; blue, 42 }  ,opacity=1 ]  {$B$};
\draw (475,227.61) node [anchor=east] [inner sep=0.75pt]  [color={rgb, 255:red, 139; green, 87; blue, 42 }  ,opacity=1 ]  {$\tilde{\phi}_0$};

\end{tikzpicture}

\caption{Lellouch's example \cite{lellouch} on the genus-2 surface (left) and on the universal cover $\wt S$: the trajectories $\I_{\tilde \F}^\Z(\tilde x_1)$ and $\I_{\tilde \F}^\Z(\tilde x_2)$ have no $\F$-transverse intersection, as $\I_{\tilde \F}^\Z(\tilde x_1)$ is equivalent to a subpath of $\I_{\tilde \F}^\Z(\tilde x_2)$.}\label{Figlellouch}
\end{center}
\end{figure}
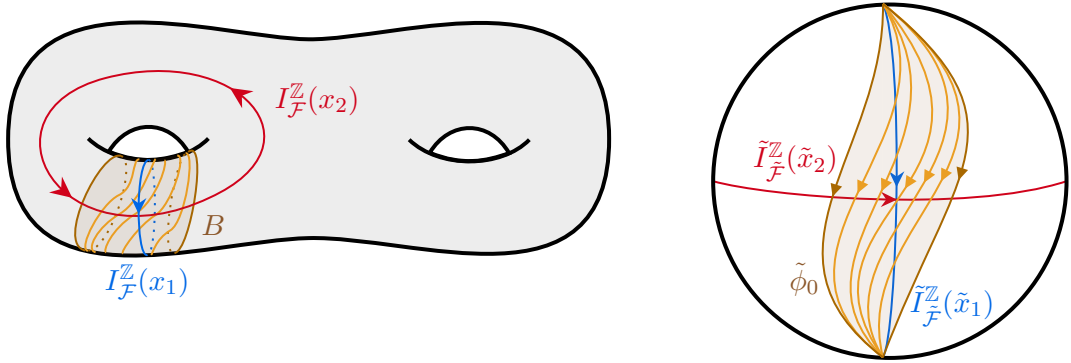

If the infimum $s_0$ is finite, then it is not attained (because $\varphi$ is an open map). By monotonicity, this implies that $\tilde I^\Z_{\tilde \F}(\tilde x_1)$ accumulates in $\tilde I_2$ at the point $\tilde I_2(s_0)$ (in the sense of the definition page~\pageref{DefAccumulate}). 
As $\tilde I_2$ is made of a locally finite number of pieces of $T_2\tilde I^\Z_{\tilde \F}(\tilde x_2)$, this implies that $\tilde I^\Z_{\tilde \F}(\tilde x_1)$ accumulates in $T_2\tilde I^\Z_{\tilde \F}(\tilde x_2)$. An example of such configuration is depicted in Figure~\ref{Figlellouch}.

By \cite[Proposition 3.2 and Proposition 4.17]{guiheneuf2023area}, there exists a transverse simple loop $A_*\subset S$, with associated deck transformation $T$, with the following properties:
\begin{enumerate}
\item The set $B$ of leaves met by $A_*$ is an open annulus of $S$.
\item The path $I^\Z_{\F}(x_1)$ stays in $B$ and is $\F$-equivalent to the natural lift of $A_*$.
\item The path $I^\Z_{\F}(x_2)$ is not included in $B$. 

More precisely, denote $\tilde B$ the lift of $B$ to $\tilde S$ containing $\tilde I_{\tilde\F}^\Z(\tilde x_1)$, and $\tilde \phi_0$ the limit leaf of the accumulation. 
Then $\tilde \phi_0\subset \partial \tilde B^R$, and $\tilde B\subset L(\tilde \phi)$ for every $\tilde \phi\subset \partial \tilde B^R$. 
\end{enumerate}

These properties allow us to apply the arguments of \cite[Section 3.4]{lellouch}. Indeed, this section treats a setting that is identical to ours, with the additional assumption that the homological rotation vectors of the measures $\mu_1$ and $\mu_2$ have non-trivial intersection in homology. The only place this assumption is used is in the proof of \cite[Lemme 3.4.3]{lellouch}, that can be replaced here with point 3. of the above properties\footnote{Equivalently, one could use \cite[Proposition 4.17]{guiheneuf2023area} to get in the setting of \cite[Section 3.4]{lellouch}, but it would somehow use Lellouch's arguments twice.}.
Hence, by \cite[Section 3.4]{lellouch}, we get the existence of a topological horseshoe, and thus the positivity of topological entropy.
\medskip

Let us now explain how to get new rotation vectors in the sense of \cite{pa} (\emph{i.e.}\ the second point of the theorem). 

By \cite[Lemme 2.2.13]{lellouch}, there exists a bounded neighbourhood $\tilde W$ of $\tilde x_1$, two increasing sequences $(r_n)_{n\in\Z}$, $(s_n)_{n\in\Z}$ of integers with $r_0=s_0=0$, such that for any $n\in\Z$, one has $\tilde f^{r_n}(\tilde x_1)\in T^{s_n}(\tilde W)$; moreover the sets $T^{s_n}(\tilde W)$ go towards $\alpha(\tilde x_1)$ with speed $\vartheta_{\mu_1}$ when $n$ goes to $-\infty$, and the sets $T^{s_n}(\tilde W)$ go towards $\omega(\tilde x_1)$ with speed $\vartheta_{\mu_1}$ when $n$ goes to $+\infty$. In particular, denoting  $\tilde \alpha$ a natural lift of $A_*$ to $\tilde S$ that is included in $\tilde B$, this implies that $\alpha(\tilde\alpha) = \alpha(\tilde x_1)$ and $\omega(\tilde\alpha) = \omega(\tilde x_1)$. 

Using the recurrence of the trajectory of $x_2$, we know that there exists a sequence $(S_n)_{n\in\Z}$ of deck transformations and an increasing sequence $(k_n)_{n\in\Z}$ of integers with $k_1>0$ and $k_{-1}<0$ such that for any $i<0$ and $j>0$, the trajectory $\tilde \I^{[k_i,k_j]}_{\tilde\F}(\tilde x_2)$ crosses both $S_i\tilde\phi_0$ and $S_j\tilde \phi_0$. 
Moreover, using as before the local continuity of $\tilde y\mapsto\tilde\gamma_{\tilde y}$ on a positive measure subset by Lusin theorem and the fact that $\tilde x_2$ tracks a geodesic (Theorem~\ref{thm:trackinggeodesictheorem}), we can suppose that for $j\to+\infty$, the sets $S_j\tilde\alpha$ converge towards $\omega(\tilde x_2)$ (for Hausdorff distance in the Poincaré closed disk associated to the Euclidean distance), and the quantities 
\begin{equation}\label{eq:projSAlpha}
\tdist\big(\inf\pr_{\tilde\gamma_{\tx_2}}(S_j\tilde\alpha),\, \sup\pr_{\tilde\gamma_{\tx_2}}(S_j\tilde\alpha)\big)
\qquad \text{and} \qquad
\tdist\big(\sup\pr_{\tilde\gamma_{\tx_2}}(S_j\tilde\alpha),\, \inf\pr_{\tilde\gamma_{\tx_2}}(S_j\tilde\alpha)\big)
\end{equation}
are equivalent to $k_j\vartheta_{\mu_2}$ (and a similar statements holds at $-\infty$). By taking a subsequence if necessary, we can suppose that the sets $(S_j\tilde B)_j$ (that are pairwise equal or disjoint) cross the geodesic $\tgamma_{\tx_2}$ in an increasing way.

A slight modification of \cite[Lemme 3.4.9]{lellouch} (for taking into account negative times, replacing the leaves $\tilde\phi$ and $S_m\tilde\phi$ of \cite{lellouch} by respectively $S_{-i}\tilde\phi_0$ and $S_j\tilde \phi_0$) leads to the following version of \cite[Lemme 3.4.9]{lellouch}. 

\begin{lemma}
There exists $K>0$ and $n_0>0$ such that for any $i,j\in\N$ and $n\ge n_0$, there exists $\tilde y_{i,j,n}\in\tilde S$ such that the transverse path $\tilde\alpha_{i,j,n} := \tilde I_{\tilde\F}^{K+r_n-k_{-i}+k_j}(\tilde y_{i,j,n})$ links $S_{-i}\tilde\phi_0$ to $T^{s_n-1}S_j\tilde \phi_0$.
\end{lemma}

In particular, the transverse path $\tilde I_{\tilde\F}^{K+r_n-k_{-i}+k_1}(\tilde y_{i,1,n})$ links $S_{-i}\tilde\phi_0$ to $T^{s_n-1}S_1\tilde \phi_0$.

All this construction is equivariant, in particular by the deck transformation $T$: the transverse path $T^{-s_n+1}\tilde I_{\tilde\F}^{K+r_n-k_{-1}+k_j}(\tilde y_{1,j,n})$ links $T^{-s_n+1}S_{-1}\tilde\phi_0$ to $S_j\tilde \phi_0$.
\medskip

Suppose first that $\vartheta_{\mu_1}\ge \vartheta_{\mu_2}$, and let $\{\tilde x_0\} = \tgamma_{\tx_1}\cap\tgamma_{\tx_2}$. Then, for any $i\ge 1$, consider some integer $n_i\ge n_0$ such that 
\begin{equation}\label{Eq1Wins}
S_{n_i}\vartheta_{\mu_1}\ge i (-k_{-i})\vartheta_{\mu_2},
\end{equation}
and set $\tilde y_i:=\tilde y_{i,1,n_i}$ so that 
$\tilde I^{[0,K+r_{n_i}-k_{-i}+k_1]}_{\tilde\F}(\tilde y_i)$ starts on $S_{-i}\tilde\phi_0$ and ends at $T^{s_{n_i}-1}S_1\tilde \phi_0$. In particular, $\tilde y_i \notin R(S_{-i}\tilde B)$ and $\tilde f^{K+r_{n_i}-k_{-i}+k_1}(\tilde y_i)\notin L(T^{s_{n_i}-1}S_1\tilde B)$. This implies, using~\eqref{eq:projSAlpha}, that 
\[\lim_{i\to +\infty} \tilde y_i = \alpha(\tilde x_2)
\qquad \text{and}\qquad 
\lim_{i\to +\infty} \tilde f^{K+r_{n_i}-k_{-i}+k_1}(\tilde y_i) = \omega(\tilde x_1).\]
Moreover, we have 
\[\tdist \big(\tx_0,\, \tilde f^{K+r_{n_i}-k_{-i}+k_1}(\tilde y_i) \big) \ge
\tdist \big(\tx_0,\, T^{s_{n_i}-1}(\tilde B) \big) \underset{i\to+\infty}{\sim}
r_{n_i}\vartheta_{\mu_1},\]
with, by \eqref{Eq1Wins}
\[\frac{r_{n_i}\vartheta_{\mu_1}}{K+r_{n_i}-k_{-i}+k_1}\underset{i\to+\infty}{\longrightarrow}\vartheta_{\mu_1}.\]
Using elementary hyperbolic geometry in $\tilde S$ (\textit{e.g.}\ \cite[Claim 4.4]{pa}), we can translate these estimates from the geodesic arcs from $\alpha(\tgamma_{\tx_2})$ to $\tx_0$ and $\tx_0$ to $\omega(\tx_1)$, to the geodesic from $\alpha(\tx_2)$ to $\omega(\tx_1)$. We get that there exists  some $\vartheta \ge \vartheta_{\mu_1}$ such that $(\alpha(\tilde x_2),\omega(\tilde x_1),\vartheta)$ is a rotation vector in the sense of \cite{pa}, hence, using \cite[Theorem A]{pa}, we get that $(\alpha(\tilde x_2),\omega(\tilde x_1),\vartheta_{\mu_1})$ is a rotation vector in the sense of \cite{pa}.

Similarly, for any $j\in\N$, consider $n'_j\ge n_0$ such that 
\begin{equation*}
jS_{n_j}\vartheta_{\mu_1}\le -k_{-j}\vartheta_{\mu_2},
\end{equation*}
and $\tilde y'_j\:=T^{s_{n'_j}-1}\tilde y_{1,j,n'_j}$ so that 
$\tilde I^{[0,K+r_{n'_j}-k_{-1}+k_j]}_{\tilde\F}(\tilde y'_j)$ starts on $T^{-s_{n'_j}+1}S_{-1}\tilde\phi_0$ and finishes on $S_j\tilde \phi_0$. In particular, $\tilde y_i \notin R(S_{-1}\tilde B)$ and $\tilde f^{K+r_{n'_j}-k_{-1}+k_j}(\tilde y'_j)\notin L(T^{r_{n_i}-1}S_j\tilde B)$. This implies, using~\eqref{eq:projSAlpha}, that 
\[\lim_{j\to +\infty} \tilde y'_j = \alpha(\tilde x_1)
\qquad \text{and}\qquad 
\lim_{j\to +\infty} \tilde f^{K+r_{n'_j}-k_{-1}+k_j}(\tilde y'_j) = \omega(\tilde x_2).\]
Like before, we moreover deduce the speed of the rotation vector: $(\alpha(\tilde x_1),\beta(\tilde x_2),\vartheta_{\mu_1})$ is a rotation vector in the sense of \cite{pa}.

A similar reasoning shows that if $\vartheta_{\mu_1}\le \vartheta_{\mu_2}$, then both triples  $(\alpha(\tilde x_1),\omega(\tilde x_2),\vartheta_{\mu_2})$ and $(\alpha(\tilde x_2),\omega(\tilde x_1),\vartheta_{\mu_2})$ are rotation vectors in the sense of \cite{pa}.
\medskip

Finally, we prove the third point of the theorem.
By \cite[Proposition 3.4.13.]{lellouch}\footnote{Or, again, one could use \cite[Proposition 4.17]{guiheneuf2023area} to get in the setting of \cite[Section 3.4]{lellouch}.}, we get that any point in the triangle spanned by $0,\rho_{H_1}(\mu_1),\rho_{H_1}(\mu_2)$ is accumulated by homological rotation vectors periodic orbits of $f$.  In particular, for $t\in (0,1)$ and $\varepsilon\in (0, \min(t,1-t))$, there exists a periodic point $z$ for $f$ such that $\rho_{H_1}(z)\in B(t\rho_{H_1}(\mu_1)+(1-t)\rho_{H_1}(\mu_2), \varepsilon)$, which moreover satisfies $\rho_{H_1}(z)\wedge \rho_{H_1}(\mu_i)\neq 0$ for $i=1,2$. 
By Proposition~\ref{prop:InterHomoImpliesInterGeod}, this implies that for $i=1,2$, there is a lift $\tilde z$ of $z$ and a lift $\tilde x_i$ of a typical point $x_i$ for $\mu_i$ such that the geodesics $\tgamma_{\tilde z}$ and $\tgamma_{\tilde x_i}$ intersect. To get the fact that the tracking geodesic of $\tilde z$ can be supposed to be as close as we want to the tracking geodesic of $\tx_1$ or to the one of $\tx_2$, one can reason as in the end of the proof in the case of Paragraph~\ref{Subsub:other} (this case of the end of Paragraph~\ref{Subsub:other} is similar although a bit more difficult because the deck transformations associated to the trajectory of $x_1$ are not multiples of a single deck transformation $T$).
\medskip

One can apply the same reasoning for the behaviour of the leaves $\tilde\phi_1^t$ for $t$ in a neighbourhood of $+\infty$, and the behaviour of the leaves $\tilde\phi_2^t$ for $t$ in a neighbourhood of $\pm \infty$.
If in one of these cases Condition~\ref{PropertyC} holds and $|s_0|<+\infty$, then the theorem is proved. Otherwise, one has that in all cases, either Condition~\ref{PropertyC} does not hold, or $|s_0|=+\infty$. We treat this case in the following paragraph.

\subsubsection{The other cases}\label{Subsub:other}

We now treat the remaining cases, \emph{i.e.}\ for all four intersections 
\begin{equation}\label{EqPossibleInter}
\begin{array}{cc}
\tilde I^\Z_{\tilde \F}(\tilde x_1)\cap T_2'\tilde I^\Z_{\tilde \F}(\tilde x_2),\qquad&
\tilde I^\Z_{\tilde \F}(\tilde x_1)\cap T_2\tilde I^\Z_{\tilde \F}(\tilde x_2),\\ \tilde I^\Z_{\tilde \F}(\tilde x_2)\cap T_1 \tilde I^\Z_{\tilde \F}(\tilde x_1),\qquad& \tilde I^\Z_{\tilde \F}(\tilde x_2)\cap T_1'\tilde I^\Z_{\tilde \F}(\tilde x_1),
\end{array}
\end{equation}
either Condition~\ref{PropertyC} does not hold, or Condition~\ref{PropertyC} holds and $|s_0|=+\infty$ (see Figure~\ref{fig:OtherCases}).

Consider the first of these intersections. If Condition~\ref{PropertyC} holds and $|s_0|=+\infty$, then for any $t$ small enough the leaf $\tilde\phi_1^t$ meets $T_2T_1\tilde I^\Z_{\tilde \F}(\tilde x_1)$ before it meets $T_2\tilde I^\Z_{\tilde \F}(\tilde x_2)$. Applying again Lemma~\ref{LemEndleaves}, we deduce that the leaf $\tilde\phi_1^t$ is disjoint from the connected component of the complement of $T_2T_1\tilde I^\Z_{\tilde \F}(\tilde x_1) \cup T_2\tilde I^\Z_{\tilde \F}(\tilde x_2)$ containing $\omega(\tilde x_1)$. The same conclusion is true if Condition~\ref{PropertyC} does not hold.

Similar conclusions hold for all of the four intersections of \eqref{EqPossibleInter}. This allows to apply \cite[Lemma 10.7.3]{pa} to conclude that the paths $\tilde I^\Z_{\tilde \F}(\tilde x_1)$ and $\tilde I^\Z_{\tilde \F}(\tilde x_2)$ intersect $\F$-transversally at some point $\tilde I^{t_1}_{\tilde \F}(\tilde x_1) = \tilde I^{t_2}_{\tilde \F}(\tilde x_2)$.
In particular, we get two recurrent transverse trajectories which intersect $\F$-transversally; this implies \cite{lecalveztalforcing, lct2} the existence of a topological horseshoe for $f$ (this proves the first point of Theorem~\ref{TheoInterGeod}). 
\medskip

\begin{figure}
\begin{center}
\tikzset{every picture/.style={line width=0.75pt}} 

\begin{tikzpicture}[x=0.75pt,y=0.75pt,yscale=-.9,xscale=.9]

\draw [color={rgb, 255:red, 0; green, 104; blue, 230 }  ,draw opacity=1 ]   (174.61,42.17) .. controls (186.88,69.75) and (182.89,227) .. (174.61,241.15) ;
\draw [shift={(182.33,145.36)}, rotate = 270.44] [fill={rgb, 255:red, 0; green, 104; blue, 230 }  ,fill opacity=1 ][line width=0.08]  [draw opacity=0] (8.04,-3.86) -- (0,0) -- (8.04,3.86) -- (5.34,0) -- cycle    ;
\draw [color={rgb, 255:red, 208; green, 2; blue, 27 }  ,draw opacity=1 ]   (83.47,169.52) .. controls (123.05,161.2) and (216.82,160.88) .. (274.25,170.8) ;
\draw [shift={(182.89,163.43)}, rotate = 180.7] [fill={rgb, 255:red, 208; green, 2; blue, 27 }  ,fill opacity=1 ][line width=0.08]  [draw opacity=0] (8.04,-3.86) -- (0,0) -- (8.04,3.86) -- (5.34,0) -- cycle    ;
\draw [color={rgb, 255:red, 245; green, 166; blue, 35 }  ,draw opacity=1 ]   (101.02,91.49) .. controls (100.07,70.77) and (168.13,45.1) .. (187.18,45.39) ;
\draw [shift={(134.3,61.65)}, rotate = 335.22] [fill={rgb, 255:red, 245; green, 166; blue, 35 }  ,fill opacity=1 ][line width=0.08]  [draw opacity=0] (6.25,-3) -- (0,0) -- (6.25,3) -- cycle    ;
\draw [color={rgb, 255:red, 245; green, 166; blue, 35 }  ,draw opacity=1 ]   (107.63,93.65) .. controls (108,79.14) and (145.42,57.26) .. (189.55,54.38) ;
\draw [shift={(139.25,66.32)}, rotate = 337.89] [fill={rgb, 255:red, 245; green, 166; blue, 35 }  ,fill opacity=1 ][line width=0.08]  [draw opacity=0] (6.25,-3) -- (0,0) -- (6.25,3) -- cycle    ;
\draw [color={rgb, 255:red, 245; green, 166; blue, 35 }  ,draw opacity=1 ]   (128.11,98.91) .. controls (134.41,79.74) and (164.98,65.82) .. (188.53,64.37) ;
\draw [shift={(149.5,76.3)}, rotate = 332.21] [fill={rgb, 255:red, 245; green, 166; blue, 35 }  ,fill opacity=1 ][line width=0.08]  [draw opacity=0] (6.25,-3) -- (0,0) -- (6.25,3) -- cycle    ;
\draw [color={rgb, 255:red, 208; green, 2; blue, 27 }  ,draw opacity=1 ]   (98.07,84.45) .. controls (132.46,100.12) and (207.41,95.64) .. (262.57,88.6) ;
\draw [shift={(183.4,94.91)}, rotate = 179.04] [fill={rgb, 255:red, 208; green, 2; blue, 27 }  ,fill opacity=1 ][line width=0.08]  [draw opacity=0] (8.04,-3.86) -- (0,0) -- (8.04,3.86) -- (5.34,0) -- cycle    ;
\draw [color={rgb, 255:red, 0; green, 104; blue, 230 }  ,draw opacity=1 ]   (128.74,55.31) .. controls (141.98,76.65) and (115.51,110.66) .. (89.99,97.67) ;
\draw [shift={(121.31,92.09)}, rotate = 311.69] [fill={rgb, 255:red, 0; green, 104; blue, 230 }  ,fill opacity=1 ][line width=0.08]  [draw opacity=0] (8.04,-3.86) -- (0,0) -- (8.04,3.86) -- (5.34,0) -- cycle    ;
\draw [color={rgb, 255:red, 0; green, 104; blue, 230 }  ,draw opacity=1 ]   (458.61,41.5) .. controls (470.88,69.09) and (466.89,226.33) .. (458.61,240.48) ;
\draw [shift={(466.33,144.69)}, rotate = 270.44] [fill={rgb, 255:red, 0; green, 104; blue, 230 }  ,fill opacity=1 ][line width=0.08]  [draw opacity=0] (8.04,-3.86) -- (0,0) -- (8.04,3.86) -- (5.34,0) -- cycle    ;
\draw [color={rgb, 255:red, 208; green, 2; blue, 27 }  ,draw opacity=1 ]   (367.47,168.85) .. controls (407.05,160.54) and (500.82,160.22) .. (558.25,170.13) ;
\draw [shift={(466.89,162.76)}, rotate = 180.7] [fill={rgb, 255:red, 208; green, 2; blue, 27 }  ,fill opacity=1 ][line width=0.08]  [draw opacity=0] (8.04,-3.86) -- (0,0) -- (8.04,3.86) -- (5.34,0) -- cycle    ;
\draw [color={rgb, 255:red, 245; green, 166; blue, 35 }  ,draw opacity=1 ]   (417.33,53.17) .. controls (423.67,60.17) and (452.13,44.44) .. (471.18,44.73) ;
\draw [shift={(439.8,51.37)}, rotate = 343.5] [fill={rgb, 255:red, 245; green, 166; blue, 35 }  ,fill opacity=1 ][line width=0.08]  [draw opacity=0] (6.25,-3) -- (0,0) -- (6.25,3) -- cycle    ;
\draw [color={rgb, 255:red, 245; green, 166; blue, 35 }  ,draw opacity=1 ]   (410.33,73.83) .. controls (425,78.17) and (429.42,56.59) .. (473.55,53.71) ;
\draw [shift={(436.77,63.13)}, rotate = 334.73] [fill={rgb, 255:red, 245; green, 166; blue, 35 }  ,fill opacity=1 ][line width=0.08]  [draw opacity=0] (6.25,-3) -- (0,0) -- (6.25,3) -- cycle    ;
\draw [color={rgb, 255:red, 245; green, 166; blue, 35 }  ,draw opacity=1 ]   (412.11,98.24) .. controls (418.41,79.07) and (448.98,65.16) .. (472.53,63.7) ;
\draw [shift={(433.5,75.63)}, rotate = 332.21] [fill={rgb, 255:red, 245; green, 166; blue, 35 }  ,fill opacity=1 ][line width=0.08]  [draw opacity=0] (6.25,-3) -- (0,0) -- (6.25,3) -- cycle    ;
\draw [color={rgb, 255:red, 208; green, 2; blue, 27 }  ,draw opacity=1 ]   (382.07,83.78) .. controls (416.46,99.45) and (491.41,94.97) .. (546.57,87.94) ;
\draw [shift={(467.4,94.24)}, rotate = 179.04] [fill={rgb, 255:red, 208; green, 2; blue, 27 }  ,fill opacity=1 ][line width=0.08]  [draw opacity=0] (8.04,-3.86) -- (0,0) -- (8.04,3.86) -- (5.34,0) -- cycle    ;
\draw [color={rgb, 255:red, 0; green, 104; blue, 230 }  ,draw opacity=1 ]   (412.74,54.64) .. controls (425.98,75.98) and (399.51,109.99) .. (373.99,97.01) ;
\draw [shift={(405.31,91.42)}, rotate = 311.69] [fill={rgb, 255:red, 0; green, 104; blue, 230 }  ,fill opacity=1 ][line width=0.08]  [draw opacity=0] (8.04,-3.86) -- (0,0) -- (8.04,3.86) -- (5.34,0) -- cycle    ;
\draw  [line width=1.5]  (362.36,141.34) .. controls (362.36,86.4) and (407.2,41.85) .. (462.51,41.85) .. controls (517.83,41.85) and (562.67,86.4) .. (562.67,141.34) .. controls (562.67,196.29) and (517.83,240.83) .. (462.51,240.83) .. controls (407.2,240.83) and (362.36,196.29) .. (362.36,141.34) -- cycle ;
\draw  [line width=1.5]  (78.36,142.01) .. controls (78.36,87.06) and (123.2,42.52) .. (178.51,42.52) .. controls (233.83,42.52) and (278.67,87.06) .. (278.67,142.01) .. controls (278.67,196.96) and (233.83,241.5) .. (178.51,241.5) .. controls (123.2,241.5) and (78.36,196.96) .. (78.36,142.01) -- cycle ;
\draw  [draw opacity=0][fill={rgb, 255:red, 126; green, 211; blue, 33 }  ,fill opacity=0.1 ] (373.99,97.01) .. controls (410.38,16.56) and (516.75,32.94) .. (546.57,87.94) .. controls (511.38,93.81) and (443.75,97.19) .. (405.67,90.83) .. controls (400,96.56) and (388,104.31) .. (373.99,97.01) -- cycle ;
\draw  [draw opacity=0][fill={rgb, 255:red, 126; green, 211; blue, 33 }  ,fill opacity=0.1 ] (89.99,97.67) .. controls (126.37,17.23) and (232.75,33.6) .. (262.57,88.6) .. controls (227.37,94.48) and (159.75,97.85) .. (121.67,91.5) .. controls (116,97.23) and (104,104.98) .. (89.99,97.67) -- cycle ;

\draw (230.84,95.03) node [anchor=north] [inner sep=0.75pt]  [color={rgb, 255:red, 208; green, 2; blue, 27 }  ,opacity=1 ]  {$T_{2}\tilde{I}_{\tilde\F}^{\Z}(\tilde{x}_{2})$};
\draw (185.29,202.31) node [anchor=west] [inner sep=0.75pt]  [color={rgb, 255:red, 0; green, 104; blue, 230 }  ,opacity=1 ]  {$\tilde{I}_{\tilde\F}^{\Z}(\tilde{x}_{1})$};
\draw (133.64,168) node [anchor=north] [inner sep=0.75pt]  [color={rgb, 255:red, 208; green, 2; blue, 27 }  ,opacity=1 ]  {$\tilde{I}_{\tilde\F}^{\Z}(\tilde{x}_{2})$};
\draw (86,102) node [anchor=north west][inner sep=0.75pt]  [color={rgb, 255:red, 0; green, 104; blue, 230 }  ,opacity=1 ]  {$T_{2} T_{1}\tilde{I}_{\tilde\F}^{\Z}(\tilde{x}_{1})$};
\draw (514.84,94.36) node [anchor=north] [inner sep=0.75pt]  [color={rgb, 255:red, 208; green, 2; blue, 27 }  ,opacity=1 ]  {$T_{2}\tilde{I}_{\tilde\F}^{\Z}(\tilde{x}_{2})$};
\draw (469.29,201.64) node [anchor=west] [inner sep=0.75pt]  [color={rgb, 255:red, 0; green, 104; blue, 230 }  ,opacity=1 ]  {$\tilde{I}_{F}^{Z}(\tilde{x}_{1})$};
\draw (417.64,168) node [anchor=north] [inner sep=0.75pt]  [color={rgb, 255:red, 208; green, 2; blue, 27 }  ,opacity=1 ]  {$\tilde{I}_{F}^{Z}(\tilde{x}_{2})$};
\draw (371,100) node [anchor=north west][inner sep=0.75pt]  [color={rgb, 255:red, 0; green, 104; blue, 230 }  ,opacity=1 ]  {$T_{2} T_{1}\tilde{I}_{\tilde\F}^{\Z}(\tilde{x}_{1})$};

\end{tikzpicture}
\caption{In both cases of Paragraph~\ref{Subsub:other}, for $t$ small enough the leaves $\tilde\phi_1^t$ stay in the (light green) region that is the complement of the connected component of $T_2\big(\tilde{I}_{\tilde\F}^{\Z}(\tilde{x}_{2})\cup T_{1}\tilde{I}_{\tilde\F}^{\Z}(\tilde{x}_{1})\big)^\complement$ containing $\omega(\tilde x_1)$. Left: Condition~\ref{PropertyC} holds and $s_0=-\infty$. Right: Condition~\ref{PropertyC} does not hold.\label{fig:OtherCases}}
\end{center}
\end{figure}
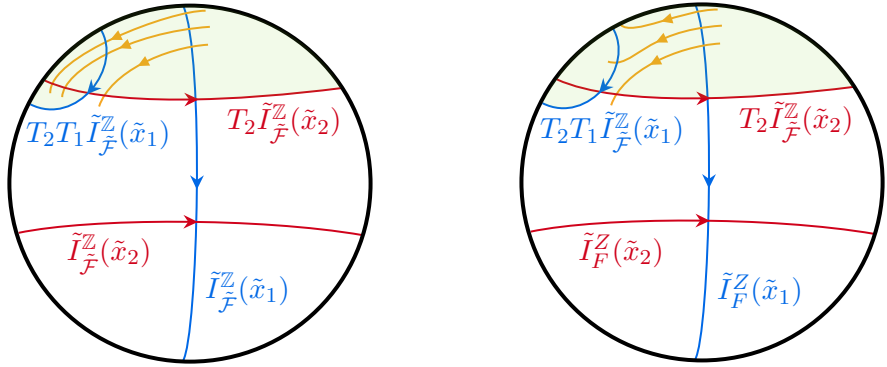

Let us get more precise implications of this property, about the new rotation vectors that are created.


Let us set notations for the transverse intersection we just obtained. Let $n_1\le t_1\le  n_1'$, $n_2\le t_2\le n_2'$, with $n_1,n'_1,n_2,n'_2\in\Z$, be such that:
\begin{itemize}
\item $\tilde I_{\tilde \F}^{n_1'-n_1}(\tilde f^{n_1}(\tilde x_1))$ and $\tilde I_{\tilde \F}^{n_2'-n_2}(\tilde f^{n_2}(\tilde x_2))$ intersect $\tilde\F$-transversally at $\tilde I_{\tilde \F}^{t_1}(\tilde x_1) = \tilde I_{\tilde \F}^{t_2}(\tilde x_2)$;
\item for $i=1,2$, the whole leaf $\tilde\phi_{\tilde f^{n_i}(\tilde x_i)}$ is disjoint from the connected component of the complement of $T_{i+1}\big(\tilde I_{\tilde \F}^\Z(\tilde x_{i+1}) \cup T_i\tilde I_{\tilde \F}^\Z(\tilde x_i)\big)$ containing $\omega(\tilde x_i)$ (the indices are computed modulo 2);
\item for $i=1,2$, the whole leaf $\tilde\phi_{\tilde f^{n'_i}(\tilde x_i)}$ is disjoint from the connected component of the complement of $T_{i+1}'\big(\tilde I_{\tilde \F}^\Z(\tilde x_{i+1}) \cup T'_i\tilde I_{\tilde \F}^\Z(\tilde x_i)\big)$ containing $\alpha(\tilde x_i)$.
\end{itemize}

For $i=1,2$, let $\tilde U_i$ be a neighbourhood of $\tilde f^{n_i}(\tilde x_i)$ such that for any $\tilde y\in \tilde U_i$, the path $\tilde I_{\tilde \F}^{n'_i-n_i}(\tilde x_i)$ is equivalent to a subpath of $\tilde I_{\tilde \F}^{n'_i-n_i+2}(\tilde f^{-1}(\tilde y))$ (\emph{e.g.}\ \cite[Lemma 17]{lecalveztalforcing}).

As $x_i$ is typical, by Lusin theorem, there is a subset $\tilde A_i$ of $\tilde U_i$ of $\mu_i$-positive measure on which the (tracking geodesic) map $\tilde y\mapsto\tilde\gamma_{\tilde y}$ is continuous. 

By typicality of the point $\tilde x_1$, the fact that it is tracked by $\tilde\gamma_{\tilde x_1}$, and that it has homological rotation vector $\rho_{H_1}(\mu_1)$, Birkhoff theorem implies that there exists a sequence $(S_{1,j})_{j\in\N}$ of deck transformations, as well as a sequence of times $(m_{1,j})_{j\in\N}$, with $S_{1,0}=\Id$ and $m_{1,0}=0$, and $\eta>0$, such that:
\begin{enumerate}[label=(\alph*${}_1$)]
\item $m_{1,j}\underset{j\to+\infty}{\sim} -j\eta$;
\item $\tilde f^{m_{1,j}}(\tilde x_1)\in S_{1,j}(\tilde A_1)$;
\item $\frac{[S_{1,j}]_{H_1}}{m_{1,j}} = \rho_{H_1}(\mu_1)$;
\item \[\lim_{j\to+\infty}\inf\frac{\pr_{\tgamma_{\tx_1}}(S_{1,j}\tilde A_1)}{m_{1,j}} = \lim_{j\to+\infty}\sup\frac{\pr_{\tgamma_{\tx_1}}(S_{1,j}\tilde A_1)}{m_{1,j}} = \vartheta_{\mu_1};\]
\item $L\big(S_{1,j+1}T'_2I_{\tilde \F}^\Z(\tilde x_2)\big) \subset L\big(S_{1,j}T_2I_{\tilde \F}^\Z(\tilde x_2)\big)$ (this is a consequence of the above property about the angles between tracking geodesics being bigger than $\theta_0$, and Lemma~\ref{LemConsRecurGeod});
\item $R\big(S_{1,j}^{-1}T_2I_{\tilde \F}^\Z(\tilde x_2)\big)\subset R\big(T_2'I_{\tilde \F}^\Z(\tilde x_2)\big)$  (this is obtained by using the fact that the axis of $S_{1,j}^{-1}$ is close to $\tgamma_{\tx_2}$, by reasoning as in the proof of Lemma~\ref{ClaimSpeedTransverseTraj}, and the fact that the translation length of $S_{1,j}$ is arbitrarily long).  
\end{enumerate}

Similarly, by typicality of the point $\tilde x_2$, the fact that it is tracked by $\tilde\gamma_{\tilde x_2}$, and that it has homological rotation vector $\rho_{H_1}(\mu_2)$, Birkhoff theorem implies that there exists a sequence $(S_{2,j'})_{j'\in\N}$ of deck transformations, as well as a sequence of times $(m_{2,j'})_{j'\in\N}$, with $S_{2,0}=\Id$ and $m_{2,0}=0$, and $\eta>0$, such that:
\begin{enumerate}[label=(\alph*${}_2$)]
\item $m_{2,j'}\underset{j'\to+\infty}{\sim} j\eta$;
\item $\tilde f^{m_{2,j'}}(\tilde x_2)\in S_{2,j'}(\tilde A_2)$;
\item $\frac{[S_{2,j'}]_{H_1}}{m_{2,j'}} = \rho_{H_1}(\mu_2)$;
\item \[\lim_{j'\to+\infty}\inf\frac{\pr_{\tgamma_{\tx_2}}(S_{2,j'}\tilde A_2)}{m_{2,j'}} = \lim_{j'\to+\infty}\sup\frac{\pr_{\tgamma_{\tx_2}}(S_{2,j'}\tilde A_2)}{m_{2,j'}} = \vartheta_{\mu_2};\]
\item $L\big(S_{2,j'+1}T_1\tilde I_{\tilde \F}^\Z(\tilde x_1)\big)\subset L\big(S_{2,j'}T_1'\tilde I_{\tilde \F}^\Z(\tilde x_1)\big)$;
\item $R\big(S_{2,j}^{-1}T_1'I_{\tilde \F}^\Z(\tilde x_1)\big) \subset R\big(T_1I_{\tilde \F}^\Z(\tilde x_1)\big)$.
\end{enumerate}

By the fundamental proposition \cite[Proposition 20]{lecalveztalforcing} of the forcing theory, for $j,j'\in\N^*$, there exists $\tilde y_{j,j'}\in\tilde S$ such that the transverse path (see Figure~\ref{FigFinPreuveOuf}, left)
\[\tilde\alpha_{j,j'}:= \tilde I_{\tilde \F}^{-m_{1,j}+m_{2,j'}+n_1'-n_1+n_2'-n_2+2}(\tilde y_{j,j'})\]
is $\tilde \F$-equivalent to the path $\tilde I_{\tilde \F}^\Z(\tilde x_1)|_{[m_{1,j}-1,t_1]}I_{\tilde \F}^\Z(\tilde x_2)|_{[t_2,m_{2,j'}+1]}$.

These paths satisfy:
\begin{enumerate}[label=\arabic*)]
\item For any neighbourhoods $O_1,O_2$ of respectively $\alpha(\tilde x_1)$, $\omega(\tilde x_2)$, there exists $J_0\in\N$ such that if $j,j'\ge J_0$, then $\tilde y_{j,j'}\in O_1$ and $\tilde f^{-m_{1,j}+m_{2,j'}+n_1'-n_1+n_2'-n_2+2}(\tilde y_{j,j'})\in O_2$ (by b) and d));
\item If $j'$ is fixed, then $\pr_{\tilde \gamma_{\tilde x_1}}(\tilde y_{j,j'})\underset{j\to+\infty}{\sim} m_{1,j}\vartheta_{\mu_1}$ (by (b${}_1$), (b${}_2$), (d${}_1$) and (d${}_2$));
\item If $j$ is fixed, then $\pr_{\tilde \gamma_{\tilde x_2}}(\tilde f^{-m_{1,j}+m_{2,j'}+n_1'-n_1+n_2'-n_2}(\tilde y_{j,j'}))\underset{j'\to+\infty}{\sim} m_{2,j'}\vartheta_{\mu_2}$ (by (b${}_1$), (b${}_2$), (d${}_1$) and (d${}_2$));
\item The paths $\tilde\alpha_{j,j'}$ and $S_{2,j'}S_{1,j}^{-1}\tilde\alpha_{j,j'}$ intersect $\F$-transversally. This comes from the facts that the path $S_{1,j}\tilde I_{\tilde \F}^{n'_1-n_1}(\tilde x_1)$ is equivalent to a subpath of $\tilde I_{\tilde \F}^{n'_1-n_1+2}(\tilde y_{j,j'})$, and the path $S_{2,j'}\tilde I_{\tilde \F}^{n'_2-n_2}(\tilde x_2)$ is equivalent to a subpath of $\tilde I_{\tilde \F}^{n'_2-n_2+2}(\tilde f^{-m_{1,j}+m_{2,j'}+n_1'-n_1}(\tilde y_{j,j'}))$.
\end{enumerate}

The first three properties show that the rotation vector $(\alpha(\tilde x_1), \omega(\tilde x_2), \max(\vartheta_{\mu_1}, \vartheta_{\mu_2}))$ is a rotation vector of $f$ in the sense of \cite{pa} (see \cite[Claim 4.4]{pa}). Symmetrical arguments show that  $(\alpha(\tilde x_2), \omega(\tilde x_1), \max(\vartheta_{\mu_1}, \vartheta_{\mu_2}))$ is also a rotation vector of $f$ in the sense of \cite{pa}.
This proves the second point of the theorem.

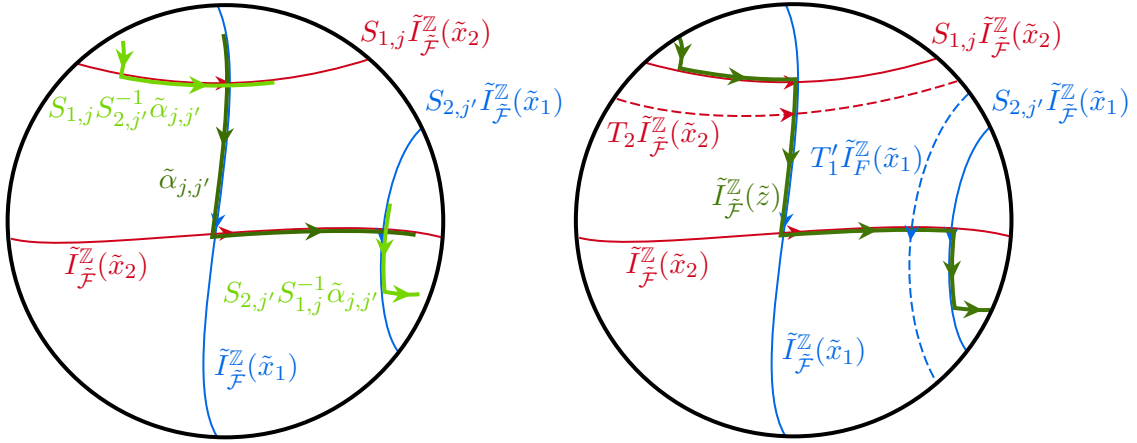
\begin{figure}
\begin{center}

\tikzset{every picture/.style={line width=0.75pt}} 

\begin{tikzpicture}[x=0.75pt,y=0.75pt,yscale=-.97,xscale=.97]

\draw [color={rgb, 255:red, 0; green, 104; blue, 230 }  ,draw opacity=1 ]   (451.58,33.3) .. controls (474.85,82.01) and (427.64,216.74) .. (451.58,257.44) ;
\draw [shift={(451.15,149.72)}, rotate = 277.48] [fill={rgb, 255:red, 0; green, 104; blue, 230 }  ,fill opacity=1 ][line width=0.08]  [draw opacity=0] (8.04,-3.86) -- (0,0) -- (8.04,3.86) -- (5.34,0) -- cycle    ;
\draw [color={rgb, 255:red, 208; green, 2; blue, 27 }  ,draw opacity=1 ]   (345.58,154.78) .. controls (379.2,164.18) and (515.4,139.05) .. (568.17,154.42) ;
\draw [shift={(460.49,151.62)}, rotate = 176.11] [fill={rgb, 255:red, 208; green, 2; blue, 27 }  ,fill opacity=1 ][line width=0.08]  [draw opacity=0] (8.04,-3.86) -- (0,0) -- (8.04,3.86) -- (5.34,0) -- cycle    ;
\draw [color={rgb, 255:red, 208; green, 2; blue, 27 }  ,draw opacity=1 ]   (379.35,63.99) .. controls (412.97,73.4) and (466.75,82.29) .. (530.77,61.47) ;
\draw [shift={(458.68,74.09)}, rotate = 179.01] [fill={rgb, 255:red, 208; green, 2; blue, 27 }  ,fill opacity=1 ][line width=0.08]  [draw opacity=0] (8.04,-3.86) -- (0,0) -- (8.04,3.86) -- (5.34,0) -- cycle    ;
\draw [color={rgb, 255:red, 0; green, 104; blue, 230 }  ,draw opacity=1 ]   (556.91,97.5) .. controls (533.31,119.83) and (530.04,194.05) .. (544.93,213.5) ;
\draw [shift={(536.43,157.29)}, rotate = 275.27] [fill={rgb, 255:red, 0; green, 104; blue, 230 }  ,fill opacity=1 ][line width=0.08]  [draw opacity=0] (8.04,-3.86) -- (0,0) -- (8.04,3.86) -- (5.34,0) -- cycle    ;
\draw [color={rgb, 255:red, 65; green, 117; blue, 5 }  ,draw opacity=1 ][line width=1.5]    (396.78,50.3) .. controls (399.19,54.99) and (398.91,59.35) .. (397.33,66.96) .. controls (417.5,71.04) and (437.64,73.31) .. (456.83,72.63) .. controls (456.92,96.38) and (451.67,133.69) .. (449.25,153.63) .. controls (480.33,151.13) and (517.25,149.96) .. (538.58,151.38) .. controls (537.15,162.67) and (536.85,178.25) .. (538.83,191.29) .. controls (547.43,192.13) and (549.69,192.59) .. (557.31,192.23) ;
\draw [shift={(397.98,63.54)}, rotate = 275.46] [fill={rgb, 255:red, 65; green, 117; blue, 5 }  ,fill opacity=1 ][line width=0.08]  [draw opacity=0] (9.91,-4.76) -- (0,0) -- (9.91,4.76) -- (6.58,0) -- cycle    ;
\draw [shift={(431.92,71.99)}, rotate = 185.35] [fill={rgb, 255:red, 65; green, 117; blue, 5 }  ,fill opacity=1 ][line width=0.08]  [draw opacity=0] (9.91,-4.76) -- (0,0) -- (9.91,4.76) -- (6.58,0) -- cycle    ;
\draw [shift={(453.54,118.53)}, rotate = 276.24] [fill={rgb, 255:red, 65; green, 117; blue, 5 }  ,fill opacity=1 ][line width=0.08]  [draw opacity=0] (9.91,-4.76) -- (0,0) -- (9.91,4.76) -- (6.58,0) -- cycle    ;
\draw [shift={(498.99,150.99)}, rotate = 178.35] [fill={rgb, 255:red, 65; green, 117; blue, 5 }  ,fill opacity=1 ][line width=0.08]  [draw opacity=0] (9.91,-4.76) -- (0,0) -- (9.91,4.76) -- (6.58,0) -- cycle    ;
\draw [shift={(537.53,176.54)}, rotate = 269.07] [fill={rgb, 255:red, 65; green, 117; blue, 5 }  ,fill opacity=1 ][line width=0.08]  [draw opacity=0] (9.91,-4.76) -- (0,0) -- (9.91,4.76) -- (6.58,0) -- cycle    ;
\draw [shift={(553.11,192.35)}, rotate = 182.3] [fill={rgb, 255:red, 65; green, 117; blue, 5 }  ,fill opacity=1 ][line width=0.08]  [draw opacity=0] (9.91,-4.76) -- (0,0) -- (9.91,4.76) -- (6.58,0) -- cycle    ;
\draw [color={rgb, 255:red, 208; green, 2; blue, 27 }  ,draw opacity=1 ] [dash pattern={on 3.75pt off 1.5pt}]  (364.93,82.79) .. controls (408.51,97.01) and (476.06,94.46) .. (540.07,73.64) ;
\draw [shift={(456.79,90.56)}, rotate = 175.74] [fill={rgb, 255:red, 208; green, 2; blue, 27 }  ,fill opacity=1 ][line width=0.08]  [draw opacity=0] (8.04,-3.86) -- (0,0) -- (8.04,3.86) -- (5.34,0) -- cycle    ;
\draw [color={rgb, 255:red, 0; green, 104; blue, 230 }  ,draw opacity=1 ] [dash pattern={on 3.75pt off 1.5pt}]  (546.05,80.07) .. controls (515.79,107.93) and (503.79,185.07) .. (529.5,229.93) ;
\draw [shift={(515.99,156.33)}, rotate = 275.58] [fill={rgb, 255:red, 0; green, 104; blue, 230 }  ,fill opacity=1 ][line width=0.08]  [draw opacity=0] (8.04,-3.86) -- (0,0) -- (8.04,3.86) -- (5.34,0) -- cycle    ;
\draw  [line width=1.5]  (343.86,145.77) .. controls (343.86,83.87) and (394.04,33.7) .. (455.94,33.7) .. controls (517.85,33.7) and (568.03,83.87) .. (568.03,145.77) .. controls (568.03,207.66) and (517.85,257.83) .. (455.94,257.83) .. controls (394.04,257.83) and (343.86,207.66) .. (343.86,145.77) -- cycle ;
\draw [color={rgb, 255:red, 0; green, 104; blue, 230 }  ,draw opacity=1 ]   (159.17,33.8) .. controls (182.44,82.51) and (135.23,217.24) .. (159.17,257.94) ;
\draw [shift={(158.74,150.22)}, rotate = 277.48] [fill={rgb, 255:red, 0; green, 104; blue, 230 }  ,fill opacity=1 ][line width=0.08]  [draw opacity=0] (8.04,-3.86) -- (0,0) -- (8.04,3.86) -- (5.34,0) -- cycle    ;
\draw [color={rgb, 255:red, 208; green, 2; blue, 27 }  ,draw opacity=1 ]   (53.17,155.28) .. controls (86.79,164.68) and (222.98,139.55) .. (275.76,154.92) ;
\draw [shift={(168.08,152.12)}, rotate = 176.11] [fill={rgb, 255:red, 208; green, 2; blue, 27 }  ,fill opacity=1 ][line width=0.08]  [draw opacity=0] (8.04,-3.86) -- (0,0) -- (8.04,3.86) -- (5.34,0) -- cycle    ;
\draw [color={rgb, 255:red, 208; green, 2; blue, 27 }  ,draw opacity=1 ]   (86.94,64.49) .. controls (120.56,73.9) and (174.34,82.79) .. (238.36,61.97) ;
\draw [shift={(166.27,74.59)}, rotate = 179.01] [fill={rgb, 255:red, 208; green, 2; blue, 27 }  ,fill opacity=1 ][line width=0.08]  [draw opacity=0] (8.04,-3.86) -- (0,0) -- (8.04,3.86) -- (5.34,0) -- cycle    ;
\draw [color={rgb, 255:red, 0; green, 104; blue, 230 }  ,draw opacity=1 ]   (264.5,98) .. controls (240.9,120.33) and (237.63,194.55) .. (252.52,214) ;
\draw [shift={(244.02,157.79)}, rotate = 275.27] [fill={rgb, 255:red, 0; green, 104; blue, 230 }  ,fill opacity=1 ][line width=0.08]  [draw opacity=0] (8.04,-3.86) -- (0,0) -- (8.04,3.86) -- (5.34,0) -- cycle    ;
\draw [color={rgb, 255:red, 58; green, 108; blue, 0 }  ,draw opacity=1 ][line width=1.5]    (162.45,49.23) .. controls (168,75.41) and (160.21,125.9) .. (156.73,154.41) .. controls (184.58,151.67) and (239.64,149.59) .. (261.36,153.41) ;
\draw [shift={(162.38,107.37)}, rotate = 275.3] [fill={rgb, 255:red, 58; green, 108; blue, 0 }  ,fill opacity=1 ][line width=0.08]  [draw opacity=0] (9.91,-4.76) -- (0,0) -- (9.91,4.76) -- (6.58,0) -- cycle    ;
\draw [shift={(214.33,151.4)}, rotate = 178.9] [fill={rgb, 255:red, 58; green, 108; blue, 0 }  ,fill opacity=1 ][line width=0.08]  [draw opacity=0] (9.91,-4.76) -- (0,0) -- (9.91,4.76) -- (6.58,0) -- cycle    ;
\draw  [line width=1.5]  (51.44,146.27) .. controls (51.44,84.37) and (101.63,34.2) .. (163.53,34.2) .. controls (225.44,34.2) and (275.62,84.37) .. (275.62,146.27) .. controls (275.62,208.16) and (225.44,258.33) .. (163.53,258.33) .. controls (101.63,258.33) and (51.44,208.16) .. (51.44,146.27) -- cycle ;
\draw [color={rgb, 255:red, 117; green, 213; blue, 0 }  ,draw opacity=1 ][line width=1.5]    (110.79,52.79) .. controls (112.05,65.21) and (110.54,64.79) .. (109.91,71.32) .. controls (136.65,77.11) and (169.36,76.63) .. (188.73,74.32) ;
\draw [shift={(110.55,67.1)}, rotate = 276.89] [fill={rgb, 255:red, 117; green, 213; blue, 0 }  ,fill opacity=1 ][line width=0.08]  [draw opacity=0] (9.91,-4.76) -- (0,0) -- (9.91,4.76) -- (6.58,0) -- cycle    ;
\draw [shift={(154.58,75.9)}, rotate = 181.42] [fill={rgb, 255:red, 117; green, 213; blue, 0 }  ,fill opacity=1 ][line width=0.08]  [draw opacity=0] (9.91,-4.76) -- (0,0) -- (9.91,4.76) -- (6.58,0) -- cycle    ;
\draw [color={rgb, 255:red, 117; green, 213; blue, 0 }  ,draw opacity=1 ][line width=1.5]    (248.27,137.14) .. controls (245.69,151.64) and (243.87,166.53) .. (245.45,182.86) .. controls (254.95,184.45) and (256.33,184.63) .. (263.4,184.1) ;
\draw [shift={(244.97,165.04)}, rotate = 273.36] [fill={rgb, 255:red, 117; green, 213; blue, 0 }  ,fill opacity=1 ][line width=0.08]  [draw opacity=0] (9.91,-4.76) -- (0,0) -- (9.91,4.76) -- (6.58,0) -- cycle    ;
\draw [shift={(259.41,184.34)}, rotate = 182.38] [fill={rgb, 255:red, 117; green, 213; blue, 0 }  ,fill opacity=1 ][line width=0.08]  [draw opacity=0] (9.91,-4.76) -- (0,0) -- (9.91,4.76) -- (6.58,0) -- cycle    ;

\draw (391.24,157.63) node [anchor=north] [inner sep=0.75pt]  [color={rgb, 255:red, 208; green, 2; blue, 27 }  ,opacity=1 ]  {$\tilde{I}_{\tilde\F}^{\Z}(\tilde{x}_{2})$};
\draw (448.38,214.42) node [anchor=west] [inner sep=0.75pt]  [color={rgb, 255:red, 0; green, 104; blue, 230 }  ,opacity=1 ]  {$\tilde{I}_{\tilde\F}^{\Z}(\tilde{x}_{1})$};
\draw (451.4,134.31) node [anchor=east] [inner sep=0.75pt]  [color={rgb, 255:red, 65; green, 117; blue, 5 }  ,opacity=1 ]  {$\tilde{I}_{\tilde\F}^{\Z}(\tilde{z})$};
\draw (420.27,89.98) node [anchor=north east] [inner sep=0.75pt]  [color={rgb, 255:red, 208; green, 2; blue, 27 }  ,opacity=1 ]  {$T_{2}\tilde{I}_{\tilde\F}^{\Z}(\tilde{x}_{2})$};
\draw (525,62.5) node [anchor=south west] [inner sep=0.75pt]  [color={rgb, 255:red, 208; green, 2; blue, 27 }  ,opacity=1 ]  {$S_{1,j}\tilde{I}_{\tilde\F}^{\Z}(\tilde{x}_{2})$};
\draw (524.56,113.36) node [anchor=east] [inner sep=0.75pt]  [color={rgb, 255:red, 0; green, 104; blue, 230 }  ,opacity=1 ]  {$T'_{1}\tilde{I}_{F}^{\Z}(\tilde{x}_{1})$};
\draw (556.03,85) node [anchor=west] [inner sep=0.75pt]  [color={rgb, 255:red, 0; green, 104; blue, 230 }  ,opacity=1 ]  {$S_{2,j'}\tilde{I}_{\tilde\F}^{\Z}(\tilde{x}_{1})$};
\draw (102.33,158.13) node [anchor=north] [inner sep=0.75pt]  [color={rgb, 255:red, 208; green, 2; blue, 27 }  ,opacity=1 ]  {$\tilde{I}_{\tilde\F}^{\Z}(\tilde{x}_{2})$};
\draw (156.14,222.75) node [anchor=west] [inner sep=0.75pt]  [color={rgb, 255:red, 0; green, 104; blue, 230 }  ,opacity=1 ]  {$\tilde{I}_{\tilde\F}^{\Z}(\tilde{x}_{1})$};
\draw (231.23,61.5) node [anchor=south west] [inner sep=0.75pt]  [color={rgb, 255:red, 208; green, 2; blue, 27 }  ,opacity=1 ]  {$S_{1,j}\tilde{I}_{\tilde\F}^{\Z}(\tilde{x}_{2})$};
\draw (264.62,85) node [anchor=west] [inner sep=0.75pt]  [color={rgb, 255:red, 0; green, 104; blue, 230 }  ,opacity=1 ]  {$S_{2,j'}\tilde{I}_{\tilde\F}^{\Z}(\tilde{x}_{1})$};
\draw (158.11,126.51) node [anchor=east] [inner sep=0.75pt]  [color={rgb, 255:red, 65; green, 117; blue, 5 }  ,opacity=1 ]  {$\tilde{\alpha }_{j,j'}$};
\draw (244.61,185) node [anchor=east] [inner sep=0.75pt]  [color={rgb, 255:red, 117; green, 213; blue, 0 }  ,opacity=1 ]  {$S_{2,j'} S_{1,j}^{-1}\tilde{\alpha }_{j,j'}$};
\draw (153.41,77.08) node [anchor=north east] [inner sep=0.75pt]  [color={rgb, 255:red, 117; green, 213; blue, 0 }  ,opacity=1 ]  {$S_{1,j} S_{2,j'}^{-1}\tilde{\alpha }_{j,j'}$};

\end{tikzpicture}

\caption{The path $\tilde{\alpha }_{j,j'}$ and its translates $S_{2,j'} S_{1,j}^{-1}\tilde{\alpha }_{j,j'}$ and $S_{1,j} S_{2,j'}^{-1}\tilde{\alpha }_{j,j'}$ with whom it has transverse intersections (left), and the trajectory of the point $\tilde z$ for the end of the proof of the theorem (right).\label{FigFinPreuveOuf}}
\end{center}
\end{figure}
\medskip

By \cite[Theorem M]{lct2}, property 4) implies the existence of a topological rotational horseshoe relative to the deck transformation $S_{2,j'}S_{1,j}^{-1}$. More precisely, for any rational number $p/q\in (0,1]$ written in a irreducible way, there exists a point $\tilde z\in \tilde S$ such that $\tilde f^{q(-m_{1,j}+m_{2,j'}+n_1'-n_1+n_2'-n_2+2)}(\tilde z) = (S_{2,j'}S_{1,j}^{-1})^p(\tilde z)$ (see Figure~\ref{FigFinPreuveOuf}, right).
In particular, the homological rotation vector of $\tilde z$ is equal to 
\[\frac{p}{q} \frac{[S_{2,j'}]_{H_1(S)}-[S_{1,j}]_{H_1(S)}}{-m_{1,j}+m_{2,j'}+n_1'-n_1+n_2'-n_2+2}
\ \underset{j,j'\to+\infty}{\sim}\ 
 \frac{p}{q} \frac{[S_{2,j'}]_{H_1}-[S_{1,j}]_{H_1}}{-m_{1,j}+m_{2,j'}}.\]
Let $\lambda\in \R_+^*$, and pick $j=n$, $j'=\lfloor\lambda n\rfloor$, we get that these rotation vectors are equivalent to (recall that $-m_{1,j}\sim \eta j \sim m_{2,j}$, and use properties (c${}_1$) and (c${}_2$))
\[ \frac{p}{q} \left(\frac{[S_{2,j'}]_{H_1}}{(1+\lambda^{-1})m_{2,j}} + \frac{-[S_{1,j}]_{H_1}}{-(1+\lambda)m_{1,j}}\right)
\ \underset{j,j'\to+\infty}{\sim}\ 
 \frac{p}{q} \left(\frac{\lambda}{1+\lambda}\rho_{H_1}(\mu_1) + \frac{1}{1+\lambda}\rho_{H_1}(\mu_2)\right),\]
which implies that the whole triangle spanned by $0, \rho_{H_1}(\mu_1), \rho_{H_1}(\mu_2)$ is accumulated by rotation vectors of periodic orbits.
\medskip

Finally, let us locate the endpoints in the boundary at infinity of the trajectory of $\tilde z$. Equivalently, we want to locate the endpoints of the axis of $S_{2,j'}S_{1,j}^{-1}$. For this we consider $j,j'\ge 2$.

Let us first show that for any $k\ge 0$, we have
\begin{equation}\label{eq:inclusions1}
\left(S_{2,j'}S_{i,j}^{-1}\right)^k\big(L(S_{2,1}\tgamma_{\tx_1})\big) \subset L(S_{2,1}\tgamma_{\tx_1}),
\end{equation}
In other words that $L(S_{2,1}\tgamma_{\tx_1})$ is stable under $S_{2,j'}S_{i,j}^{-1}$. Note that, using (e${}_1$)
\[L(S_{2,1}\tgamma_{\tx_1})
\subset L(T_1'\tgamma_{\tx_1}) 
\subset R(T_2\tgamma_{\tx_2})
\subset R(S_{1,j}T'_2\tgamma_{\tx_2})
\subset L(S_{1,j}T_1\tgamma_{\tx_1})\]
so
\[S_{1,j}^{-1}L(S_{2,1}\tgamma_{\tx_1})
\subset L(T_1\tgamma_{\tx_1}).\]
Moreover (because $j'\ge 2$, and by (e${}_2$)), 
\[S_{2,j'}(L(T_1\tgamma_{\tx_1}))\subset S_{2,1}(L(T'_1\tgamma_{\tx_1})).\]
Therefore,
\[S_{2,j'}S_{1,j}^{-1}(L(S_{2,2}\tgamma_{\tx_1}))
\subset S_{2,2}(L(T'_1\tgamma_{\tx_1})).\] 

Similarly, one can show that for any $k\ge 0$, we have
\begin{equation*}
\left(S_{2,j'}S_{i,j}^{-1}\right)^{-k}\big(L(S_{1,2}\tgamma_{\tx_2})\big) \subset L(S_{1,2}\tgamma_{\tx_2}).
\end{equation*}
Combined with \eqref{eq:inclusions1}, this shows that $\alpha(\tilde z) \subset L(S_{1,2}^{-1}\tgamma_{\tx_2})$ and $ \omega(\tilde z)\subset L(S_{2,2}\tgamma_{\tx_1})$.
Hence, $\alpha(S_{1,1}^{-1}\tilde z) \subset L(S_{1,1}^{-1}S_{1,2}\tgamma_{\tx_2})\subset L(T_2\tgamma_{\tx_2})$ (by (e${}_1$)) and $ \omega(S_{1,1}^{-1}\tilde z)\subset L(S_{1,1}^{-1}S_{2,2}\tgamma_{\tx_1}) \subset R(T_2'\tgamma_{\tx_2})$ (by (f${}_1$) and (e${}_2$)). As the number $R$ in Lemma~\ref{LemConsRecurGeod} is arbitrary, this proves that the tracking geodesic of $S_{1,1}^{-1}\tilde z$ can be chosen arbitrarily close to $\tgamma_{\tx_1}$. Similarly, one can show that the tracking geodesic of $S_{2,1}\tilde z$ can be chosen arbitrarily close to $\tgamma_{\tx_2}$.
Moreover, the tracking geodesic of $\tilde z$ can be chosen arbitrarily close to the geodesic $\big(\alpha(\tilde x_1),\omega(\tilde x_2)\big)$, and the tracking geodesic of $S_{1,j}^{-1}\tilde z$ can be chosen arbitrarily close to the geodesic $\big(\alpha(\tilde x_2),\omega(\tilde x_1)\big)$.
This ends the proof of the theorem.
\end{proof}

\section{The shape of rotation sets --- proof of Theorems~\ref{thm:ShapeRotationSet}, \ref{thm:DecompRotSetIntro} and Corollary~\ref{thm:maintheorem}\label{sec:maintheoremsection}}

Let us recall the statement of Theorem~\ref{thm:DecompRotSetIntro}. 

\begin{theorem}\label{thm:DecompRotSet}
Let $f\in\Homeo_0(S)$. Then there exists a decomposition 
\[\M = \bigsqcup_{i\in I} \cl_i = \bigsqcup_{i\in I^1} \cl_i \sqcup \bigsqcup_{i\in I^+} \cl_i\]
of the set of ergodic measures with positive rotation speed into equivalence classes such that, denoting for $i\in I$ (see Theorem~\ref{thm:equidistributiontheoremintro} for the set $\Lambda_\mu$),
\[\rho_i = \big\{\rho_{H_1}(\mu)\mid \mu\in\cl_i\big\},\qquad 
V_i = \operatorname{Span}(\rho_i),\qquad
\Lambda_i = \bigcup_{\mu\in\cl_i} \Lambda_\mu,\]
we have:
\begin{enumerate}
\item For every $i \in I^1$, 

\begin{itemize}
	\item $\Lambda_i$ is a minimal geodesic lamination (hence contains all the tracking geodesics associated to any of the ergodic measures of $\cl_i$). 
	\item $\rho_{I^1} = \bigcup \limits_{i \in I^1}\rho_i$ is included in a union of at most $3g-3$ lines of $H_1(S,\R)$.
\end{itemize}

\item If $I^+ \neq \varnothing$, then $f$ has a topological horseshoe (and in particular, positive topological entropy), and for every $i \in I^+$,

\begin{itemize}
\item The linear subspace $V_i$ has a basis formed by elements of $H_1(S,\Z)$;
\item The set $\overline{\rho_i}$ is a convex set containing $0$;
\item We have $\operatorname{int}_{V_i}(\overline{\rho_i})= \operatorname{int}_{V_i}(\rho_i)$ (in other words, $\rho_i$ is convex up to the fact that elements of $\partial_{V_i}(\rho_i)\setminus\operatorname{extrem}(\rho_i)$ can be in the complement of $\rho_i$);
\item Every element of $\operatorname{int}_{V_i}(\rho_i) \cap H_1(S,\Q)$ is the rotation vector of some $f$-periodic orbit (because $V_i$ has a rational basis, such elements are dense in $\operatorname{int}_{V_i}(\rho_i)$). 
\end{itemize} 

\item For $i,j\in I$, $i\neq j$, for $v_i\in V_i$ and $v_j\in V_j$, we have $v_i\wedge v_j = 0$. Thus, $\mathrm{span}(\rho_{I^{1}})$ is a totally isotropic subspace of $H_1(S,\mathbb{R})$. Moreover, if $i,j\in I^1$, then $\Lambda_i\cap\Lambda_j = \varnothing$.
\item $\card I^1\le 3g-3$ and $\card I^+\le 2g-2$.
\end{enumerate}
\end{theorem}

Note that this theorem implies that $\rho^{\textnormal{erg}}_{H_1}(f) = \bigcup_{i\in I} \rho_i$ (see Definition \ref{def:ErgHomRotationSet}), so $\rho^{\textnormal{erg}}_{H_1}(f)$ is a finite union of convex sets containing 0, together with a set contained in a $g$-dimensional subspace and included in a finite number of one-dimensional subspaces of $H_1(S,\R)$.  

Note also that the periodic orbits that we get in order to prove that a dense subset of $\overline{\rho_i}$ ($i\in I^+$) is realised by periodic orbits are stable, in other words the rotation vectors of these periodic orbits persist under small perturbations of $f$.

Recall that for any finite set $F\subset \inte_{V_i}(\rho_i)$, denoting $R = \conv(\{0\}, F)$, there exists a constant $M>0$ such that for any $p/q\in R\cap H_1(S,\Q)$, with $p\in H_1(S,\Z)$ and $q\in\N^*$, there exists a periodic point of period dividing $Mq$ and with rotation vector $p/q$. We cannot hope having a universal bound on the constant $M$, as shown by the exemple of Figure~\ref{FigExNotExact}.

As depicted in Figure~\ref{FigEx2cvx}, it is possible that for $i,j\in I^+$, $i\neq j$, we have $V_i = V_j$ and $\rho_i\neq \rho_j$. In this specific example, the intersection $\rho^{\textnormal{erg}}_{H_1}(f) \cap V_i$ is made of 4 pieces, two that are convex sets of dimension 2 and two included in segments.
\medskip

The fact that Theorem~\ref{thm:DecompRotSet} implies Theorem~\ref{thm:ShapeRotationSet} and Corollary~\ref{thm:maintheorem} is trivial.

\subsection{Preliminary tools of Nielsen-Thurston Theory}

Let us first recall some definitions of Nielsen-Thurston theory.

Assume that $h\in\Homeo(\Sigma)$ is a homeomorphism of a compact surface $\Sigma$, possibly with boundary or punctures. We call $h$ \emph{periodic} if there exists $n > 0$ such that $h^n = \Id_\Sigma$. We call $h$ \emph{pseudo-Anosov} if there exist $h$-invariant measurable foliations with associated uniformly expanding transverse measures (see \cite{thurston} for details). These two types of homeomorphism are distinct, and in particular periodic homeomorphisms have zero topological entropy while pseudo-Anosov homeomorphisms have nonzero topological entropy. Given a finite $h$-invariant set $F$ (in other words, $F$ is a finite union of $h$-periodic orbits), we call $h$ \emph{pseudo-Anosov relative to $F$} if $h|_{\Sigma\setminus F}$ is pseudo-Anosov.

The key theorem of Nielsen-Thurston theory is the following (\emph{e.g.}\ \cite{Travaux}):

\begin{theorem}[Nielsen-Thurston classification]\label{TheoNTClass}
	Every homeomorphism $f\in\Homeo(\Sigma)$ is isotopic to a homeomorphism $h\in\Homeo(\Sigma)$ such that:
	\begin{enumerate}[label=(\roman*)]
		\item $h$ leaves invariant a finite family (possibly empty) of disjoint simple closed curves $C_1, \dots, C_n$ on $\Sigma$;
		\item No curve $C_i$ is homotopic to a boundary curve of $S$;
		\item We can decompose $\Sigma = \bigcup_{j=1}^d \Sigma_j$, where $\Sigma_1,\dots, \Sigma_d$ are closed surfaces with disjoint interiors obtained by cutting the surface $S$ along the curves $\{C_1, \dots , C_n\}$;
		\item For each $1 \le j \le  d$, there exists $N\ge 1$ such that the map $h^N|_{\Sigma_j}$ is a homeomorphism that is either periodic or pseudo-Anosov.
	\end{enumerate}
\end{theorem}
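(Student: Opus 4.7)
The plan is to follow Thurston's original approach via the action on the space of projective measured laminations, as this gives the cleanest route to both the periodic/pseudo-Anosov dichotomy in the irreducible case and to the system of reducing curves. Let $\Sigma$ be a compact orientable surface with $\chi(\Sigma) < 0$ (small Euler characteristic cases are handled by hand) and let $[f]$ denote the mapping class of $f \in \Homeo(\Sigma)$. The setup requires two preliminary pieces of machinery: first, the space $\mathcal{PML}(\Sigma)$ of projective measured laminations, which by Thurston's parametrization (via train tracks or measured foliations with prong data at punctures) is homeomorphic to a sphere of dimension $6g-7+2p$, where $p$ is the number of boundary/puncture components; second, the continuous action of the mapping class group on $\mathcal{PML}(\Sigma)$ induced by push-forward of measured laminations.

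Given these, the first step is to invoke Brouwer's fixed point theorem: the continuous self-map of a sphere induced by $[f]$ has a fixed projective class $[L]$, so there exist a representative $h \in [f]$ and $\lambda > 0$ with $h_*L = \lambda L$. The proof then branches according to the combinatorial type of $L$. If $L$ has a simple closed curve as a component (an atomic leaf), that curve is, up to isotopy, $h$-invariant and we are in the reducible regime. If $L$ is filling (intersects every essential simple closed curve with positive measure) and $\lambda = 1$, a Nielsen-style argument using that a filling measured lamination has discrete stabilizer shows $h$ has finite order up to isotopy, yielding the periodic case. If $L$ is filling and $\lambda \neq 1$, one produces a second fixed class $[L']$ transverse to $[L]$ (e.g.\ by applying Brouwer to the subsphere of classes with nontrivial intersection with $[L]$, or by using attractor/repeller dynamics of $[f]$ on $\mathcal{PML}(\Sigma)$); the pair $(L,L')$ then gives a transverse pair of invariant measured foliations with dilatations $\lambda$ and $\lambda^{-1}$, which upon straightening produces a pseudo-Anosov representative.

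For items (i)--(iv) of the theorem, the remaining work is the reducible case: one must produce a \emph{finite disjoint} family of essential simple closed curves $C_1,\dots,C_n$, none homotopic to the boundary, such that cutting along them yields components on each of which a suitable representative is periodic or pseudo-Anosov. The approach is inductive on $|\chi(\Sigma)|$: starting from one invariant curve produced above, one takes a maximal collection of pairwise disjoint, pairwise non-isotopic, essential simple closed curves that are permuted by some representative (such maximal collections exist because the number of isotopy classes of pairwise disjoint curves on $\Sigma$ is bounded by $3g-3+p$), replaces $h$ by a power fixing each curve setwise, then applies the theorem inductively to each component surface (of strictly smaller complexity). Disjoint compatibility of the representatives across the cuts is arranged by a further isotopy supported in collar neighborhoods.

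The hard part will be establishing that \emph{simultaneous} straightening is possible: on each piece one obtains a Nielsen–Thurston representative, but one must glue these representatives into a single homeomorphism of $\Sigma$ isotopic to $f$. The technical tool here is the uniqueness (up to isotopy) of the Thurston canonical reduction system, together with the fact that any two representatives of a mapping class that fix a collection of curves setwise differ by an isotopy supported in the complement of those curves. Controlling behavior near the boundary (ensuring no $C_i$ is boundary-parallel) requires a separate compactness/essentiality argument, using that a boundary-parallel curve carries no measured lamination weight in the Thurston norm. Once these are in place, items (i)--(iv) follow by assembling the inductive output.
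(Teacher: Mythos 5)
The paper does not prove this statement at all: it is the classical Nielsen--Thurston classification, quoted with a citation to Thurston, so your proposal has to be measured against the standard proofs (Thurston, FLP, Casson--Bleiler). Measured that way, there is a genuine gap at your very first step. Brouwer's fixed point theorem applies to continuous self-maps of a closed ball, not of a sphere: a homeomorphism of $\mathcal{PML}(\Sigma)\cong S^{6g-7+2p}$ can be fixed-point free (the antipodal map is the standard example), and since this sphere is odd-dimensional the Lefschetz number of an orientation-preserving self-map vanishes, so no fixed projective class is guaranteed by any general fixed-point theorem. The failure is not merely formal: a finite-order mapping class whose quotient orbifold is a sphere with three cone points preserves no essential simple closed curve and no projective measured lamination whatsoever, so the class $[L]$ on which your entire case analysis branches need not exist. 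The classical argument avoids this by applying Brouwer to the Thurston compactification $\overline{\mathcal{T}(\Sigma)}=\mathcal{T}(\Sigma)\cup\mathcal{PML}(\Sigma)$, which is a closed ball of dimension $6g-6+2p$ on which the mapping class acts continuously; a fixed point in the interior gives the periodic case (the stabilized hyperbolic structure has finite isometry group), and only when every fixed point lies on the boundary sphere does one analyze an invariant lamination as you do.

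Two further steps are asserted rather than proved, and they are where most of the real work lies. First, producing the second invariant class $[L']$ transverse to a filling $[L]$ with dilatation $\lambda^{-1}$: your suggestion to ``apply Brouwer to the subsphere'' has the same sphere problem, and the actual constructions go through train-track coordinates, Markov partitions, or an analysis of source--sink dynamics on $\mathcal{PML}(\Sigma)$, none of which can simply be invoked. Second, in the reducible case you appeal to uniqueness of the canonical reduction system to glue the pieces; that uniqueness is normally \emph{deduced from} the classification theorem, so using it here is circular unless you give an independent argument (the standard treatments instead isotope the representatives on the cut pieces to agree on annular collars by hand). Since the paper itself only cites this theorem, the efficient course is to do the same; if you want a self-contained proof, it must be restructured around the compactified Teichm\"uller space as indicated above.
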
 

In \cite{llibremackay}, Llibre and MacKay prove that if $S=\T^2$ and if $f\in \Homeo_0(S)$ has a rotation set with nonempty interior, then $f$ is isotopic to a pseudo-Anosov relative to a finite set. This result was later generalized to other contexts, with the same proof strategy, in \cite{pollicott, MR1334719, matsumoto, boyland2}. Let us state the first of these results.

\begin{theorem}[\cite{pollicott}, Theorem 2]
	Let $S$ be a compact closed surface of genus $g \ge 2$, and $f\in \Homeo_0(S)$. Assume that there exist $2g+ 1$ periodic points $x_1,\dots,x_{2g+1}$
	whose rotation vectors $\rho_1,\dots,\rho_{2g+1}\in H_1(S,\R)$ do not lie on a hyperplane, then $f$ is isotopic to some $h\in \Homeo_0(S)$ having an invariant set $S_1\subset S$ that is a closed surface with boundary, such that $h|_{S_1}$ is pseudo-Anosov relative to a finite set. 
\end{theorem}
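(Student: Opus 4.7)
The plan is to apply the Nielsen–Thurston classification (Theorem~\ref{TheoNTClass}) to the isotopy class of $f$ rel the invariant set $F=\bigcup_i\mathcal{O}_f(x_i)$, and to use the hypothesis on the $\rho_i$ to rule out the case where every piece of the decomposition is of periodic type.

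First I would choose $N$ to be a common multiple of the periods of the $x_i$ and replace $f$ by $f^N$, so $F$ is pointwise fixed; this only rescales the $\rho_i$ by $N$ and preserves their non-hyperplanar configuration. Applying Theorem~\ref{TheoNTClass} to the homeomorphism induced on $S\setminus F$ yields a representative $h$ isotopic to $f^N$ rel $F$, a family of $h$-invariant disjoint simple closed reduction curves $C_1,\dots,C_n\subset S\setminus F$, and an $h$-invariant decomposition of $S\setminus(F\cup\bigcup_j C_j)$ into pieces; on each $h$-orbit of pieces the first return is isotopic rel boundary either to a periodic or to a pseudo-Anosov homeomorphism. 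If any orbit is of pseudo-Anosov type, I take $S_1$ to be its closure in $S$: it is a closed $h$-invariant subsurface whose boundary is a subfamily of $\{C_j\}$, and $h|_{S_1}$ is pseudo-Anosov relative to the finite $h$-invariant set $F\cap S_1$, which is the required conclusion. That one can arrange $h\in\Homeo_0(S)$ isotopic to $f$ (rather than to $f^N$) is a standard extraction: the NT normal form inherits the isotopy class of the original map, and one substitutes an $N$-th root in $\Homeo_0(S)$ that shares the same invariant subsurface.

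The heart of the argument is to reach a contradiction when every component is of periodic type. In that case there exists $M\ge 1$ such that $h^M|_{\Sigma_\ell}$ is isotopic rel boundary to the identity on every piece $\Sigma_\ell$; gluing these isotopies along the $C_j$'s shows that $[h^M]\in\mathrm{MCG}(S\setminus F)$ is a product $\prod_j T_{C_j}^{k_j}$ of Dehn twists around the reduction curves. Because $f\in\Homeo_0(S)$, this class lies in the kernel of the forgetful map $\mathrm{MCG}(S\setminus F)\to\mathrm{MCG}(S)$, i.e.\ in the point-pushing subgroup $P(F)$, so the twist multiplicities $k_j$ must describe an actual motion of $F$ in $S$. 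Lifting to $\tS$ and tracking the canonical lift $\tilde h^{NM}$ along a lift of each periodic point, one reads the corresponding deck transformation off the twist data: its homology class is a linear combination $\sum_j k_j m_{ij}[C_j]$, where $m_{ij}\in\Z$ counts the net number of times the orbit of $x_i$ is pushed across $C_j$. Dividing by $NM$ gives
\begin{equation*}
\rho_{H_1}(x_i)\in\Span\big([C_1],\dots,[C_n]\big)\subset H_1(S,\R),\qquad 1\le i\le 2g+1.
\end{equation*}

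Since the $C_j$ are pairwise disjoint simple closed curves, $[C_j]\wedge[C_{j'}]=0$ for every pair, so $V:=\Span([C_1],\dots,[C_n])$ is an isotropic subspace of the symplectic space $(H_1(S,\R),\wedge)$; consequently $\dim V\le g$. For $g\ge 2$ one has $g\le 2g-2<2g-1$, so $V$ is contained in a proper linear hyperplane of $H_1(S,\R)$, and all $2g+1$ vectors $\rho_i$ lie on that hyperplane, contradicting the hypothesis. Therefore an orbit of pseudo-Anosov pieces must exist, and the proof is complete. The main obstacle I foresee is the bookkeeping in the point-pushing step: identifying $\rho_{H_1}(x_i)$ with $\sum_j k_j m_{ij}[C_j]$ requires comparing the canonical lift of $f$ on $\tS$ with a chosen lift of the Dehn twist product, and keeping track of both the Birman forgetful map and the point-pushing description. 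Granting that calculation, the remaining steps are standard applications of NT theory and symplectic linear algebra.
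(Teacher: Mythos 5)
Your strategy is sound and is, in outline, the classical route (very likely close to Pollicott's original argument, which the paper only cites and does not reprove): puncture along the invariant finite set $F$, apply Nielsen--Thurston, and, if no pseudo-Anosov piece occurs, force a power of the class rel $F$ to be a multi-twist about the reduction curves, so that all rotation vectors land in $\Span([C_1],\dots,[C_n])$, an isotropic subspace of $(H_1(S,\R),\wedge)$ of dimension at most $g<2g$, contradicting the non-coplanarity of the $2g+1$ vectors. Note that this is genuinely different from what the paper itself proves in this direction (Claim~\ref{ClaimpA}): there the hypotheses are on pairwise transversally intersecting tracking geodesics, and the periodic/reducible alternative is excluded geometrically in the universal cover, by observing that the lifted reduction curves and complementary components are invariant under the preferred lift, which traps the endpoints of each tracking geodesic in the boundary set of its piece and forces all of $F$ into the closure of one piece, which then cannot be periodic. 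Your route buys the exact quantitative statement via symplectic linear algebra; the paper's route avoids all point-pushing bookkeeping, which is why they use it for their variant.

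Two steps need repair. First, the detour through $f^N$ followed by ``substituting an $N$-th root in $\Homeo_0(S)$'' is not an argument: such a root of the Thurston representative need not exist. Simply apply the classification to $f$ relative to the $f$-invariant set $F$ (or use that $f$ and $f^N$ have the same canonical reduction system, with corresponding pieces pseudo-Anosov simultaneously). Second, the identity $NM\rho_i=\sum_j k_j m_{ij}[C_j]$ is the heart of the matter and is only asserted; ``reading the deck transformation off the twist data'' hides exactly the Birman/point-pushing computation you flag. It is true and can be completed as follows: since $\Homeo_0(S)$ is contractible (Hamstrom, as used in the paper), the homology class of the trajectory loop of $x_i$ under an isotopy from the identity to a representative of the multi-twist is independent of the isotopy; grouping the $C_j$ by isotopy class in $S$, triviality in the mapping class group of $S$ forces the exponents within each parallel family to sum to zero, so one can write an explicit isotopy to the identity supported in the annuli between parallel copies (and in the disks bounded by the $C_j$ that are inessential in $S$), under which each $x_i$ travels an integer multiple of the core class (respectively a null-homotopic loop); composing these isotopies only introduces corrections by the transvections $(T_{C_j})_*$, which again stay in $\Span([C_1],\dots,[C_n])$. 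With these two repairs your proof is complete.
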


We will adapt these proofs in order to deal with second item of the proof of Theorem~\ref{thm:DecompRotSet}.

\subsection{The proof}

Recall that the partition of $\M$ into equivalence classes $\cl_i$ was defined at the beginning of Section~\ref{SecIntroRot}.

\begin{proof}[Proof of Theorem~\ref{thm:DecompRotSet}]
Let us study the classes $\cl_i$. We start by defining the two complementary sets
\begin{align*}
I^1 & = \big\{i \in I \mid\forall  \mu_1,\mu_2\in \cl_i, \ \Lambda_{\mu_1} = \Lambda_{\mu_2}\big\},\\
I^+ & = \big\{i \in I \mid \exists\mu_1,\mu_2\in \cl_i \text{ that are dynamically transverse}\big\}.
\end{align*}

Fix $i\in I$. Let us now deal with each of the four items of the statement. 

\medskip

\noindent \textbf{Item 1.} Suppose that $i\in I^1$ and let us prove the two required properties.

\smallskip

\noindent \emph{--- $\Lambda_i$ is a minimal geodesic lamination.} 
By Theorem~\ref{TheoInterGeod}, all geodesics of $\Lambda_i$ are simple. Indeed, if they are not, then Theorem~\ref{TheoInterGeod} produces a periodic orbit whose tracking geodesic intersects one tracking geodesic of $\mu$. Hence, the uniform measure on this periodic orbit belongs to $\cl_i$, and hence $\Lambda_i$ is reduced to a single closed geodesic. By hypothesis, this geodesic is not simple. But by \cite[Theorem E]{pa}, this geodesic crosses the tracking geodesic of another $f$-periodic point, which is a contradiction. Incidentally, the same reasoning also proves that $I^1\cap I^+=\varnothing$.

 By Theorem~\ref{thm:ifsimpletheoremintro}, the set \(\Lambda_i \) is a minimal geodesic lamination and
\[\overline{\gamma_x(\R)} = \Lambda_i\]
for any $\mu\in \cl_i$ and \(\mu\)-a.e.\ \(x \in S\). 

\smallskip
	
\noindent \emph{--- $\rho_{I^1}$ is included in a union of at most $3g-3$ lines.} 
Note that the union of the minimal laminations $\{\Lambda_i\}_{i \in I^1}$ is itself a (possibly non-minimal) lamination $\Lambda^1$. By Proposition \ref{prop:ErgMeasuresConstHomRotation}, it is enough to bound the amount of homological rotation vectors associated to geodesics in this lamination. By Birkhoff's ergodic theorem, this is equivalent to bounding the amount of ergodic transverse measures for the lamination. By \cite{zbMATH06272204}, this number is bounded by $3g-3$. 

%
%
	

\bigskip

\noindent \textbf{Item 2.} Suppose that there exist two measures of $\cl_i$ that are dynamically transverse. 

\smallskip

\noindent \emph{--- If $I^+ \neq \varnothing$, then $f$ has a topological horseshoe.}
\smallskip 

This is a direct consequence of  Theorem~\ref{TheoInterGeod}. 

\smallskip

\noindent \emph{--- $\overline{\rho_i}$ is a convex subset of $V_i$ with nonempty interior, containing 0.} 

\begin{lemma}\label{LemCombineClass}
Let $\mu_1,\mu_2\in \cl_i$ and suppose that $\Lambda_{\mu_1} \neq \Lambda_{\mu_2}$. Then, any point of the convex hull of $0,\rho_{\mu_1}, \rho_{\mu_2}$ is accumulated by rotation vectors of periodic measures that belong to $\cl_i$. 
\end{lemma}

\begin{proof}
There exist $\tau_1,\dots,\tau_m\in\M$ such that $\tau_1=\mu_1$, $\tau_m=\mu_2$ and for all $1\le i<m$, the measures $\tau_i$ and $\tau_{i+1}$ are dynamically transverse.

Let $\lambda_1,\lambda_2\in [0,1]$ such that $\lambda_1+\lambda_2\le 1$, and $\varep>0$. We want to show that the ball of center $\lambda_1\rho_{H_1}(\mu_1) + \lambda_2\rho_{H_1}(\mu_2)$ and radius $\varep$ intersects $\rho_i = \{\rho(\mu)\mid\mu\in\cl_i\}$.
By Theorem~\ref{TheoInterGeod}, for $\tau_2$-almost any $x_2$, there exists an $f$-periodic point $z_2$, such that $\rho_{H_1}(z_2) \in B(\rho_{H_1}(\tau_1),\varep)$ and that one lift of the tracking geodesic for $z_2$ is close to some tracking geodesic $\tgamma_{\tx_2}$. As by hypothesis $\tau_3$-almost any $x_3$ has a lift such that $\tgamma_{\tx_2}\cap \tgamma_{\tx_3}\neq\varnothing$, the tracking geodesic for $z_2$ intersects the tracking geodesic $\tgamma_{\tx_3}$.

Applying now Theorem~\ref{TheoInterGeod} to the uniform measure on the periodic orbit of $z_2$ and the measure $\tau_3$, for $\tau_3$-almost any $x_3$, one obtains an $f$-periodic point $z_3$ such that $\rho_{H_1}(z_3)\in B(\rho_{H_1}(\tau_1),2\varep)$ and that the tracking geodesic for $z_3$ is close to some tracking geodesic $\tgamma_{\tx_3}$.

Iterating this process, for $\tau_{m-1}$-almost any $x_{m-1}$, one obtains an $f$-periodic point $z_{m-1}$ such that $\rho_{H_1}(z_{m-1})\in B(\rho_{H_1}(\tau_1),({m-1})\varep)$ and that the tracking geodesic for $z_{m-1}$ is close to some tracking geodesic $\tgamma_{\tx_{m-1}}$.

Finally, applying Theorem~\ref{TheoInterGeod} to the uniform measure on the periodic orbit of $z_{m-1}$ and the measure $\tau_m$, one obtains a periodic point $z_m$ for $f$, such that $$\rho_{H_1}(z_m)\in B\big(\lambda_1\rho_{H_1}(\tau_1)+\lambda_2\rho_{H_1}(\mu_2),m\varep\big)$$ 
and such that the tracking geodesic for $z_m$ is close to some tracking geodesic $\tgamma_{\tx_m}$ (and in particular the uniform measure on the orbit of $z_m$ belongs to $\cl_i$). Hence, any point of the convex hull of $0,\rho_{\mu_1}, \rho_{\mu_2}$ is accumulated by rotation vectors of periodic measures that belong to $\cl_i$. Note that this convex hull can be trivial: we can very well have $\rho_{H_1}(\mu) = 0$ for every $\mu \in \cl_i$. See \textit{e.g.}\ Figure \ref{fig:HomologicallyTrivialHorseshoe}.
\end{proof}

This shows that the set $\overline{\rho_i}$ is convex, contains 0 and spans a linear vector space having a basis made of elements of $H_1(S,\Z)$. In particular, $\overline{\rho_i}$ has nonempty interior in the vector space it spans.
\medskip

\noindent \emph{--- There is a dense subset of $\overline{\rho_i}$ made of rotation vectors of periodic orbits belonging to $\cl_i$ (and hence $V_i$ is a rational subspace of $H_1(S,\R)$), and $\operatorname{int}_{V_i}(\overline{\rho_i})\subset\rho_i$.}

\smallskip

Let us show that $\inte_{V_i}\overline{\rho_i}$ is contained in $\rho_i$. By the fact that $\rho_i$ is dense in $\conv (\rho_i)$ (this is the above fact about the denseness of rational elements in $\conv (\rho_i)$) this amounts to show that $\inte_{V_i} \conv (\rho_i) \subset\rho_i$. We do this by means of arguments due to Llibre and MacKay \cite{llibremackay}. These arguments are now considered as folklore results (for instance, Boyland attributes them to a conversation with Franks \cite[Theorem 11.9]{boyland2}).

We start by adapting the proof of \cite[Theorem 2]{pollicott} to get the following:

\begin{claim}\label{ClaimpA}
Suppose that $F$ is a finite $f^q$-invariant subset of $S$, made of fixed points of $f^q$ belonging to a single class $\cl_i$, with $\card F\ge 2$. Suppose also that for $x,y\in F$, $x\neq y$, the geodesics $\gamma_x$ and $\gamma_y$ are distinct and intersect transversally.
Then, $f^q|_{S\setminus F}$ is isotopic to a homeomorphism having a pseudo-Anosov component on some sub-surface $\Sigma_{j_0}$ such that $F\subset\overline{\Sigma_{j_0}}$.
\end{claim}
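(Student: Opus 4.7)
The strategy is to adapt Pollicott's argument \cite[Theorem 2]{pollicott} (also used, in the two-point setting, in the proof of Theorem~\ref{periodicpointtheorem} above) by applying the Nielsen--Thurston classification to $g := f^q$ relative to the marked set $F$, and then using the transverse intersection hypothesis to rule out periodic pieces adjacent to $F$. Since each point of $F$ is fixed by $g$ and $\Homeo_0(S)$ is simply connected (\cite{hamstrom}, as $S$ has genus $\ge 2$), one can choose an isotopy from $\Id$ to $g$ fixing $F$ pointwise and then further isotope $g$ rel $F$ to a Nielsen--Thurston canonical representative $h$ (Theorem~\ref{TheoNTClass} applied in $\operatorname{MCG}(S,F)$). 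This produces disjoint essential reducing curves $C_1, \ldots, C_n \subset S \setminus F$ and a decomposition $S \setminus F = \bigsqcup_j \Sigma_j$ on each piece of which $h$ is either periodic or pseudo-Anosov. Because the reducing curves avoid $F$, each $x \in F$ lies in the closure of a unique piece $\Sigma_{j(x)}$. Moreover, the isotopy curve $\alpha_x$ (the trace of $x$ under the chosen isotopy from $\Id$ to $g$) is a well-defined free homotopy class in $S$, and $\gamma_x$ is a closed geodesic representing a suitable iterate of $[\alpha_x]$.

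The next step is to convert the geometric hypothesis into an algebraic one: since closed geodesics minimize geometric intersection number in their free homotopy class, the transversal intersection of $\gamma_x$ and $\gamma_y$ for distinct $x,y \in F$ yields $i([\alpha_x],[\alpha_y]) > 0$. Applying \cite[Main Theorem]{imayoshi} on the punctured surface $S \setminus \{x,y\}$, as in the second case of the proof of Theorem~\ref{periodicpointtheorem}, this forces the Nielsen--Thurston decomposition of $g$ rel $\{x,y\}$ to contain a pseudo-Anosov component whose closure contains both $x$ and $y$. One then needs to transfer this conclusion to the finer decomposition rel $F$, producing a pA piece $\Sigma_{j_{x,y}}$ of the $F$-decomposition with $\{x,y\} \subset \overline{\Sigma_{j_{x,y}}}$. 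Fixing some $x_0 \in F$ and applying this to each pair $(x_0,y)$ with $y \in F\setminus\{x_0\}$ yields a pseudo-Anosov piece whose closure contains $x_0$; by uniqueness of the piece whose closure contains $x_0$, this must be $\Sigma_{j(x_0)}$, which is therefore pA and satisfies $y \in \overline{\Sigma_{j(x_0)}}$ for every $y \in F$, giving $F \subset \overline{\Sigma_{j(x_0)}}$ as required.

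The main obstacle lies in the transfer step: showing that the pseudo-Anosov component found via \cite{imayoshi} in the decomposition rel $\{x,y\}$ persists, with $x$ and $y$ still on its boundary, in the decomposition rel $F$. A natural alternative, more in the spirit of \cite{pollicott}, is to argue directly by contradiction on the decomposition rel $F$: if the piece $\Sigma_{j(x)}$ were periodic, then by \cite[Theorem 2]{kra} the free homotopy class $[\alpha_x]$ would be realized (up to iteration) by a simple closed curve, canonically supported in a regular neighbourhood of $\partial \Sigma_{j(x)} \cup \{x\}$ in view of the canonical form of the Nielsen--Thurston representative; this representative would then have zero geometric intersection with an analogous representative of $[\alpha_y]$ coming from any other $y \in F$, contradicting $i([\alpha_x],[\alpha_y]) > 0$. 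The same localization argument simultaneously rules out the possibility that the pA pieces $\Sigma_{j(x)}$ differ for different $x \in F$, since in that case the representatives of $[\alpha_x]$ and $[\alpha_y]$ would live in the disjoint subsurfaces $\overline{\Sigma_{j(x)}}$ and $\overline{\Sigma_{j(y)}}$ and hence again be disjoint. Making this localization principle precise is the real technical heart of the proof.
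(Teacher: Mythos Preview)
Your strategy is sound in spirit but, as you yourself flag, the proof is not complete: both the ``transfer to the finer decomposition'' step and the ``localization principle'' for representatives of $[\alpha_x]$ are left as assertions. In particular, the claim that a periodic piece forces $[\alpha_x]$ to be represented by a curve supported in a regular neighbourhood of $\partial\Sigma_{j(x)}\cup\{x\}$ is not quite what Kra's theorem says, and making it precise rel the full marked set $F$ is delicate. You also do not address whether the reducing curves $C_i$ could be contractible in $S$ (only essential in $S\setminus F$); if some $C_i$ bounds a disk in $S$, the ``disjoint subsurfaces have disjoint representatives'' picture collapses, and this case must be excluded separately.

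The paper's argument sidesteps both difficulties with a direct hyperbolic-geometry argument in $\tilde S$ rather than an intersection-number bookkeeping in $S$. First it shows, following Pollicott, that no reducing curve is contractible in $S$: a contractible $C_i$ would bound a disk in $S$ containing at least two points of $F$ (by condition~(ii) of Theorem~\ref{TheoNTClass}), but two points of $F$ have distinct tracking geodesics, so the lifts of their orbits under $\tilde f^q$ do not stay at bounded distance from each other, contradicting invariance of that disk. Once each $C_i$ is essential in $S$, any lift $\tilde C_i\subset\tilde S$ is at bounded distance from a genuine geodesic. Now fix a component $\Sigma_{j_0}$ with $x\in F\cap\overline{\Sigma_{j_0}}$, lift it to $\widetilde\Sigma_{j_0}$, and note that the canonical lift $\tilde h$ preserves $\widetilde\Sigma_{j_0}$ and each complementary component. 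Since $\tilde x\in\widetilde\Sigma_{j_0}$ is $\tilde h$-fixed with nontrivial translation, its tracking geodesic $\tgamma_{\tilde x}$ cannot cross any of the geodesics shadowing the $\tilde C_i$; hence both endpoints of $\tgamma_{\tilde x}$ lie in $\overline{\widetilde\Sigma_{j_0}}\cap\partial\tilde S$. If some $y\in F$ lay outside $\overline{\Sigma_{j_0}}$, the same reasoning would put both endpoints of $\tgamma_{\tilde y}$ in the closure of a complementary region, and two geodesics with endpoints in disjoint arcs of $\partial\tilde S$ cannot cross --- contradicting the hypothesis. This gives $F\subset\overline{\Sigma_{j_0}}$ in one stroke, and the piece is not periodic simply because it carries two points with distinct rotational behaviour.

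In short, the paper replaces your localization of free-homotopy representatives by a separation argument at the boundary at infinity; this is what makes the ``all of $F$ lies in one piece'' step immediate rather than the technical heart.
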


\begin{proof}
Set $\Sigma = S\setminus F$ and apply Nielsen-Thurston classification (Theorem~\ref{TheoNTClass}) to $f^q|_\Sigma$ to get $h\in\Homeo(\Sigma)$. 
Consider one simple closed curve $C$ appearing in the decomposition \textit{(i)} of Theorem~\ref{TheoNTClass}. In the proof of \cite[Theorem 2]{pollicott}, Pollicott shows that $C$ cannot be contractible in $S$. 
Indeed if it were, then it would bound a disk in $S$. This disk cannot be contractible in $\Sigma$ nor contain a single point of $F$ by point \textit{(ii)} of Theorem~\ref{TheoNTClass}. 
It cannot contain two different points of $F$ either, as these points have different tracking geodesics and hence the lifts of their orbits do not stay at finite distance under the action of $f^q$ on pair of points.

Let $\Sigma_{j_0}$ be one component of the decomposition of Theorem~\ref{TheoNTClass} such that there exists $x\in F\cap \overline{\Sigma_{j_0}}$. 
The homeomorphism $h$ extends naturally to a homeomorphism of $S$ homotopic to the identity, having the points of $F$ as fixed points and with the same rotational behaviour as under $f$. 
The set $\Sigma_{j_0}$ lifts to some set $\widetilde\Sigma_{j_0}$ of $\tilde S$, whose boundary is made of lifts of $F$ together with lifts $\tilde C_i$ of some of the curves $C_i$. Each curve $\tilde C_i$ stays at a finite distance from some geodesic of $\tilde S$. 
The (extension to $S$ of the) homeomorphism $h$ lifts canonically to a homeomorphism $\tilde h$ of $\tilde S$; $\tilde h$ leaves invariant each of the $\tilde C_i$, the set $\widetilde\Sigma_{j_0}$ and hence each of the connected components of $\tilde S\setminus \widetilde\Sigma_{j_0}$.
Recall that $x\in F\cap \overline{\Sigma_{j_0}}$ and suppose that there exists $y\in F\setminus \overline{\Sigma_{j_0}}$. Let us take $\tilde x \in \widetilde\Sigma_{j_0}$ a lift of $x$ and $\tilde y$ a lift of $y$ such that $\tgamma_{\tilde y}$ and $\tgamma_\tx$ intersect transversally (this exists by hypothesis). 
Both $\tgamma_{\tilde y}$ and $\tgamma_{\tilde x}$ cannot cross any of the geodesics that stay at a finite distance of some of the $\tilde C_i$ because $\widetilde\Sigma_{j_0}$ and the connected components of the complement of $\widetilde\Sigma_{j_0}$ are $\tilde h$-invariant. In other words, both endpoints of $\tgamma_{\tilde x}$ are points of $\overline{\widetilde\Sigma_{j_0}}\cap\partial \tilde S$, and both endpoints of $\tgamma_{\tilde y}$ are points of $\overline{\tilde S \setminus\widetilde\Sigma_{j_0}}\cap\partial \tilde S$.
This contradicts the fact that $\tgamma_{\tilde y}$ and $\tgamma_\tx$ intersect transversally. 
This implies that $F\subset \overline\Sigma_i$.

Finally, note that $h|_{\Sigma_i}$ cannot be periodic, as $\overline{\Sigma_i}$ contains two different points with different rotational behaviour. This shows, by Theorem~\ref{TheoNTClass}, that $h|_{\Sigma_i}$ is pseudo-Anosov.
\end{proof}

It then suffices to apply the arguments of \cite{zbMATH00009916} in the higher genus case. These arguments are based on Nielsen-Thurston theory, and in particular Handel's semi-conjugation result \cite{zbMATH03921585} for pseudo-Anosov maps, that can be replaced here by Boyland's result \cite{zbMATH01408389} for pseudo-Anosov components\footnote{Alternatively, Militon signalled to us that it is possible to consider the cover of the surface $S$ whose $\pi_1$ is the subgroup of $\pi_1(S)$ generated by the closed geodesics associated to the periodic orbits of $F$. In this cover, the lift of $S\setminus F$ is endowed with the lift of $f|_{S\setminus F}$ which is homotopic to a pseudo-Anosov map, and one can apply Handel's result on this surface.}.

We then get the following.

\begin{lemma}\label{LemNonemInte}
If $f\in\Homeo_0(S)$ has a pseudo-Anosov component relative to a finite subset $F$ of $S$, then any element of $\inte(\conv(\rho(f|_F)))$ is realized as the rotation vector of an $f$-ergodic measure.
\end{lemma}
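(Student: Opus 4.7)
The plan is to adapt the Misiurewicz-Ziemian argument \cite{zbMATH00009916} to the pseudo-Anosov-relative setting, using the Handel-Boyland semi-conjugacy to reduce to the dynamics of a subshift of finite type where the rotation vectors of ergodic measures are well understood.

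First, I will invoke a semi-conjugacy between $f$ and its pseudo-Anosov component. By hypothesis there exists $h \in \Homeo(S)$ isotopic to $f$, with $F$ an $h$-invariant finite set, such that $h|_{\Sigma_{j_0} \setminus F}$ is a Thurston pseudo-Anosov on some invariant subsurface $\Sigma_{j_0}$. Handel's theorem \cite{zbMATH03921585}, extended by Boyland \cite{zbMATH01408389} to the setting of pseudo-Anosov components, provides an $f$-invariant subset $Y \subset S$ containing $F$ and a continuous surjection $\pi: Y \to \Sigma_{j_0}$ semi-conjugating $f|_Y$ to $h|_{\Sigma_{j_0}}$ and homotopic to the inclusion relative to $F$. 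Consequently $\pi$ lifts equivariantly to the universal covers, so that homological rotation vectors are preserved under pushforward: $\rho_{H_1}(\pi_* \nu) = \rho_{H_1}(\nu)$ for every $f$-invariant measure $\nu$ on $Y$.

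Next, I will encode $h|_{\Sigma_{j_0}}$ by a Markov partition, yielding a continuous finite-to-one factor map $p: (\Sigma_A,\sigma) \to \Sigma_{j_0}$ from a subshift of finite type, together with a H\"older cocycle $\varphi: \Sigma_A \to H_1(S,\R)$ encoding the homological displacement to the universal cover. The points of $F$ are periodic for $h$ with prescribed rotation vectors, so they lift through $p$ to $\sigma$-periodic orbits with the same rotation vectors, placing $\inte(\conv(\rho(f|_F)))$ inside the interior of the convex hull of $\sigma$-periodic rotation vectors. By classical thermodynamic formalism for subshifts of finite type with H\"older cocycles --- specifically the variational principle combined with the existence and uniqueness of equilibrium states for H\"older potentials, as exploited in \cite{zbMATH00009916} --- every vector $v$ in this interior is realized as $v = \int \varphi\, d\mu_\sigma$ for some $\sigma$-ergodic probability measure $\mu_\sigma$. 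Pushing forward by the finite-to-one map $p$ yields an $h$-ergodic measure $\mu_h$ on $\Sigma_{j_0}$ with rotation vector $v$.

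Finally, I will lift $\mu_h$ back to an $f$-ergodic measure. The set of $f$-invariant probabilities on $Y$ projecting by $\pi$ onto $\mu_h$ is a nonempty convex compact face of the space of $f$-invariant measures, whose extreme points are $f$-ergodic (a standard consequence of the ergodicity of $\mu_h$, together with the fact that factor maps send ergodic decompositions to ergodic decompositions). Choosing such an extreme $\mu_f$, the equivariance of the lift of $\pi$ gives $\rho_{H_1}(\mu_f) = \rho_{H_1}(\pi_* \mu_f) = \rho_{H_1}(\mu_h) = v$, as required. The main obstacle will be verifying the Handel/Boyland factor map $\pi$ enjoys the right equivariance at the level of universal covers: one must check that its lift $\tilde\pi$ commutes with the relevant deck transformations and that fibers project rotation vectors correctly, so that the three-step pushforward $\sigma \to h \to f$ transports the prescribed $v$ without deformation. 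Once that equivariance is in hand, the Markov coding step and the extremal-lifting step are both routine.
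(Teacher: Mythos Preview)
Your proposal is correct and follows essentially the same route the paper sketches: Boyland's global-shadowing semiconjugacy (replacing Handel's in the pseudo-Anosov-component setting), Markov coding of the pseudo-Anosov map, realization of interior rotation vectors by ergodic measures on the subshift, and lifting back through the semiconjugacy. The paper gives no detailed proof, merely citing Misiurewicz--Ziemian, Handel, and Boyland for exactly these ingredients, so your write-up is in fact more explicit than what appears there.
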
 

With a similar proof, one gets the following consequence: for any finite set $F\subset \inte_{V_i}(\rho_i)$, denoting $R = \conv(\{0\}, F)$, there exists a constant $M>0$ such that for any $p/q\in R\cap H_1(S,\Q)$, with $p\in H_1(S,\Z)$ and $q\in\N^*$, there exists a periodic point of period dividing $Mq$ and with rotation vector $p/q$ (see also \cite[Theorem 4]{matsumoto}). 
\medskip

Let us now finish the analysis for Item 2. Consider one class $\cl_i$ with $i\in I^+$, and $r\in\inte_{V_i} \conv (\rho_i)$. We have already proved that this implies that $r\in\inte_{V_i} \conv (r_1,\dots,r_k)$, with each $r_j\in\rho_i$ that is realised by some $f$-periodic orbit belonging to $\cl_i$.
By repeated use of Theorem~\ref{TheoInterGeod} as in the above discussion, we can approximate each $r_j$ by some $r'_j\in\rho_i$ that is realised by some $f$-periodic point $x_j$ belonging to $\cl_i$, and with the additional property that for any $j\neq j'$ one has that $\gamma_{x_j}$ and $\gamma_{x_{j'}}$ intersect transversally. If the $r'_j$ approximate well enough the $r_j$, then one still has $r\in\inte_{V_i} \conv (r'_1,\dots,r'_k)$.

Take $q$ the lcm of the periods of the $x_j$; then $F = \{x_1,\dots,x_k\}$ is a set of fixed points for $f^q$, such that the tracking geodesics of two of these points intersect transversally. It allows to apply Claim~\ref{ClaimpA}: $f^q|_{S\setminus F}$ has a pseudo-Anosov component on some sub-surface $\Sigma_{j_0}$ such that $F\subset\overline{\Sigma_{j_0}}$. Then, Lemma~\ref{LemNonemInte} implies that $r$ is realised as the rotation vector of an $f$-ergodic measure.

\medskip
\noindent{\textbf{Item 3.}} Given $V_i = \textnormal{span}(\rho_i), \ V_j = \textnormal{span}(\rho_j)$, it suffices to prove that for every $\mu_i \in \mathcal{N}_i, \mu_j \in \mathcal{N}_j$ and defining $v_i = \rho_{H_1}(\mu_i), v_j = \rho_{H_1}(\mu_j)$, we have that $v_i \wedge v_j = 0$. Assume that $v_i \wedge v_j \neq 0$. By Proposition \ref{prop:InterHomoImpliesInterGeod} we obtain that typical tracking geodesics of $\mu_i$ and $\mu_j$ intersect transversally, which by Theorem \ref{thm:equidistributiontheoremintro} implies that $\mu_i$ and $\mu_j$ are dynamically transverse, which then yields that they belong to the same equivalence class. 


Now, let us restrict to the case $i, j \in I^1$. Recall that by Theorem \ref{thm:equidistributiontheoremintro}, we have that $\Lambda_{\mu_i}, \Lambda_{\mu_j}$ are minimal geodesic laminations. Therefore, whenever $\Lambda_{\mu_i} \cap \Lambda_{\mu_j} \neq \varnothing$, we obtain that $\Lambda_{\mu_i} = \Lambda_{\mu_j}$, which then implies that $i = j$. This finishes the study of item 3.  

\medskip

\noindent \textbf{Item 4.} \emph{$\card I^1\le 3g-3$ and $\card I^+\le 2g-2$.}
\smallskip

Let us first bound the number of classes bearing only simple tracking geodesics, \emph{i.e.}\ those from $I^1$.

Consider $n$ pairwise disjoint geodesic laminations $\Lambda_1,\dots,\Lambda_n$ on $S$. Each of these laminations contains a minimal geodesic lamination, so one can suppose that each $\Lambda_i$ is minimal. As these laminations are pairwise disjoint and compact, they are all at positive distance one to the other; let us call $d$ the minimum of these distances. 
	
	For each $i$, consider a geodesic $\gamma_i\subset \Lambda_i$. As $\dot\gamma_i$ is recurrent (in $\mathrm{T}^1S$), there exist times $t_k\to+\infty$ such that $\lim_{k\to+\infty}\dist (\dot\gamma_i(0),\dot\gamma_i(t_k))=0$. By the periodic shadowing lemma, this implies that there exists a closed geodesic $\alpha_i$ staying in the $d/3$-neighbourhood of $\gamma_i$. Hence, we obtain a collection of $n$ pairwise disjoint closed geodesics on $S$: it is a classical result that this implies $n\le 3g-3$.
\medskip
	
Now, let us bound the number of classes in $I^+$.


We may rewrite the set of classes $I = I^1 \sqcup I^+$ as $I = I_{\textnormal{s}}\sqcup I_{\text{m}}\sqcup I^+$, with 
\begin{itemize}
	\item $I_{\textnormal{s}}$ the set of classes reduced to a simple closed geodesic,
	\item $I_{\text{m}}$ is the set of classes associated to a minimal geodesic lamination that is not a simple closed geodesic.
\end{itemize}

Let us define, for any $i\in\I, i\notin I_{\textnormal{s}}$, a surface $S_i\subset S$ associated to the class 
$\cl_i$ of ergodic measures. We will use the vocabulary of \cite[Chapter 4]{casson}.

\begin{itemize}

\item If $i\in I_{\text{m}}$ (\emph{i.e.}~$\Lambda_i$ is a minimal geodesic lamination that is not reduced to a simple closed geodesic), then set $S_i \subset S$ to be equal to the closure of the union of the ideal polygons which are principal regions of $\Lambda_i$, together with the crowns associated to the other principal regions of $\Lambda_i$. In other words, $S_i$ is the closure of the complement of all the cores of the principal regions associated to $\Lambda_i$.
\item If $i\in I^+$, (\emph{i.e.} $\Lambda_i$ is not a geodesic lamination), then  consider the lift $\tilde \Lambda_i$ of the set $\Lambda_i$ to $\tilde S$. Set $\tilde S_i$ as the closure of the convex hull of a connected component $\tilde\Lambda_i^0$ of $\tilde \Lambda_i$ containing a tracking geodesic, and $S_i$ the projection of $\tilde S_i$ on the surface $S$.
\end{itemize}

Note that in the case $i\in I^+$, any two tracking geodesics of $\Lambda_i$ intersect up to taking a chain of geodesics belonging to the lamination; this implies that the definition of $S_i$ does not depend on the tracking geodesic used to define the connected component of $\tilde \Lambda_i$ (and hence, to define $S_i$).  Note also that (because any tracking geodesic is dense in $\Lambda_i$), the set $\tilde\Lambda_i$ is equal to  the closure of the set of all translates of $\tilde\Lambda_i^0$ by deck transformations.

\begin{lemma}
For any $i\in I, i \notin I_{\textnormal{s}}$, the set $S_i$ is a closed surface whose boundary is made of a finite union of simple closed geodesics.
\end{lemma}

\begin{proof}
%
%
%

If $i \in I_m$, by \cite[Lemma 4.4]{casson} there are finitely many cores of the principal regions, and these are either simple closed geodesics or compact connected surfaces with geodesic boundary, and thus are at a positive distance from each other. By construction, we have that $\partial S_i$ is equal to the union of the boundaries of the nontrivial (\emph{i.e.}~different from a simple closed geodesic) cores of the principal regions of $\Lambda_i$, which concludes the proof for this case.
\medskip

Suppose now that $i\in I^+$. Denote $\tilde\Lambda_i^0$ a connected component of $\tilde \Lambda_i$ containing a tracking geodesic.
Note that the boundary of $\tilde S_i$ is made of geodesics. Too see this, observe that the complement of $\tilde S_i$ is made of the union of the convex hulls in $\tilde S$ of the open intervals contained in $\partial\tilde S\setminus \overline{\tilde\Lambda_i^0}$ (the proof of the equality between these two sets uses the fact that in dimension 2, the convex hull of a connected set is equal to the union of segments having both their endpoints in the set).

Let us prove that the projection of $\partial \tilde S_i$ on $S$ is a geodesic lamination. Let $(\alpha_1,\beta_1)$ and $(\alpha_2,\beta_2)$ be two geodesics of respectively $\partial \tilde S_i$ and $T\partial \tilde S_i$, with $T$ a deck transformation of $\tilde S\to S$, and suppose that $(\alpha_1,\beta_1)$ and $(\alpha_2,\beta_2)$ intersect transversally. Then there exists a path $\gamma_1\subset\tilde\Lambda_1^0$ having $\alpha_1$ and $\beta_1$ in its closure, and a path $\gamma_2\subset T\tilde\Lambda_1^0$ having $\alpha_2$ and $\beta_2$ in its closure. The transverse intersection between $(\alpha_1,\beta_1)$ and $(\alpha_2,\beta_2)$ forces $\gamma_1$ and $\gamma_2$ to intersect, hence $\tilde\Lambda_1^0\cap T\tilde\Lambda_1^0 \neq\emptyset$ and so $\tilde\Lambda_1^0= T\tilde\Lambda_1^0$. This implies that $(\alpha_1,\beta_1)$ and $(\alpha_2,\beta_2)$ are two boundary components of  $\tilde S_i$ that intersect transversally, a contradiction.

Let us prove that there is no point of $\partial\tilde S$ that is the endpoint of two different geodesics $(\alpha,\beta_1)$ and $(\alpha,\beta_2)$ of $\partial \tilde S_i$. Indeed, if it was the case, then $\alpha$ would also be the endpoint of a tracking geodesic $\tilde\gamma$ of $\tilde\Lambda_i^0$ (because $\alpha$ is accumulated by tracking geodesics of $\tilde\Lambda_i^0$ and these tracking geodesics have to lie between both boundary geodesics), and we get a contradiction by using the fact that the tracking geodesic $\tilde\gamma$ crosses other tracking geodesics of $\cl_i$ arbitrarily close to $\alpha$, and with angle bounded away from zero\footnote{This comes from the fact that a tracking geodesic is typical for the measure $\nu_\mu$, see Theorem~\ref{thm:equidistributiontheoremintro}.}: these new tracking geodesics would eventually cross the two geodesics $(\alpha,\beta_1)$ and $(\alpha,\beta_2)$ of $\partial \tilde S_i$.

Hence, $\partial\tilde S_i$ is a geodesic lamination such that no point of $\partial\tilde S$ is the endpoint of two different geodesics of $\partial \tilde S_i$. By \cite[Lemma 4.4]{casson}, we deduce that $\partial S_i$ is a finite union of pairwise disjoint closed geodesics.
\end{proof}


\begin{lemma}
For any $i,j\in I, i,j \notin I_{\textnormal{s}}$, $i\neq j$, the interiors of $S_i$ and $S_j$ are disjoint.
\end{lemma}

\begin{proof}
Suppose first that $i\in I^+$. Denote $\tilde \Lambda_i$ a lift of $\Lambda_i$ to $\tilde S$, and $\tilde\Lambda_i^0$ a connected component of $\tilde \Lambda_i$ containing a tracking geodesic. Suppose that there is a tracking geodesic $\tilde\gamma$ of $\tilde\Lambda_j$ intersecting the interior of $\tilde S_i$. Then both connected components of the complement of $\tilde\gamma$ in $\tilde S$ intersect $\tilde\Lambda_i^0$; by connectedness of $\tilde\Lambda_i^0$ this means that $\tilde\gamma$ meets $\tilde\Lambda_i^0$, a contradiction. 

If $j\in I^+$, this proves directly that the interiors of $S_i$ and $S_j$ are disjoint: in this case, any choice of connected component $\tilde\Lambda_j^0$ forces $\tilde\Lambda_j^0$ to belong to a complementary region of $\tilde S_i$, and these regions are convex. If not, one has $j\in I_{\text{m}}$. For the interiors of $S_i$ and $S_j$ not to be disjoint, it would force $\tilde\Lambda_i^0$ to be included either in an ideal polygon or a crown associated to $\tilde\Lambda_j$. Both cases can be easily seen to be impossible. 

Finally, suppose that $i,j\in I_{\text{m}}$. Remind that we have proved that $\Lambda_i$ and $\Lambda_j$ stay at positive distance (\emph{i.e.} there exists $\delta>0$ such that if $z_1\in\lambda_1, z_2\in\Lambda_2$, we have $d(z_1,z_2)>\delta$). This prevents a geodesic of $\Lambda_i$ to be included in an ideal polygon or a crown associated to $\Lambda_j$ (and reciprocally), and implies that the surfaces $S_i$ and $S_j$ are disjoint.
\end{proof}

This allows to get the sharp bound. Let $S_0$ be the closure of the complement of all the $S_i$ for $i\in I, i \notin I_{\textnormal{s}}$. Note that $\chi(S_0)\le 0$ (where $\chi$ denotes the Euler characteristic) because it does not contain any component homeomorphic to a disk or a sphere, and that $\chi(S_i)\le -1$ for any $i\notin I_{\mathrm{s}}$. Moreover, given that each of the connected components of the boundary of $S_0$ and every $S_i$ are simple closed geodesic (with Euler characteristic equal to 0), we obtain that
\[2-2g = \chi(S) = \chi(S_0) + \sum_{i\in I\setminus I_{\mathrm{s}}} \chi(S_i) \le -\card(I^+)-\card ( I_{\text{m}}).\]
In particular, $\card(I^+)\le 2g-2$ and $\card ( I_{\text{m}})\le 2g-2$, which concludes the proof.
\end{proof}

\begin{remark}
We could have been a bit more careful in the definition of the surfaces $S_i$, by avoiding taking closures. This would have allowed to take into account the classes of measures whose associated laminations are simple closed geodesics, which would have been disjoint from these "new" surfaces $S_i$.
\end{remark}

\begin{remark}
Note that by construction, if $i\in I_{\text{m}}$, then $\Lambda_i$ is a minimal and filling lamination of $S_i$. In particular, $S_i$ cannot be homeomorphic to a pair of pants, and $\chi(S_i)\ge -1$ iff $S_i$ is homeomorphic to a torus minus a disk. As there are at most $g$ such surfaces included in $S$, this implies that $\card ( I_{\text{m}})\le 3g/2-1$. As we do not know if there is some minimal filling lamination of the 4-punctured sphere associated to a class of measures, we do not know if this bound is sharp.
\end{remark}



\section{Examples and proof of Proposition~\ref{thm:nonconstanttrackingtheorem}\label{sec:examplessection}}

Throughout this section we will explain various examples and in particular we will prove Proposition~\ref{thm:nonconstanttrackingtheorem}. 

\begin{figure}
\begin{center}
\tikzset{every picture/.style={line width=1.2pt}} 

\begin{tikzpicture}[x=0.75pt,y=0.75pt,yscale=-1.8,xscale=1.8]

\draw [color={rgb, 255:red, 0; green, 0; blue, 0 }  ,draw opacity=1 ][fill={rgb, 255:red, 155; green, 155; blue, 155 }  ,fill opacity=0.25 ]   (300,150) .. controls (279.8,149.4) and (269.4,159.4) .. (250,160) .. controls (202.2,161) and (201.4,90.6) .. (250,90) .. controls (262.02,89.89) and (280.97,100.03) .. (300,100) .. controls (329.8,99) and (339.8,90.2) .. (350,90) .. controls (397.85,90.57) and (401,161) .. (350,160) .. controls (340.2,160.2) and (321.4,150.2) .. (300,150) -- cycle ;
\draw [draw opacity=0][fill={rgb, 255:red, 255; green, 255; blue, 255 }  ,fill opacity=1 ]   (245.18,126.71) .. controls (252.92,118.04) and (260.58,120.63) .. (265.09,126.71) .. controls (258.75,130.96) and (251.42,131.29) .. (245.18,126.71) -- cycle ;
\draw    (240,122.66) .. controls (249.86,132.59) and (260.29,132.3) .. (270,122.66) ;
\draw    (245.18,126.71) .. controls (252.04,118.78) and (259.95,119.78) .. (265.09,126.71) ;

\draw [draw opacity=0][fill={rgb, 255:red, 255; green, 255; blue, 255 }  ,fill opacity=1 ]   (335.18,126.81) .. controls (342.92,118.14) and (350.58,120.73) .. (355.09,126.81) .. controls (348.75,131.06) and (341.42,131.39) .. (335.18,126.81) -- cycle ;
\draw    (330,122.76) .. controls (339.86,132.69) and (350.29,132.4) .. (360,122.76) ;
\draw    (335.18,126.81) .. controls (342.04,118.88) and (349.95,119.88) .. (355.09,126.81) ;

\draw  [color={rgb, 255:red, 155; green, 155; blue, 155 }  ,draw opacity=0.69 ][line width=10]  (230.2,125.3) .. controls (230,148.9) and (269.4,154.5) .. (300,125) .. controls (330.6,95.5) and (370,100.4) .. (370,125) .. controls (370,149.6) and (329,154.1) .. (300,125) .. controls (271,95.9) and (230.4,101.7) .. (230.2,125.3) -- cycle ;
\draw  [color={rgb, 255:red, 208; green, 2; blue, 27 }  ,draw opacity=1 ] (230.2,125.3) .. controls (230,148.9) and (269.4,154.5) .. (300,125) .. controls (330.6,95.5) and (370,100.4) .. (370,125) .. controls (370,149.6) and (329,154.1) .. (300,125) .. controls (271,95.9) and (230.4,101.7) .. (230.2,125.3) -- cycle ;
\draw [color={rgb, 255:red, 208; green, 2; blue, 27 }  ,draw opacity=1 ]   (233.92,114.29) .. controls (231.04,118.35) and (231.34,118.67) .. (230.82,121.4) ;
\draw [shift={(230.13,124.31)}, rotate = 285.78] [fill={rgb, 255:red, 208; green, 2; blue, 27 }  ,fill opacity=1 ][line width=0.08]  [draw opacity=0] (8.04,-3.86) -- (0,0) -- (8.04,3.86) -- (5.34,0) -- cycle    ;
\draw [color={rgb, 255:red, 208; green, 2; blue, 27 }  ,draw opacity=1 ]   (367.67,115.54) .. controls (369.15,118.26) and (369.29,119.67) .. (369.58,122.04) ;
\draw [shift={(370,125)}, rotate = 260.19] [fill={rgb, 255:red, 208; green, 2; blue, 27 }  ,fill opacity=1 ][line width=0.08]  [draw opacity=0] (8.04,-3.86) -- (0,0) -- (8.04,3.86) -- (5.34,0) -- cycle    ;
\draw [color={rgb, 255:red, 74; green, 144; blue, 226 }  ,draw opacity=1 ]   (263.27,133.93) .. controls (281.06,127.92) and (284.23,115.27) .. (263.69,115.44) ;
\draw [shift={(261,115.53)}, rotate = 356.75] [fill={rgb, 255:red, 74; green, 144; blue, 226 }  ,fill opacity=1 ][line width=0.08]  [draw opacity=0] (8.04,-3.86) -- (0,0) -- (8.04,3.86) -- (5.34,0) -- cycle    ;
\draw [color={rgb, 255:red, 74; green, 144; blue, 226 }  ,draw opacity=1 ]   (337.67,133.4) .. controls (323.66,129.96) and (316.34,117.77) .. (336.15,115.52) ;
\draw [shift={(339.13,115.27)}, rotate = 176.75] [fill={rgb, 255:red, 74; green, 144; blue, 226 }  ,fill opacity=1 ][line width=0.08]  [draw opacity=0] (8.04,-3.86) -- (0,0) -- (8.04,3.86) -- (5.34,0) -- cycle    ;

\draw (279,122) node [anchor=west] [inner sep=0.75pt]  [color={rgb, 255:red, 74; green, 144; blue, 226 }  ,opacity=1 ]  {$a$};
\draw (322,122) node [anchor=east] [inner sep=0.75pt]  [color={rgb, 255:red, 74; green, 144; blue, 226 }  ,opacity=1 ]  {$c$};

\end{tikzpicture}

\caption{In this example, the homeomorphism $f$ is the time one of a time dependent vector field which is identically zero outside the grey neighbourhood of the red curve. The red curve represents a periodic orbit of $f$. One easily sees that the homological rotation set of $f$ must be included in $\langle [a]_{H_1}, [c]_{H_1}\rangle$, which is a totally isotropic subspace of $H_1(S,\R)$ for $\wedge$. However, by \cite[Theorem E]{pa}, the closure $\overline{\rho_{H_1}^{\textnormal{erg}}(f)}$ of the ergodic rotation set has nonempty interior.\label{Fig:Exuncount}}
\end{center}
\end{figure}
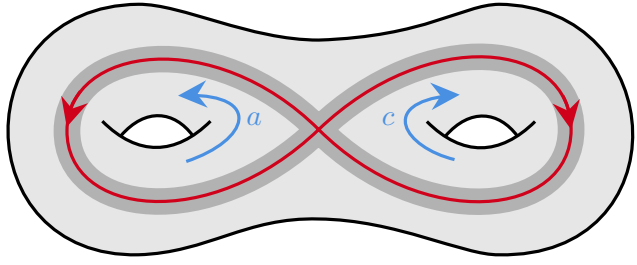

\begin{figure}[ht]
	\centering
	\def\svgscale{0.6}
	\import{./figures/}{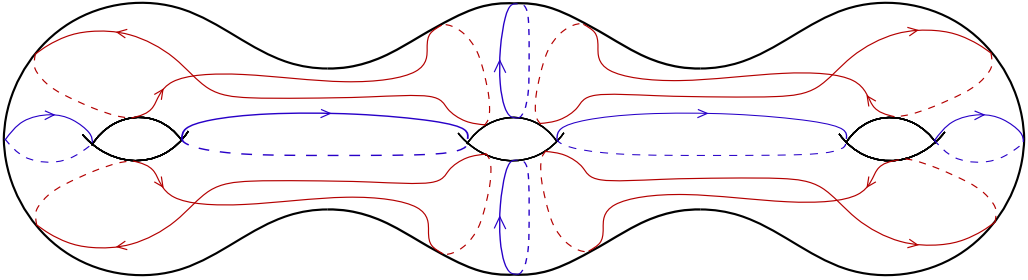}
	
	\caption{In this example, the homeomorphism $f$ is supported on sufficiently small neighbourhoods of the ten depicted closed curves, and is built as a time dependent vector field which is identically zero outside these neighbourhoods, as in Figure \ref{Fig:Exuncount}. We then obtain ten classes for $\M$, the blue curves representing the rotation of measures in $I^1$, and the red ones representing the rotation of measures in $I^+$.	
	\label{fig:TenClassesTritorus}} 

\end{figure}

\begin{figure}[ht]
	\centering
	\def\svgscale{0.6}
	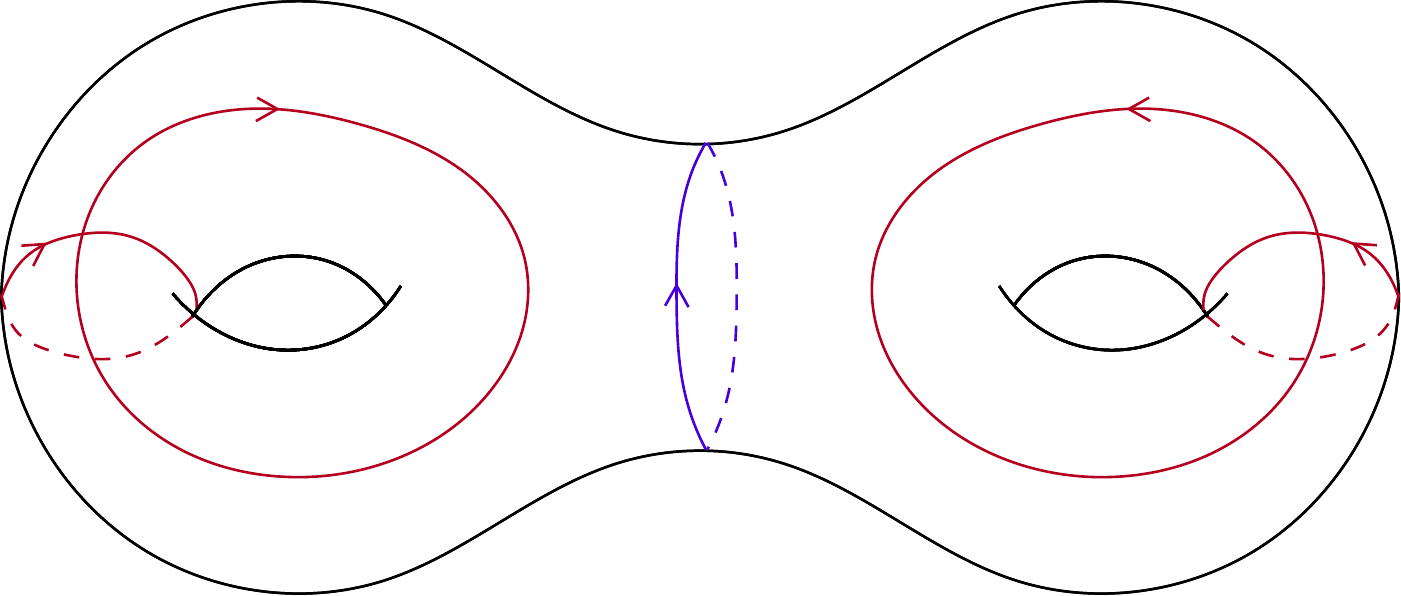
	
	\caption{As in the previous examples of the section, the homeomorphism $f$ is built as the product of point-push homeomorphisms, three for this case. We have two classes in $\M$ which belong to $I^+$, and one class which belongs to $I^1$. Moreover, the closure of the set $\rho_{H_1}^{\textnormal{erg}}(f)$ is the union of two two-dimensional convex sets, each of them generated by a horseshoe represented with the red pairs of transversally intersecting curves. This example is due to Matsumoto \cite[Proposition 3.2]{matsumoto}.\label{fig:CrissCrossBitorus}}   
\end{figure}

\begin{figure}[h]
	\centering
	\def\svgscale{0.6}
	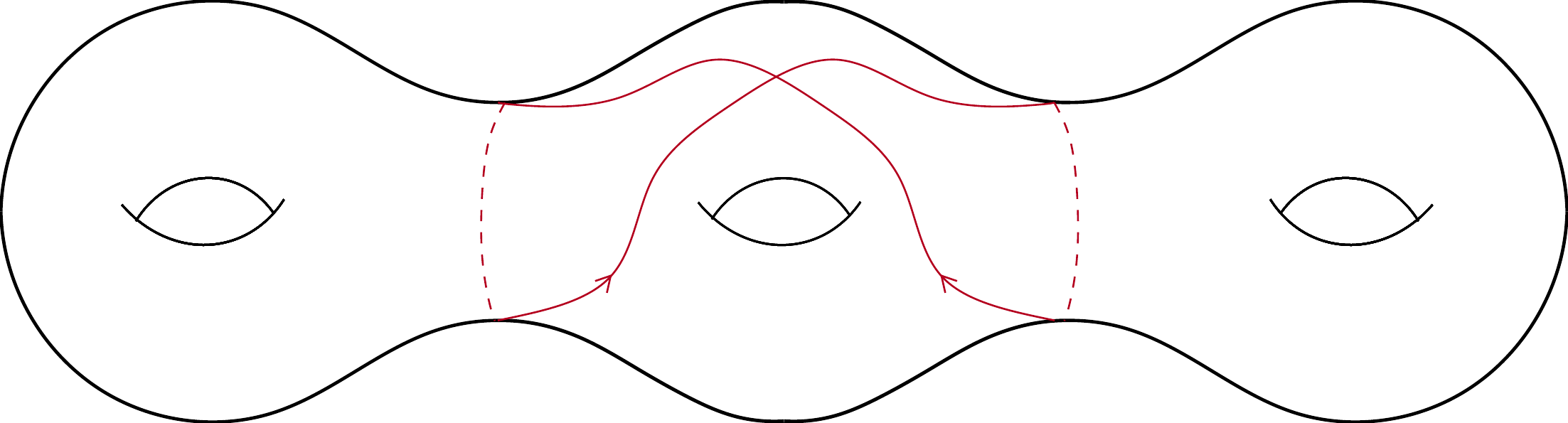
	
	\caption{The red curve is homologically trivial, as it is the concatenation of two boundaries of subsurfaces. As in Figure \ref{Fig:Exuncount}, the homeomorphism $f$ is built time as a time dependent vector field supported in a small neighbourhood of the red curve, which yields on its turn a periodic orbit of $f$ whose trajectory by the isotopy is said red curve. This is called a \emph{point-push}. For this example, the set $\rho_{H_1}^{\textnormal{erg}}(f)$ is trivial.}  
	\label{fig:HomologicallyTrivialHorseshoe}
\end{figure}

\begin{figure}
\begin{center}

\tikzset{every picture/.style={line width=0.75pt}} 

\begin{tikzpicture}[x=0.75pt,y=0.75pt,yscale=-1.25,xscale=1.25]

\draw  [fill={rgb, 255:red, 74; green, 74; blue, 74 }  ,fill opacity=0.1 ][line width=1.5]  (90,100) .. controls (90.37,11.71) and (192.75,60.04) .. (240,60) .. controls (287.25,59.96) and (332.25,59.54) .. (380,60) .. controls (427.75,60.46) and (528.75,11.04) .. (530,100) .. controls (531.25,188.96) and (429.75,139.46) .. (380,140) .. controls (330.25,140.54) and (288.25,139.46) .. (240,140) .. controls (191.75,140.54) and (89.63,188.29) .. (90,100) -- cycle ;
\draw  [fill={rgb, 255:red, 255; green, 255; blue, 255 }  ,fill opacity=1 ][line width=1.5]  (169.89,105.85) .. controls (158.41,109.97) and (146.62,112.39) .. (130.3,105.66) .. controls (140.28,89.13) and (160.17,89.03) .. (169.89,105.85) -- cycle ;
\draw [line width=1.5]    (120,99.08) .. controls (135.08,113.81) and (166.08,113.47) .. (180,99.08) ;

\draw  [fill={rgb, 255:red, 255; green, 255; blue, 255 }  ,fill opacity=1 ][line width=1.5]  (489.89,105.85) .. controls (478.41,109.97) and (466.62,112.39) .. (450.3,105.66) .. controls (460.28,89.13) and (480.17,89.03) .. (489.89,105.85) -- cycle ;
\draw [line width=1.5]    (440,99.08) .. controls (455.08,113.81) and (486.08,113.47) .. (500,99.08) ;

\draw [color={rgb, 255:red, 189; green, 16; blue, 224 }  ,draw opacity=1 ]   (240,60) .. controls (230.28,60.08) and (229.61,140.08) .. (240,140) ;
\draw [color={rgb, 255:red, 189; green, 16; blue, 224 }  ,draw opacity=1 ] [dash pattern={on 0.84pt off 2.51pt}]  (240,60) .. controls (251.61,60.08) and (250.94,139.75) .. (240,140) ;
\draw [color={rgb, 255:red, 189; green, 16; blue, 224 }  ,draw opacity=1 ]   (377.54,60) .. controls (367.82,60.08) and (367.15,140.08) .. (377.54,140) ;
\draw [color={rgb, 255:red, 189; green, 16; blue, 224 }  ,draw opacity=1 ] [dash pattern={on 0.84pt off 2.51pt}]  (377.54,60) .. controls (389.15,60.08) and (388.49,139.75) .. (377.54,140) ;
\draw  [color={rgb, 255:red, 208; green, 2; blue, 27 }  ,draw opacity=1 ][fill={rgb, 255:red, 208; green, 2; blue, 27 }  ,fill opacity=0.1 ] (140,120) .. controls (145.28,122.42) and (151.28,121.75) .. (160,120) .. controls (161.61,126.08) and (161.61,132.42) .. (160,140) .. controls (153.28,142.42) and (147.94,142.75) .. (140,140) .. controls (137.28,132.75) and (138.61,126.42) .. (140,120) -- cycle ;
\draw  [color={rgb, 255:red, 65; green, 117; blue, 5 }  ,draw opacity=1 ][fill={rgb, 255:red, 65; green, 117; blue, 5 }  ,fill opacity=0.1 ] (244.33,113.33) .. controls (252,113.88) and (255.4,114.08) .. (264.33,113.33) .. controls (262.8,118.48) and (262.4,125.08) .. (264.33,133.33) .. controls (257,133.48) and (252.8,133.68) .. (244.33,133.33) .. controls (241.61,126.08) and (242.94,119.75) .. (244.33,113.33) -- cycle ;
\draw [color={rgb, 255:red, 208; green, 2; blue, 27 }  ,draw opacity=1 ][line width=1.5]    (128.28,121.75) .. controls (141.94,128.75) and (197.48,127.03) .. (182.6,95.64) .. controls (167.72,64.25) and (84.06,88.92) .. (106.33,120.75) .. controls (128.61,152.58) and (238.11,126.72) .. (270.61,128.75) ;
\draw [color={rgb, 255:red, 65; green, 117; blue, 5 }  ,draw opacity=1 ][line width=1.5]    (238.94,120.08) .. controls (282.94,118.75) and (262.84,131.18) .. (300.51,128.84) ;
\draw  [color={rgb, 255:red, 65; green, 117; blue, 5 }  ,draw opacity=1 ][fill={rgb, 255:red, 65; green, 117; blue, 5 }  ,fill opacity=0.1 ] (275.93,112.53) .. controls (283.6,113.08) and (287,113.28) .. (295.93,112.53) .. controls (294.4,117.68) and (294,124.28) .. (295.93,132.53) .. controls (288.6,132.68) and (284.4,132.88) .. (275.93,132.53) .. controls (273.21,125.28) and (274.54,118.95) .. (275.93,112.53) -- cycle ;
\draw [color={rgb, 255:red, 65; green, 117; blue, 5 }  ,draw opacity=1 ][line width=1.5]    (270.26,119.28) .. controls (314.26,117.95) and (294.27,130.32) .. (331.94,127.99) ;
\draw  [color={rgb, 255:red, 65; green, 117; blue, 5 }  ,draw opacity=1 ][fill={rgb, 255:red, 65; green, 117; blue, 5 }  ,fill opacity=0.1 ] (307.73,112.33) .. controls (315.4,112.88) and (318.8,113.08) .. (327.73,112.33) .. controls (326.2,117.48) and (325.8,124.08) .. (327.73,132.33) .. controls (320.4,132.48) and (316.2,132.68) .. (307.73,132.33) .. controls (305.01,125.08) and (306.34,118.75) .. (307.73,112.33) -- cycle ;
\draw [color={rgb, 255:red, 65; green, 117; blue, 5 }  ,draw opacity=1 ][line width=1.5]    (302.06,119.08) .. controls (346.06,117.75) and (328.84,128.89) .. (366.51,126.56) ;
\draw  [color={rgb, 255:red, 65; green, 117; blue, 5 }  ,draw opacity=1 ][fill={rgb, 255:red, 65; green, 117; blue, 5 }  ,fill opacity=0.1 ] (344.53,111.73) .. controls (352.2,112.28) and (355.6,112.48) .. (364.53,111.73) .. controls (363,116.88) and (362.6,123.48) .. (364.53,131.73) .. controls (357.2,131.88) and (353,132.08) .. (344.53,131.73) .. controls (341.81,124.48) and (343.14,118.15) .. (344.53,111.73) -- cycle ;
\draw  [color={rgb, 255:red, 0; green, 97; blue, 212 }  ,draw opacity=1 ][fill={rgb, 255:red, 0; green, 97; blue, 212 }  ,fill opacity=0.1 ] (449.17,119.6) .. controls (454.44,122.02) and (460.44,121.35) .. (469.17,119.6) .. controls (470.78,125.68) and (470.78,132.02) .. (469.17,139.6) .. controls (462.44,142.02) and (457.11,142.35) .. (449.17,139.6) .. controls (446.44,132.35) and (447.78,126.02) .. (449.17,119.6) -- cycle ;
\draw [color={rgb, 255:red, 65; green, 117; blue, 5 }  ,draw opacity=1 ][line width=1.5]    (339.14,118.48) .. controls (383.14,117.15) and (435.23,139.82) .. (472.9,137.48) ;
\draw  [color={rgb, 255:red, 121; green, 141; blue, 0 }  ,draw opacity=1 ][fill={rgb, 255:red, 121; green, 141; blue, 0 }  ,fill opacity=0.1 ] (244.33,70.93) .. controls (252,71.48) and (255.4,71.68) .. (264.33,70.93) .. controls (262.8,76.08) and (262.4,82.68) .. (264.33,90.93) .. controls (257,91.08) and (252.8,91.28) .. (244.33,90.93) .. controls (241.61,83.68) and (242.94,77.35) .. (244.33,70.93) -- cycle ;
\draw [color={rgb, 255:red, 121; green, 141; blue, 0 }  ,draw opacity=1 ][line width=1.5]    (238.94,77.68) .. controls (282.94,76.35) and (263.41,88.32) .. (301.08,85.99) ;
\draw  [color={rgb, 255:red, 121; green, 141; blue, 0 }  ,draw opacity=1 ][fill={rgb, 255:red, 121; green, 141; blue, 0 }  ,fill opacity=0.1 ] (275.93,70.13) .. controls (283.6,70.68) and (287,70.88) .. (295.93,70.13) .. controls (294.4,75.28) and (294,81.88) .. (295.93,90.13) .. controls (288.6,90.28) and (284.4,90.48) .. (275.93,90.13) .. controls (273.21,82.88) and (274.54,76.55) .. (275.93,70.13) -- cycle ;
\draw [color={rgb, 255:red, 121; green, 141; blue, 0 }  ,draw opacity=1 ][line width=1.5]    (270.54,76.88) .. controls (314.54,75.55) and (296.27,88.32) .. (333.94,85.99) ;
\draw  [color={rgb, 255:red, 121; green, 141; blue, 0 }  ,draw opacity=1 ][fill={rgb, 255:red, 121; green, 141; blue, 0 }  ,fill opacity=0.1 ] (307.73,69.93) .. controls (315.4,70.48) and (318.8,70.68) .. (327.73,69.93) .. controls (326.2,75.08) and (325.8,81.68) .. (327.73,89.93) .. controls (320.4,90.08) and (316.2,90.28) .. (307.73,89.93) .. controls (305.01,82.68) and (306.34,76.35) .. (307.73,69.93) -- cycle ;
\draw [color={rgb, 255:red, 121; green, 141; blue, 0 }  ,draw opacity=1 ][line width=1.5]    (302.34,76.68) .. controls (346.34,75.35) and (329.7,88.32) .. (367.37,85.99) ;
\draw  [color={rgb, 255:red, 121; green, 141; blue, 0 }  ,draw opacity=1 ][fill={rgb, 255:red, 121; green, 141; blue, 0 }  ,fill opacity=0.1 ] (344.53,69.33) .. controls (352.2,69.88) and (355.6,70.08) .. (364.53,69.33) .. controls (363,74.48) and (362.6,81.08) .. (364.53,89.33) .. controls (357.2,89.48) and (353,89.68) .. (344.53,89.33) .. controls (341.81,82.08) and (343.14,75.75) .. (344.53,69.33) -- cycle ;
\draw [color={rgb, 255:red, 0; green, 97; blue, 212 }  ,draw opacity=1 ][line width=1.5]    (340.07,75.88) .. controls (391.03,74.48) and (398.3,131.82) .. (457.57,132.28) .. controls (516.83,132.75) and (523.85,105.2) .. (507.83,85.75) .. controls (491.81,66.3) and (467.73,68.05) .. (448.33,75.25) .. controls (428.93,82.45) and (385.97,127.08) .. (477.97,126.08) ;
\draw [color={rgb, 255:red, 121; green, 141; blue, 0 }  ,draw opacity=1 ][line width=1.5]    (131.77,129.88) .. controls (283.37,131.28) and (171.37,83.28) .. (268.57,86.68) ;
\draw  [draw opacity=0][fill={rgb, 255:red, 0; green, 0; blue, 0 }  ,fill opacity=1 ] (148.19,125.17) .. controls (148.19,124.23) and (148.95,123.48) .. (149.89,123.48) .. controls (150.82,123.48) and (151.58,124.23) .. (151.58,125.17) .. controls (151.58,126.11) and (150.82,126.86) .. (149.89,126.86) .. controls (148.95,126.86) and (148.19,126.11) .. (148.19,125.17) -- cycle ;
\draw  [draw opacity=0][fill={rgb, 255:red, 0; green, 0; blue, 0 }  ,fill opacity=1 ] (249.91,119.93) .. controls (249.91,118.99) and (250.67,118.24) .. (251.6,118.24) .. controls (252.54,118.24) and (253.3,118.99) .. (253.3,119.93) .. controls (253.3,120.87) and (252.54,121.63) .. (251.6,121.63) .. controls (250.67,121.63) and (249.91,120.87) .. (249.91,119.93) -- cycle ;
\draw  [draw opacity=0][fill={rgb, 255:red, 0; green, 0; blue, 0 }  ,fill opacity=1 ] (282.28,119.56) .. controls (282.28,118.62) and (283.04,117.86) .. (283.98,117.86) .. controls (284.91,117.86) and (285.67,118.62) .. (285.67,119.56) .. controls (285.67,120.49) and (284.91,121.25) .. (283.98,121.25) .. controls (283.04,121.25) and (282.28,120.49) .. (282.28,119.56) -- cycle ;
\draw  [draw opacity=0][fill={rgb, 255:red, 0; green, 0; blue, 0 }  ,fill opacity=1 ] (314.53,119.18) .. controls (314.53,118.24) and (315.29,117.49) .. (316.23,117.49) .. controls (317.16,117.49) and (317.92,118.24) .. (317.92,119.18) .. controls (317.92,120.12) and (317.16,120.88) .. (316.23,120.88) .. controls (315.29,120.88) and (314.53,120.12) .. (314.53,119.18) -- cycle ;
\draw  [draw opacity=0][fill={rgb, 255:red, 0; green, 0; blue, 0 }  ,fill opacity=1 ] (352.05,119.06) .. controls (352.05,118.12) and (352.81,117.36) .. (353.75,117.36) .. controls (354.68,117.36) and (355.44,118.12) .. (355.44,119.06) .. controls (355.44,119.99) and (354.68,120.75) .. (353.75,120.75) .. controls (352.81,120.75) and (352.05,119.99) .. (352.05,119.06) -- cycle ;
\draw  [draw opacity=0][fill={rgb, 255:red, 0; green, 0; blue, 0 }  ,fill opacity=1 ] (457.57,132.28) .. controls (457.57,131.35) and (458.33,130.59) .. (459.26,130.59) .. controls (460.2,130.59) and (460.96,131.35) .. (460.96,132.28) .. controls (460.96,133.22) and (460.2,133.98) .. (459.26,133.98) .. controls (458.33,133.98) and (457.57,133.22) .. (457.57,132.28) -- cycle ;
\draw  [draw opacity=0][fill={rgb, 255:red, 0; green, 0; blue, 0 }  ,fill opacity=1 ] (351.65,86.11) .. controls (351.65,85.17) and (352.41,84.41) .. (353.35,84.41) .. controls (354.28,84.41) and (355.04,85.17) .. (355.04,86.11) .. controls (355.04,87.04) and (354.28,87.8) .. (353.35,87.8) .. controls (352.41,87.8) and (351.65,87.04) .. (351.65,86.11) -- cycle ;
\draw  [draw opacity=0][fill={rgb, 255:red, 0; green, 0; blue, 0 }  ,fill opacity=1 ] (281.1,85.57) .. controls (281.1,84.64) and (281.86,83.88) .. (282.79,83.88) .. controls (283.73,83.88) and (284.49,84.64) .. (284.49,85.57) .. controls (284.49,86.51) and (283.73,87.27) .. (282.79,87.27) .. controls (281.86,87.27) and (281.1,86.51) .. (281.1,85.57) -- cycle ;
\draw  [draw opacity=0][fill={rgb, 255:red, 0; green, 0; blue, 0 }  ,fill opacity=1 ] (315.44,85.81) .. controls (315.44,84.87) and (316.2,84.11) .. (317.13,84.11) .. controls (318.07,84.11) and (318.83,84.87) .. (318.83,85.81) .. controls (318.83,86.74) and (318.07,87.5) .. (317.13,87.5) .. controls (316.2,87.5) and (315.44,86.74) .. (315.44,85.81) -- cycle ;
\draw  [draw opacity=0][fill={rgb, 255:red, 0; green, 0; blue, 0 }  ,fill opacity=1 ] (252.18,86.68) .. controls (252.18,85.74) and (252.94,84.99) .. (253.87,84.99) .. controls (254.81,84.99) and (255.57,85.74) .. (255.57,86.68) .. controls (255.57,87.62) and (254.81,88.38) .. (253.87,88.38) .. controls (252.94,88.38) and (252.18,87.62) .. (252.18,86.68) -- cycle ;
\draw    (190.51,107.42) .. controls (218.82,82.81) and (181.19,57.13) .. (141.6,67.2) ;
\draw [shift={(139.18,67.86)}, rotate = 343.76] [fill={rgb, 255:red, 0; green, 0; blue, 0 }  ][line width=0.08]  [draw opacity=0] (10.72,-5.15) -- (0,0) -- (10.72,5.15) -- (7.12,0) -- cycle    ;
\draw    (517.9,95.84) .. controls (514.85,74.94) and (491.75,54.24) .. (452.77,64.51) ;
\draw [shift={(450.36,65.18)}, rotate = 343.76] [fill={rgb, 255:red, 0; green, 0; blue, 0 }  ][line width=0.08]  [draw opacity=0] (10.72,-5.15) -- (0,0) -- (10.72,5.15) -- (7.12,0) -- cycle    ;

\draw (145.17,139) node [anchor=north east] [inner sep=0.75pt]  [color={rgb, 255:red, 208; green, 2; blue, 27 }  ,opacity=1 ]  {$R_{L}$};
\draw (473.83,141.72) node [anchor=west] [inner sep=0.75pt]  [color={rgb, 255:red, 0; green, 97; blue, 212 }  ,opacity=1 ]  {$R_{R}$};
\draw (230.8,71.62) node [anchor=east] [inner sep=0.75pt]  [color={rgb, 255:red, 189; green, 16; blue, 224 }  ,opacity=1 ]  {$\gamma _{L}$};
\draw (384.06,70) node [anchor=west] [inner sep=0.75pt]  [color={rgb, 255:red, 189; green, 16; blue, 224 }  ,opacity=1 ]  {$\gamma _{R}$};
\draw (186.26,70.5) node [anchor=south west] [inner sep=0.75pt]    {$a$};
\draw (492.73,64.5) node [anchor=south west] [inner sep=0.75pt]    {$c$};
\draw (139.17,81.46) node [anchor=south east] [inner sep=0.75pt]  [color={rgb, 255:red, 208; green, 2; blue, 27 }  ,opacity=1 ]  {$f( R_{L})$};
\draw (448.38,75.27) node [anchor=south east] [inner sep=0.75pt]  [color={rgb, 255:red, 0; green, 97; blue, 212 }  ,opacity=1 ]  {$f( R_{R})$};

\end{tikzpicture}

\caption{The images of the two rectangles $R_L$ and $R_R$ are depicted with thick lines, all the intersections between rectangles are markovian. The black dots are contractible fixed points of $f$. One can build the homeomorphism $f$ such that the intersections $f^n(\gamma_L)\cap\gamma_R$ are empty for any $|n|\le n_0$ (on this example, $n_0=5$). Hence, any periodic point realizing any nonzero homology vector collinear to $[ac]_{H_1}$ must have period bigger than $2n_0$. However, one can see by studying the discrete dynamics associated to the Markovian intersections that for any rational number $p/q\in [0,1)$, the homology vector $p/q\,[ac]_{H_1}$ is realized by some periodic orbit. In particular, $1/2\,[ac]_{H_1}$ is realized by some periodic orbit but by no periodic orbit of period 2: Franks' ``exactness'' property for periods in the torus case \cite{MR0958891} does not hold on higher genus.
\label{FigExNotExact}}
\end{center}
\end{figure}
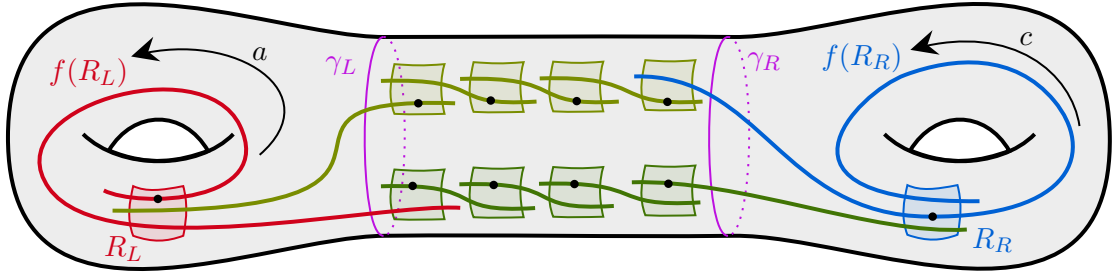

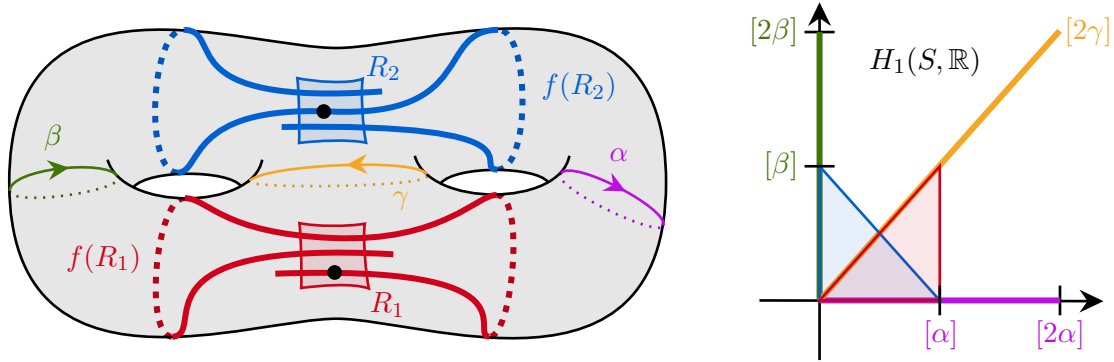
\begin{figure}
\begin{center}

\tikzset{every picture/.style={line width=1pt}} 

\begin{tikzpicture}[x=0.75pt,y=0.75pt,yscale=-1.5,xscale=1.5]

\draw [color={rgb, 255:red, 208; green, 2; blue, 27 }  ,draw opacity=1 ][line width=2.25]  [dash pattern={on 2.53pt off 3.02pt}]  (324.46,163.7) .. controls (334.39,163.09) and (329.8,211.58) .. (322.09,211.18) ;
\draw [color={rgb, 255:red, 0; green, 96; blue, 209 }  ,draw opacity=1 ][line width=2.25]  [dash pattern={on 2.53pt off 3.02pt}]  (324.82,155.15) .. controls (336.3,154.75) and (335.5,107.86) .. (325.02,108.26) ;
\draw [color={rgb, 255:red, 189; green, 16; blue, 224 }  ,draw opacity=1 ] [dash pattern={on 0.84pt off 2.51pt}]  (346.05,156.3) .. controls (341.78,160.57) and (379.14,178.57) .. (380.06,173.88) ;
\draw [color={rgb, 255:red, 208; green, 2; blue, 27 }  ,draw opacity=1 ][line width=2.25]  [dash pattern={on 2.53pt off 3.02pt}]  (220.81,164.69) .. controls (210.92,164.5) and (206.96,211.38) .. (216.65,211.18) ;
\draw [color={rgb, 255:red, 65; green, 117; blue, 5 }  ,draw opacity=1 ] [dash pattern={on 0.84pt off 2.51pt}]  (162.53,162.25) .. controls (162.83,168.04) and (203.22,161.79) .. (198.23,157.21) ;
\draw [color={rgb, 255:red, 0; green, 96; blue, 209 }  ,draw opacity=1 ][line width=2.25]  [dash pattern={on 2.53pt off 3.02pt}]  (218.42,156.15) .. controls (206.95,156.15) and (209.3,107.47) .. (220.57,108.26) ;
\draw [color={rgb, 255:red, 245; green, 166; blue, 35 }  ,draw opacity=1 ] [dash pattern={on 0.84pt off 2.51pt}]  (242.38,158.87) .. controls (238.29,161.82) and (306.28,161.26) .. (301.29,156.68) ;
\draw  [fill={rgb, 255:red, 0; green, 0; blue, 0 }  ,fill opacity=0.1 ] (271.63,114.34) .. controls (302.23,114.08) and (381.18,82.7) .. (380.92,159.74) .. controls (380.67,236.77) and (300.18,205.39) .. (271.63,205.13) .. controls (243.09,204.87) and (162.6,236.25) .. (162.34,159.74) .. controls (162.08,83.22) and (241.03,114.6) .. (271.63,114.34) -- cycle ;
\draw [draw opacity=0][fill={rgb, 255:red, 255; green, 255; blue, 255 }  ,fill opacity=1 ]   (240.09,160.45) .. controls (231.33,165.64) and (211.81,166.58) .. (203.74,160.92) .. controls (209.24,154.91) and (235.92,155.19) .. (240.09,160.45) -- cycle ;
\draw    (203.74,160.92) .. controls (209.24,154.91) and (235.21,155.31) .. (239.39,160.57) ;
\draw    (195.08,151.8) .. controls (199.76,169.09) and (243.31,169.09) .. (246.51,151.8) ;

\draw [draw opacity=0][fill={rgb, 255:red, 255; green, 255; blue, 255 }  ,fill opacity=1 ]   (342.32,159.15) .. controls (333.55,164.34) and (314.03,165.28) .. (305.96,159.62) .. controls (311.46,153.61) and (338.14,153.9) .. (342.32,159.15) -- cycle ;
\draw    (305.96,159.62) .. controls (311.46,153.61) and (337.44,154.02) .. (341.61,159.27) ;
\draw    (297.3,150.51) .. controls (301.98,167.8) and (345.53,167.8) .. (348.73,150.51) ;

\draw  [color={rgb, 255:red, 208; green, 2; blue, 27 }  ,draw opacity=1 ][fill={rgb, 255:red, 208; green, 2; blue, 27 }  ,fill opacity=0.1 ] (258.9,173.07) .. controls (264.44,174.27) and (275.2,174.25) .. (280.66,173.27) .. controls (280.02,177.46) and (280.86,189.64) .. (282.04,195.02) .. controls (276.7,193.03) and (265.63,194.02) .. (259.49,194.82) .. controls (259.69,189.83) and (259.44,176.87) .. (258.9,173.07) -- cycle ;
\draw [color={rgb, 255:red, 208; green, 2; blue, 27 }  ,draw opacity=1 ][line width=2.25]    (250.88,190.03) .. controls (340.13,186.84) and (314.57,210.39) .. (322.09,211.18) ;
\draw [color={rgb, 255:red, 208; green, 2; blue, 27 }  ,draw opacity=1 ][line width=2.25]    (220.81,164.69) .. controls (228.72,164.89) and (232.87,176.47) .. (269.87,178.06) .. controls (306.86,179.66) and (318.54,163.07) .. (324.46,163.7) ;
\draw [color={rgb, 255:red, 208; green, 2; blue, 27 }  ,draw opacity=1 ][line width=2.25]    (216.65,211.18) .. controls (228.97,210.27) and (200.5,178.31) .. (290.17,183.54) ;
\draw  [draw opacity=0][fill={rgb, 255:red, 0; green, 0; blue, 0 }  ,fill opacity=1 ] (268.2,189.62) .. controls (268.2,188.26) and (269.29,187.16) .. (270.64,187.16) .. controls (271.99,187.16) and (273.08,188.26) .. (273.08,189.62) .. controls (273.08,190.98) and (271.99,192.08) .. (270.64,192.08) .. controls (269.29,192.08) and (268.2,190.98) .. (268.2,189.62) -- cycle ;
\draw  [color={rgb, 255:red, 0; green, 96; blue, 209 }  ,draw opacity=1 ][fill={rgb, 255:red, 0; green, 96; blue, 209 }  ,fill opacity=0.1 ] (258.2,124.45) .. controls (263.74,125.65) and (275.49,126.56) .. (279.96,124.65) .. controls (279.32,128.84) and (279.91,140.74) .. (281.1,146.13) .. controls (275.72,144.92) and (264.92,145.4) .. (258.79,146.2) .. controls (258.99,141.21) and (258.74,128.24) .. (258.2,124.45) -- cycle ;
\draw [color={rgb, 255:red, 0; green, 96; blue, 209 }  ,draw opacity=1 ][line width=2.25]    (220.57,108.26) .. controls (234.49,108.68) and (214.43,134.1) .. (286.43,128.91) ;
\draw [color={rgb, 255:red, 0; green, 96; blue, 209 }  ,draw opacity=1 ][line width=2.25]    (218.42,156.15) .. controls (226.86,155.97) and (220.6,133.58) .. (269.46,135.65) .. controls (318.32,137.73) and (304.2,108.28) .. (325.02,108.26) ;
\draw [color={rgb, 255:red, 0; green, 96; blue, 209 }  ,draw opacity=1 ][line width=2.25]    (253,140.84) .. controls (343.4,139.44) and (315.72,154.75) .. (324.82,155.15) ;
\draw  [draw opacity=0][fill={rgb, 255:red, 0; green, 0; blue, 0 }  ,fill opacity=1 ] (264.58,135.65) .. controls (264.58,134.29) and (265.67,133.19) .. (267.02,133.19) .. controls (268.37,133.19) and (269.46,134.29) .. (269.46,135.65) .. controls (269.46,137.01) and (268.37,138.11) .. (267.02,138.11) .. controls (265.67,138.11) and (264.58,137.01) .. (264.58,135.65) -- cycle ;
\draw [color={rgb, 255:red, 189; green, 16; blue, 224 }  ,draw opacity=1 ]   (346.05,156.3) .. controls (349.95,151.57) and (381.57,168.04) .. (380.06,173.88) ;
\draw [shift={(367.51,162.44)}, rotate = 207.25] [fill={rgb, 255:red, 189; green, 16; blue, 224 }  ,fill opacity=1 ][line width=0.08]  [draw opacity=0] (7.14,-3.43) -- (0,0) -- (7.14,3.43) -- (4.74,0) -- cycle    ;
\draw [color={rgb, 255:red, 65; green, 117; blue, 5 }  ,draw opacity=1 ]   (162.53,162.25) .. controls (162.53,154.77) and (194.44,151.57) .. (198.23,157.21) ;
\draw [shift={(182.26,154.49)}, rotate = 173.27] [fill={rgb, 255:red, 65; green, 117; blue, 5 }  ,fill opacity=1 ][line width=0.08]  [draw opacity=0] (7.14,-3.43) -- (0,0) -- (7.14,3.43) -- (4.74,0) -- cycle    ;
\draw    (432,219) -- (432,102) ;
\draw [shift={(432,99)}, rotate = 90] [fill={rgb, 255:red, 0; green, 0; blue, 0 }  ][line width=0.08]  [draw opacity=0] (8.04,-3.86) -- (0,0) -- (8.04,3.86) -- (5.34,0) -- cycle    ;
\draw    (412,199) -- (523.5,199.24) ;
\draw [shift={(526.5,199.25)}, rotate = 180.13] [fill={rgb, 255:red, 0; green, 0; blue, 0 }  ][line width=0.08]  [draw opacity=0] (8.04,-3.86) -- (0,0) -- (8.04,3.86) -- (5.34,0) -- cycle    ;
\draw    (512,194) -- (512,204) ;
\draw    (427,109) -- (437,109) ;
\draw [color={rgb, 255:red, 65; green, 117; blue, 5 }  ,draw opacity=1 ][line width=2.25]    (432,109) -- (432,199) ;
\draw [color={rgb, 255:red, 189; green, 16; blue, 224 }  ,draw opacity=1 ][line width=2.25]    (432,199) -- (512,199) ;
\draw [color={rgb, 255:red, 245; green, 166; blue, 35 }  ,draw opacity=1 ]   (242.38,158.87) .. controls (247.53,153.54) and (297.51,151.03) .. (301.29,156.68) ;
\draw [shift={(274.39,153.58)}, rotate = 357.3] [fill={rgb, 255:red, 245; green, 166; blue, 35 }  ,fill opacity=1 ][line width=0.08]  [draw opacity=0] (7.14,-3.43) -- (0,0) -- (7.14,3.43) -- (4.74,0) -- cycle    ;
\draw [color={rgb, 255:red, 245; green, 166; blue, 35 }  ,draw opacity=1 ][line width=2.25]    (432,199) -- (511.88,108.59) ;
\draw  [color={rgb, 255:red, 0; green, 96; blue, 209 }  ,draw opacity=1 ][fill={rgb, 255:red, 0; green, 96; blue, 209 }  ,fill opacity=0.1 ] (472,199) -- (432,199) -- (432,154) -- cycle ;
\draw  [color={rgb, 255:red, 208; green, 2; blue, 27 }  ,draw opacity=1 ][fill={rgb, 255:red, 208; green, 2; blue, 27 }  ,fill opacity=0.1 ] (472,154) -- (472,199) -- (432,199) -- cycle ;
\draw    (427,154) -- (437,154) ;
\draw    (472,194) -- (472,204) ;

\draw (365.2,153.39) node [anchor=south] [inner sep=0.75pt]  [color={rgb, 255:red, 189; green, 16; blue, 224 }  ,opacity=1 ]  {$\alpha $};
\draw (176.99,149.8) node [anchor=south] [inner sep=0.75pt]  [color={rgb, 255:red, 65; green, 117; blue, 5 }  ,opacity=1 ]  {$\beta $};
\draw (281.27,195.23) node [anchor=north west][inner sep=0.75pt]  [color={rgb, 255:red, 208; green, 2; blue, 27 }  ,opacity=1 ]  {$R_{1}$};
\draw (279.94,126.3) node [anchor=south west] [inner sep=0.75pt]  [color={rgb, 255:red, 0; green, 96; blue, 209 }  ,opacity=1 ]  {$R_{2}$};
\draw (338.39,127.45) node [anchor=west] [inner sep=0.75pt]  [color={rgb, 255:red, 0; green, 96; blue, 209 }  ,opacity=1 ]  {$f( R_{2})$};
\draw (207.34,184.96) node [anchor=east] [inner sep=0.75pt]  [color={rgb, 255:red, 208; green, 2; blue, 27 }  ,opacity=1 ]  {$f( R_{1})$};
\draw (472.38,204.4) node [anchor=north] [inner sep=0.75pt]  [color={rgb, 255:red, 189; green, 16; blue, 224 }  ,opacity=1 ]  {$[ \alpha ]$};
\draw (512,204.4) node [anchor=north] [inner sep=0.75pt]  [color={rgb, 255:red, 189; green, 16; blue, 224 }  ,opacity=1 ]  {$[ 2\alpha ]$};
\draw (426,154) node [anchor=east] [inner sep=0.75pt]  [color={rgb, 255:red, 65; green, 117; blue, 5 }  ,opacity=1 ]  {$[ \beta ]$};
\draw (426,109) node [anchor=east] [inner sep=0.75pt]  [color={rgb, 255:red, 65; green, 117; blue, 5 }  ,opacity=1 ]  {$[ 2\beta ]$};
\draw (467.15,118.94) node    {$H_{1}( S,\mathbb{R})$};
\draw (292.66,161.29) node [anchor=north] [inner sep=0.75pt]  [color={rgb, 255:red, 245; green, 166; blue, 35 }  ,opacity=1 ]  {$\gamma $};
\draw (530.5,109) node [anchor=east] [inner sep=0.75pt]  [color={rgb, 255:red, 245; green, 166; blue, 35 }  ,opacity=1 ]  {$[2\gamma] $};

\end{tikzpicture}
\caption{The right figure represents the ergodic rotation set of $f\in\Homeo_0(S)$ represented on the left. The images of the two rectangles $R_1$ and $R_2$ are depicted with thick lines, all the intersections are markovian. The homeomorphism $f$ also has periodic orbits rotating like $[2\alpha]$, $[2\beta]$ and $[2\gamma]$, and his support is included in a small neighbourhood of $\alpha\cup\beta\cup\gamma\cup R_1 \cup R_2\cup f(R_1)\cup f(R_2)$. The two black dots are contractible fixed points of $f$. The precise construction of the Markovian intersections (using arguments of Kwapisz \cite{zbMATH00120193}) is depicted in Figure~\ref{FigEx2cvx2}.\label{FigEx2cvx}}
\end{center}
\end{figure}

\begin{figure}[!ht]
\begin{center}

\tikzset{every picture/.style={line width=0.75pt}} 

\begin{tikzpicture}[x=0.75pt,y=0.75pt,yscale=-.9,xscale=.88]

\draw  [draw opacity=0][fill={rgb, 255:red, 16; green, 213; blue, 0 }  ,fill opacity=0.1 ] (10,10) -- (330,10) -- (330,120) -- (10,120) -- cycle ;
\draw  [draw opacity=0][fill={rgb, 255:red, 74; green, 144; blue, 226 }  ,fill opacity=0.1 ] (10,120) -- (95.85,120) -- (95.85,140) -- (10,140) -- cycle ;
\draw  [draw opacity=0][fill={rgb, 255:red, 144; green, 19; blue, 254 }  ,fill opacity=0.1 ] (244.15,120) -- (330,120) -- (330,140) -- (244.15,140) -- cycle ;
\draw  [draw opacity=0][fill={rgb, 255:red, 208; green, 2; blue, 27 }  ,fill opacity=0.1 ] (127.07,120) -- (212.93,120) -- (212.93,140) -- (127.07,140) -- cycle ;
\draw  [draw opacity=0][fill={rgb, 255:red, 226; green, 255; blue, 9 }  ,fill opacity=0.1 ] (10,140) -- (330,140) -- (330,250) -- (10,250) -- cycle ;
\draw   (10,10) -- (330,10) -- (330,250) -- (10,250) -- cycle ;
\draw  [fill={rgb, 255:red, 255; green, 255; blue, 255 }  ,fill opacity=1 ] (95.85,110) -- (127.07,110) -- (127.07,150) -- (95.85,150) -- cycle ;
\draw  [fill={rgb, 255:red, 255; green, 255; blue, 255 }  ,fill opacity=1 ] (212.93,110) -- (244.15,110) -- (244.15,150) -- (212.93,150) -- cycle ;
\draw [color={rgb, 255:red, 128; green, 128; blue, 128 }  ,draw opacity=1 ][fill={rgb, 255:red, 226; green, 255; blue, 9 }  ,fill opacity=0.5 ]   (173.9,220) -- (173.9,210) -- (166.1,210) -- (166.1,220) -- (25.61,220) -- (25.61,210) -- (17.8,210) -- (17.8,240) -- (322.2,240) -- (322.2,210) -- (314.39,210) -- (314.39,220) -- cycle ;
\draw [color={rgb, 255:red, 128; green, 128; blue, 128 }  ,draw opacity=1 ][fill={rgb, 255:red, 16; green, 213; blue, 0 }  ,fill opacity=0.5 ]   (25.61,50) -- (17.8,50) -- (17.8,20) -- (26.39,20) -- (322.2,20) -- (322.2,50) -- (314.39,50) -- (314.39,40) -- (173.9,40) -- (173.9,50) -- (166.1,50) -- (166.1,40) -- (25.61,40) -- cycle ;
\draw  [color={rgb, 255:red, 128; green, 128; blue, 128 }  ,draw opacity=1 ][fill={rgb, 255:red, 74; green, 144; blue, 226 }  ,fill opacity=0.5 ] (17.8,50) -- (25.61,50) -- (25.61,210) -- (17.8,210) -- cycle ;
\draw  [color={rgb, 255:red, 128; green, 128; blue, 128 }  ,draw opacity=1 ][fill={rgb, 255:red, 144; green, 19; blue, 254 }  ,fill opacity=0.5 ] (314.39,50) -- (322.2,50) -- (322.2,210) -- (314.39,210) -- cycle ;
\draw  [color={rgb, 255:red, 128; green, 128; blue, 128 }  ,draw opacity=1 ][fill={rgb, 255:red, 208; green, 2; blue, 27 }  ,fill opacity=0.5 ] (128.63,65.63) .. controls (178.81,66.2) and (172.15,86.13) .. (172.34,126.88) .. controls (172.54,167.63) and (188.09,192.21) .. (219.37,192.13) .. controls (250.64,192.04) and (275.55,181.93) .. (275,130.75) .. controls (274.45,79.57) and (251.56,80.53) .. (228.15,80.13) .. controls (204.73,79.72) and (196.15,97.77) .. (196.15,128.38) .. controls (196.15,158.98) and (203.91,148.3) .. (204.15,127.88) .. controls (204.38,107.45) and (204.73,92.75) .. (228.54,92.63) .. controls (252.34,92.5) and (264.67,97.33) .. (264.83,129.13) .. controls (264.99,160.93) and (253.12,178.13) .. (227.17,177.88) .. controls (201.22,177.63) and (178.55,180.68) .. (178.39,120.88) .. controls (178.23,61.07) and (166.45,60.45) .. (166.1,50) .. controls (169.43,49.91) and (172.13,49.91) .. (173.9,50) .. controls (174.4,60.45) and (184.99,66.9) .. (184.52,119.1) .. controls (184.05,171.3) and (200.63,170.38) .. (228.15,170.63) .. controls (255.66,170.88) and (257.45,150.82) .. (257.73,130.1) .. controls (258,109.38) and (248.83,100.38) .. (228.85,100.5) .. controls (208.87,100.63) and (209.69,112.2) .. (209.02,135.38) .. controls (208.36,158.55) and (191.27,174.93) .. (191.27,133.13) .. controls (191.27,91.32) and (197.4,73.43) .. (228.15,72.63) .. controls (258.9,71.82) and (281.96,69.72) .. (282,128.75) .. controls (282.04,187.78) and (258.33,200.64) .. (219.56,201.13) .. controls (180.79,201.61) and (166.76,172.35) .. (166.29,132.13) .. controls (165.82,91.9) and (175.18,74.94) .. (128.63,75.13) .. controls (82.09,75.31) and (66.31,80.75) .. (66,119.75) .. controls (65.69,158.75) and (83.89,188.82) .. (110.68,188.3) .. controls (137.48,187.78) and (143.86,156.42) .. (144.17,130.82) .. controls (144.49,105.22) and (135.49,111.45) .. (135.8,130.45) .. controls (136.11,149.45) and (137.95,165.47) .. (112.71,165.3) .. controls (87.48,165.13) and (79.44,146) .. (79.31,130.5) .. controls (79.18,115) and (87.77,90.19) .. (111.57,89.86) .. controls (135.38,89.52) and (159.59,94.89) .. (159.07,115.73) .. controls (158.55,136.56) and (173.97,199.73) .. (173.9,210) .. controls (171.14,210.09) and (168.72,210.09) .. (166.1,210) .. controls (166.03,200.09) and (154.5,139.86) .. (154.11,119.36) .. controls (153.72,98.86) and (134.25,95.46) .. (112.24,95.86) .. controls (90.23,96.26) and (85.13,118.78) .. (85.24,130.9) .. controls (85.34,143.02) and (88.18,157.58) .. (112.24,157.75) .. controls (136.31,157.92) and (131.29,147.8) .. (131.29,127.3) .. controls (131.29,106.8) and (148.12,100) .. (148.43,125) .. controls (148.74,150) and (144.84,195.83) .. (111.15,195.5) .. controls (77.46,195.17) and (59.6,158.23) .. (59,119.25) .. controls (58.4,80.27) and (78.46,65.05) .. (128.63,65.63) -- cycle ;
\draw  [draw opacity=0][fill={rgb, 255:red, 16; green, 213; blue, 0 }  ,fill opacity=0.1 ] (350,10) -- (670,10) -- (670,120) -- (350,120) -- cycle ;
\draw  [draw opacity=0][fill={rgb, 255:red, 74; green, 144; blue, 226 }  ,fill opacity=0.1 ] (350,120) -- (435.85,120) -- (435.85,140) -- (350,140) -- cycle ;
\draw  [draw opacity=0][fill={rgb, 255:red, 144; green, 19; blue, 254 }  ,fill opacity=0.1 ] (584.15,120) -- (670,120) -- (670,140) -- (584.15,140) -- cycle ;
\draw  [draw opacity=0][fill={rgb, 255:red, 208; green, 2; blue, 27 }  ,fill opacity=0.1 ] (467.07,120) -- (552.93,120) -- (552.93,140) -- (467.07,140) -- cycle ;
\draw  [draw opacity=0][fill={rgb, 255:red, 226; green, 255; blue, 9 }  ,fill opacity=0.1 ] (350,140) -- (670,140) -- (670,250) -- (350,250) -- cycle ;
\draw   (350,10) -- (670,10) -- (670,250) -- (350,250) -- cycle ;
\draw  [fill={rgb, 255:red, 255; green, 255; blue, 255 }  ,fill opacity=1 ] (435.85,110) -- (467.07,110) -- (467.07,150) -- (435.85,150) -- cycle ;
\draw  [fill={rgb, 255:red, 255; green, 255; blue, 255 }  ,fill opacity=1 ] (552.93,110) -- (584.15,110) -- (584.15,150) -- (552.93,150) -- cycle ;
\draw [color={rgb, 255:red, 128; green, 128; blue, 128 }  ,draw opacity=1 ][fill={rgb, 255:red, 226; green, 255; blue, 9 }  ,fill opacity=0.5 ]   (513.9,220) -- (513.9,210) -- (506.1,210) -- (506.1,220) -- (365.61,220) -- (365.61,210) -- (357.8,210) -- (357.8,240) -- (662.2,240) -- (662.2,210) -- (654.39,210) -- (654.39,220) -- cycle ;
\draw [color={rgb, 255:red, 128; green, 128; blue, 128 }  ,draw opacity=1 ][fill={rgb, 255:red, 16; green, 213; blue, 0 }  ,fill opacity=0.5 ]   (365.61,50) -- (357.8,50) -- (357.8,20) -- (366.39,20) -- (662.2,20) -- (662.2,50) -- (654.39,50) -- (654.39,40) -- (513.9,40) -- (513.9,50) -- (506.1,50) -- (506.1,40) -- (365.61,40) -- cycle ;
\draw  [color={rgb, 255:red, 128; green, 128; blue, 128 }  ,draw opacity=1 ][fill={rgb, 255:red, 74; green, 144; blue, 226 }  ,fill opacity=0.5 ] (357.8,50) -- (365.61,50) -- (365.61,210) -- (357.8,210) -- cycle ;
\draw  [color={rgb, 255:red, 128; green, 128; blue, 128 }  ,draw opacity=1 ][fill={rgb, 255:red, 144; green, 19; blue, 254 }  ,fill opacity=0.5 ] (654.39,50) -- (662.2,50) -- (662.2,210) -- (654.39,210) -- cycle ;
\draw  [color={rgb, 255:red, 128; green, 128; blue, 128 }  ,draw opacity=1 ][fill={rgb, 255:red, 208; green, 2; blue, 27 }  ,fill opacity=0.5 ] (593.81,204.54) .. controls (632.58,203.87) and (640.6,181.83) .. (639.04,129.17) .. controls (637.48,76.5) and (608.34,64.17) .. (566.72,63.83) .. controls (525.09,63.5) and (500.37,104.5) .. (500.37,128.5) .. controls (500.37,152.5) and (504.28,159.17) .. (518.07,158.5) .. controls (531.85,157.83) and (540.18,147.83) .. (539.66,130.83) .. controls (539.14,113.83) and (547.98,96.17) .. (569.06,96.17) .. controls (590.13,96.17) and (600.02,100.17) .. (600.54,135.17) .. controls (601.06,170.17) and (574.52,178.5) .. (522.23,178.17) .. controls (469.93,177.83) and (472.8,151.83) .. (473.32,125.5) .. controls (473.84,99.17) and (469.93,98.17) .. (450.94,97.5) .. controls (431.95,96.83) and (423.89,106.17) .. (423.89,134.83) .. controls (423.89,163.5) and (468.11,181.83) .. (514.16,182.83) .. controls (560.21,183.83) and (613.54,174.5) .. (614.59,134.17) .. controls (615.63,93.83) and (591.69,91.17) .. (568.8,90.83) .. controls (545.9,90.5) and (533.93,113.5) .. (533.67,131.83) .. controls (533.41,150.17) and (511.56,168.83) .. (511.82,134.5) .. controls (512.08,100.17) and (533.93,70.83) .. (569.58,70.17) .. controls (605.22,69.5) and (634.62,87.83) .. (633.58,135.83) .. controls (632.54,183.83) and (620.57,198.83) .. (528.21,199.17) .. controls (435.85,199.5) and (378.36,198.83) .. (378.1,134.17) .. controls (377.84,69.5) and (389.02,56.5) .. (445.74,58.17) .. controls (502.46,59.83) and (506.28,59.92) .. (506.1,50) .. controls (509.16,50.38) and (512.64,50.08) .. (513.9,50) .. controls (513.96,69.46) and (498.29,71.83) .. (443.92,72.17) .. controls (389.54,72.5) and (386.16,82.5) .. (386.94,137.17) .. controls (387.72,191.83) and (432.99,197.17) .. (527.17,195.83) .. controls (621.35,194.5) and (627.33,177.83) .. (627.07,134.83) .. controls (626.81,91.83) and (600.54,77.83) .. (566.72,77.83) .. controls (532.89,77.83) and (520.67,107.17) .. (518.85,130.83) .. controls (517.02,154.5) and (528.21,144.07) .. (528.21,128.83) .. controls (528.21,113.6) and (535.76,83.17) .. (569.32,83.5) .. controls (602.88,83.83) and (621.87,91.17) .. (621.35,135.5) .. controls (620.83,179.83) and (573.74,191.5) .. (514.42,190.17) .. controls (455.11,188.83) and (415.04,172.5) .. (414.26,136.5) .. controls (413.48,100.5) and (425.71,87.83) .. (452.76,88.5) .. controls (479.82,89.17) and (482.16,90.83) .. (482.16,122.17) .. controls (482.16,153.5) and (471.76,171.17) .. (522.75,171.17) .. controls (573.74,171.17) and (592.73,172.5) .. (592.21,138.17) .. controls (591.69,103.83) and (584.15,104.83) .. (565.41,105.17) .. controls (546.68,105.5) and (546.42,111.83) .. (546.68,131.17) .. controls (546.94,150.5) and (540.7,165.83) .. (517.8,166.17) .. controls (494.91,166.5) and (494.91,160.5) .. (493.87,128.5) .. controls (492.83,96.5) and (513.12,55.83) .. (566.46,55.5) .. controls (619.79,55.17) and (646.59,64.5) .. (647.11,130.17) .. controls (647.63,195.83) and (643.38,211.56) .. (595.25,212.23) .. controls (547.12,212.9) and (513.96,201.15) .. (513.9,210) .. controls (510.84,209.77) and (508.96,209.83) .. (506.1,210) .. controls (505.32,199.46) and (555.05,205.21) .. (593.81,204.54) -- cycle ;
\draw    (121.8,126.3) .. controls (121.8,115.3) and (110.9,115.15) .. (104.9,120.15) .. controls (99.32,124.8) and (97.11,136.15) .. (108.98,137.52) ;
\draw [shift={(111.9,137.65)}, rotate = 183.7] [fill={rgb, 255:red, 0; green, 0; blue, 0 }  ][line width=0.08]  [draw opacity=0] (8.04,-3.86) -- (0,0) -- (8.04,3.86) -- (5.34,0) -- cycle    ;
\draw    (239.4,126.3) .. controls (239.4,115.3) and (228.5,115.15) .. (222.5,120.15) .. controls (216.92,124.8) and (214.71,136.15) .. (226.58,137.52) ;
\draw [shift={(229.5,137.65)}, rotate = 183.7] [fill={rgb, 255:red, 0; green, 0; blue, 0 }  ][line width=0.08]  [draw opacity=0] (8.04,-3.86) -- (0,0) -- (8.04,3.86) -- (5.34,0) -- cycle    ;
\draw    (462.47,128.3) .. controls (462.47,117.3) and (451.57,117.15) .. (445.57,122.15) .. controls (439.99,126.8) and (437.78,138.15) .. (449.64,139.52) ;
\draw [shift={(452.57,139.65)}, rotate = 183.7] [fill={rgb, 255:red, 0; green, 0; blue, 0 }  ][line width=0.08]  [draw opacity=0] (8.04,-3.86) -- (0,0) -- (8.04,3.86) -- (5.34,0) -- cycle    ;
\draw    (580.07,128.3) .. controls (580.07,117.3) and (569.17,117.15) .. (563.17,122.15) .. controls (557.59,126.8) and (555.38,138.15) .. (567.24,139.52) ;
\draw [shift={(570.17,139.65)}, rotate = 183.7] [fill={rgb, 255:red, 0; green, 0; blue, 0 }  ][line width=0.08]  [draw opacity=0] (8.04,-3.86) -- (0,0) -- (8.04,3.86) -- (5.34,0) -- cycle    ;

\draw (42.05,128.69) node  [color={rgb, 255:red, 0; green, 102; blue, 218 }  ,opacity=1 ]  {$R_{L}$};
\draw (298.44,128.69) node  [color={rgb, 255:red, 117; green, 0; blue, 218 }  ,opacity=1 ]  {$R_{R}$};
\draw (249.52,68.48) node [anchor=south west] [inner sep=0.75pt]  [color={rgb, 255:red, 208; green, 2; blue, 27 }  ,opacity=1 ]  {$f( R_{M})$};
\draw (112.23,127) node    {$a$};
\draw (229.8,127) node    {$b$};
\draw (452.89,129) node    {$a$};
\draw (570.47,129) node    {$b$};

\end{tikzpicture}

\begin{tikzpicture}
\node[draw,circle,fill=gray!10] (1) at (0,0) {\color{rgb, 255:red, 0; green, 102; blue, 218 } {$R_L$}};
\node[draw,circle,fill=gray!10] (2) at (2.7,0) {\color{rgb, 255:red, 208; green, 2; blue, 27 }{$R_M$}};
\node[draw,circle,fill=gray!10] (3) at (5.4,0) {\color{rgb, 255:red, 117; green, 0; blue, 218 }{$R_R$}};

\draw[->, >=stealth] (2) to[looseness=1, out= -190, in=10] node[midway,above]{$0$} (1) ;
\draw[->, >=stealth] (2) to[looseness=1, out= -170, in=-10] node[midway, below]{$0$} (1) ;
\draw[->, >=stealth] (2) to[looseness=1, out= 10, in=-190] node[midway,above]{$0$} (3) ;
\draw[->, >=stealth] (2) to[looseness=1, out= -10, in=-170] node[midway, below]{$0$} (3) ;

\draw[->, >=stealth] (2) to [looseness=14, out= -150, in=-130]node[midway, below]{$[a]$} (2) ;
\draw[->, >=stealth] (2) to [looseness=14, out= -120, in=-100]node[midway, below]{$[a]$} (2) ;
\draw[->, >=stealth] (2) to [looseness=14, out= -60, in=-80]node[midway, below]{$[b]$} (2) ;
\draw[->, >=stealth] (2) to [looseness=14, out= -30, in=-50]node[midway, below]{$[b]$} (2) ;

\draw[->, >=stealth] (2) to [looseness=14, out= 30, in=50]node[midway, above]{$0$} (2) ;
\draw[->, >=stealth] (2) to [looseness=14, out= 80, in=100]node[midway, above]{$0$} (2) ;
\draw[->, >=stealth] (2) to [looseness=14, out= 120, in=140]node[midway, above]{$0$} (2) ;

\draw[->, >=stealth] (1) to [looseness=5, out= -120, in=-60]node[midway, below]{$0$} (1) ;
\draw[->, >=stealth] (3) to [looseness=5, out= -120, in=-60]node[midway, below]{$0$} (3) ;

\end{tikzpicture}
\hfill 
\begin{tikzpicture}
\node[draw,circle,fill=gray!10] (1) at (0,0) {\color{rgb, 255:red, 0; green, 102; blue, 218 } {$R_L$}};
\node[draw,circle,fill=gray!10] (2) at (2.7,0) {\color{rgb, 255:red, 208; green, 2; blue, 27 }{$R_M$}};
\node[draw,circle,fill=gray!10] (3) at (5.4,0) {\color{rgb, 255:red, 117; green, 0; blue, 218 }{$R_R$}};

\draw[->, >=stealth] (2) to[looseness=1, out= -190, in=10] node[midway,above]{$0$} (1) ;
\draw[->, >=stealth] (2) to[looseness=1, out= -170, in=-10] node[midway, below]{$0$} (1) ;
\draw[->, >=stealth] (2) to[looseness=1, out= 10, in=-190] node[midway,above]{$0$} (3) ;
\draw[->, >=stealth] (2) to[looseness=1, out= 40, in=-220] node[midway, above]{$0$} (3) ;
\draw[->, >=stealth] (2) to[looseness=1, out= -50, in=-130] node[midway,below]{$[a]$} (3) ;
\draw[->, >=stealth] (2) to[looseness=1, out= -10, in=-170] node[midway, below]{$[a]$} (3) ;

\draw[->, >=stealth] (2) to [looseness=14, out= -140, in=-120]node[midway, below]{$[b]$} (2) ;
\draw[->, >=stealth] (2) to [looseness=14, out= -100, in=-80]node[midway, below]{$[b]$} (2) ;

\draw[->, >=stealth] (2) to [looseness=14, out= 50, in=70]node[midway, above]{$0$} (2) ;
\draw[->, >=stealth] (2) to [looseness=14, out= 100, in=120]node[midway, above]{$[ab]$} (2) ;
\draw[->, >=stealth] (2) to [looseness=14, out= 140, in=160]node[midway, above]{$[ab]$} (2) ;

\draw[->, >=stealth] (1) to [looseness=5, out= -120, in=-60]node[midway, below]{$0$} (1) ;
\draw[->, >=stealth] (3) to [looseness=5, out= -120, in=-60]node[midway, below]{$0$} (3) ;
\end{tikzpicture}

\caption{The adaptation of the arguments of \cite{zbMATH00120193} for building two homeomorphisms of a pair of pants, the first one (left) having (ergodic) rotation set equal to $\conv(0, [a], [b])$ and the second one (right) having (ergodic) rotation set equal to $\conv(0, [ab], [b])$. The union of the light-coloured sets is a pair of pants that is mapped to the union dark-coloured sets, hence strictly into itself.
These can be used to build the example of Figure~\ref{FigEx2cvx}. All the intersections between rectangles $R_L$, $R_M$ and $R_R$ and their images are Markovian. The bottom diagrams represent the transition diagrams of the discrete dynamics associated to the different Markovian intersections. Adapting the arguments of \cite{zbMATH00120193}, one can ensure that the rotation set of the resulting homeomorphisms are reduced to the one of the restriction to the maximal invariant set spanned by the rectangles.
\label{FigEx2cvx2}}
\end{center}
\end{figure}
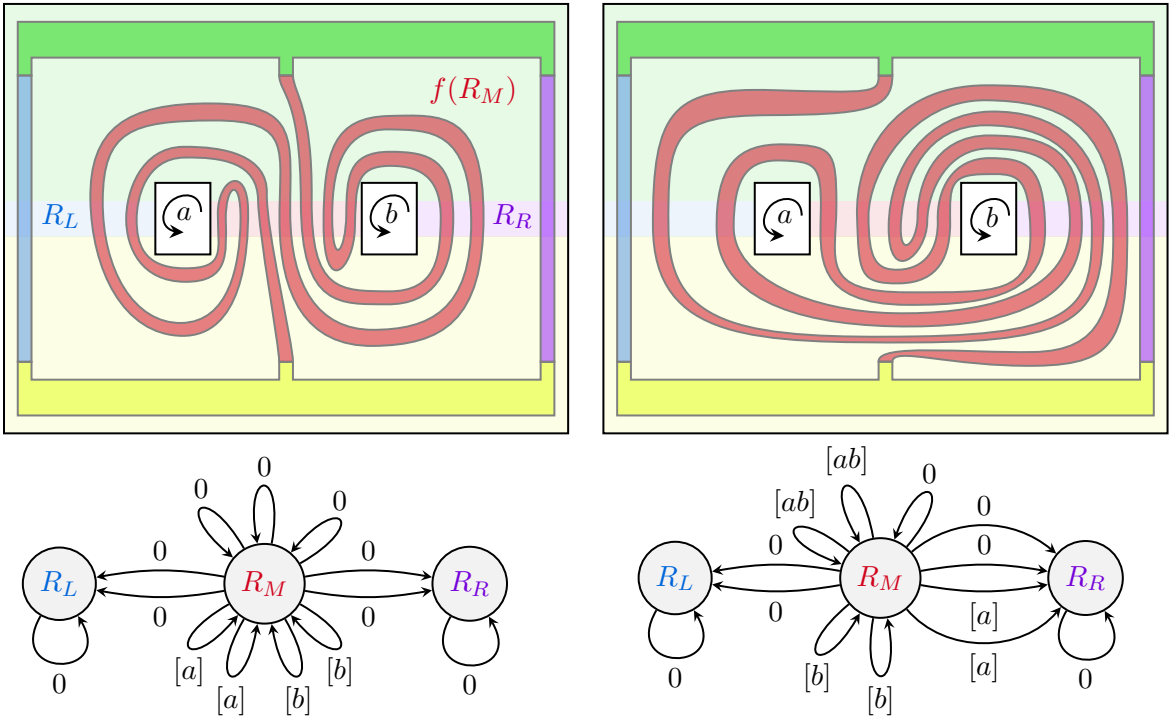

Let us summarize the various examples we will meet in this section.
\begin{enumerate}[label=\alph*)]
\item In Figure~\ref{Fig:Exuncount} we give an example of a homeomorphism $f$ of a genus 2 surface having a single class $\cl_i$ in $I^+$ (and at least two classes in $I^1$), but such that both $\cl_i$ and $\rho_{H_1}^{\text{erg}}(f)$ generate the same 2-dimensional totally isotropic subspace of $H_1(S,\R)$.
\item In Figure~\ref{fig:TenClassesTritorus} we give an example of a homeomorphism $f$ of a genus 3 surface having 4 classes in $I^+$ and 6 classes in $I^1$. This example can be easily generalized to a homeomorphism of a closed surface of genus $g$, having $2g-2$ classes in $I^+$ and $3g-3$ classes in $I^1$. This, in particular, proves that our bound on the number of classes is sharp.
\item In Figure~\ref{fig:CrissCrossBitorus} we give an example, due to Matsumoto, of a homeomorphism $f$ of a genus 2 surface having 2 classes in $I^+$ and 1 class in $I^1$. The ergodic rotation set is made of two pieces that are 2-dimensional ``almost convex'' sets containing 0, spanning two 2-dimensional vector subspaces of $H_1(S,\R)$ that are orthogonal for $\wedge$ and in direct sum.
\item In Figure~\ref{fig:HomologicallyTrivialHorseshoe} we give an example of a homeomorphism $f$ of a genus 3 surface having a single class $\cl_i$ belonging to $I^+$ (and two classes in $I^1$, but each one having rotation set $\{0\}$), but such that $\rho_{H_1}^{\text{erg}}(f)=\{0\}$.
\item In Figure~\ref{FigExNotExact} we give an example of a homeomorphism $f$ of a genus 2 surface for which Franks' ``exactness'' property for the periods of the periodic points does not hold.
\item In Figure~\ref{FigEx2cvx} we give an example of a homeomorphism $f$ of a genus 2 surface whose ergodic rotation set is included in a plane, but is not convex: it is made of the union of two 2-dimensional sets together with three sets included in lines. In particular, it shows that one cannot hope to have a decomposition of $H_1(S,\R)$ in a direct sum of vector subspaces such that the rotation set is included in the union of these vector subspaces, and the intersection of the rotation set with each of these subspaces is ``almost convex''. The precise construction of this homeomorphism, adapting arguments of Kwapisz in a pair of pants, is made in Figure~\ref{FigEx2cvx2}.
\item In Paragraph~\ref{SubSecOxt}, and in particular Proposition~\ref{prop:OxtobyAutomorphism}, building on arguments of Oxtoby and Stepanov, we get an example of smooth diffeomorphism having an invariant measure for which there are uncountably many tracking geodesics.
\item In Paragraph~\ref{SubsecHandel}, and in particular Proposition~\ref{prop:HandelExample}, using a sketch of Handel, we give an example of a topological flow for which there exists a minimal filling geodesic lamination $\Lambda$, such that the tracking geodesic of every orbit (except for a finite number of them) equidistributes to a measure supported in $\Lambda$.
\end{enumerate}

Note also the example of \cite[Figure 1]{guiheneuf2023hyperbolic} showing that there are homeomorphisms of a closed surface $S$ of genus 2 with rotation set $\rho_{H_1}(f)$ having nonempty interior but with $\rho_{H_1}^\textrm{erg}(f)$ included in the union of two planes of $H_1(S,\R)$.

\subsection{Oxtoby Automorphism: uncountably many tracking geodesics}\label{SubSecOxt}

We will prove the following.

\begin{proposition}\label{prop:OxtobyAutomorphism}
	Let $S_g$ be a closed hyperbolic surface. There exists a $\mathcal{C} ^{\infty}$-diffeomorphism of $S_g$, homotopic to the identity, and having uncountably many tracking geodesics belonging to at most two minimal geodesic laminations. 
\end{proposition}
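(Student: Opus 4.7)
The plan is to construct $f$ as the time-one map of a smooth flow that is essentially a slowed-down geodesic flow along the leaves of a prescribed non-trivial minimal geodesic lamination of $S_g$, thereby realising Handel's ``example $\Lambda$'' mentioned in the introduction.

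First I would fix, on $S_g$, a minimal orientable measured geodesic lamination $(\Lambda,\tau)$ that does not reduce to a single simple closed geodesic; such laminations exist on every closed hyperbolic surface of genus $g\ge 2$, for instance as the stable (or unstable) lamination of any pseudo-Anosov homeomorphism of $S_g$. Being minimal and non-trivial, $\Lambda$ has Cantor-type transverse structure, hence uncountably many distinct leaves, and the transverse measure $\tau$ can be chosen to be ergodic and non-atomic. Next I would build a $C^\infty$ vector field $X$ on $S_g$ that is tangent to $\Lambda$ and non-vanishing along it. The standard device is to embed $\Lambda$ into a $C^\infty$ measured foliation $\F$ of $S_g$ with finitely many prong-type singularities (via Thurston's identification of measured foliations with horizontal foliations of quadratic differentials), to fix on $\F$ the orientation extending that of $\Lambda$, to take a smooth tangent vector field to $\F$ away from the singularities, and to smoothly taper it to $0$ in small disjoint neighbourhoods of the prongs. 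Defining $f := \phi_1$, the time-one map of the flow of $X$, yields a $C^\infty$ diffeomorphism of $S_g$ isotopic to the identity through the flow $(\phi_t)_{t\in[0,1]}$.

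The invariant measure witnessing the proposition is the probability measure $\mu$ on $\Lambda$ obtained locally as the product of $\tau$ with arc-length along leaves, suitably normalised; it is $f$-invariant because $X$ is tangent to $\Lambda$, ergodic because $\tau$ is ergodic as a transverse measure, and has positive rotation speed $\rho_\mu > 0$ because $\|X\|$ is bounded below on the compact set $\Lambda$ and the leaves of $\Lambda$ are unit-speed geodesics. For $\mu$-almost every $x$, the entire $f$-orbit remains on the leaf $\ell_x$ of $\Lambda$ through $x$ and travels along it at a positive linear rate; combined with Theorem~\ref{thm:trackinggeodesictheorem}, this forces $\gamma_x = \ell_x$ up to reparametrisation. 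Since $\tau$ has no atoms, every measurable $E\subset S_g$ with $\mu(E)>0$ meets uncountably many distinct leaves of $\Lambda$, so $\{\dot\gamma_x(\R) : x\in E\}$ is uncountable; and all these tracking geodesics lie inside the single minimal lamination $\Lambda$, comfortably within the ``at most two'' bound of the statement.

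The main technical obstacle is the construction of $X$: the tangent line field of a general geodesic lamination is only continuous (and not smooth) in the transverse direction, so it cannot be directly extended to a smooth global vector field by a naive cut-off. Passing through a Thurston measured foliation $\F \supset \Lambda$ resolves the transverse regularity issue, but one must then verify that the smoothing near the prongs does not create new recurrence or drift that would spoil the equality $\gamma_x = \ell_x$ for $\mu$-typical orbits. Checking this compatibility between the smoothing procedure and the leaf-tracking dynamics on $\Lambda$ is the delicate step.
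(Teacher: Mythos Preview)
Your approach is the second of the two constructions mentioned in the introduction (Handel's ``example $\Lambda$''), whereas the paper proves this proposition via the first (the Stepanov/Oxtoby ``example S''): it starts from Oxtoby's analytic, Lebesgue-ergodic flow on $\T^2$ with a single fixed point, restricts to the punctured torus (where tracking geodesics are coded by Sturmian cutting sequences and are therefore uncountable), blows up the fixed point to a boundary circle via Stark's theorem, smoothly extends across a glued annulus using Seeley's $C^\infty$ extension theorem, and finally attaches a genus-$(g-1)$ handlebody on which the map is the identity. The ``at most two'' laminations are the minimal lamination coming from the Sturmian coding and, possibly, the core curve of the gluing annulus.

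Your version has a genuine gap at exactly the point you flag, and your proposed fix does not close it. The Thurston correspondence between measured geodesic laminations and measured foliations is \emph{not} an inclusion of $\Lambda$ into $\F$: the foliation $\F$ associated to $\Lambda$ is obtained by collapsing each complementary ideal polygon of $\Lambda$ to a spine through a prong, so the leaves of $\Lambda$ are not leaves of $\F$ and a vector field tangent to $\F$ is not tangent to $\Lambda$. In the other direction, the tangent line field of a non-trivial geodesic lamination is only Lipschitz in the transverse (Cantor) direction, and there is no standard mechanism producing a global $C^\infty$ extension that remains tangent along $\Lambda$. This is presumably why the paper's own Handel-type construction (Proposition~\ref{prop:HandelExample}) is stated only for a \emph{topological} flow, while the $C^\infty$ claim of Proposition~\ref{prop:OxtobyAutomorphism} is obtained by the Oxtoby route instead. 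A salvageable variant of your idea is to abandon the requirement that orbits lie on $\Lambda$ itself: flow along the horizontal foliation of an abelian differential with minimal horizontal foliation (tapered smoothly near the zeros) and take Lebesgue as the ergodic measure; the tracking geodesics are then the hyperbolic straightenings of the horizontal leaves and fill a single minimal lamination.
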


The example we will build is based on the idea of Stepanov flows in $\mathbb{T}^2$, which are introduced in \cite{stepanov}. A $\mathcal{C}^0$ version of this example can be found in \cite[Section 7.1]{pa}.

\subsubsection*{Stepanov Flows}

\begin{definition}[Stepanov flow]\label{def:stepanovflow}
	A flow $\phi^t$ in $\T^2$ is a \emph{Stepanov flow} if there exists an irrational number $\alpha$ such that
	\begin{itemize}
		\item Every orbit belongs to a line $y = \alpha x + k$;
		\item There exists exactly one fixed point $p_0$ of $\phi^t$. 
	\end{itemize}
Fixing $t_0\neq 0$, we will say that $\phi^{t_0}$ is a \emph{Stepanov diffeomorphism}. 
\end{definition}

\begin{theorem}[\cite{oxtoby}]\label{thm:oxtoby}
	Let $f$ be a Stepanov diffeomorphism in $\T^2$. Then, exactly one of the following is true:
	
	\begin{enumerate}
		\item The only $f$-invariant probability is $\delta_{p_0}$,
		\item There exists exactly one $f$-ergodic Borel probability $\mu$ with $\mu(p_0) = 0$.  
	\end{enumerate}
\end{theorem}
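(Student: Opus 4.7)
The plan is to compare the Stepanov flow $\phi^t$ to the underlying linear irrational flow $\psi^t$ on $\T^2$ with which it shares all orbits, and to exploit the unique ergodicity of $\psi^t$.

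First I would express the vector field of $\phi^t$ as $\lambda\,X$, where $X=(1,\alpha)$ generates $\psi^t$ and $\lambda:\T^2\to[0,1]$ is continuous with $\lambda^{-1}(0)=\{p_0\}$. Since the two flows have identical oriented orbits in $\T^2\setminus\{p_0\}$ (the Stepanov flow just slows the linear flow by the factor $\lambda$), the correspondence $\nu\mapsto\lambda\,\nu$ is a bijection between $\phi^t$-invariant Borel measures on $\T^2\setminus\{p_0\}$ and $\psi^t$-invariant ones. Unique ergodicity of $\psi^t$ (whose unique invariant probability on $\T^2$ is Lebesgue) then forces the $\phi^t$-invariant Borel measures on $\T^2\setminus\{p_0\}$ to be exactly the positive multiples of the $\sigma$-finite measure $\lambda^{-1}\mathrm{Leb}$. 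Setting $Z:=\int_{\T^2}\lambda^{-1}\,d\mathrm{Leb}\in(0,+\infty]$, we obtain two regimes: if $Z=+\infty$ no $\phi^t$-invariant probability is supported in $\T^2\setminus\{p_0\}$; if $Z<+\infty$ there is a unique such, namely $\mu:=Z^{-1}\lambda^{-1}\mathrm{Leb}$, and it is automatically flow-ergodic (it is extremal in the simplex of flow-invariant probabilities, whose only other extreme point is $\delta_{p_0}$).

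Next I would reduce $f$-invariance to flow-invariance by time-averaging. If $\nu$ is any $f=\phi^{t_0}$-invariant Borel probability with $\nu(\{p_0\})=0$, the average $\tilde\nu:=t_0^{-1}\int_0^{t_0}\phi^s_*\nu\,ds$ is invariant under every $\phi^r$ (a change of variables using $f$-invariance of $\nu$ shows $\phi^r_*\tilde\nu=\tilde\nu$), does not charge $p_0$, and is a probability. Hence the preceding step forces $Z<+\infty$ and $\tilde\nu=\mu$. This already proves case 1: when $Z=+\infty$, no $f$-invariant probability can avoid $p_0$, so $\delta_{p_0}$ is the unique $f$-invariant probability.

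In the case $Z<+\infty$, the remaining task is to show that $\mu$ is $f$-ergodic and is the unique $f$-ergodic probability with no atom at $p_0$. Uniqueness follows once $f$-ergodicity of $\mu$ is known: for any such $f$-ergodic $\nu$, each pushforward $\phi^s_*\nu$ is also $f$-ergodic (since $\phi^s$ commutes with $f$), so the identity $\mu=t_0^{-1}\int_0^{t_0}\phi^s_*\nu\,ds$ writes $\mu$ as an average of $f$-ergodic probabilities; uniqueness of the $f$-ergodic decomposition of $\mu$ then forces $\phi^s_*\nu=\mu$ for almost every $s$, and weak-$*$ continuity in $s$ yields $\nu=\mu$. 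To obtain $f$-ergodicity of $\mu$ I would represent $\phi^t$ as a special flow over a circle transversal $\Gamma$ to $\psi^t$: the base transformation is the uniquely ergodic irrational rotation $R_\alpha$ and the roof function is the Stepanov return time $T_\phi:\Gamma\to(0,+\infty)$, with $T_\phi\in L^1(\mathrm{Leb}_\Gamma)$ precisely when $Z<+\infty$. In this representation, $f$-ergodicity of $\mu$ reduces, by a standard argument on special flows, to the fact that $T_\phi$ is not measurably cohomologous to a constant multiple of $t_0$ over $R_\alpha$; this holds because $T_\phi$ is unbounded on any $R_\alpha$-orbit coming close to the projection of $p_0$ while the Birkhoff sums of any continuous coboundary over the uniquely ergodic $R_\alpha$ would remain bounded. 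The main obstacle is precisely this last step: unique ergodicity of a flow does not in general descend to unique ergodicity of its time-$t_0$ maps (the linear flow $\psi^t$ itself is a counterexample when $t_0\in\Z$), so one must genuinely exploit the unboundedness of the Stepanov roof near $p_0$ to forbid non-trivial $f$-invariant sets of intermediate $\mu$-measure; this is the technical heart of Oxtoby's argument.
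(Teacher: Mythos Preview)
The paper does not give a proof of this theorem: it is stated with a citation to Oxtoby \cite{oxtoby} and used as a black box. So there is no ``paper's own proof'' to compare against; your proposal is a self-contained attempt at a result the authors simply import.

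Your overall architecture is sound and is essentially the classical route: the time-change bijection $\nu\mapsto\lambda\,\nu$ between invariant measures of the reparametrised flow and the linear flow, unique ergodicity of the latter, and the averaging $\nu\mapsto t_0^{-1}\int_0^{t_0}\phi^s_*\nu\,ds$ to pass from $f$-invariance to flow-invariance. This cleanly yields case~1 when $Z=\infty$, and in case~2 produces the candidate $\mu$ and reduces everything to showing that $\mu$ is $f$-ergodic; your uniqueness deduction from $f$-ergodicity via the ergodic decomposition of $\mu$ is correct.

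The genuine gap is in your last step. Ergodicity of the time-$t_0$ map of the (flow-ergodic) special flow over $R_\alpha$ with roof $T_\phi$ is \emph{not} equivalent to ``$T_\phi$ is not measurably cohomologous to a constant''; the correct obstruction is that the flow have no nonzero eigenvalue in $\tfrac{2\pi}{t_0}\Z$, i.e.\ that for no $k\in\Z\setminus\{0\}$ does there exist a measurable $h:\Gamma\to S^1$ with $e^{(2\pi i k/t_0)\,T_\phi}=h\circ R_\alpha/h$. Your heuristic (``$T_\phi$ is unbounded near $p_0$, while Birkhoff sums of a continuous coboundary are bounded'') rules out \emph{continuous} solutions of the additive equation $T_\phi=c+g\circ R_\alpha-g$, but that is neither the right equation (it is multiplicative, circle-valued) nor the right regularity (measurable). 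Oxtoby's actual argument handles precisely this point and is where the work lies; you have correctly located it but not supplied it.
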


In particular, if we take the constant vector field $V = (\alpha_1,\alpha_2)$, where $\alpha_1, \alpha_2$ are rationally independent irrational numbers with quotient $\alpha$, and then \textit{puncture the flow} scaling by a non-negative function $\kappa: \T^2 \to \R$ which vanishes at a single point $p_0$, then the flow $\phi$ induced by $\kappa V$ preserves the measure 

\begin{equation}\label{eq:invmeasurestepanovflow}
	\mu(E) = \iint_E \frac{\dd \textnormal{Leb}}{\kappa(x,y)}
\end{equation}


We will be in case 2. of Theorem~\ref{thm:oxtoby} if and only if this measure is finite. Name $f = \phi^1$, and note that if $F$ denotes a lift of $f$ to $\R^2$, the rotation set $\rho(F)$ (defined in \cite{zbMATH04084609}) is a segment with irrational slope $\alpha$, containing $0$, the other endpoint being
\begin{equation}\label{eq:torusrotationequation}
	v = (v_1,v_2) = \rho_{\mu}(f) := \int_{\mathbb{T}^2} F(z) - z \dd\mu
\end{equation}
By the ergodic theorem, for Lebesgue almost every point $z = \T ^2 \backslash \{p_0\}$, we have that
\begin{equation}\label{eq:lineardisplacementtorusflow}
\lim \limits_{n \to +\infty} \frac{F^{n}(z)-z}{n} = \rho_{\mu}(f)
\end{equation}

Oxtoby also built the following example in \cite{oxtoby}:

\begin{theorem}[Oxtoby analytic example]\label{thm:OxtobyExample}
	There exists an analytic, Lebesgue ergodic flow in $\T^2$ with exactly one fixed point. 
\end{theorem}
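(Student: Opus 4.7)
The plan is to produce an analytic vector field $X$ on $\T^2$ vanishing at exactly one point $p_0$, whose flow preserves Lebesgue measure and is ergodic with respect to it. The naive Stepanov recipe of the previous subsection — taking $X = \kappa V$ with $V = (1,\alpha)$ linear irrational and $\kappa$ a non-negative analytic function vanishing only at $p_0$ — does not work directly, because (as the formula \eqref{eq:invmeasurestepanovflow} shows) the resulting flow preserves $d\mathrm{Leb}/\kappa$, not Lebesgue. So the whole difficulty is to incorporate a single degenerate zero into the vector field while remaining divergence-free and analytic, and still getting ergodicity.

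My first attempt would be to look for $X$ of the form $X = \kappa V + Y$, where $Y$ is an analytic correction chosen so that $\mathrm{div}(X) = 0$ on all of $\T^2$ and $X(p_0) = 0$. Writing $X = (X_1,X_2)$, the condition $\partial_x X_1 + \partial_y X_2 = 0$ is a linear first-order PDE for the coefficients of $Y$, which we can attempt to solve by Fourier series. The corresponding small-divisor problem is invertible provided $\alpha$ is Diophantine, and in the analytic category one can keep the Fourier coefficients of $Y$ summable and hence obtain an actual analytic $Y$. Choosing $\kappa$ to vanish to a sufficiently mild order at $p_0$ (say, quadratically), one arranges that the correction $Y$ needed to restore $\mathrm{div}(X) = 0$ is smaller than $\kappa V$ near $p_0$, so that $p_0$ remains the unique zero of $X$.

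Once such an analytic, divergence-free, single-zero vector field $X$ is constructed, the remaining task is Lebesgue-ergodicity. Here I would invoke a perturbative argument modeled on Theorem~\ref{thm:oxtoby}: the flow of $X$ differs from that of $\kappa V$ only by the Lebesgue-preserving transport induced by $Y$, and the orbits of $X$ on $\T^2 \setminus \{p_0\}$ are still integral curves of an analytic foliation with irrational asymptotic direction $\alpha$, so any Lebesgue-positive measurable invariant set must be saturated by these analytic curves. Analyticity, together with irrationality of the asymptotic direction, forces such a saturated set to be null or co-null.

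The hardest step will be the analytic small-divisor construction of $Y$, and in particular ensuring simultaneously that (i) $Y$ is analytic, (ii) $X(p_0) = 0$ and this is the only zero, and (iii) the perturbed flow inherits unique ergodicity on $\T^2 \setminus \{p_0\}$ with respect to Lebesgue. This delicate balance — between the Diophantine hypothesis needed for the PDE and the smallness required so that no spurious fixed points appear — is exactly what Oxtoby's construction in \cite{oxtoby} handles explicitly, and I would ultimately follow his strategy rather than reinvent it.
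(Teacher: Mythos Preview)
Your approach is very different from the paper's and considerably more roundabout. The paper (following Oxtoby) simply writes down an explicit trigonometric polynomial vector field on $\T^2$: for irrational $\alpha\in(0,1)$,
\[
X(x,y)=\alpha\bigl(1-\cos 2\pi(x-y)\bigr)+(1-\alpha)\bigl(1-\cos 2\pi y\bigr),\qquad
Y(x,y)=\alpha\bigl(1-\cos 2\pi(x-y)\bigr).
\]
One checks by inspection that $X_x+Y_y=0$, so the flow preserves Lebesgue; that $(X,Y)$ vanishes only at $(0,0)$; and --- this is the crux --- that the explicit homeomorphism
\[
h(x,y)=\Bigl(x,\; y+\tfrac{\alpha}{2\pi}\sin 2\pi(x-y)+\tfrac{1-\alpha}{2\pi}\sin 2\pi y\Bigr)
\]
conjugates the flow to a Stepanov flow of slope $\alpha$. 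Ergodicity of Lebesgue then falls out of Theorem~\ref{thm:oxtoby}: Lebesgue is an invariant probability with $\mathrm{Leb}(\{p_0\})=0$, and that theorem says there is at most one ergodic probability not concentrated at $p_0$, so Lebesgue itself must be ergodic. No small divisors, no Diophantine hypothesis --- the construction works for every irrational $\alpha$.

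Your proposal has two genuine weak spots. First, solving $\mathrm{div}(Y)=-\mathrm{div}(\kappa V)$ on $\T^2$ does not involve small divisors at all (take $Y=\nabla u$ with $\Delta u$ equal to the right-hand side, which only requires the right-hand side to have zero mean); the small-divisor machinery you invoke is relevant for \emph{conjugating} to the linear flow, which is a different and harder problem that the paper never needs. Second, and more seriously, your ergodicity step is not an argument: saying that invariant sets are ``saturated by analytic curves with irrational asymptotic direction'' does not by itself force them to be null or co-null --- that is precisely the content that Oxtoby's dichotomy (Theorem~\ref{thm:oxtoby}) supplies, and to apply it you still have to exhibit a topological conjugacy to a Stepanov flow. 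The explicit formula above short-circuits all of this.
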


\begin{proof}[Sketch of the proof.]
Let $\alpha\in[0,1]$ be an irrational number. Take the vector field $V = (X,Y)$ in $\T^2$ defined as follows:
\[\left\{\begin{array}{ll}
X(x,y)\!\!\!\! & =  \alpha(1-\textnormal{cos}(2\pi(x-y))) + (1-\alpha)(1-\textnormal{cos}(2\pi y)) \\
Y(x,y)\!\!\!\! & = \alpha(1-\textnormal{cos}(2\pi(x-y)))
\end{array}\right.\]
	
	The flow $\phi$ induced by $V$ preserves the Lebesgue measure because \(X_x + Y_y = 0\). Moreover, the flow $\phi$ is topologically conjugate to a Stepanov flow by the homeomorphism $h$ given by 
	\[h(x,y) = \left ( x, y + \frac{\alpha \sin(2\pi (x-y))}{2 \pi} + \frac{(1-\alpha)(\sin(2\pi y))}{2 \pi}\right ).\]
	
	Let $f = \phi^1$. By Theorem~\ref{thm:oxtoby}, we conclude that the Lebesgue measure is $f$-ergodic. 	
\end{proof}

Because of this result and \eqref{eq:torusrotationequation}, we obtain that any Oxtoby example $f$ has $\rho_{\textnormal{Leb}}(f) = v \neq 0$, and its rotation set $\rho(f)$ is a segment of vertices $0$ and $v$.  

\begin{proof}[Proof or Proposition~\ref{prop:OxtobyAutomorphism}]

Let us start with an Oxtoby automorphism $f$ of the torus $\T^2$, from Theorem~\ref{thm:OxtobyExample}. 

\subsubsection*{Step 1. Blowing it up and gluing together}

The idea is now to explode the fixed point into a circle of directions. By Stark's result in \cite[Theorem 6.1]{stark} we know that if we start with a $C^{\infty}$ diffeomorphism $f$ of $\R^2$, with $0$ as a fixed point, we obtain another $C ^{\infty}$ diffeomorphism $\widehat{f}$ of $\R^2 \backslash \{\textnormal{B}(0,1)\}$, by conjugating outside the closed ball, and extending to the boundary $\partial B(0,1)$ as 
\[\hat{f}(p) = \frac{\textnormal{D}f \lvert _0 (p)}{\| \textnormal{D}f \lvert _0 (p).   \|}\]

This technique induces a blow up $\widehat{f}$ of the Oxtoby example, which is a $\mathcal{C}^{\infty}$ diffeomorphism of $\widehat{\Sigma}$, a punctured torus with boundary. Using Seeley's version of the Whitney extension theorem (see \cite{seeley}, and Whitney's result in \cite{whitney}) in local coordinates, we may extend $\widehat{f}$ to a $\mathcal{C}^{\infty}$ diffeorphism with the same name, of the surface obtained by gluing a closed annulus $\overline{\mathbb{A}} \simeq \mathbb{S}^1 \times [0,1]$ to $\Sigma$, through one of its boundaries $\mathbb{S}^1 \times \{0\}$. 
Using a bump function $\kappa'$ we may modify $\widehat{f}$ inside the interior of $\mathbb{A}$, such that $\widehat{f}$ is the identity map in a neighbourhood of $\partial_1\mathbb{A} = \mathbb{S}^1 \times \{1\}$. We may then glue another $(g-1)$-torus minus a disk through $\partial_1\mathbb{A}$, and extend $\widehat{f}$ as the identity, to obtain a $\mathcal{C}^{\infty}$ automorphism of $S_g$, which we will still call $\widehat{f}$.  
\end{proof}

\subsubsection*{Step 2. Describing the tracking geodesics}

Let us understand the rotation of $\widehat{f}$. Recall that $\widehat{f}$ is the identity outside the gluing of a punctured torus with an annulus (this last one being homologically trivial in our surface $S_g$).

Let us take $\{a_1, b_1, \dots , a_{g}, b_{g}\}$ the canonical generator of the fundamental group of $S = S_g$. For the sake of simplicity let us call $a = a_1, b = b_1$. 

Restrict $f$ to $\Sigma = \T^2 \backslash \{0\} \subset S$, and note that it is uniquely ergodic. Let us check that the ergodic measure $\mu$ has positive rotation speed when seen in $S$ (we already know that to be true we looking at the torus). More precisely, taking $\tilde S$ the universal covering of $S$ and $\tilde f$ the respective lift of $\widehat{f}|_{\Sigma}$ which commutes with the covering transformations, we will prove that 

\begin{claim}
	For $\mu$-a.e. $\tilde z \in \tilde S$ we have that 
	
	\begin{equation}\label{eq:LinearEscapeOxtoby}
		\lim \limits_{n \to \pm \infty} \frac{1}{n} \wt{\mathrm{dist}}(\tilde{f}^n(\tilde{z}),\tilde{z})) = \vartheta > 0.
	\end{equation}
\end{claim}


\begin{proof}

Note that the fundamental group $\pi_1(\Sigma)$ is free, and generated by the canonical generator set $\{[a], [b]\}$ of $\pi_1(\T^2)$ given by the vectors $a = (1,0), \ b = (0,1)$, which also generates $\pi_1(\Sigma)$. Now, we have that there exist $\vartheta_1, \vartheta_2 > 0$ such that for $\mu$-a.e. $z \in \Sigma$ (precisely for every $z$ outside the two rays respectively coming from and going to $0$ by the dynamics), we have that  

\begin{equation}\label{eq:WitnessCurves}
	\lim \limits_{n \to \infty} \frac{\phi^t(z)|_{t \in [0,n]} \wedge [a]}{n} = v_1 > 0, \ \lim \limits_{n \to \infty} \frac{\phi^t(z)|_{t \in [0,n]} \wedge [b]}{n} = v_2 > 0,	
\end{equation}

and similarly for the past orbits, where $\wedge$ denotes the intersection number between curves. Note that the quotient $\vartheta_1 / \vartheta_2$ equals the slope of the original flow in the torus.

Fix a fundamental domain $D \in \tilde S$. Given $\tilde f$ commutes with the covering transformations which are isometries of $\tilde S$, it is enough to check \eqref{eq:LinearEscapeOxtoby} for points $\tilde z$ in $D$. 

By the limits in \eqref{eq:WitnessCurves}, we have that up to a sublinear quantity, $\tilde{f}^n(\tilde z)$ belongs to $T(D)$ where the covering transformation $T$ is written as a word of length $(\vartheta_1 + \vartheta_2) n$ when using the generator $\{(a,b)\}$ (in other words this word has positive linear length growth rate). The \v{S}varc-Milnor Lemma then states that the desired linear rate from \eqref{eq:LinearEscapeOxtoby} is, up to a positive constant, the same as the one for the word, which concludes the proof.  
\end{proof}

Given that these generic points $\tilde z$ go to the boundary of $\tilde \Sigma$ at a positive linear rate and because $\widehat{f}$ is uniformly continuous, we may apply Proposition 7 in \cite{lessa}, and obtain that any of these generic points $z$ has a tracking geodesic $\gamma_z$. Given that any two nearby flow lines go through different sides of a lift of the singularity, and by \v{S}varc-Milnor Lemma, we obtain that if we take two points $z, z'$ in different flow lines, then the tracking geodesics separate at a linear rate. 

Moreover, note that we may describe the tracking geodesic of a point using the inersections with the already taken canonical $\pi_1$ generator curves in the torus, $a = (1,0), b=(0,1)$. Given an orbit by the flow with tracking geodesic $\gamma_z$, me may associate to it a Sturmian sequence $\xi_z \in \{a,b\}^{\Z}$ by counting the intersection with the chosen curves $a,b$ (it is best to look at this in the universal covering of the torus, without any puncture). We obtain what is usually called a \textit{cutting sequence} with the square grid, given by lines of slope $\alpha$. 

The resulting sequences are equivalent to the set of Sturmian sequences with density of $b$'s equal to $\frac{1}{\alpha+1}$, which may be coded by the rotation $\textnormal{R}_{\frac{1}{\alpha+1}}$ of angle $\frac{1}{\alpha+1}$ in $\Sp^1$. Note that this map is \textit{almost surjective}, the only sequence not having a preimage being the one corresponding to the orbit of $\textnormal{R}_{\frac{1}{\alpha+1}}(0)$. Given that the tracking geodesics of two points in the same flow line are mapped to the same sequence (up to a shift), we obtain that $f$ has uncountably many tracking geodesics, which by Corollary~\ref{thm:maintheorem} belong to a minimal geodesic lamination $\Lambda$. 

We have then proved that the tracking geodesics of $\widehat{f}$ are the elements of $\Lambda$, and potentially the $\pi_1$ generator for the annulus we used in the construction during Step 1, which yields its own minimal lamination. Thus, $\widehat{f}$ is the desired $\mathcal{C}^{\infty}$ automorphism.
  




\subsection{Handel's flow: filling lamination containing all the rotation}\label{SubsecHandel}

\begin{proposition}\label{prop:HandelExample}
	For every closed hyperbolic surface $S_g$, there exists a topological flow $\phi$ with $\phi^1 = f$, and a minimal filling geodesic lamination $\Lambda$, such that the tracking geodesic of every orbit (up to finitely many), equidistributes to a measure supported in $\Lambda$. Moreover, every tracking geodesic for $f$ has the same rotation speed.
\end{proposition}

Let us before give some tools for building orientable minimal filling geodesic laminations, which we need for the construction. 

\subsubsection*{Pseudo-Anosov examples with orientable stable foliations}

By Thurston classification theorem, we know that if $f$ is a pseudo-Anosov homeomorphism, then there exists a pair of singular foliations $\mathcal F_s, \mathcal{F}_u$ (respectively, the stable and unstable foliations of $f$), with respective transverse measures $\mu_s, \mu_u$, and some $\lambda > 1$ such that
\begin{equation}\label{eq:PseudoAnosovFoliations}
	f \cdot (\F_s, \mu_s) = (\F_s, \lambda^{-1}\mu_s), \ \  f \cdot (\F_u, \mu_u) = (\F_u, \lambda\mu_u).	
\end{equation}
We will build a family of examples on closed surfaces, for which the stable foliations are minimal and orientable. A brief explanation can be found at \cite[Section 14.1]{farb}.

Start with any closed surface $S_g$, and note that there exists an orientation preserving branched covering $\widehat{\pi}: S_g \to \T ^2$ with exactly $2g-2$ singularities, each of them having degree $2$. Take for example the quotient by an orientation preserving involution (see Figure \ref{fig:torusbranchedcovering}).

\begin{figure}[ht]
	\centering
	
	\def\svgscale{0.5}
\begingroup%
  \makeatletter%
  \providecommand\color[2][]{%
    \errmessage{(Inkscape) Color is used for the text in Inkscape, but the package 'color.sty' is not loaded}%
    \renewcommand\color[2][]{}%
  }%
  \providecommand\transparent[1]{%
    \errmessage{(Inkscape) Transparency is used (non-zero) for the text in Inkscape, but the package 'transparent.sty' is not loaded}%
    \renewcommand\transparent[1]{}%
  }%
  \providecommand\rotatebox[2]{#2}%
  \newcommand*\fsize{\dimexpr\f@size pt\relax}%
  \newcommand*\lineheight[1]{\fontsize{\fsize}{#1\fsize}\selectfont}%
  \ifx\svgwidth\undefined%
    \setlength{\unitlength}{764.91954894bp}%
    \ifx\svgscale\undefined%
      \relax%
    \else%
      \setlength{\unitlength}{\unitlength * \real{\svgscale}}%
    \fi%
  \else%
    \setlength{\unitlength}{\svgwidth}%
  \fi%
  \global\let\svgwidth\undefined%
  \global\let\svgscale\undefined%
  \makeatother%
  \begin{picture}(1,0.74187918)%
    \lineheight{1}%
    \setlength\tabcolsep{0pt}%
    \put(0,0){\includegraphics[width=\unitlength,page=1]{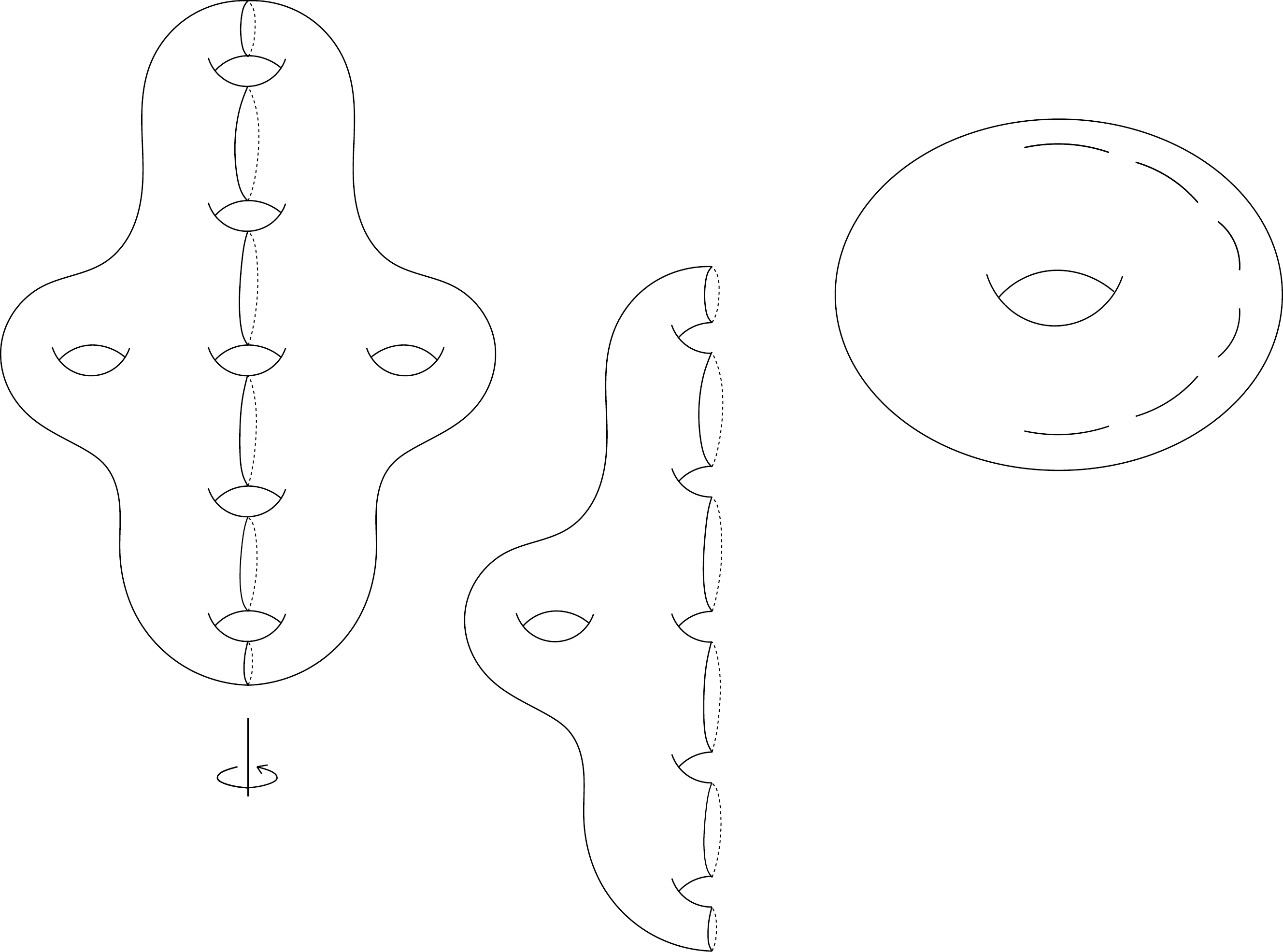}}%
    \put(0.48321715,0.63448262){\makebox(0,0)[lt]{\lineheight{1.25}\smash{\begin{tabular}[t]{l}$\widehat{\pi}$\end{tabular}}}}%
    \put(0,0){\includegraphics[width=\unitlength,page=2]{torusbranchedcovering.pdf}}%
  \end{picture}%
\endgroup%

	\caption{An example of the torus branched covering for $g = 7$. Each red endpoint of each blue seam is a critical value of $\widehat{\pi}$.}
	\label{fig:torusbranchedcovering}
\end{figure}

Take a linear Anosov diffeomorphism $A$ of $\T ^2$ (it suffices to take any integer matrix of determinant equal to 1 and trace greater than 2), and note that it will have a stable foliation $\mathcal{F}_s$ consisting of parallel lines to the projection of the subspace generated by the eigenvector associated to the smaller eigenvalue $\lambda_-$, which is less than 1. Note that this foliation is minimal because these lines have irrational slope.

Recall that every rational point is periodic for $A$. Take $\phi$ conjugate and isotopic to a power of $A$, such that it fixes the projection of the critical points. Take $\widehat{\phi}$ a lift of $\phi$ to $S_g$, and note that it preserves a stable singular foliation $\widehat{\mathcal{F}}_s$ (resp. unstable singular foliation $\wh \F_u$), whose singularities are the critical points of $\widehat{\pi}$, each of them having four prongs (see Figure \ref{fig:bitorusstablefoliation}). Moreover, the orientability of $\mathcal{F}_s$ and the fact that our branched covering preserves orientation, together imply that $\widehat{\mathcal{F}}_s$ is also orientable (resp. $\wh \F_u$). Note that we obtain natural foliations $\wh {\F}_s$, $\wh{\F}_u$ in the lift, with respective transverse measures $\hat{\mu}_s$, $\hat \mu_u$, such that they altogether hold an analogous of \eqref{eq:PseudoAnosovFoliations}, we then get the lifted dynamics $\hat{f}$ is a pseudo-Anosov map in the covering space $S_g$.  

\begin{figure}[h]
	\centering
	
	\def\svgscale{0.52}
\begingroup%
  \makeatletter%
  \providecommand\color[2][]{%
    \errmessage{(Inkscape) Color is used for the text in Inkscape, but the package 'color.sty' is not loaded}%
    \renewcommand\color[2][]{}%
  }%
  \providecommand\transparent[1]{%
    \errmessage{(Inkscape) Transparency is used (non-zero) for the text in Inkscape, but the package 'transparent.sty' is not loaded}%
    \renewcommand\transparent[1]{}%
  }%
  \providecommand\rotatebox[2]{#2}%
  \newcommand*\fsize{\dimexpr\f@size pt\relax}%
  \newcommand*\lineheight[1]{\fontsize{\fsize}{#1\fsize}\selectfont}%
  \ifx\svgwidth\undefined%
    \setlength{\unitlength}{685.23834949bp}%
    \ifx\svgscale\undefined%
      \relax%
    \else%
      \setlength{\unitlength}{\unitlength * \real{\svgscale}}%
    \fi%
  \else%
    \setlength{\unitlength}{\svgwidth}%
  \fi%
  \global\let\svgwidth\undefined%
  \global\let\svgscale\undefined%
  \makeatother%
  \begin{picture}(1,0.2701037)%
    \lineheight{1}%
    \setlength\tabcolsep{0pt}%
    \put(0,0){\includegraphics[width=\unitlength,page=1]{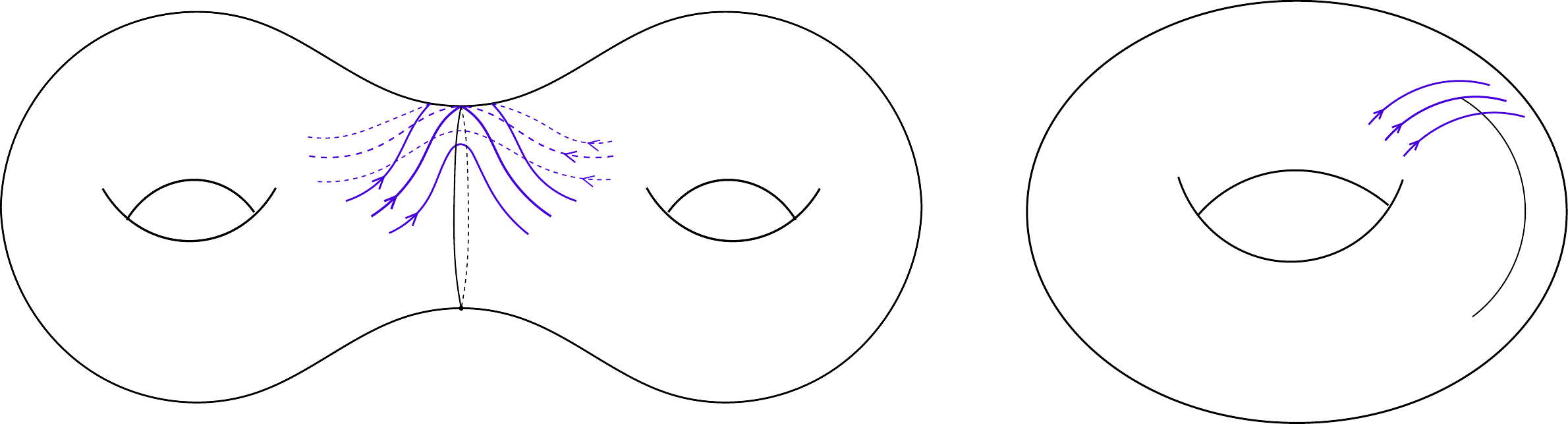}}%
    \put(0.61815407,0.22331062){\makebox(0,0)[lt]{\lineheight{1.25}\smash{\begin{tabular}[t]{l}$\widehat{\pi}$\end{tabular}}}}%
    \put(0.90931197,0.1418795){\color[rgb]{0,0,0.78431373}\makebox(0,0)[lt]{\lineheight{1.25}\smash{\begin{tabular}[t]{l}$\mathcal{F}_s$\end{tabular}}}}%
    \put(0.1875882,0.08494848){\color[rgb]{0.2745098,0,0.8627451}\makebox(0,0)[lt]{\lineheight{1.25}\smash{\begin{tabular}[t]{l}$\widehat{\mathcal{F}}_s$\end{tabular}}}}%
    \put(0,0){\includegraphics[width=\unitlength,page=2]{bitorusstablefoliation.pdf}}%
  \end{picture}%
\endgroup%

	\caption{For the bitorus case, there are exactly two points where the stable foliation is singular.}
	\label{fig:bitorusstablefoliation}
\end{figure}

In a similar fashion we can take branched coverings from closed hyperbolic surfaces to other closed hyperbolic surfaces, and lift the dynamics of pseudo-Anosov maps to obtain new pseudo-Anosov maps in these surfaces, which do not belong to the family we just built, but also have minimal orientable stable foliations.

\subsection*{Laminations obtained by foliation straightening}

\begin{definition}[Curve straightening]
	Let a $S$ be a closed hyperbolic surface, and let $\eta \subset S$ be a curve whose lift $\wt \eta$ to $\D$ is proper and lands in the boundary at $\alpha_{\wt \eta}$ and $\omega_{\wt \eta}$. The \emph{straightening} of $\eta$ is the projection $\gamma$ of the geodesic $\wt \gamma$ having the same endpoints as $\wt \eta$
\end{definition}

The proof of the following statement can be found at \cite[Theorem 1.47]{calegari}).

\begin{theorem}\label{thm:calegari}
	Given $f$ be a pseudo-Anosov homeomorphism of a closed surface $S$, there exists a filling minimal lamination $\Lambda$, such that the straightening of $f(\Lambda)$ is $\Lambda$  
\end{theorem}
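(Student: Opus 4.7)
The plan is to produce $\Lambda$ directly as the ``straightening'' of the unstable measured foliation $\mathcal F^u$ of $f$, and then use the fact that $f$ permutes the leaves of $\mathcal F^u$ to conclude the invariance statement up to straightening.

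First I would recall the relevant features of $\mathcal F^u$: it is a singular measured foliation on $S$ with finitely many prong singularities, it is minimal and filling (since $f$ is pseudo-Anosov on a closed surface), and $f$ maps leaves of $\mathcal F^u$ to leaves of $\mathcal F^u$ while multiplying the transverse measure by the expansion factor $\lambda>1$. Lift $\mathcal F^u$ to a foliation $\widetilde{\mathcal F}^u$ of $\tilde S\simeq \Hy^2$. The central step is to show that each lifted leaf $\tilde\ell$ of $\widetilde{\mathcal F}^u$ is a proper embedded quasi-geodesic, so that it has two well-defined distinct endpoints $\alpha(\tilde\ell),\omega(\tilde\ell)\in\partial \Hy^2$. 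For this, one uses the invariant Euclidean-cone metric $\mu$ on $S$ coming from $\mathcal F^u\times\mathcal F^s$: leaves of $\widetilde{\mathcal F}^u$ are geodesics of the lifted flat metric $\tilde\mu$, and because the flat metric and the hyperbolic metric are bi-Lipschitz on compact pieces and the leaves of $\widetilde{\mathcal F}^u$ grow exponentially away from singularities under the action of $\tilde f$, each leaf has a well-defined limit point in each direction and is a quasi-geodesic in $\Hy^2$. Then I would define the straightening map $\ell\mapsto \gamma_\ell$ where $\tilde\gamma_{\tilde\ell}$ is the hyperbolic geodesic of $\Hy^2$ with endpoints $\alpha(\tilde\ell),\omega(\tilde\ell)$, and set
\[\Lambda = \bigcup_{\ell \text{ leaf of }\mathcal F^u} \gamma_\ell(\R).\]

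Next I would check that $\Lambda$ is a geodesic lamination. Two distinct leaves of $\widetilde{\mathcal F}^u$ are disjoint simple quasi-geodesics; as such, their endpoint pairs $\{\alpha,\omega\}$ either coincide or are unlinked in $\partial \Hy^2$, so their straightenings are either equal or disjoint in $\Hy^2$. Projecting to $S$ and passing to the closure, this yields a geodesic lamination. Minimality of $\Lambda$ is inherited from minimality of $\mathcal F^u$: every leaf of $\mathcal F^u$ is dense in $S$, hence the set of endpoint pairs of its lifts is dense in the set of all endpoint pairs coming from $\widetilde{\mathcal F}^u$, and the straightening map is continuous at the level of pairs in $\partial\Hy^2$. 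Filling follows because the complement of $\Lambda$ in $S$ corresponds (via straightening) to the complementary regions of $\mathcal F^u$, each of which is a once-punctured disk around a prong singularity; hence each complementary component of $\Lambda$ is topologically an ideal polygon, and $\Lambda$ is filling.

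For the invariance, observe that $f$ maps leaves of $\mathcal F^u$ to leaves of $\mathcal F^u$ as a set, so $f(\Lambda)$ is the union of the curves $f(\gamma_\ell(\R))$ as $\ell$ ranges over leaves of $\mathcal F^u$. In the universal cover, choosing a lift $\tilde f$, the image $\tilde f(\tilde\gamma_{\tilde\ell})$ stays at bounded Hausdorff distance from the quasi-geodesic $\tilde f(\tilde\ell)$, which is itself a leaf of $\widetilde{\mathcal F}^u$. Therefore $\tilde f(\tilde\gamma_{\tilde\ell})$ and the leaf $\tilde f(\tilde\ell)$ have the same pair of endpoints in $\partial\Hy^2$, namely $\alpha(\tilde f(\tilde\ell))$ and $\omega(\tilde f(\tilde\ell))$. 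Straightening $\tilde f(\tilde\gamma_{\tilde\ell})$ therefore yields the leaf of $\widetilde\Lambda$ corresponding to the leaf $\tilde f(\tilde\ell)$ of $\widetilde{\mathcal F}^u$. Projecting to $S$, the straightening of $f(\Lambda)$ is exactly $\Lambda$, as desired.

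The step I expect to be the main obstacle is proving that each leaf of $\widetilde{\mathcal F}^u$ is a quasi-geodesic of $\Hy^2$ with two distinct endpoints at infinity, together with the continuity of the endpoint assignment needed to conclude minimality; this is where the interaction between the Euclidean flat structure induced by $(\mathcal F^u,\mathcal F^s)$ and the hyperbolic geometry of $S$ really does the work, and one must control the behaviour of leaves near the prong singularities where the flat metric degenerates.
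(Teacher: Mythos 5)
Your proposal is correct in outline, but it takes a genuinely different route from the proof sketched in the paper. The paper argues via curve iteration: pick a non-periodic free homotopy class $[\gamma]$, note that the geodesic lengths of $f^n(\gamma)$ tend to infinity, extract a Hausdorff limit of the straightened iterates, take a minimal sublamination, and get the filling property by a soft Nielsen--Thurston argument (a non-filling invariant lamination would force a power of $f$ to preserve an essential subsurface, contradicting pseudo-Anosov); invariance up to straightening is then built into the limiting procedure. You instead straighten the unstable foliation $\mathcal F^u$ directly, which is the construction the paper only alludes to in the Remark following the theorem (citing Calegari, Sections 1.7--1.9). Your route puts the real work where you say it is: one must know that lifted leaves of $\widetilde{\mathcal F}^u$ are uniform quasi-geodesics of $\Hy^2$ with distinct endpoints, which follows from the singular flat metric of $(\mathcal F^u,\mathcal F^s)$ being equivariantly quasi-isometric to the hyperbolic metric (Milnor--\v{S}varc) together with the Morse lemma; the paper's route avoids this flat-versus-hyperbolic comparison entirely. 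In exchange, your construction makes the invariance statement essentially tautological ($f$ permutes the leaves of $\mathcal F^u$, and a lift $\tilde f$ is an equivariant quasi-isometry, so endpoint pairs are permuted accordingly), identifies the complementary regions concretely as ideal polygons around the prong singularities, and transparently transports extra structure such as orientability of $\mathcal F^u$ to $\Lambda$ --- which is precisely what the surrounding text of the paper needs for its examples. Minor points you should still tighten: the quasi-geodesic constants must be uniform over all leaves (they are, since they come from the global quasi-isometry constants); two distinct leaves may share one endpoint at infinity, or a singular leaf may terminate at a singularity, so the disjoint-or-equal dichotomy for straightenings needs a word about asymptotic leaves; and $\Lambda$ should be defined as the closure of the union of straightenings, with minimality then checked for that closure.
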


\begin{proof}[Sketch of the proof.] 
	Choose a non-periodic closed curve homotopy class $[\gamma]$. Fixing a length $L > 0$, the amount of classes that have a representative of length smaller than $L$ is finite, by the \v Svarc-Milnor Lemma. This implies that the length of $\gamma_n = f^n(\gamma)$ grows to infinity with $n$. 
	
	The lamination $\Lambda$ appears as a minimal sublamination of the limit of a subsequence $\{\gamma_{n_k}\}$ in the Hausdorff topology, which will not be a simple closed curve since $f$ is not reducible. The fact that the obtained lamination is filling is a little bit tricky, the key idea being that otherwise there would exist a power of $f$ fixing a subsurface with non trivial fundamental group, which would imply that some boundary homotopy class is invariant for a -maybe greater- power of $f$, which on its turn contradicts the fact of $f$ being pseudo-Anosov. 
\end{proof}	

Further details on the following statement can be found at \cite[Sections 1.7 to 1.9]{calegari}. 

\begin{remark}
	The lamination from Theorem~\ref{thm:calegari} appears as the straightening of the stable or unstable singular foliations given by $f$. 
\end{remark}

Given that the examples of pseudo-Anosov homeomorphisms we built have orientable stable foliations, we obtain that their straghtenings are filling minimal laminations which are also orientable, thus setting the conditions for the examples we will now build, where we roughly \textit{follow} the lamination.

\begin{proof}[Proof of Proposition~\ref{prop:HandelExample}]

The idea for this example comes from a brief mention in a Handel's preprint \cite{handel}. 

Start with an oriented minimal filling geodesic lamination $\Lambda$. Note that each component of $\Lambda^{\complement}$ is an ideal geodesic polygon $P$ with an even number of sides $s$, because the lamination is oriented. Moreover, by Gauss-Bonnet's Theorem any ideal triangle has area $\pi$, and every ideal polygon has $s - 2$ disjoint ideal triangles inside; we then know that we have finitely many components $P_n$ in the complement of $\Lambda$, each of them having finitely many $s_n$ sides. 

Take a vector field $V$ in $\Lambda$, of unit tangent vectors respecting the lamination's orientation. Note that this induces a flow in $\Lambda$. We will now extend this flow to the rest of the surface $S$. 

For each of the ideal polygons, choose a point $z_n$ in its interior to be an $s_n$-pronged singularity, with each prong going to one different ideal vertex of the polygon. Orient the flow in the prongs such that it is compatible with the flow orientation on the sides of the polygon (that is, alternating the orientation for the flow when changing prongs). Each $P_n$ is now divided into $s_n$ ideal bigons with one geodesic side (the other one has the singularity in the middle, see Figure \ref{fig:IdealPolygon}). Take a filling set of parallel lines to that side, and extend the flow with unit tangent vectors. To make it continuous at the singularity, simple take a non-negative bump function $\kappa_n$ which only vanishes at $z_n$ and is equal to 1 outside a sufficiently small neighbourhood $U_n$ of $z_n$, to scale the vector field which defines the flow. 

\begin{figure}[h]
	\centering
	
	\def\svgscale{0.52}
\begingroup%
  \makeatletter%
  \providecommand\color[2][]{%
    \errmessage{(Inkscape) Color is used for the text in Inkscape, but the package 'color.sty' is not loaded}%
    \renewcommand\color[2][]{}%
  }%
  \providecommand\transparent[1]{%
    \errmessage{(Inkscape) Transparency is used (non-zero) for the text in Inkscape, but the package 'transparent.sty' is not loaded}%
    \renewcommand\transparent[1]{}%
  }%
  \providecommand\rotatebox[2]{#2}%
  \newcommand*\fsize{\dimexpr\f@size pt\relax}%
  \newcommand*\lineheight[1]{\fontsize{\fsize}{#1\fsize}\selectfont}%
  \ifx\svgwidth\undefined%
    \setlength{\unitlength}{387.14531365bp}%
    \ifx\svgscale\undefined%
      \relax%
    \else%
      \setlength{\unitlength}{\unitlength * \real{\svgscale}}%
    \fi%
  \else%
    \setlength{\unitlength}{\svgwidth}%
  \fi%
  \global\let\svgwidth\undefined%
  \global\let\svgscale\undefined%
  \makeatother%
  \begin{picture}(1,0.82412988)%
    \lineheight{1}%
    \setlength\tabcolsep{0pt}%
    \put(0,0){\includegraphics[width=\unitlength,page=1]{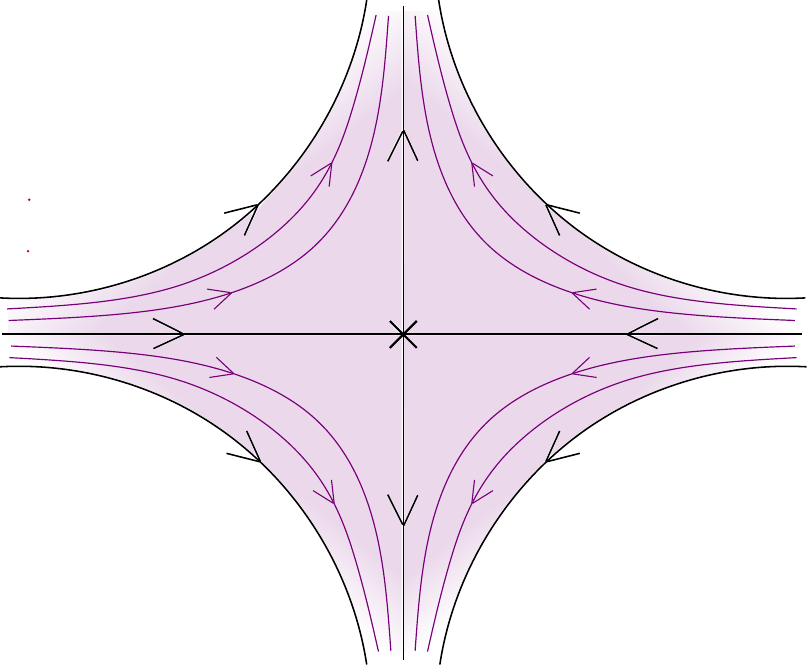}}%
    \put(0.5257177,0.42625442){\makebox(0,0)[lt]{\lineheight{1.25}\smash{\begin{tabular}[t]{l}$z_n$\end{tabular}}}}%
    \put(0.40842236,0.31039707){\color[rgb]{0.47058824,0,0.47058824}\makebox(0,0)[lt]{\lineheight{1.25}\smash{\begin{tabular}[t]{l}$P_n$\end{tabular}}}}%
  \end{picture}%
\endgroup%

	\caption{Sketch of the flow lines inside an ideal polygon $P_n$ with four sides, the sides of this polygon are geodesics in $\Lambda$. Every point in the interior of this polygon, except for $z_n$, is wandering by the time-one flow map.}
	\label{fig:IdealPolygon}
\end{figure}

Note that every orbit $\eta$ except for the singularities and their prongs, stays outside the neighbourhoods $U_n$ except for at most a finite amount of time. Therefore, any of these orbits is tracked by the geodesic side $\gamma$ of the ideal bigon it belongs to. Given that the distance in the universal covering to that geodesic side is decreasing and tends to 0, we conclude that the rotation speed holds $\vartheta(\eta) = 1$ for each of these orbits. 

We naturally have an ergodic measure $\delta_{z_n}$ for every fixed point $z_n$. Every other point $z \in \Lambda^{\complement}$ is wandering, and therefore every other ergodic measure $\mu$ is supported in $\Lambda$, having every point with a tracking geodesic equidisitributing to a measure supported in the same lamination $\Lambda$, and having speed equal to 1.  
\end{proof}

Note that we have proven that, given a orientable minimal filling geodesic lamination $\Lambda$ in $S_g$, there exists a homeomorphism of $S_g$ for which $\Lambda$ encodes all the rotational information of the homeomorphism (even for the wandering points).   

\begin{remark}
By building a rotational horsehoe in an invariant annulus, it is easy to get examples of smooth diffeomorphisms with invariant ergodic measures having positive metric entropy and belonging to $\cl_i$, with $i\in I^1$ (from Theorem~\ref{thm:DecompRotSetIntro}).

By Denjoy-Rees technique, for any closed hyperbolic surface $S$, one can build a homeomorphism of $S$ without topological horseshoe but having an ergodic measure with positive metric entropy and with nonzero rotation vector belonging to an irrational direction. In particular, at least in low regularity, there exist examples of homeomorphisms without topological horseshoes and having measures of $\cl_i$, with $i\in I^1$ (from Theorem~\ref{thm:DecompRotSetIntro}), with positive metric entropy and whose tracking geodesics are not closed (or also, with rotation vector not belonging to $\R H_1(S,\Z)$).

Indeed, consider an ergodic measure $\mu$ of the homeomorphism $f$ of Proposition~\ref{prop:HandelExample} supported in the minimal geodesic lamination associated to $f$, and an $f$-invariant set $A$ of total $\mu$-measure and null $\nu$-measure for any other $f$-ergodic invariant measure $\nu$ (this set $A$ is given by the ergodic decomposition theorem). The example is then obtained by applying \cite[Theorem 1.3]{BCLR} to the product map $h : A\times C \to A\times C;	 (x,c)\mapsto (f(x),h_0(c))$, where $C$ is a Cantor set and $h_0$ is a homeomorphism of $C$ with positive topological entropy. To our knowledge, it is an open question if this kind of examples can exist in $C^2$ regularity, or (for example) if mechanisms ``\emph{\`a la Katok}'' \cite{MR573822} prevent the existence of measures of positive entropy belonging to a class associated to a minimal lamination that is not a closed geodesic.
\end{remark}

\appendix

\section{Proof of Proposition~\ref{LemLocalTransverse}}\label{SecAppendix}

Let us recall Proposition~\ref{LemLocalTransverse}.

\begin{proposition*}[\ref{LemLocalTransverse}]
Let $\Sigma$ be a surface, $\F$ a singular foliation on $\Sigma$ and $f\in\Homeo_0(\Sigma)$. We denote by $\sing \F$ the set of singularities of $\F$.
Suppose that for any $x\in \Sigma\setminus\sing \F$ we are given an $\F$-transverse trajectory $I_\F(x)$ linking $x$ to $f(x)$ and homotopic relative to fixed points to a fixed isotopy $I$ from the identity to $f$. 
Then, for any neighbourhood $V_0\subset \Sigma$ of $\sing \F$, there exists a neighbourhood $U_0\subset V_0$ of $\sing \F$ such that for any $x\in U_0\setminus \sing \F$, there exists an $\F$-transverse trajectory $I'_\F(x)$ linking $x$ to $f(x)$, homotopic to $I_\F(x)$ (in $\dom\F$) and included in $V_0$. 
\end{proposition*}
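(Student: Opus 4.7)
\medskip

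My plan is to reduce the statement to a purely local question around a single singularity of $\F$ and then resolve it by a lifting argument to a simply connected cover of a punctured disk around that singularity.

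First, I would observe that the conclusion is local near $\sing \F$: since $\sing \F$ is closed and the hypotheses on $f$, $I$ and $\F$ are intrinsic to $\Sigma$, it suffices to prove that for every $s \in \sing \F$ and every open disk neighborhood $V$ of $s$ containing no other singularity, there exists a smaller neighborhood $U \subset V$ of $s$ such that every $x \in U \setminus \{s\}$ admits an $\F$-transverse trajectory from $x$ to $f(x)$ contained in $V$ and homotopic to $I_\F(x)$ rel endpoints. The full statement then follows by choosing, inside an arbitrary $V_0$, a small pairwise disjoint family of such local disks around the singularities and taking $U_0$ to be the union of the resulting local neighborhoods. Because each $I_t$ fixes $s$, continuity of the isotopy allows me to shrink $V$ so that $V \setminus \{s\}$ is a genuine open annulus and to pick $U \subset V$ with $I_t(U) \subset V$ for every $t \in [0,1]$; in particular $I(x) \subset V \setminus \{s\}$ for all $x \in U \setminus \{s\}$.

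The next step is to pass to the connected component $\tilde V$ of the preimage of $V \setminus \{s\}$ in the universal cover $\hat \Sigma$ of $\Sigma \setminus \sing \F$ containing a chosen lift $\tilde x$ of $x$. Because the free-homotopy class of a loop around $s$ in $\Sigma \setminus \sing \F$ is nontrivial and in fact a free generator in the relevant piece of $\pi_1$, this component is simply connected (it is homeomorphic to the universal cover of an annulus, i.e. a strip), and $\F$ lifts to a non-singular oriented foliation $\tilde \F$ on $\tilde V$. The lift $\tilde I(\tilde x)$ of $I(x)$ stays in $\tilde V$ by connectedness, so its endpoint is a well-defined lift $\tilde f(\tilde x) \in \tilde V$. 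The given $\tilde I_\F(\tilde x)$, which is the lift of $I_\F(x)$ starting at $\tilde x$, ends at the same point $\tilde f(\tilde x)$ since $I_\F(x)$ is homotopic rel endpoints to $I(x)$ in $\Sigma \setminus \sing \F$; however $\tilde I_\F(\tilde x)$ may stray outside $\tilde V$.

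The heart of the proof is then to construct a positively $\tilde \F$-transverse path from $\tilde x$ to $\tilde f(\tilde x)$ entirely inside $\tilde V$: projecting such a path to $V \setminus \{s\}$ gives the desired $I'_\F(x) \subset V$, and because $\tilde V$ is simply connected, any two paths inside it joining $\tilde x$ to $\tilde f(\tilde x)$ are homotopic rel endpoints, so the result is automatically homotopic to $I(x)$ and hence to $I_\F(x)$. To build the path, I would use that on a simply connected piece of an oriented non-singular foliation the leaf space carries a natural strict partial order induced by the positive transverse direction. I would show that $\tilde f(\tilde x)$ is strictly ``above'' $\tilde x$ for this order by comparing local positive directions in $\tilde V$ at the endpoints with the global positively transverse path $\tilde I_\F(\tilde x)$, and then construct a transverse path by concatenating short transverse segments that cross, in order, the $\tilde \F$-leaves separating $\tilde x$ from $\tilde f(\tilde x)$ in $\tilde V$. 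Compactness of $\overline U$ away from $s$ and equicontinuity of the isotopy near $s$ then give a single $U$ that works uniformly for all $x$.

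The main obstacle is the last step, namely producing a transverse path in $\tilde V$ with the correct endpoint: one has to rule out the possibility that some leaves of $\tilde \F$ ``escape to $s$'' in such a way that they form a one-sided barrier separating $\tilde x$ from $\tilde f(\tilde x)$ inside $\tilde V$ but not in $\hat \Sigma$. This is where the hypothesis that $s$ is fixed by the whole isotopy $I$, together with the compatibility of $I_\F(x)$ with $I(x)$, is crucial: the isotopy arc $I(x)$ itself is a path inside $\tilde V$ from $\tilde x$ to $\tilde f(\tilde x)$ whose homotopy class pins down the correct lift, and standard arguments from Le Calvez's theory (perturbing a path to be transverse within its homotopy class, using a local foliated chart near the fixed point to handle the behavior of leaves approaching $s$) allow one to promote $I(x)$ to a positively transverse path in $\tilde V$, finishing the proof.
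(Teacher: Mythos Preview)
Your reduction to isolated singularities is not justified: the hypothesis only gives $\sing\F$ as a closed subset of $\Sigma$ (in the paper's applications it is $\fix(I)$), and nothing prevents it from being, say, a Cantor set. The paper handles this by collapsing connected components of $\sing\F$ to points and then working with a totally disconnected singular set inside a disk $V_0$, but still without assuming the singularities are isolated. Your scheme of ``a small pairwise disjoint family of such local disks around the singularities'' breaks down in that generality.

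More seriously, the last paragraph is a hand-wave at exactly the hard part. You correctly identify the obstacle --- leaves approaching the singular set that form a one-sided barrier in $\tilde V$ separating $\tilde x$ from $\tilde f(\tilde x)$ --- but there is no ``standard argument from Le Calvez's theory'' that lets you perturb an arbitrary path (here $I(x)$) into a positively transverse one \emph{while remaining in a prescribed open set}. That is precisely the content of the proposition. The existence of a transverse path from $\tilde x$ to $\tilde f(\tilde x)$ in the full cover $\hat\Sigma$ does not a priori give one inside $\tilde V$: the set of points reachable from $\tilde x$ by positively transverse paths in $\tilde V$ is bounded by leaves of $\tilde\F\vert_{\tilde V}$, and you have to prove that no such leaf separates $\tilde x$ from $\tilde f(\tilde x)$. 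The paper does this by a contradiction argument on a sequence $z_n\to\sing\F$, extracting limit leaves via Poincar\'e--Bendixson, and proving a key monotonicity claim (if $\hat z$ is close enough to $\sing\F$ and lies in $\overline{L(\check\phi)}$ for a suitable leaf $\check\phi$, then so does $\hat f(\hat z)$), followed by a case analysis on whether the barrier leaves are incoming, outgoing, or crossing. None of this is routine, and your sketch does not supply a substitute.
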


\begin{proof}[Proof of Proposition~\ref{LemLocalTransverse}]
First, notice that it is sufficient to prove the proposition in the case where $\Sigma\setminus\sing\F$ is connected. Indeed, otherwise, consider each connected component of $\Sigma\setminus\sing\F$; the transverse trajectories $I_\F(x)$ stay in each of these connected components. By replacing $\Sigma$ by the compactification of $\Sigma\setminus\sing\F$ corresponding to the collapsing of each connected component of $\sing\F$, we can suppose that $\sing\F$ is inessential and that each of its connected components is reduced to a singleton.

Suppose that $\Sigma$ is the sphere and that $\sing\F$ is a single point of $\Sigma$. In other words, $\Sigma\setminus\sing\F$ is a plane that is foliated by the foliation $\hat \F:=\F|_{\Sigma\setminus\sing\F}$ which has no singularity. Hence, $\hat f:=f|_{\Sigma\setminus\sing\F}$ is a Brouwer homeomorphism, and $V_0$ projects to a neighbourhood of infinity in this plane. It is not very difficult to perturb the homeomorphism $\hat f$ in a compact set $\hat K\subset \Sigma\setminus\sing\F\simeq \R^2$ such that the obtained homeomorphism $\hat g$ has a single fixed point $\wh x_0\in\hat K$, and such that every leaf of $\hat\F$ is mapped by $\hat g$ to the closure of its left.
More precisely, the foliation $\hat \F|_{\R^2\setminus\{\wh{x}_0\}}$ is transverse for the homeomorphism $\hat g|_{\R^2\setminus\{\wh{x}_0\}}$, and for any $\wh x\in \R^2\setminus\{\wh{x}_0\}$ there is an $\wh\F$-transverse trajectory $I_{\wh \F}^{\hat g}$ linking $\wh x$ to $\wh g(\wh x)$, that can be supposed to coincide with $I_\F$ if $x\notin \hat K$. Hence, one can apply \cite[Proposition~3.4]{zbMATH05518893} to this homeomorphism and the neighbourhood $V_0\setminus \hat K$; it gives a neighbourhood $U_0$ of infinity in $\R^2\setminus\{\wh{x_0}\}$: for any $\wh x\in U_0$, the transverse trajectory $I_{\wh\F}^{\hat g}(x)$ for $g$ stays in $V_0\setminus \hat K$. But this transverse trajectory has been supposed to coincide with $I_{\wh \F}$; this proves the proposition for $f$ in this case.
\bigskip

As remarked before, \cite[Proposition~3.4]{zbMATH05518893} proves directly the proposition when $\sing\F$ is a single point of $\Sigma$ in the case $\Sigma$ is not the sphere.

It remains to treat the case where $\sing\F$ is not a single point of $\Sigma$.
Treating separately each connected component of $V_0$, we can suppose $V_0$ is connected, contains at least one singularity and does not contain any singularity in its boundary. By taking a smaller $V_0$ if necessary, one can suppose that it does not contain genus (recall that $\sing\F$ was supposed to be totally disconnected) and it is relatively compact.
If $\Sigma\setminus V_0$ is not connected, one can replace $V_0$ by some smaller neighbourhood of $\sing\F$ whose complement is connected.

Hence, we are reduced to the case where $\sing\F$ is not a single point of $\Sigma$ and is totally disconnected, and $V_0$ is a topological disk of $\Sigma$ containing at least one singularity of $\sing\F$. Finally, we can suppose that the boundary of $V_0$ is an embedded circle.
\bigskip

From now on, we follow and adapt the proof of \cite[Proposition~3.4]{zbMATH05518893} that treats the case where $V_0$ contains a single singularity of $\sing\F$.

Let us set, for $i\in\N$, $V_i = \bigcap_{|n|\le i} f^n(V_0)$. These sets are neighbourhoods of $V_0\cap\sing\F$ and are not necessarily connected, but each of their connected components are topological disks. Let $\widehat{\dom\F}$ be the universal cover of $\Sigma\setminus\sing\F$. The foliation $\F$, the homeomorphism $f$, the isotopy $I$ and the sets $V_i$ lift naturally to $\widehat{\dom\F}$ to respectively $\widehat\F$, $\widehat f$, $\widehat I$ and $\widehat{V_i}$. By the above remark, the connected components of the sets $\widehat{V_i}$ are simply connected.

We prove the proposition by contradiction and suppose its conclusion is false. This means that there exists a sequence $(z_n)_n$ of points of $V_0\setminus\sing\F$ with $\lim_{n\to+\infty} d(z_n,\sing \F) = 0$ and such that for any $n$, there is no transverse path from $z_n$ to $f(z_n)$ that is homotopic to $I(z)$ and included in $V_0$. Up to taking a subsequence, one can suppose that the $z_n$ tend to a single point $z_\infty\in \sing\F$, and that all the $z_n$ belong to the connected component $V_1^0$ of $V_1$ containing $z_\infty$. We denote by $\wh z_n$ the lifts of the $z_n$ belonging to $\wh V_0$. 

Note that given $\wh z\in \widehat{\dom\F}$, the set $\wh W_{\wh z}$ of points $\wh z'\in \widehat{\dom\F}$ that can be linked from $\wh z$ by a positively transverse arc is an open subset of $\widehat{\dom\F}$ whose boundary is a family $(\wh\phi_\alpha)_\alpha$ of leaves such that $\wh z\in \overline{L(\wh\phi_\alpha)}$ for any $\alpha$.

Let us denote $\check\phi$ the leaves of the topological plane $\wh V_0$; such leaves are connected components of the intersection of some leaf $\wh\phi$ of $\wh \F$ with $\wh V_0$. Hence, given $\wh z\in \widehat V_0$, the set $\check W_{\wh z}$ of points $\wh z'\in \widehat V_0$ that can be linked from $\wh z$ by a positively transverse arc included in $\wh V_0$ is an open subset of $\widehat{V}_0$ whose boundary is a family $(\check\phi_\alpha)_\alpha$ of leaves such that $\wh z\in \overline{L(\check\phi_\alpha)}$ for any $\alpha$. 
Hence, for any $n\in\N$, we have $\wh f(\wh z_n)\in \wh W_{\wh z_n}\setminus \check W_{\wh z_n}$. By the above characterization of $\check W_{\wh z}$, this implies that there exists a leaf $\check \phi_n$ of $\wh V_0$ such that $\wh z_n\in \overline{L(\check\phi_n)}$ and $\wh f(\wh z_n) \notin \overline{L(\check\phi_n)}$.

Because $\wh f(\wh z_n)\in \wh W_{\wh z}$ for any $n$, any leaf $\check \phi_n$ comes from a leaf $\wh\phi_n$ that meets $\wh{\dom\F}\setminus\wh V_0$. 
\medskip

Let $\wh\phi$ be any leaf of $\wh\F$ such that $\omega(\wh\phi)\subset \wh V_1$, and denote $\check\phi$ the connected component of $\wh\phi\cap \wh V_0$ containing a neighbourhood of $+\infty$ for $\wh\phi$. Let $\phi$ be the projection of $\wh\phi$ to $\Sigma$.

\begin{claim}\label{ClaimTechnical}
There is a neighbourhood $U$ of 
$\sing\F\cap V_1$ such that if $z\in U$, if $\wh z$ is a lift of $z$ belonging to $\wh V_1$ and if $\wh z\in \overline{L(\check\phi)}$, then $\wh f(\wh z) \in \overline{L(\check\phi)}$.
\end{claim}

\begin{proof}
Note that by the Poincaré-Bendixson Theorem (and as $\sing\F$ is totally disconnected), the set $\omega(\phi)$ is either a single singularity of $\sing\F$ together with leaves that are homoclinic to it, or a single closed leaf of $\F$, or the union of a set homeomorphic to a circle that is a union of leaves of $\F$ together with their $\alpha$ and $\omega$ limits that are singularities, with possibly some leaves that are homoclinic to these singularities.

Let $\wh x_0\in \check\phi$ such that $\check\phi_{\wh x_0}^+\subset \wh V_1$. Because $\wh\phi$ is a Brouwer line, the sets $\wh f(\check\phi_{\wh x_0}^+)$ and $\wh f^{-1}(\check\phi_{\wh x_0}^+)$ are disjoint from $\check\phi$. More precisely, let us prove that (in the plane $\wh V_0$)
\begin{equation}\label{EqInclusLeav}
\wh f(\check\phi_{\wh x_0}^+) \subset L(\check\phi)
\qquad \text{and}\qquad
\wh f^{-1}(\check\phi_{\wh x_0}^+) \subset R(\check\phi).
\end{equation}
We treat the first inclusion, the second being identical. If this first inclusion was false, then there would exist $\wt y_0\in \partial \widehat V_0\cap \widehat\phi$ such that 
\begin{equation}\label{EqInclusLeav3}
\wh f(\check\phi_{\wh x_0}^+) \subset \overline{L(\check\phi_{\wh y_0}^-)} \subset \overline{R(\check\phi)}.
\end{equation}
We have two cases. 
\begin{itemize}
\item Either $\omega(\phi) = \omega(f(\phi))$ is reduced to a single point $y$ (which is a singularity of $\F$) and then \eqref{EqInclusLeav3} implies that $\alpha(\phi)$ is reduced to $\{y\}$ and it is easy to get a contradiction (one of the projections of $L(\wh\phi)$ or $R(\wh\phi)$ on $\Sigma$ is a topological disk mapped into itself by $f$ or $f^{-1}$). 
\item 
Or $\omega(\phi)$ has at least two points and $\phi$ spirals around it in a neighbourhood of $+\infty$; then \eqref{EqInclusLeav3} implies that $\alpha(\phi) = \omega(\phi)$, which is impossible for reasons of orientation of the foliation around this limit set.
\end{itemize}

Let $U_0$ be a neighbourhood of $\sing\F\cap V_1$ included in $V_1$ (denote $\wh U_0$ the lift of $U_0$ included in $\wh V_1$) such that 
\begin{equation}\label{EqInclusLeav2}
\check\phi_{\wh x_0}^- \subset \wh V_0\setminus \wh U_0
\qquad \text{and}\qquad
\wh f^{-1}(\check\phi_{\wh x_0}^-) \subset \wh V_0\setminus \wh U_0.
\end{equation}
Let $z\in V_1$ such that the trajectory $(I^t(z))_{t\in[0,1]}$ of $z$ under the isotopy $I$ stays in $U_0\cap f^{-1}(U_0)$. By (uniform) continuity of $I$, the set of such $z$ is a neighbourhood $U$ of $\sing\F\cap V_1$.
Suppose by contradiction that some lift $\wh z$ of $z$ belonging to $\wh V_1$ satisfies $\wh z\in \overline{L(\check\phi)}$ and $\wh f(\wh z)\notin \overline{L(\check\phi)}$.
In this case, both $\wh z$ and $\wh f(\wh z)$ belong to the set $\wh U$ of points of $\wh V_1$ projecting in $U$; moreover these points are separated by $\check\phi$, so $\check\phi$ meets $\wh U$. As $(I^t(\wh z))_{t\in[0,1]}$ links $\wh z$ to $\wh f(\wh z)$, there exists $\wh \gamma$ a subpath of $(I^t(\wh z))_{t\in[0,1]}$ (hence included in $\wh U$) linking $\wh z$ to $\wh z'\in \check\phi$ and not meeting $\check\phi$ in its interior. It is included in $\overline{L(\check\phi)}$ and so (by \eqref{EqInclusLeav}) it is disjoint from $\wh f^{-1}(\check\phi_{\wh x_0}^+)$. It is also included in $\wh U$ and so (by \eqref{EqInclusLeav2}) it is disjoint from $\wh f^{-1}(\check\phi_{\wh x_0}^-)$. Hence, $\wh f(\wh \gamma)$ is disjoint from $\check\phi$; moreover (because of \eqref{EqInclusLeav}) it contains $\wh f(\wh z')\in \overline{L(\check\phi)}$. We deduce that $\wh f(\wh \gamma)$ is included in $\overline{L(\check\phi)}$ and this shows that $\wh f(\wh z)\in L(\check\phi)$.
\end{proof}

Because the leaves $\wh \phi_n$ separate the points $\wh z_n$ and $\wh f(\wh z_n)$, and because these points belong to $V_1$ eventually, the leaves $\phi_n$ meet $V_1$ eventually. Moreover, recall that $\phi_n$ meets $\dom\F\setminus V_0$. Hence, up to taking a subsequence, 
\begin{enumerate}
\item \label{case1technic}either all the leaves $\check\phi_n$ are incoming $\wh V_0$ with limit in $V_1$, meaning that there exists $\wh y_n^+\in \partial \wh V_0$ such that $\check\phi_n = (\wh \phi_n)_{\wh y_n^+}^+$ with $\omega(\phi_n)\subset V_1$;
\item \label{case2technic}or all the leaves $\check\phi_n$ are outgoing $\wh V_0$ with limit in $V_1$, meaning that there exists $\wh y_n^-\in \partial \wh V_0$ such that $\check\phi_n = (\wh \phi_n)_{\wh y_n^-}^-$ with $\alpha(\phi_n)\subset V_1$;
\item \label{case3technic}or for any $n$, there is a connected component of $\phi_n\cap V_1$ joining $y_n^+\in \partial V_1\cap \phi_n$ to $y_n^-\in \partial V_1\cap \phi_n$: we have $(\wh \phi_n)_{\wh y_n^-}^- \cap (\wh \phi_n)_{\wh y_n^+}^+\neq\varnothing$.
\end{enumerate}

Let us begin with case \ref{case1technic}. By taking a subsequence if necessary, we can suppose that the sets $\check\phi_n \cup \{y_n^-\}\cup \omega(\phi_n)$ converge for Hausdorff topology to some set $K\subset \overline{V_0}$. This set contains one incoming leaf $\phi'$.
By the Poincaré-Bendixson Theorem, and because the singularity set is totally disconnected, the limit $\omega(\phi')$ is either a single singularity, or a limit cycle (implying in prticular that any leaf coming close enough to this set stays forever close to this set). 
In both of these cases the set $\omega(\phi')$ is included in $V_1$.


Let $\gamma$ be an arc included in $V_0$, transverse to $\F$ and intersecting $\phi'$ in its interior and once. By changing the lifts of the $\phi_n$ if necessary, we can suppose that this arc lifts to an arc $\wh\gamma$ of $\wh{\dom\F}$ intersecting a lift $\wh\phi'$ of $\phi'$ in a point $\wh y_0$, such that $\wh y_0$ is the limit of points of $\wh\phi_n$. 
Considering a subsequence if necessary, we can suppose that either the sequence $(L(\wh\phi_n))_n$ is increasing and that $L(\wh\phi_n) \subset L(\wh\phi')$, or that the sequence $(L(\wh\phi_n))_n$ is decreasing and that $L(\wh\phi') \subset L(\wh\phi_n)$. Let us treat the first case, the second one being similar.

\begin{figure}
\begin{center}

\tikzset{every picture/.style={line width=0.75pt}} 

\begin{tikzpicture}[x=0.75pt,y=0.75pt,yscale=-.9,xscale=.9]


\draw  [draw opacity=0][fill={rgb, 255:red, 155; green, 155; blue, 155 }  ,fill opacity=0.1 ] (240.88,141.12) .. controls (240.4,86.3) and (266.62,81.88) .. (359.62,81.88) .. controls (452.62,81.88) and (437.99,91.6) .. (438.56,151.21) .. controls (439.13,210.82) and (435.4,211.3) .. (435,223.3) .. controls (419,225.5) and (380.8,228.9) .. (345.6,226.9) .. controls (311.8,223.5) and (264,229.3) .. (245.2,229.7) .. controls (246.6,218.9) and (240.88,175.19) .. (240.88,141.12) -- cycle ;
\draw [draw opacity=0][fill={rgb, 255:red, 95; green, 176; blue, 0 }  ,fill opacity=0.1 ]   (243.03,117.9) .. controls (249.87,88.07) and (265.88,82.33) .. (359.62,81.88) .. controls (454.54,81.88) and (436.77,98.33) .. (438.56,140.32) .. controls (395.01,144.32) and (392,184.67) .. (392.01,196.75) .. controls (378.05,198.01) and (337.19,193.97) .. (307.95,199.99) .. controls (303.16,164.44) and (268.87,112.93) .. (243.03,117.9) -- cycle ;
\draw [color={rgb, 255:red, 245; green, 166; blue, 35 }  ,draw opacity=1 ]   (346.33,233.58) .. controls (346.96,183.5) and (322.48,119.69) .. (336.58,82.26) ;
\draw [shift={(336.53,153.98)}, rotate = 80.71] [fill={rgb, 255:red, 245; green, 166; blue, 35 }  ,fill opacity=1 ][line width=0.08]  [draw opacity=0] (6.25,-3) -- (0,0) -- (6.25,3) -- cycle    ;
\draw [color={rgb, 255:red, 245; green, 166; blue, 35 }  ,draw opacity=1 ]   (392.83,233.25) .. controls (384.67,160.42) and (406.56,143.82) .. (438.56,140.32) ;
\draw [shift={(395.81,171.14)}, rotate = 106.5] [fill={rgb, 255:red, 245; green, 166; blue, 35 }  ,fill opacity=1 ][line width=0.08]  [draw opacity=0] (6.25,-3) -- (0,0) -- (6.25,3) -- cycle    ;
\draw [color={rgb, 255:red, 128; green, 128; blue, 128 }  ,draw opacity=1 ]   (245.2,229.7) .. controls (272.8,226.9) and (318,224.7) .. (345.6,226.9) .. controls (373.2,229.1) and (407.6,226.5) .. (435,223.3) ;
\draw [color={rgb, 255:red, 245; green, 166; blue, 35 }  ,draw opacity=1 ]   (243.03,117.9) .. controls (267.63,112.03) and (313,163.75) .. (309.67,235.08) ;
\draw [shift={(293.99,159.85)}, rotate = 64.22] [fill={rgb, 255:red, 245; green, 166; blue, 35 }  ,fill opacity=1 ][line width=0.08]  [draw opacity=0] (6.25,-3) -- (0,0) -- (6.25,3) -- cycle    ;
\draw [color={rgb, 255:red, 95; green, 176; blue, 0 }  ,draw opacity=1 ]   (290.61,204.75) .. controls (319.27,191.7) and (388.63,199.07) .. (402.33,196.12) ;

\draw (314.77,135.76) node [anchor=north] [inner sep=0.75pt]    {$\hat{z}_{n}$};
\draw (352.8,124.5) node  [font=\tiny]  {$\times $};
\draw (353.07,124.84) node [anchor=west] [inner sep=0.75pt]    {$\hat{f}(\hat{z}_{n})$};
\draw (313.79,132.33) node  [font=\tiny]  {$\times $};
\draw (404.33,196.12) node [anchor=west] [inner sep=0.75pt]  [color={rgb, 255:red, 88; green, 162; blue, 1 }  ,opacity=1 ]  {$\hat{\gamma }_{+}$};
\draw (295.31,104.15) node [anchor=west] [inner sep=0.75pt]  [color={rgb, 255:red, 88; green, 162; blue, 1 }  ,opacity=1 ]  {$\wh D$};
\draw (278.7,129.34) node [anchor=north east] [inner sep=0.75pt]  [color={rgb, 255:red, 210; green, 142; blue, 30 }  ,opacity=1 ]  {$\check{\phi }_{1}$};
\draw (413.49,144.99) node [anchor=north west][inner sep=0.75pt]  [color={rgb, 255:red, 210; green, 142; blue, 30 }  ,opacity=1 ]  {$\check{\phi } '$};
\draw (262.34,221.5) node [anchor=south] [inner sep=0.75pt]    {$\hat{V}_{0}$};
\draw (343.39,167.7) node [anchor=west] [inner sep=0.75pt]  [color={rgb, 255:red, 210; green, 142; blue, 30 }  ,opacity=1 ]  {$\check{\phi }_{n}$};

\end{tikzpicture}

\caption{The domain $\wh D$ of case \ref{case1technic}.\ in the proof of Proposition~\ref{LemLocalTransverse}.\label{FigFirstD}}
\end{center}
\end{figure}
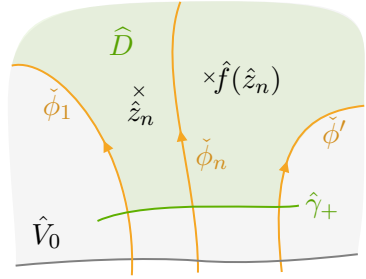

Claim~\ref{ClaimTechnical} gives us a neighbourhood $U\subset V_1$ of $\sing\F\cap V_0$ such that if $z\in U$, then any lift $\wh z$ of $z$ belonging to $\overline{L(\check\phi_1)}$ is such that $\wh f(\wh z) \in \overline{L(\check\phi_1)}$, and any lift $\wh z$ of $z$ belonging to $\overline{L(\check\phi')}$ is such that $\wh f(\wh z) \in \overline{L(\check\phi')}$. Because $z_n$ tend to $z_\infty\in \sing\F\cap V_0$, the points $z_n$ belong to $U$ eventually: by taking a subsequence we can suppose that they all belong to $U$.
Recall that we have supposed by contradiction that $\wh z_n\in \overline{L(\check\phi_n)}$ and $\wh f(\wh z_n) \notin \overline{L(\check\phi_n)}$.
\begin{itemize}
\item If $\wh z_n\notin \overline{L(\check\phi_1)}$ and $\wh f(\wh z_n) \in \overline{L(\check\phi')}$, then both $\wh z_n$ and $\wh f(\wh z_n)$ belong to $R(\check\phi_1)\cap L(\check\phi')\subset \wh V_0$; more precisely for $n$ large enough they both belong to the unbounded connected component $\wh D$ (see Figure~\ref{FigFirstD}) of $\big(R(\check\phi_1)\cap L(\check\phi')\big)\setminus\wh \gamma$. The boundary of this set is made of pieces of two leaves, together with the transverse arc $\wh\gamma$. The orientation of these pieces prevents $\wh \phi_n\setminus \check\phi_n$ from intersecting $\wh D$, and contradicts the existence of an $\wh\F$-transverse path joining $\wh z$ to  $\wh f(\wh z)$.
\item If $\wh z_n\in \overline{L(\check\phi_1)}$, then by Claim~\ref{ClaimTechnical} we have $\wh f(\wh z_n)\in \overline{L(\check\phi_1)}$.
On the other hand, $\wh f(\wh z_n) \in R(\check\phi_n)\subset R(\check\phi_1)$. This is a contradiction.
\item If $\wh f(\wh z_n) \notin \overline{L(\check\phi')}$, then by Claim~\ref{ClaimTechnical} we have $\wh z_n\notin \overline{L(\check\phi')}$.
On the other hand, $\wh z_n\in\overline{L(\check\phi_n)}\subset L(\check\phi_1)$. This is also a contradiction.
\end{itemize}


The case where all the leaves $\check\phi_n$ are outgoing (case \ref{case2technic}) is identical to the incoming case.
\bigskip

It remains to treat case \ref{case3technic}, \emph{i.e.}\ the case where for any $n\in\N$, we have $\check\phi_n := (\wh \phi_n)_{\wh y_n^-}^- \cap (\wh \phi_n)_{\wh y_n^+}^+$ with $\wh y_n^+, \wh y_n^-\in \partial \wh V_1$ (we now denote $\check\phi$ the intersections of a leaf $\wh\phi$ with $\wh V_1$). By taking a subsequence if necessary, we can suppose that the sets $\check\phi_n \cup \{y_n^-, y_n^+\}$ converge for Hausdorff topology to some set $K\subset \overline{V_0}$. 
This set contains one incomming leaf $\phi^+$ and an outgoing leaf $\phi^-$. 
By the Poincaré-Bendixson Theorem, and because the singularity set is totally disconnected, the limit sets $\omega(\phi^+)$ and $\omega(\phi^-)$ are either a single singularity, or a limit cycle (implying in prticular that any leaf coming close enough to this set stays forever close to this set in the future or the past). The fact that the leaves $\phi_n$ go in and out of $V_0$ rules out the second possibility. Hence, the sets $\omega(\phi^+)$ and $\alpha(\phi^-)$ are both reduced to a single singularity; in particular these sets are included in $V_2$. 

Let $\gamma^+$ (resp. $\gamma^-$) be a simple arc included in $V_1$, transverse to $\F$ and intersecting $\phi^+$ (resp. $\phi^-$) in its interior and once. Because the leaves $\overline{\pr_\Sigma(\check\phi_n)}$ meet $\partial V_1$, Poincaré-Bendixson theory implies that for any $n\in\N$, the pieces of leaves $\pr_\Sigma(\check\phi_n)$ intersect both transversals $\gamma^+$ and $\gamma^-$ once. Let $\{x_n^+\} = \pr_\Sigma(\check\phi_n)\cap \gamma^+$ and $\{x_n^-\} = \pr_\Sigma(\check\phi_n)\cap \gamma^-$. As before, up to taking a subsequence, we can suppose that either both sequences $x_n^+$ and $x_n^-$ are increasing (for some order on $\gamma^+$ and $\gamma^-$) and tend to respectively $x^+$ and $x^-$, or both sequences $x_n^+$ and $x_n^-$ are decreasing and tend to respectively $x^+$ and $x^-$. We treat only the second case, the first one being similar.

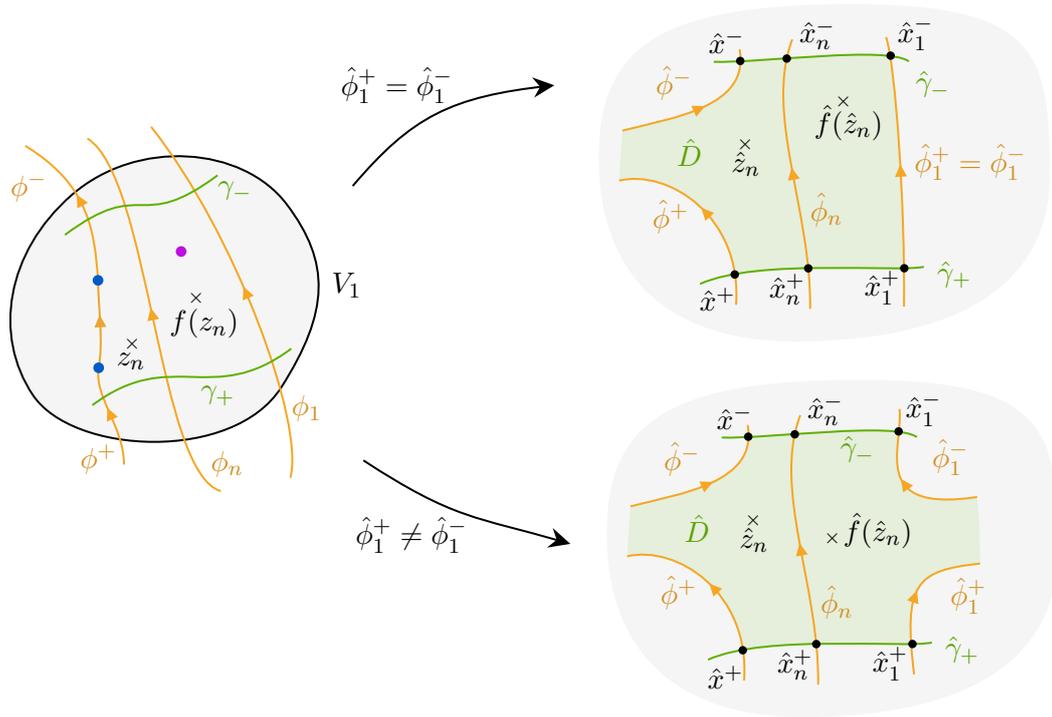
\begin{figure}
\begin{center}

\tikzset{every picture/.style={line width=0.75pt}} 

\begin{tikzpicture}[x=0.75pt,y=0.75pt,yscale=-1,xscale=1]

\draw  [draw opacity=0][fill={rgb, 255:red, 155; green, 155; blue, 155 }  ,fill opacity=0.1 ] (336.09,78.05) .. controls (336.09,20.75) and (369.36,1.17) .. (462.36,1.17) .. controls (555.36,1.17) and (560.72,25.64) .. (561.29,85.25) .. controls (561.86,144.86) and (512.61,169.12) .. (440.09,170.85) .. controls (367.56,172.59) and (336.09,135.34) .. (336.09,78.05) -- cycle ;
\draw [color={rgb, 255:red, 245; green, 166; blue, 35 }  ,draw opacity=1 ]   (86.11,140.08) .. controls (86.44,157.08) and (88.78,170.42) .. (86.44,184.08) ;
\draw [shift={(86.95,157.43)}, rotate = 86.85] [fill={rgb, 255:red, 245; green, 166; blue, 35 }  ,fill opacity=1 ][line width=0.08]  [draw opacity=0] (6.25,-3) -- (0,0) -- (6.25,3) -- cycle    ;
\draw  [fill={rgb, 255:red, 155; green, 155; blue, 155 }  ,fill opacity=0.1 ] (72.97,97.17) .. controls (105.63,68.51) and (158.3,72.51) .. (180.3,101.84) .. controls (202.3,131.17) and (201.63,155.84) .. (177.63,193.84) .. controls (153.63,231.84) and (73.63,227.84) .. (53.63,197.84) .. controls (33.63,167.84) and (40.3,125.84) .. (72.97,97.17) -- cycle ;
\draw [color={rgb, 255:red, 245; green, 166; blue, 35 }  ,draw opacity=1 ]   (80.78,68.75) .. controls (111.78,87.75) and (116.11,237.42) .. (147.44,246.08) ;
\draw [shift={(112.39,153.93)}, rotate = 77.75] [fill={rgb, 255:red, 245; green, 166; blue, 35 }  ,fill opacity=1 ][line width=0.08]  [draw opacity=0] (6.25,-3) -- (0,0) -- (6.25,3) -- cycle    ;
\draw [color={rgb, 255:red, 245; green, 166; blue, 35 }  ,draw opacity=1 ]   (112.59,62.85) .. controls (154.59,116.05) and (189.44,212.08) .. (182.11,239.42) ;
\draw [shift={(158.68,143.24)}, rotate = 66.99] [fill={rgb, 255:red, 245; green, 166; blue, 35 }  ,fill opacity=1 ][line width=0.08]  [draw opacity=0] (6.25,-3) -- (0,0) -- (6.25,3) -- cycle    ;
\draw [color={rgb, 255:red, 245; green, 166; blue, 35 }  ,draw opacity=1 ]   (49.78,72.42) .. controls (80.78,91.42) and (84.44,114.08) .. (86.11,140.08) ;
\draw [shift={(75.48,96.96)}, rotate = 61.84] [fill={rgb, 255:red, 245; green, 166; blue, 35 }  ,fill opacity=1 ][line width=0.08]  [draw opacity=0] (6.25,-3) -- (0,0) -- (6.25,3) -- cycle    ;
\draw [color={rgb, 255:red, 245; green, 166; blue, 35 }  ,draw opacity=1 ]   (86.44,184.08) .. controls (84.11,199.08) and (98.78,212.75) .. (99.11,232.75) ;
\draw [shift={(90.77,204.22)}, rotate = 66.44] [fill={rgb, 255:red, 245; green, 166; blue, 35 }  ,fill opacity=1 ][line width=0.08]  [draw opacity=0] (6.25,-3) -- (0,0) -- (6.25,3) -- cycle    ;
\draw  [draw opacity=0][fill={rgb, 255:red, 0; green, 91; blue, 198 }  ,fill opacity=1 ] (83.43,140.08) .. controls (83.43,138.6) and (84.63,137.4) .. (86.11,137.4) .. controls (87.59,137.4) and (88.79,138.6) .. (88.79,140.08) .. controls (88.79,141.56) and (87.59,142.76) .. (86.11,142.76) .. controls (84.63,142.76) and (83.43,141.56) .. (83.43,140.08) -- cycle ;
\draw  [draw opacity=0][fill={rgb, 255:red, 0; green, 91; blue, 198 }  ,fill opacity=1 ] (83.76,184.08) .. controls (83.76,182.6) and (84.96,181.4) .. (86.44,181.4) .. controls (87.92,181.4) and (89.12,182.6) .. (89.12,184.08) .. controls (89.12,185.56) and (87.92,186.76) .. (86.44,186.76) .. controls (84.96,186.76) and (83.76,185.56) .. (83.76,184.08) -- cycle ;
\draw [color={rgb, 255:red, 95; green, 176; blue, 0 }  ,draw opacity=1 ]   (83.28,202.89) .. controls (123.28,172.89) and (142.59,204.45) .. (182.59,174.45) ;
\draw [color={rgb, 255:red, 95; green, 176; blue, 0 }  ,draw opacity=1 ]   (69.61,117.22) .. controls (109.61,87.22) and (105.79,117.25) .. (145.79,87.25) ;
\draw    (213.24,92.64) .. controls (242.71,57.82) and (261.99,47.62) .. (311.62,42.22) ;
\draw [shift={(313.91,41.98)}, rotate = 174.07] [fill={rgb, 255:red, 0; green, 0; blue, 0 }  ][line width=0.08]  [draw opacity=0] (10.72,-5.15) -- (0,0) -- (10.72,5.15) -- (7.12,0) -- cycle    ;
\draw    (218.58,230.64) .. controls (264.31,260.7) and (274.19,258.74) .. (319.12,271.82) ;
\draw [shift={(321.91,272.64)}, rotate = 196.48] [fill={rgb, 255:red, 0; green, 0; blue, 0 }  ][line width=0.08]  [draw opacity=0] (10.72,-5.15) -- (0,0) -- (10.72,5.15) -- (7.12,0) -- cycle    ;
\draw [draw opacity=0][fill={rgb, 255:red, 95; green, 176; blue, 0 }  ,fill opacity=0.1 ]   (347.84,64.78) .. controls (389.64,55.84) and (405.32,48.26) .. (406.69,29.74) .. controls (416.26,30.29) and (462.49,24.58) .. (480.7,27.42) .. controls (485.7,43.57) and (487.99,110) .. (488.01,133.91) .. controls (474.05,135.17) and (433.19,131.13) .. (403.95,137.15) .. controls (399.39,109.37) and (374.89,84.25) .. (346.23,89.93) ;
\draw [color={rgb, 255:red, 245; green, 166; blue, 35 }  ,draw opacity=1 ]   (346.23,89.93) .. controls (370.84,84.06) and (407.76,107.88) .. (404.6,152.39) ;
\draw [shift={(387.24,103.59)}, rotate = 47.2] [fill={rgb, 255:red, 245; green, 166; blue, 35 }  ,fill opacity=1 ][line width=0.08]  [draw opacity=0] (6.25,-3) -- (0,0) -- (6.25,3) -- cycle    ;
\draw [color={rgb, 255:red, 245; green, 166; blue, 35 }  ,draw opacity=1 ]   (347.84,64.78) .. controls (372.79,58.37) and (411.8,53.03) .. (406.35,23.83) ;
\draw [shift={(389.22,52.5)}, rotate = 156.41] [fill={rgb, 255:red, 245; green, 166; blue, 35 }  ,fill opacity=1 ][line width=0.08]  [draw opacity=0] (6.25,-3) -- (0,0) -- (6.25,3) -- cycle    ;
\draw [color={rgb, 255:red, 245; green, 166; blue, 35 }  ,draw opacity=1 ]   (488.42,153.28) .. controls (489.04,103.19) and (484.26,32.48) .. (479.49,17.75) ;
\draw [shift={(486.56,81.71)}, rotate = 86.84] [fill={rgb, 255:red, 245; green, 166; blue, 35 }  ,fill opacity=1 ][line width=0.08]  [draw opacity=0] (6.25,-3) -- (0,0) -- (6.25,3) -- cycle    ;
\draw [color={rgb, 255:red, 95; green, 176; blue, 0 }  ,draw opacity=1 ]   (393.46,30.17) .. controls (417.97,30.8) and (479.64,21.96) .. (490.86,29.96) ;
\draw [color={rgb, 255:red, 95; green, 176; blue, 0 }  ,draw opacity=1 ]   (386.61,141.91) .. controls (415.27,128.87) and (484.63,136.23) .. (498.33,133.29) ;
\draw [color={rgb, 255:red, 245; green, 166; blue, 35 }  ,draw opacity=1 ]   (441.59,154.44) .. controls (442.21,104.35) and (418.56,56.33) .. (432.66,18.91) ;
\draw [shift={(432.32,83.3)}, rotate = 79.61] [fill={rgb, 255:red, 245; green, 166; blue, 35 }  ,fill opacity=1 ][line width=0.08]  [draw opacity=0] (6.25,-3) -- (0,0) -- (6.25,3) -- cycle    ;
\draw  [draw opacity=0][fill={rgb, 255:red, 0; green, 0; blue, 0 }  ,fill opacity=1 ] (427.8,28.53) .. controls (427.8,27.36) and (428.75,26.42) .. (429.92,26.42) .. controls (431.09,26.42) and (432.04,27.36) .. (432.04,28.53) .. controls (432.04,29.71) and (431.09,30.65) .. (429.92,30.65) .. controls (428.75,30.65) and (427.8,29.71) .. (427.8,28.53) -- cycle ;
\draw  [draw opacity=0][fill={rgb, 255:red, 0; green, 0; blue, 0 }  ,fill opacity=1 ] (479.62,27.08) .. controls (479.62,25.91) and (480.57,24.96) .. (481.74,24.96) .. controls (482.91,24.96) and (483.86,25.91) .. (483.86,27.08) .. controls (483.86,28.25) and (482.91,29.2) .. (481.74,29.2) .. controls (480.57,29.2) and (479.62,28.25) .. (479.62,27.08) -- cycle ;
\draw  [draw opacity=0][fill={rgb, 255:red, 0; green, 0; blue, 0 }  ,fill opacity=1 ] (404.57,29.74) .. controls (404.57,28.57) and (405.52,27.62) .. (406.69,27.62) .. controls (407.86,27.62) and (408.81,28.57) .. (408.81,29.74) .. controls (408.81,30.91) and (407.86,31.86) .. (406.69,31.86) .. controls (405.52,31.86) and (404.57,30.91) .. (404.57,29.74) -- cycle ;
\draw  [draw opacity=0][fill={rgb, 255:red, 0; green, 0; blue, 0 }  ,fill opacity=1 ] (438.53,134.06) .. controls (438.53,132.89) and (439.48,131.94) .. (440.65,131.94) .. controls (441.82,131.94) and (442.77,132.89) .. (442.77,134.06) .. controls (442.77,135.23) and (441.82,136.18) .. (440.65,136.18) .. controls (439.48,136.18) and (438.53,135.23) .. (438.53,134.06) -- cycle ;
\draw  [draw opacity=0][fill={rgb, 255:red, 0; green, 0; blue, 0 }  ,fill opacity=1 ] (486.44,134) .. controls (486.44,132.83) and (487.39,131.88) .. (488.56,131.88) .. controls (489.73,131.88) and (490.68,132.83) .. (490.68,134) .. controls (490.68,135.17) and (489.73,136.12) .. (488.56,136.12) .. controls (487.39,136.12) and (486.44,135.17) .. (486.44,134) -- cycle ;
\draw  [draw opacity=0][fill={rgb, 255:red, 0; green, 0; blue, 0 }  ,fill opacity=1 ] (401.83,137.15) .. controls (401.83,135.98) and (402.78,135.03) .. (403.95,135.03) .. controls (405.12,135.03) and (406.07,135.98) .. (406.07,137.15) .. controls (406.07,138.32) and (405.12,139.27) .. (403.95,139.27) .. controls (402.78,139.27) and (401.83,138.32) .. (401.83,137.15) -- cycle ;
\draw  [draw opacity=0][fill={rgb, 255:red, 155; green, 155; blue, 155 }  ,fill opacity=0.1 ] (340.09,267.05) .. controls (340.09,209.75) and (373.36,190.17) .. (466.36,190.17) .. controls (559.36,190.17) and (564.72,214.64) .. (565.29,274.25) .. controls (565.86,333.86) and (516.61,358.12) .. (444.09,359.85) .. controls (371.56,361.59) and (340.09,324.34) .. (340.09,267.05) -- cycle ;
\draw [draw opacity=0][fill={rgb, 255:red, 95; green, 176; blue, 0 }  ,fill opacity=0.1 ]   (351.84,253.78) .. controls (393.64,244.84) and (409.32,237.26) .. (410.69,218.74) .. controls (420.26,219.29) and (466.49,213.58) .. (484.7,216.42) .. controls (486.17,241.06) and (481,254.34) .. (524.92,249.06) .. controls (527,267.34) and (525.98,274.8) .. (526.42,282.56) .. controls (484.42,284.31) and (492,310.83) .. (492.01,322.91) .. controls (478.05,324.17) and (437.19,320.13) .. (407.95,326.15) .. controls (403.39,298.37) and (378.89,273.25) .. (350.23,278.93) ;
\draw [color={rgb, 255:red, 245; green, 166; blue, 35 }  ,draw opacity=1 ]   (350.23,278.93) .. controls (374.84,273.06) and (411.76,296.88) .. (408.6,341.39) ;
\draw [shift={(391.24,292.59)}, rotate = 47.2] [fill={rgb, 255:red, 245; green, 166; blue, 35 }  ,fill opacity=1 ][line width=0.08]  [draw opacity=0] (6.25,-3) -- (0,0) -- (6.25,3) -- cycle    ;
\draw [color={rgb, 255:red, 245; green, 166; blue, 35 }  ,draw opacity=1 ]   (351.84,253.78) .. controls (376.79,247.37) and (415.8,242.03) .. (410.35,212.83) ;
\draw [shift={(393.22,241.5)}, rotate = 156.41] [fill={rgb, 255:red, 245; green, 166; blue, 35 }  ,fill opacity=1 ][line width=0.08]  [draw opacity=0] (6.25,-3) -- (0,0) -- (6.25,3) -- cycle    ;
\draw [color={rgb, 255:red, 245; green, 166; blue, 35 }  ,draw opacity=1 ]   (524.92,249.06) .. controls (470.42,257.31) and (488.17,223.81) .. (486.17,204.81) ;
\draw [shift={(486.85,239.96)}, rotate = 54.48] [fill={rgb, 255:red, 245; green, 166; blue, 35 }  ,fill opacity=1 ][line width=0.08]  [draw opacity=0] (6.25,-3) -- (0,0) -- (6.25,3) -- cycle    ;
\draw [color={rgb, 255:red, 95; green, 176; blue, 0 }  ,draw opacity=1 ]   (397.46,219.17) .. controls (421.97,219.8) and (483.64,210.96) .. (494.86,218.96) ;
\draw [color={rgb, 255:red, 95; green, 176; blue, 0 }  ,draw opacity=1 ]   (390.61,330.91) .. controls (419.27,317.87) and (488.63,325.23) .. (502.33,322.29) ;
\draw [color={rgb, 255:red, 245; green, 166; blue, 35 }  ,draw opacity=1 ]   (445.59,343.44) .. controls (446.21,293.35) and (422.56,245.33) .. (436.66,207.91) ;
\draw [shift={(436.32,272.3)}, rotate = 79.61] [fill={rgb, 255:red, 245; green, 166; blue, 35 }  ,fill opacity=1 ][line width=0.08]  [draw opacity=0] (6.25,-3) -- (0,0) -- (6.25,3) -- cycle    ;
\draw  [draw opacity=0][fill={rgb, 255:red, 0; green, 0; blue, 0 }  ,fill opacity=1 ] (431.8,217.53) .. controls (431.8,216.36) and (432.75,215.42) .. (433.92,215.42) .. controls (435.09,215.42) and (436.04,216.36) .. (436.04,217.53) .. controls (436.04,218.71) and (435.09,219.65) .. (433.92,219.65) .. controls (432.75,219.65) and (431.8,218.71) .. (431.8,217.53) -- cycle ;
\draw  [draw opacity=0][fill={rgb, 255:red, 0; green, 0; blue, 0 }  ,fill opacity=1 ] (483.62,216.08) .. controls (483.62,214.91) and (484.57,213.96) .. (485.74,213.96) .. controls (486.91,213.96) and (487.86,214.91) .. (487.86,216.08) .. controls (487.86,217.25) and (486.91,218.2) .. (485.74,218.2) .. controls (484.57,218.2) and (483.62,217.25) .. (483.62,216.08) -- cycle ;
\draw  [draw opacity=0][fill={rgb, 255:red, 0; green, 0; blue, 0 }  ,fill opacity=1 ] (408.57,218.74) .. controls (408.57,217.57) and (409.52,216.62) .. (410.69,216.62) .. controls (411.86,216.62) and (412.81,217.57) .. (412.81,218.74) .. controls (412.81,219.91) and (411.86,220.86) .. (410.69,220.86) .. controls (409.52,220.86) and (408.57,219.91) .. (408.57,218.74) -- cycle ;
\draw  [draw opacity=0][fill={rgb, 255:red, 0; green, 0; blue, 0 }  ,fill opacity=1 ] (442.53,323.06) .. controls (442.53,321.89) and (443.48,320.94) .. (444.65,320.94) .. controls (445.82,320.94) and (446.77,321.89) .. (446.77,323.06) .. controls (446.77,324.23) and (445.82,325.18) .. (444.65,325.18) .. controls (443.48,325.18) and (442.53,324.23) .. (442.53,323.06) -- cycle ;
\draw  [draw opacity=0][fill={rgb, 255:red, 0; green, 0; blue, 0 }  ,fill opacity=1 ] (405.83,326.15) .. controls (405.83,324.98) and (406.78,324.03) .. (407.95,324.03) .. controls (409.12,324.03) and (410.07,324.98) .. (410.07,326.15) .. controls (410.07,327.32) and (409.12,328.27) .. (407.95,328.27) .. controls (406.78,328.27) and (405.83,327.32) .. (405.83,326.15) -- cycle ;
\draw [color={rgb, 255:red, 245; green, 166; blue, 35 }  ,draw opacity=1 ]   (493.47,338.33) .. controls (487.92,292.06) and (494.42,286.06) .. (526.42,282.56) ;
\draw [shift={(494.89,296.11)}, rotate = 109.28] [fill={rgb, 255:red, 245; green, 166; blue, 35 }  ,fill opacity=1 ][line width=0.08]  [draw opacity=0] (6.25,-3) -- (0,0) -- (6.25,3) -- cycle    ;
\draw  [draw opacity=0][fill={rgb, 255:red, 0; green, 0; blue, 0 }  ,fill opacity=1 ] (490.53,323) .. controls (490.53,321.83) and (491.48,320.88) .. (492.65,320.88) .. controls (493.82,320.88) and (494.77,321.83) .. (494.77,323) .. controls (494.77,324.17) and (493.82,325.12) .. (492.65,325.12) .. controls (491.48,325.12) and (490.53,324.17) .. (490.53,323) -- cycle ;
\draw  [draw opacity=0][fill={rgb, 255:red, 189; green, 16; blue, 224 }  ,fill opacity=1 ] (124.93,125.58) .. controls (124.93,124.1) and (126.13,122.9) .. (127.61,122.9) .. controls (129.09,122.9) and (130.29,124.1) .. (130.29,125.58) .. controls (130.29,127.06) and (129.09,128.26) .. (127.61,128.26) .. controls (126.13,128.26) and (124.93,127.06) .. (124.93,125.58) -- cycle ;

\draw (103.22,172.02) node  [font=\tiny]  {$\times $};
\draw (102.58,175) node [anchor=north] [inner sep=0.75pt]    {$z_{n}$};
\draw (135.06,148.68) node  [font=\tiny]  {$\times $};
\draw (139.17,152.98) node [anchor=north] [inner sep=0.75pt]    {$f( z_{n})$};
\draw (146.02,191.78) node [anchor=north] [inner sep=0.75pt]  [color={rgb, 255:red, 88; green, 162; blue, 1 }  ,opacity=1 ]  {$\gamma _{+}$};
\draw (144.4,88.92) node [anchor=north west][inner sep=0.75pt]  [color={rgb, 255:red, 88; green, 162; blue, 1 }  ,opacity=1 ]  {$\gamma _{-}$};
\draw (141,240.23) node [anchor=south west] [inner sep=0.75pt]  [color={rgb, 255:red, 210; green, 142; blue, 30 }  ,opacity=1 ]  {$\phi _{n}$};
\draw (181.27,212.39) node [anchor=south west] [inner sep=0.75pt]  [color={rgb, 255:red, 210; green, 142; blue, 30 }  ,opacity=1 ]  {$\phi _{1}$};
\draw (96.93,221.12) node [anchor=north east] [inner sep=0.75pt]  [color={rgb, 255:red, 210; green, 142; blue, 30 }  ,opacity=1 ]  {$\phi ^{+}$};
\draw (62.18,84.12) node [anchor=north east] [inner sep=0.75pt]  [color={rgb, 255:red, 210; green, 142; blue, 30 }  ,opacity=1 ]  {$\phi ^{-}$};
\draw (264.67,52.4) node [anchor=south east] [inner sep=0.75pt]    {$\hat{\phi }_{1}^{+} =\hat{\phi }_{1}^{-}$};
\draw (272,257.4) node [anchor=north east] [inner sep=0.75pt]    {$\hat{\phi }_{1}^{+} \neq \hat{\phi }_{1}^{-}$};
\draw (409.57,73.92) node [anchor=north] [inner sep=0.75pt]    {$\hat{z}_{n}$};
\draw (452.58,45.37) node [anchor=north west][inner sep=0.75pt]  [font=\tiny]  {$\times $};
\draw (460.77,50.51) node [anchor=north] [inner sep=0.75pt]    {$\hat{f}(\hat{z}_{n})$};
\draw (403.57,66.8) node [anchor=north west][inner sep=0.75pt]  [font=\tiny]  {$\times $};
\draw (440.17,104.44) node [anchor=west] [inner sep=0.75pt]  [color={rgb, 255:red, 210; green, 142; blue, 30 }  ,opacity=1 ]  {$\hat{\phi }_{n}$};
\draw (382.55,98.05) node [anchor=north east] [inner sep=0.75pt]  [color={rgb, 255:red, 210; green, 142; blue, 30 }  ,opacity=1 ]  {$\hat{\phi }^{+}$};
\draw (384.28,50.64) node [anchor=south east] [inner sep=0.75pt]  [color={rgb, 255:red, 210; green, 142; blue, 30 }  ,opacity=1 ]  {$\hat{\phi }^{-}$};
\draw (492.18,80.62) node [anchor=west] [inner sep=0.75pt]  [color={rgb, 255:red, 210; green, 142; blue, 30 }  ,opacity=1 ]  {$\hat{\phi }_{1}^{+} =\hat{\phi }_{1}^{-}$};
\draw (503.57,137.06) node [anchor=west] [inner sep=0.75pt]  [color={rgb, 255:red, 88; green, 162; blue, 1 }  ,opacity=1 ]  {$\hat{\gamma }_{+}$};
\draw (492.86,33.36) node [anchor=north west][inner sep=0.75pt]  [color={rgb, 255:red, 88; green, 162; blue, 1 }  ,opacity=1 ]  {$\hat{\gamma }_{-}$};
\draw (373.51,75.72) node [anchor=west] [inner sep=0.75pt]  [color={rgb, 255:red, 88; green, 162; blue, 1 }  ,opacity=1 ]  {$\hat D$};
\draw (410.35,27.37) node [anchor=south east] [inner sep=0.75pt]    {$\hat{x}^{-}$};
\draw (434.8,25.48) node [anchor=south west] [inner sep=0.75pt]    {$\hat{x}_{n}^{-}$};
\draw (483.74,25.8) node [anchor=south west] [inner sep=0.75pt]    {$\hat{x}_{1}^{-}$};
\draw (440.5,135.28) node [anchor=north east] [inner sep=0.75pt]    {$\hat{x}_{n}^{+}$};
\draw (405.75,157) node [anchor=south east] [inner sep=0.75pt]    {$\hat{x}^{+}$};
\draw (487.99,133.88) node [anchor=north east] [inner sep=0.75pt]    {$\hat{x}_{1}^{+}$};
\draw (413.57,262.92) node [anchor=north] [inner sep=0.75pt]    {$\hat{z}_{n}$};
\draw (446.98,264.77) node [anchor=north west][inner sep=0.75pt]  [font=\tiny]  {$\times $};
\draw (456.67,265.8) node [anchor=west] [inner sep=0.75pt]    {$\hat{f}(\hat{z}_{n})$};
\draw (407.57,255.8) node [anchor=north west][inner sep=0.75pt]  [font=\tiny]  {$\times $};
\draw (445.37,301.84) node [anchor=west] [inner sep=0.75pt]  [color={rgb, 255:red, 210; green, 142; blue, 30 }  ,opacity=1 ]  {$\hat{\phi }_{n}$};
\draw (386.55,287.05) node [anchor=north east] [inner sep=0.75pt]  [color={rgb, 255:red, 210; green, 142; blue, 30 }  ,opacity=1 ]  {$\hat{\phi }^{+}$};
\draw (388.28,239.64) node [anchor=south east] [inner sep=0.75pt]  [color={rgb, 255:red, 210; green, 142; blue, 30 }  ,opacity=1 ]  {$\hat{\phi }^{-}$};
\draw (501,229) node [anchor=west] [inner sep=0.75pt]  [color={rgb, 255:red, 210; green, 142; blue, 30 }  ,opacity=1 ]  {$\hat{\phi }_{1}^{-}$};
\draw (507.57,326.06) node [anchor=west] [inner sep=0.75pt]  [color={rgb, 255:red, 88; green, 162; blue, 1 }  ,opacity=1 ]  {$\hat{\gamma }_{+}$};
\draw (465.2,217.11) node [anchor=north] [inner sep=0.75pt]  [color={rgb, 255:red, 88; green, 162; blue, 1 }  ,opacity=1 ]  {$\hat{\gamma }_{-}$};
\draw (377.51,264.72) node [anchor=west] [inner sep=0.75pt]  [color={rgb, 255:red, 88; green, 162; blue, 1 }  ,opacity=1 ]  {$\hat D$};
\draw (414.35,216.37) node [anchor=south east] [inner sep=0.75pt]    {$\hat{x}^{-}$};
\draw (438.8,214.48) node [anchor=south west] [inner sep=0.75pt]    {$\hat{x}_{n}^{-}$};
\draw (487.74,214.8) node [anchor=south west] [inner sep=0.75pt]    {$\hat{x}_{1}^{-}$};
\draw (444.5,324.28) node [anchor=north east] [inner sep=0.75pt]    {$\hat{x}_{n}^{+}$};
\draw (409.75,347) node [anchor=south east] [inner sep=0.75pt]    {$\hat{x}^{+}$};
\draw (491.99,322.88) node [anchor=north east] [inner sep=0.75pt]    {$\hat{x}_{1}^{+}$};
\draw (510,288) node [anchor=north west][inner sep=0.75pt]  [color={rgb, 255:red, 210; green, 142; blue, 30 }  ,opacity=1 ]  {$\hat{\phi }_{1}^{+}$};
\draw (201.33,142.52) node [anchor=west] [inner sep=0.75pt]    {$V_{1}$};

\end{tikzpicture}

\caption{The two cases of case \ref{case3technic}.\ of the proof of Proposition~\ref{LemLocalTransverse}, whether there is a singularity of $\F$ such as the violet one (in the disk delimited by the leaves $\phi_1$ and $\phi_n$, and the transverse segments $\gamma_+$ and $\gamma_-$): if there does not exist some then $\hat{\phi }_{1}^{+} =\hat{\phi }_{1}^{-}$, otherwise $\hat{\phi }_{1}^{+} \neq \hat{\phi }_{1}^{-}$. The two other blue points are singularities.\label{FigDescripD}}
\end{center}
\end{figure}

Fix $n\in\N$, $n\ge 2$, and let $\wh\gamma^+$ and $\wh\gamma^-$ be the lifts of the arcs $\gamma^+$ and $\gamma^-$ to $\wh V_1$ intersecting $\check\phi_{n}$. We denote $\wh x_1^+$ and $\wh x_1^-$ the lifts of $x_1^+$ and $x_1^-$ belonging to respectively $\wh\gamma^+$ and $\wh\gamma^-$, and $\wh x^+$ and $\wh x^-$ the lifts of $x^+$ and $x^-$ belonging to respectively $\wh\gamma^+$ and $\wh\gamma^-$. Let $\wh\phi_1^+$, $\wh\phi_1^-$, $\wh\phi^+$, $\wh\phi^-$ the lifts of respectively $\phi_1$, $\phi_1$, $\phi^+$ and $\phi^-$ meeting respectively $\wh x_1^+$, $\wh x_1^-$, $\wh x^+$, $\wh x^-$. Note that all these lifts do depend on $n$.

As $z_n$ tends to $z_\infty\in \sing\F$, for $n$ large enough the trajectory $(I^t(z_n))_{t\in[0,1]}$ does not meet $(\phi_1)_{x_1^-}^- \cap (\phi_1)_{x_1^+}^+$ nor $\gamma^+\cup\gamma^-$.

Recall that we have supposed by contradiction that $\wh z_n\in \overline{L(\check\phi_n)}$ and $\wh f(\wh z_n) \notin \overline{L(\check\phi_n)}$. We have three cases.
\begin{itemize}
\item Either $\wh f(\wh z_n) \in {R(\check\phi^+)}$. Recall that $\wh z_n \in \overline{L(\check\phi_n)}\subset {L(\check\phi^+)}$. One can apply Claim~\ref{ClaimTechnical} to get a contradiction for $n$ large enough.
\item Or $\wh f(\wh z_n) \in {R(\check\phi^-)}$. Recall that $\wh z_n \in \overline{L(\check\phi_n)}\subset {L(\check\phi^-)}$. Once again one can apply Claim~\ref{ClaimTechnical} to get a contradiction for $n$ large enough.
\item Or $\wh f(\wh z_n) \in \overline{L(\check\phi^+)}\cap \overline{L(\check\phi^-)}$. Let $\wh D$ be the domain containing $(\wh \phi_n)_{\wh x_n^+}^+ \cap (\wh \phi_n)_{\wh x_n^-}^-$ and bounded by (see Figure~\ref{FigDescripD})
\[\begin{cases}
(\wh\phi^+)_{\wh x^+}^+, (\wh\phi^-)_{\wh x^-}^-, (\wh \phi_1^+)_{\wh x_1^+}^+ \cap (\wh \phi_1^-)_{\wh x_1^-}^-, \wh\gamma^+|_{[\wh x_1^+, \wh x^+]} \text{ and }\wh\gamma^-|_{[\wh x_1^-, \wh x^-]}\qquad & \text{if } \wh\phi_1^+ = \wh\phi_1^- \\
(\wh\phi^+)_{\wh x^+}^+, (\wh\phi^-)_{\wh x^-}^-, (\wh \phi_1^+)_{\wh x_1^+}^+,  (\wh \phi_1^-)_{\wh x_1^-}^-, \wh\gamma^+|_{[\wh x_1^+, \wh x^+]} \text{ and }\wh\gamma^-|_{[\wh x_1^-, \wh x^-]}\qquad & \text{if } \wh\phi_1^+ \neq \wh\phi_1^- .
\end{cases}\]
Then both $\wh z_n$ and $\wh f(\wh z_n)$ belong to $\wh D$: because the trajectory $(I^t(z_n))_{t\in[0,1]}$ does not meet $(\phi_1)_{y_1^-}^- \cap (\phi_1)_{y_1^+}^+$ nor $\gamma^+\cup\gamma^-$, we have $\wh z_n, \wh f(\wh z_n)\in {R(\check\phi_1^+)}\cap {R(\check\phi_1^-)}$. On the other hand, the half leaves $(\wh\phi_n)_{\wh x_n^+}^-$ and $(\wh\phi_n)_{\wh x_n^-}^+$ cannot meet the domain $\wh D$ (for instance, $(\wh\phi_n)_{\wh x_n^+}^-$ is included in the domain bounded by\footnote{Note that we use the fact that, as transverse paths, the arcs $\wh\gamma^-$ and $\wh\gamma^-$ can meet leaves at most once.} $(\wh\phi^+)_{\wh x^+}^+$, $\wh\gamma^+|_{[\wh x_1^+, \wh x^+]}$ and $(\wh \phi_1^+)_{\wh x_1^+}^-$). This contradicts the existence of an $\wh\F$-transverse trajectory going from $\wh z_n$ to $\wh f(\wh z_n)$.
\end{itemize}
\end{proof}

\bibliographystyle{alpha}
\bibliography{biblio} 
\end{document}